\definecolor{darkred}{rgb}{0.6,0.0,0.1}
\definecolor{darkgreen}{rgb}{0,0.5,0}
\definecolor{darkblue}{rgb}{0,0,0.5}
\renewcommand{\cite}{\citet}
\definecolor{dgreen}{rgb}{0,0.5,0}
\definecolor{dblue}{rgb}{0,0,0.9}
\definecolor{dred}{rgb}{0.6,0.0,0.1}
\definecolor{dgold}{rgb}{0.5,0.3,0.0}
\definecolor{dvio}{rgb}{0.6,0.3,0.5}
\definecolor{gray}{rgb}{0.5,0.5,0.5}
\definecolor{dbraun}{rgb}{.5,0.2,0}
\definecolor{jaune}{RGB}{255,237,111}
\definecolor{tagada}{RGB}{237,162,153}
\definecolor{violett}{RGB}{250,50,100}
\definecolor{vert}{RGB}{190,242,182}
\newcommand{\dr}{\color{dred}}
\newtheoremstyle{mysc}% name
  {3pt}%      Space above
  {3pt}%      Space below
  {\it}%         Body font
  {}%         Indent amount (empty = no indent, \parindent = para indent)
  {\color{darkred}\sc}% Thm head font
  {.}%        Punctuation after thm head
  {.5em}%     Space after thm head: " " = normal interword space;
\newtheoremstyle{myas}% name
  {3pt}%      Space above
  {3pt}%      Space below
  {\it}%         Body font
  {}%         Indent amount (empty = no indent, \parindent = para indent)
  {\color{darkblue}\sc}% Thm head font
  {.}%        Punctuation after thm head
  {.5em}%     Space after thm head: " " = normal interword space;
\newtheoremstyle{myex}% name
  {10pt}%      Space above
  {10pt}%      Space below
  {\it}%         Body font
  {}%         Indent amount (empty = no indent, \parindent = para indent)
  {\color{darkred}\sc}% Thm head font
  {.}%        Punctuation after thm head
  {.5em}%     Space after thm head: " " = normal interword space;
\theoremstyle{mysc}\newtheorem{prop}{Proposition}[section]
\theoremstyle{mysc}\newtheorem{coro}[prop]{Corollary}
\theoremstyle{mysc}\newtheorem{theo}[prop]{Theorem}
\theoremstyle{mysc}\newtheorem{lem}[prop]{Lemma}
\theoremstyle{myas}\newtheorem{ass}{Assumption}
\theoremstyle{myex}
\theoremstyle{myex}\newtheorem*{illu}{Illustration}
\numberwithin{equation}{section} 
\def\@fnsymbol#1{\ensuremath{\ifcase#1\or 1 \or  2\or  * \or 3\or 4\or * \or \star \or  , \or 
g\or h\or i\else\@ctrerr\fi}}% 
\author{\begin{minipage}{.4\textwidth}\center
{\sc Nicolas Asin}\thanks{ISBA, Universit\'e catholique de Louvain, Voie du Roman Pays 20, 1348~Louvain-la-Neuve,
Belgium,  e-mail:
\url{nicolas.asin@uclouvain.be}}\\[.5ex]\small Universit\'e catholique
de Louvain\\\null\end{minipage}
\begin{minipage}{.45\textwidth}\center
{\sc Jan Johannes}
\thanks{IAM, Ruprecht-Karls-Universit\"at Heidelberg, Mathematikon, Im
  Neuenheimer Feld 205, D-69120 Heidelberg, Germany, e-mail:
\url{johannes@math.uni-heidelberg.de}}\\[.5ex]\small Ruprecht-Karls-Universit\"at Heidelberg \\\null\end{minipage}}
\date{} 
\title{Adaptive non-parametric instrumental regression\\ in the presence of dependence} 
\begin{document} 
\maketitle 

% --------------------------------------------------------------------
% <<Abstract>>
% --------------------------------------------------------------------
\begin{abstract}
We consider the estimation of  a structural function which models a
non-parametric relationship between a response and an endogenous
regressor given an instrument in presence of dependence in
the data generating process. Assuming an independent and identically
distributed (\iid) sample it has been shown in
\cite{JohannesSchwarz2010} that a least squares estimator based on
dimension reduction and thresholding can attain minimax-optimal rates
of convergence up to a constant. As this estimation procedure requires
an optimal choice of a dimension parameter with regard amongst others to certain
characteristics of the unknown structural function we investigate its
fully data-driven choice based on a combination of model selection and
Lepski’s method inspired by \cite{GoldenshlugerLepski2011}. For the
resulting fully data-driven thresholded least squares estimator a
non-asymptotic oracle risk bound is derived by considering  either an
\iid sample or by dismissing the independence assumption. In both
cases the derived risk bounds coincide up to a constant  assuming
sufficiently weak dependence characterised by a fast decay of the
mixing coefficients. Employing the risk bounds the minimax optimality
up to constant of the estimator is established over a variety of
classes of structural functions. 
\end{abstract} 
{\footnotesize
\begin{tabbing} 
\noindent \emph{Keywords:} \=Non-parametric regression, instrumental variable, dependence, mixing, minimax theory, adaptive. \\[.2ex] 
\noindent\emph{JEL codes:} \=C13, C14, C30, C36.
\end{tabbing}}
% --------------------------------------------------------------------
% <<Content>>
% 
% --------------------------------------------------------------------
%======================================================================================================================
%                                                                 
% Title: Introduction
% Author: Jan JOHANNES, Nicolas Asin
% 
% Email: jan.johannes@ensai.fr nicolas.asin@uclouvain.be
% Date: %%ts latex start%%[2016-04-07 Thu 12:58]%%ts latex end%%
% Main-TeX-File: t in der form "Name"
%
% ======================================================================================================================
\section{Introduction}\label{s:in}
In non-parametric instrumental regression the relationship between a response $\iY$ and an endogenous explanatory variable $\iZ$ % given an instrument $\iV$ is studied.  Typically the  dependence of $\iY$ on the variation of $\iZ$
is characterised by 
\begin{subequations}
\begin{equation}
\label{eq:model}\iY=\So(\iZ)+\iNo\,\quad\text{with}\,\quad\Ex(\iNo|\iZ)\ne 0
\end{equation} where the error term $\iNo$ and $\iZ$ are not stochastically mean-independent and $\So$ is called structural function. To account for the lack of mean-independence an additional exogenous random variable $\iV$, an instrument, is assumed, that is
\begin{equation}
\label{eq:model2}\Ex(\iNo|\iV)=0. 
\end{equation}
\end{subequations}
In this paper we are interested in a fully data-driven estimation of
the structural function $\So$ based on an identically distributed
(\id) sample of $(\iY,\iZ,\iV)$ consisting either of independent or
weakly dependent observations. % Following \cite{JohannesSchwarz2010} we
Considering a thresholded least-squares estimator based on a dimension
reduction with data-driven selection of the dimension parameter  we
show that 
% propose a dimension parameter selection method in a data-driven way such that
the resulting fully data-driven estimator can attain optimal rates of
convergence in a minimax sense.

Typical examples of models satisfying (\ref{eq:model}–\ref{eq:model2})
are error-in-variable models, simultaneous equations or treatment
models with endogenous selection. The natural generalisation
(\ref{eq:model}–\ref{eq:model2})  of a standard parametric model
(e.g. \cite{Amemiya1974}) to the non-parametric situation has been
introduced by \cite{Florens2003} and \cite{NeweyPowell2003}, while its
identification has been studied e.g. in
\cite{CarrascoFlorensRenault2007},
\cite{DarollesFanFlorensRenault2011} and
\cite{FlorensJohannesVanBellegem2011}. Applications and extensions of
this approach include non-parametric tests of exogeneity
(\cite{BlundellHorowitz2007}), quantile regression models
(\cite{HorowitzLee2007}), semi-parametric modelling
(\cite{FlorensJohannesVanBellegem2012}), or quasi-Bayesian  approaches
(\cite{FlorensSimoni2012}), to name but a few. There exists a vast
literature on the non-parametric estimation of the structural function
based on an \iid sample of $(\iY,\iZ,\iV)$. For example,
\cite{AiChen2003}, \cite{BlundellChenKristensen2007} or
\cite{NeweyPowell2003} consider sieve minimum distance estimators,
\cite{DarollesFanFlorensRenault2011},
\cite{FlorensJohannesVanBellegem2011} or \cite{GagliardiniScaillet2012} study penalised least squares
estimators, \cite{DunkerFlorensHohageJohannesMammen2014} propose an
iteratively regularised Gauß–Newton methods,  while  iteratively
regularised least squares estimators are analysed in
\cite{CarrascoFlorensRenault2007} and \cite{JohannesVanBellegemVanhems2013}. A least squares estimator based on
dimension reduction  and threshold techniques has been considered by
\cite{JohannesSchwarz2010} and \cite{BreunigJohannes2015} which
borrows ideas from the inverse problem community
(c.f. \cite{EfromovichKoltchinskii2001} or
\cite{HoffmannReiss2008}). \cite{HallHorowitz2005},
\cite{ChenReiss2011} and \cite{JohannesSchwarz2010} prove lower bounds
for the mean integrated squared error (MISE) and propose estimators
which can attain optimal rates in a minimax sense. On the other hand
lower bounds and  minimax-optimal estimation of the value of a linear
functional of the structural function has been shown in \cite{BreunigJohannes2015}.

It is worth noting that all the proposed estimation procedures rely on
the choice of at least one tuning parameter, which in turn, crucially
influences the attainable accuracy of the constructed estimator. 
In general, this choice requires knowledge of characteristics of the
structural function, such as the number of its derivatives, which are
not known in practice. From an empirical point of view data-driven
estimation procedures have been studied, for example, by
\cite{FeveFlorens2014}, and \cite{Horowitz2014}.
Considering an \iid sample a fully data-driven estimation procedure for linear functionals of the
structural function which can attain minimax-rates up to a logarithmic
deterioration has been proposed by \cite{BreunigJohannes2015}. 
On the other hand side, based on an \iid sample data-driven estimators of the structural function
which can attain lower bounds for the MISE are studied by  \cite{LoubesMarteau2009} or
\cite{JohannesSchwarz2010}. However, a straightforward application of
their results is not obvious to us since they assume  a partial
knowledge of the associated conditional expectation of
$\iZ$ given $\iV$, that is,  the eigenfunctions are known in advance, but the eigenvalues have to be estimated.
In this paper we do not impose an a priori knowledge of the eigenbasis, and
hence the estimators considered in \cite{LoubesMarteau2009} and
\cite{JohannesSchwarz2010} are no more accessible to us. Instead, we
consider  a thresholded least squares estimator as presented in
\cite{JohannesSchwarz2010}. %  and \cite{BreunigJohannes2015} which we
% briefly recall next

% One objective in this article is the minimax-optimal non-parametric
% estimation of the structural function $\So$ based on an \iid. sample
% of $(\iY,\iZ,\iV)$ satisfying the model equations
% (\ref{eq:model}–\ref{eq:model2}).

Let us briefly sketch our fully data-driven estimation approach here. For the moment being, suppose that the structural function can be
represented as $\So=\sum_{j=1}^{m}\fSo_j\basZ_j$ using only $m$
pre-specified basis functions $\{\basZ_j\}_{j=1}^m$, and that only
the coefficients $\{\fSo_j\}_{j=1}^m$ with respect to these functions are
unknown. In this situation, rewriting (\ref{eq:model}--\ref{eq:model2})
% taking the conditional expectation with respect to the instrument
% $\iV$ on both sides of \eqref{eq:model} 
as a  multivariate linear conditional moment equation the estimation
 of the $m$ coefficients of $\So$ % $\{\fSo_j\}_{j=1}^m$
 % , that is, $\Ex[\iY|\iV] =
% \sum_{j=1}^m\fSo_j\Ex[\basZ_j(\iZ)|\iV]$. Solving this equation 
is a classical textbook problem in econometrics
(cf. \cite{PaganUllah1999}). A popular approach consists in replacing
the conditional moment equation by an unconditional one, that is,
$\Ex[\iY\basV_l(\iV)]
=\sum^m_{j=1}\fSo_j\Ex[\basZ_j(\iZ)\basV_l(\iV)]$, $l = 1,\dotsc,m$
given $m$ functions $\{\basV_l\}_{l=1}^m$. % , one can consider $m$ unconditional moment equations instead of the multivariate conditional moment equation, that is, $\Ex[\iY\basV_l(\iV)] = \sum^m_{j=1}\fSo_j\Ex[\basZ_j(\iZ)\basV_l(\iV)]$, $l = 1,\dotsc,m$.
Notice that once the functions $\set{\basV_l}_{l=1}^m$ are chosen, all
the unknown quantities in the unconditional moment equations can be
estimated by simply substituting empirical versions for the
theoretical expectation. Moreover, a least squares solution of the
estimated equation leads to a consistent and asymptotically normally
distributed estimator of the coefficients vector of $\So$ %the parameter vector $\set{\fSo_j}_{j=1}^m$
under mild assumptions. The choice of the functions
$\set{\basV_l}_{l=1}^m$ directly influences the asymptotic variance of
the estimator and thus the question of optimal instruments minimising
the asymptotic variance arises (cf. \cite{Newey1990}). % One advantage
% of this approach is that the estimator is easily computable. 
However, in many situations an infinite number of functions
$\set{\basZ_j}_{j=1}^\infty$ and associated coefficients
$\set{\fSo_j}_{j=1}^\infty$ is needed to represent the structural
function $\So$, % The choice of the basis functions
% $\set{\basZ_j}_{j=1}^\infty$ reflects a priori information about the
% structural function $\So$, such as smoothness.
% Considering an infinite
% number of functions $\set{\basV_l}_{l=1}^\infty$,
but we could still
consider the finite dimensional least squares estimator described
above for each dimension parameter $m\in\Nz$.  In this situation the
dimension $m$ plays the role of a smoothing parameter and we may hope
that the estimator of the structural function $\So$ is also consistent
as $m$ tends to infinity at a suitable rate. Unfortunately, this is not true in general. Let $\DiSo:=\sum_{j=1}^\Di \fDiSo_j \basZ_j$ denote a least squares solution of the reduced unconditional moment equations, that
is, the vector of coefficients $(\fDiSo_j)_{j=1}^\Di$ minimises the quantity $\sum_{l=1}^\Di\{\Ex[\iY \basV_l(\iV)] -\sum_{j=1}^\Di \ga_j\Ex[\basZ_j(\iZ)\basV_l(\iV)]\}^2$ over all
$(\ga_j)_{j=1}^\Di$. Under an additional assumption (defined below) on the basis
$\{\basV_j\}_{j\geq1}$  it is shown in \cite{JohannesSchwarz2010} that
$\DiSo$ converges to the true structural function as $\Di$ tends to
infinity. Moreover, requiring  a suitable chosen dimension parameter
$\Di$ a least squares estimator $\hDiSo$ of $\So$ based on a dimension reduction together with an
additional thresholding can attain minimax-optimal rates of
convergence in terms of the MISE.  In this paper we make use of  a method to select the dimension parameter in
a fully data-driven way, that is, neither depending on the structural
function nor on the underlying joint distribution of $\iZ$ and
$\iV$. Inspired by the work of \cite{GoldenshlugerLepski2011} the procedure
combines a model selection approach (cf. \cite{BarronBirgeMassart1999}
and its detailed discussion in \cite{Massart07}) and Lepski’s
method (cf. \cite{Lepskij1990}). 

The main contribution of this paper is the derivation of a
non-asymptotic oracle bound of the MISE for the resulting fully
data-driven thresholded least squares estimator by considering either
an \iid sample or by dismissing the independence assumption.
Employing these bounds the minimax optimality up to constant  of the estimator is established in terms
of the MISE over a variety of classes of structural functions and
conditional expectations. The estimator which depends only on the data
adapts thus automatically to the unknown characteristics of the structural function.

The paper is organised as follows: in Section \ref{s:mth} we introduce
our basic model assumptions and notations, introduce the 
thresholded least squares estimator $\hDiSo$ as proposed in
\cite{JohannesSchwarz2010} and present the data-driven method to
select the tuning parameter $\hDi$. We prove in Section \ref{s:ora} an oracle upper bound of the MISE for the resulting fully
data-driven  estimator $\hDiSo[\hDi]$
assuming first that  the \id sample of $(\iY,\iZ,\iV)$ consists of
independent observations  and second that the
sample is drawn from a strictly stationary process.  We briefly review
elementary dependence notions and present standard coupling
arguments. The risk bounds are non-asymptotic and depend as usual on
the structural function and the conditional expectation. Employing these risk bounds 
we show in Section \ref{s:mm} that within the general framework as presented in
\cite{JohannesSchwarz2010} the fully
data-driven  estimator $\hDiSo[\hDi]$ can attain up to a constant  the lower bound of
the maximal MISE over a variety of classes of
structural functions and conditional expectations. In particular we
provide sufficient conditions on the dependence structure such that
the fully data-driven estimator based on the dependent observations
can still attain the minimax-rates for independent data.

%%% Local Variables:
%%% mode: latex
%%% TeX-master: "_0NIV_dep"
%%% End:

%======================================================================================================================
%                                                                 
% Title: Methodology and Review
% Author: Jan JOHANNES, Nicolas Asin
% 
% Email: jan.johannes@ensai.fr nicolas.asin@uclouvain.be
% Date: %%ts latex start%%[2016-04-07 Thu 12:51]%%ts latex end%%
% Main-TeX-File: t in der form "Name"
%
% ======================================================================================================================
\section{Assumptions and methodology}\label{s:mth}
% ....................................................................
% <<Basic model assumptions>> \ref{de:mth:ip}
% ....................................................................
\paragraph{Basic model assumptions}
For ease of presentation we consider a scalar regressor $\iZ$
  and a scalar instrument $\iV$.
  However, all the results below can be extended to the multivariate
  case in a straightforward way. It is convenient to rewrite the model
  (\ref{eq:model}--\ref{eq:model2}) in terms of an operator between
  Hilbert spaces. Let us first introduce the Hilbert spaces
  $\HiZ:=\set{\So:\Rz\to\Rz\, |\, \VnormZ{\So}^2:=
    \Ex[\So^2(\iZ)]<\infty }$ and
  $\HiV:=\set{\Im:\Rz\to\Rz \,|\, \VnormV{\Im}^2:=\Ex[\Im^2(\iV)]<\infty
  }$ endowed with the usual inner products $\skalarZ$ and $\skalarV$, respectively.
  % $\VskalarZ{\So_1,\So_2}=\Ex[\So_1(\iZ)\So_2(\iZ)]$,
  % $\So_1,\So_2\in\HiZ$
  % and $\VskalarV{\Im_1,\Im_2}=\Ex[\Im_1(\iV)\Im_2(\iV)]$,
  % $\Im_1,\Im_2\in\HiV$,
  % respectively.
  For the sake of simplicity and ease of understanding,
  we follow and refer the reader to \cite{HallHorowitz2005} for a
  discussion of the assumption that $\iZ$
  and $\iV$
  are marginally uniformly distributed on the interval
  $[0,1]$.
  Obviously, in this situation both Hilbert spaces $\HiZ$
  and $\HiV$
  are isomorphic to $L^2:=L^2[0,1]$
  endowed with the usual norm $\norm[L^2]$
  and inner product $\skalar[L^2]$.
  The conditional expectation of $\iZ$
  given $\iV$,
  however, defines a linear operator $\Op\So:=\Ex[\So(\iZ)|\iV]$,
  $\So\in\HiZ$
  mapping  $\HiZ$
  into $\HiV$.
  Taking the conditional expectation with respect to the instrument
  $\iV$
  on both sides in \eqref{eq:model} we obtain from \eqref{eq:model2}
  that:
  \begin{equation}\label{de:mth:ip}
    \Im:=\Ex(\iY|\iV)=\Ex(\So(\iZ)|\iV)=:\Op\So
  \end{equation}
  where the function $\Im$
  belongs to $\HiV$. Estimation of
    the structural function $\So$
    is thus linked to the inversion of  $\Op$   and it is therefore called an inverse
    problem. Here und subsequently,  we
    suppose implicitly that the operator $\Op$  is compact, which is the case under fairly mild assumptions
    (c.f. \cite{CarrascoFlorensRenault2006}). Consequently, unlike in
    a multivariate linear instrumental regression model, a continuous
    generalised inverse of $\Op$
    does not exist as long as the range of the operator $\Op$
    is an infinite dimensional subspace of $\HiV$.
    This corresponds to the set-up of ill-posed inverse problems with
    the additional difficulty that $\Op$
    is unknown and has to be estimated. In what follows, it is always
    assumed that there exists a unique solution $\So\in\HiZ$
    of equation \eqref{de:mth:ip}, in other words, that $\Im$
    belongs to the range of $\Op$,
    and that T is injective. For a detailed discussion in the context
    of inverse problems see Chapter 2.1 in
    \cite{EnglHankeNeubauer2000}, while in the special case of a
    non-parametric instrumental regression we refer to
    \cite{CarrascoFlorensRenault2006}.
  Considering $\ceE(\iZ,\iV):=\Ex[\iNo|\iZ,\iV]$
  we decompose throughout the paper the error term
  $\iNo=\iE+\ceE(\iZ,\iV)$
  where $\iE$
  is centred due to the mean independence of $\iNo$
  given the instrument $\iV$
  as supposed in \eqref{eq:model2}. Moreover, we assume that $\iE$
  and $(\iZ,\iV)$
  are independent of each other.  Denoting by $\VnormInf{h}$ and $\Vnorm[\iZ,\iV]{h}:=(\Ex h^2(\iZ,\iV))^{1/2}$,
  respectively,   the usual uniform norm and $L^2$-norm of a real
  valued function $h$ the next assumption completes and formalises our conditions on the
  regressor $\iZ$, the instrument $\iV$ and the random variable $\iE$.
% ....................................................................
% <<Ass \ref{a:mth:rv}>>
% ....................................................................
\begin{ass}\label{a:mth:rv} The joint distribution of $(\iZ,\iV)$
  admits a bounded density $p_{\iZ,\iV}$, i.e.,
  $\VnormInf{p_{\iZ,\iV}}<\infty$, while both $\iZ$ and $\iV$ are marginally
  uniformly distributed on the interval $[0,1]$. The conditional mean
  function $\ceE(\iZ,\iV):=\Ex[\iNo|\iZ,\iV]$ is uniformly bound, that
  is, $\VnormInf{\ceE}<\infty$ and, thus $\Vnorm[\iZ,\iV]{\ceE}<\infty$.
 The random variables 
  $\{\iE_i:=\iNo_i-\ceE(\iZ_i,\iV_i)\}_{i=1}^n$ form an \iid $n$-sample of  $\iE:=\iNo-\ceE(\iZ,\iV)$ satisfying   $\Ex\iE^{12}<\infty$ and
  $\vE^2:=\Ex\iE^2>0$,  which is independent of $\{(\iZ_i,\iV_i)\}_{i=1}^n$. 
\end{ass}
% --------------------------------------------------------------------
% <<Matrix and operator notations>>
% --------------------------------------------------------------------
\paragraph{Matrix and operator notations}
We base our estimation procedure on
  the expansion of the structural function $\So$ and the conditional
  expectation operator $\Op$ in an orthonormal basis   of $\HiZ$
  and $\HiV$, respectively.  The selection of an adequate basis in non-parametric
  instrumental regression, and inverse problems in particular, is
  discussed in various publications, (c.f. 
  \cite{EfromovichKoltchinskii2001} or \cite{BreunigJohannes2015}, and
  references within). We may emphasise that, the basis in $\HiZ$ is
  determined by the presumed information on the structural
  function and is not necessarily an eigenbasis for the unknown
  operator.   However, the statistical  choice of a basis from
  a family of bases (c.f. \cite{BirgeMassart1997})  is complicated, and its discussion is far beyond the scope
  of this paper. Therefore, we assume here and subsequently that $\basis{\basZ}$
  and $\basis{\basV}$ denotes an adequate orthonormal basis  of $\HiZ$
  and $\HiV$,
  respectively, which do not in general correspond to the
  eigenfunctions of the operator $\Op$ defined in \eqref{de:mth:ip}.
The next assumption summarises our minimal conditions  on those basis.
% ....................................................................
% <<Ass \ref{a:mth:bs}>> \ref{a:mth:bs}
% ....................................................................
\begin{ass}\label{a:mth:bs}There exists a finite constant $\maxnormsup^2\geq1$ such that the
basis $\basis{\basZ}$ and $\basis{\basV}$ satisfy
$\VnormInf{\sum_{j=1}^{\Di}\basZ_j^2}\leq\Di\maxnormsup^2$ and $\VnormInf{\sum_{j=1}^{\Di}\basV_j^2}\leq\Di\maxnormsup^2$, for any $\Di\in\Nz$.
\end{ass}%
% ....................................................................
% <<Comment Ass \ref{a:mth:bs}>> \ref{a:mth:bs}
% ....................................................................
According to Lemma 6 of \cite{BirgeMassart1997} Assumption
  \ref{a:mth:bs}  is exactly equivalent to following
  property: there exists a positive constant $\maxnormsup$
  such that for any $h$
  belongs to the subspace $\Dz_\Di$,
  spanned by the first $\Di$
  basis functions, holds
  $\VnormInf{h}\leq \maxnormsup\sqrt{\Di} \VnormZ{h}$.
  Typical example are bounded basis, such as the trigonometric basis,
  or basis satisfying the assertion, that there exists a positive
  constant $C_\infty$
  such that for any $(c_1,\dotsc,c_\Di)\in\Rz^\Di$,
  $\VnormInf{\sum_{j=1}^\Di c_j\basZ_j}\leq
  C_\infty\sqrt{\Di}|c|_\infty$ where
  $|c|_\infty=\max_{1\leq j\leq\Di}c_j$.
  \cite{BirgeMassart1997} have shown that the last property is
  satisfied for piece-wise polynomials, splines and wavelets. 

Given the orthonormal
  basis $\basis{\basZ}$
  and $\basis{\basV}$
  of $\HiZ$
  and $\HiV$,
  respectively, we consider for all $\So\in\HiZ$
  and $\Im\in\HiV$
  the development $\So=\sum_{j=1}^\infty \fou{\So}_j\basZ_j$
  and $\Im=\sum_{j=1}^\infty \fou{\Im}_j\basV_j$
  where with a slight abuse of notation the sequences
  $(\fou{\So}_j)_{j\geq1}$
  and $(\fou{\Im}_j)_{j\geq1}$
  with generic elements $\fou{\So}_j:=\VskalarZ{\So,\basZ_j}$
  and $\fou{\Im}_j:=\VskalarV{\Im,\basV_j}$
  are square-summable, that is,
  $\VnormZ{\So}^2=\sum_{j=1}^\infty\fou{\So}^2_j<\infty$
  and $\VnormV{\Im}^2=\sum_{j=1}^\infty\fou{\Im}^2_j<\infty$.
  We will refer to any sequence as a whole by omitting its index as
  for example in «the sequence $\fou{\So}$».
  Furthermore, for $\Di\geq1$
  let $\fou{\So}_{\uDi}:=(\fou{\So}_j,\dotsc,\fou{\So}_\Di)^t$
  (resp. $\fou{\Im}_{\uDi}$)
  where $x^t$
  is the transpose of $x$.
  Let us further denote by $\DiHiZ$
  and $\DiHiV$
  the subspace of $\HiZ$
  and $\HiV$
  spanned by the basis functions $\{\basZ_j\}_{j=1}^\Di$
  and $\{\basV_j\}_{j=1}^\Di$,
  respectively. 
  Obviously, the norm of $\So\in\DiHiZ$
  equals the euclidean norm of its coefficient vector
  $\fou{\So}_{\uDi}$,
  that is,
  $\VnormZ{\So}=(\fou{\So}_{\uDi}^t\fou{\So}_{\uDi})^{1/2}=:\Vnorm{\fou{\So}_{\uDi}}$.
  Clearly, if $(\iY,\iZ,\iV)$
  obeys the model equations (\ref{eq:model}--\ref{eq:model2}) then
  introducing the infinite dimensional random vector
  $\fou{\basV(\iV)}$
  with generic elements $\fou{\basV(\iV)}_j=\basV_j(\iV)$
  the identity $\DifIm:=\Ex (Y\fou{\basV(\iV)}_{\uDi})$
  holds true. Consider in addition the infinite dimensional random
  vector $\fou{\basZ(\iZ)}$
  with generic elements $\fou{\basZ(\iZ)}_j=\basZ_j(\iZ)$.
  We define the $\Di\times\Di$
  dimensional matrix
  $\DifOp:=\Ex (\fou{\basV(\iV)}_{\uDi}\fou{\basZ(\iZ)}_{\uDi}^t)$
  with generic elements $\VskalarV{\basV_l,\Op\basZ_j}$  which is throughout the paper assumed to be non singular for all
  $\Di\geq1$
  (or, at least for sufficiently large $\Di$),
  so that  $\DifOp^{-1}$
  always exists with finite  spectral
  norm $\VnormS{\DifOp^{-1}}:=\sup_{\Vnorm{v}\leq 1}\Vnorm{\DifOp^{-1}v}<\infty$. Note that it is a non-trivial problem to determine
  under what precise conditions such an assumption holds (see
  e.g. \cite{EfromovichKoltchinskii2001} and references therein). 
  We
  consider the approximation $\DiSo\in\DiHiZ$
  of $\So$
  given by $\fou{\DiSo}_{\uDi}=\DifOp^{-1}\DifIm$
  and $\fou{\DiSo}_{j}=0$
  for all $j>\Di$.
  Although, it does generally not correspond to the orthogonal
  projection of $\So$
  onto the subspace $\DiHiZ$
  and the approximation error
  $\biasnivSo:=\sup_{k\geq \Di}\VnormZ{\DiSo-\So}^2$
  does generally not converge to zero as $\Di\to\infty$.
  Here and subsequently, however, we restrict ourselves to cases of
  structural functions and conditional expectation operators which
  ensure the convergence. Obviously, this is a minimal regularity
  condition for us since we aim to estimate the approximation
  $\DiSo$. 
% --------------------------------------------------------------------
% <<Thresholded projection estimator>>
% --------------------------------------------------------------------
\paragraph{Thresholded least squares estimator}
  In this paper, we follow \cite{JohannesSchwarz2010} and consider a
  least squares solution of a reduced set of unconditional moment
  equations which takes its inspiration from the linear Galerkin
  approach used in the inverse problem community
  (c.f. \cite{EfromovichKoltchinskii2001} or
  \cite{HoffmannReiss2008}).  To be precise, let
  $\set{(\iY_i,\iZ_i,\iV_i)}_{i=1}^n$
  be an identically distributed sample of $(\iY,\iZ,\iV)$
  obeying (\ref{eq:model}--\ref{eq:model2}). Since
  $\DifOp=\Ex\fou{\basV(\iV)}_{\uDi}\fou{\basZ(\iZ)}_{\uDi}^t$
  and $\DifIm=\Ex\iY\fou{\basV(\iV)}_{\uDi}$
  are written as expectations we can construct estimators using their
  empirical counterparts, that is,
  $\DihfOp:=n^{-1}\sum_{i=1}^n
  \fou{\basV(\iV_i)}_{\uDi}\fou{\basZ(\iZ_i)}_{\uDi}^t$ and
  $\DihfIm:=n^{-1}\sum_{i=1}^n \iY_i\fou{\basV(\iV_i)}_{\uDi}$.
  Let $\Ind{\{\VnormS{\DihfOp^{-1}}^2\leq n\}}$
  denote the indicator function which takes the value one if $\DihfOp$
  is non singular with squared spectral norm $\VnormS{\DihfOp^{-1}}^2$
  bounded by $n$.
  The estimator $\hDiSo\in\DiHiZ$
  of the structural function $\So$ is then defined by
  \begin{equation}\label{de:mth:est}
    \fou{\hDiSo}_{\uDi}:=\hfou{\Op}_{\uDi}^{-1}\hfou{g}_{\uDi} \Ind{\{\VnormS{\hfou{\Op}_{\uDi}^{-1}}\leq n\}}
  \end{equation}
  where the dimension parameter $\Di=\Di(n)$
  has to tend to infinity as the sample size $n$ increases.
% --------------------------------------------------------------------
% <<Data-driven selection>> 
% --------------------------------------------------------------------
\paragraph{Data-driven dimension selection}
Our selection method combines model selection (c.f. \cite{BarronBirgeMassart1999} and its discussion in \cite{Massart07}) and Lepskij's method (c.f. \cite{Lepskij1990})  borrowing
ideas from  \cite{GoldenshlugerLepski2011}.  We select the dimension parameter as minimiser of a penalised contrast function   which we formalise next.  
% ....................................................................
% <<De Delta>> \ref{de:delta}
% ....................................................................
Given a positive sequence $\ga:=(\ga_m)_{m\geq 1}$ denote%
\begin{multline}\label{de:delta}
  \Delta_m(\ga):=\max\limits_{1\leq k\leq m}\ga_k, \quad\Lambda_m(\ga):=\max\limits_{1\leq k\leq m}\frac{\log(\ga_k\vee(k+2))}{\log(k+2)} \quad\mbox{and}\\\hfill  \delta_m(\ga):=m\;\Delta_m(\ga)\;\Lambda_m(\ga).
\end{multline}
% ....................................................................
% <<Def selection rule>> \ref{de:Mn} \ref{de:hpen} \ref{de:hDi}
% ....................................................................
Thereby, we define $\hdelta_m:=\delta_m(\ga)$ with
  $\ga=(\VnormS{\fhOp_{\um}^{-1}}^2)_{m\geq1}$. For $n\geq1$, a
  positive sequence $\ga:=(\ga_m)_{m\geq1}$ and $\alpha_n:=n^{1-1/\log(2+\log n)}(1+\log n)^{-1}$  denote
  \begin{equation}\label{de:Mn} \Mn(\ga):=\min\set{2\leq m\leq \DiMa:
      m^2\, \ga_m> \alpha_n}-1
  \end{equation}
  where we set $\Mn(\ga):=\DiMa$  if the minimum is taken over an empty set
and  $\gauss{x}$ denotes as usual the integer part of $x$. Thereby, the dimension parameter is selected
among a collection of admissible values $\{1,\dotsc,\Mh\}$ with  random integer $\Mh=\Mn(\ga)$ and  $\ga=(\VnormS{\fhOp_{\um}^{-1}}^2)_{m\geq1}$. Taking its inspiration from
\cite{ComteJohannes2012} the stochastic sequence of
penalties $(\hpen)_{1\leq m\leq\Mh}$  is defined by 
\begin{equation}\label{de:hpen} \hpen:=11\; \kappa\;\hvB^2\;
  \hdelta_m\; n^{-1}\quad\mbox{with}\quad
  \hvB^2:=2\big(\sum_{i=1}^n Y^2_i+\max_{1\leq k\leq m}\VnormZ{\hDiSo[k]}^2\big)
\end{equation}
where $\kappa$ is a positive constant to be chosen below.   The random  integer $\Mh$  and the stochastic penalties $(\hpen)_{1\leq m\leq\Mh}$ are used to define the sequence
 of contrasts $(\hcontr)_{1\leq m\leq\Mh}$ by
\begin{equation}\label{de:hcontr}
\hcontr:=\max_{\Di\leq k\leq \Mh}\set{\VnormZ{\hDiSo[k]-\hDiSo}^2 -\hpen[k] }.
\end{equation}
Setting $\argmin\nolimits_{m\in A}\{\ga_m\}:=\min\{m: \ga_m\leq \ga_{m'},\,\forall m'\in A\}$ for a sequence $(\ga_m)_{m\geq 1}$ with minimal value in $A\subset \mathbb N$ we select the dimension parameter
\begin{equation}\label{de:hDi}
\hDi:=\argmin_{1\leq \Di\leq \Mh}\set{\hcontr+\hpen}.
\end{equation}
The  estimator of $\So$ is now given by $\hSo_{\whm}$ and below we
derive an upper bound for its risk
$\Ex\VnormZ{\hDiSo[\hDi]-\So}^2$.  By construction the choice of the dimension
parameter and hence the estimator  $\hSo_{\whm}$ do rely
neither  on the structural function and the conditional
expectation operator nor on their regularity assumptions which we formalise
in  Section \ref{s:mm}.

%%% Local Variables:
%%% mode: latex
%%% TeX-master: "_0NIV_dep"
%%% End:

% ======================================================================================================================
%                                                                 
% Title: Oracle Risk bound
% Author: Jan JOHANNES, Nicolas Asin
% Email: johannes@math.uni-heidelberg.de nicolas.asin@uclouvain.be
% Date: %%ts latex start%%[2016-04-07 Thu 12:42]%%ts latex end%%
% Main-TeX-File: _0NIV_Dep in der form "Name"
%
% ======================================================================================================================
\section{Non asymptotic oracle  risk bound}\label{s:ora}
\subsection{Independent observations}\label{s:ora:iid}
% --------------------------------------------------------------------
% <<Independent observations>>
% --------------------------------------------------------------------
% ....................................................................
% <<Intro>>
% ....................................................................
In this section we derive an upper bound for the MISE
  of the thresholded least squares  estimator $\hSo_{\whm}$
  with data-driven choice $\whm$
  of the dimension parameter.  We first suppose that the identically
  distributed $n$-sample
  $\{(\iY_i,\iZ_i,\iV_i)\}_{i=1}^n$
  consists of independent random variables.  In a second step we
  dismiss below the independence assumption by imposing that
  $\{(\iZ_i,\iV_i)\}_{i=1}^n$
  are weakly dependent.  
% ....................................................................
% <<Ass \ref{a:iid:ora}>> \ref{a:iid:ora:{a|b}}
% ....................................................................
 The next assumption summarises our conditions on  the operator, the solution and its approximation.
\begin{ass}\label{a:iid:ora}\begin{ListeN}[\setlength{\labelwidth}{2ex}\setListe{0ex}{1ex}{4ex}{0ex}\renewcommand{\theListeN}{(\alph{ListeN})}] 
\item\label{a:iid:ora:a}  The matrix $\DifOp$ is non singular for all $\Di\geq 1$ such
  that $\DifOp^{-1}$ always exists.
\item\label{a:iid:ora:b} The function $\ceE$ as in Assumption
  \ref{a:mth:rv}, the  structural function $\So$ and its approximation $\DiSo\in\DiHiZ$
  given by $\fDiSo_{\uDi}=\DifOp^{-1}\DifIm$ satisfy
$\Vnorm[\iZ,\iV]{\ceE}^2\vee\VnormZ{\So}^2\vee\sup_{m\geq1}\VnormZ{\DiSo}^2\leq\DiSoTwo<\infty$
and $\VnormInf{\ceE}+\VnormInf{\So}+\sup_{m\geq1}\VnormInf{\So-\DiSo}\leq\DiSoInf<\infty$.
\end{ListeN}  
\end{ass}%
% ....................................................................
% <<Comment Ass \ref{a:iid:ora}>>
% ....................................................................
The formulation of the upper risk bound
  relies on theoretical counterparts to the random quantities $\hM$
  and $\hpen$
  which amongst other we define now referring only to the structural
  function $\So$
  and the operator $\Op$.
  Keep in mind the notation given in \eqref{de:delta} and
  \eqref{de:Mn}. For $m,n\geq1$
  and $\ga:=(\VnormS{\DifOp^{-1}}^2)_{\Di\geq1}$
  define $\Delta^\Op_{\Di}:=\Delta_{\Di}(\ga)$,
  $\Lambda^\Op_{\Di}:=\Lambda_{\Di}(\ga)$
  and $\delta^\Op_\Di:=m\Delta^\Op_\Di\Lambda_\Di^\Op$,
  set $\Mut:=\Mn(4\ga)$
  and $\Mot:=\Mn(\ga/4)$
  where $\Mut\leq \Mot$
  by construction. We require in addition that the sequence
  $(\Mot)_{n\geq 1}$
  satisfies $\log(n)(\Mot+1)^2\Delta_{\Mot+1}^\Op=o(n)$
  as $n\to\infty$. In Section \ref{s:mm:iid} below we provide an Illustration considering different
configurations for the decay of the sequence
$(\VnormS{\DifOp^{-1}}^2)_{\Di\geq1}$ where this 
condition is automatically satisfied.
% ....................................................................
% <<Theorem iid oracle bound>> \ref{t:iid:ora}
% ....................................................................
\begin{theo}\label{t:iid:ora}  
Assume an i.i.d. $n$-sample of $(\iY,\iZ,\iV)$ obeying
(\ref{eq:model}--\ref{eq:model2}). Let Assumption \ref{a:mth:rv},
 \ref{a:mth:bs} and \ref{a:iid:ora} be satisfied. Set $\kappa=144$ in the
definition \eqref{de:hpen} of the penalty $\hpen$. If
$\log(n)(\Mot+1)^2\Delta_{\Mot+1}^\Op=o(n)$ as $n\to\infty$, then there exists a
constant $\SoRaC$ given as in \eqref{de:iid:ora:Sigma} in the
Appendix \ref{a:ora:iid}, which depends amongst others  on $\maxnormsup$, $\DiSoInf$ and $\vE$, and a numerical constant $C$ such that for all
$n\geq 1$ 
\begin{equation*}
\Ex\big(\VnormZ{\hDiSo[\hDi]-\So}^2\big)\leq C\;\maxnormsup^2 (1+\vE^2+\DiSoTwo)\{\min_{1\leq \Di\leq
      \Mut}\{[\biasnivSo\vee \delta_{\Di}^\Op n^{-1}]\}  +n^{-1}\;\SoRaC\}.
\end{equation*}
\end{theo}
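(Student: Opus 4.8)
The plan is to control the risk $\Ex\VnormZ{\hDiSo[\hDi]-\So}^2$ by a now-standard Goldenshluger--Lepski-type bias--variance trade-off, where the main technical work is to replace all the stochastic quantities ($\hdelta_m$, $\hvB^2$, $\hpen$, $\Mh$) by their deterministic analogues ($\delta^\Op_m$, a constant multiple of $\vE^2+\DiSoTwo$, a deterministic penalty $\pen$, and $\Mut\le\Mot$) on an event of overwhelming probability. First I would introduce the deterministic event $\Omega_n$ on which (i) $\VnormS{\fhOp_{\um}^{-1}}^2$ is comparable to $\VnormS{\DifOp^{-1}}^2$ uniformly for $m\le\Mot$, so that $\hdelta_m\asymp\delta^\Op_m$ and $\Mut\le\Mh\le\Mot$; (ii) $\hvB^2$ is sandwiched between two constants times $(1+\vE^2+\DiSoTwo)$; and (iii) the empirical operator and right-hand side are close enough to their expectations that $\VnormZ{\hDiSo[k]-\DiSo[k]}^2$ is controlled. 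The complement $\Omega_n^c$ must be shown to have probability $O(n^{-p})$ for $p$ large enough (here the moment assumption $\Ex\iE^{12}<\infty$ and Assumption~\ref{a:mth:bs} are exactly what is needed), so that on $\Omega_n^c$ the trivial bound $\VnormZ{\hDiSo[\hDi]-\So}^2\le 2n\VnormZ{\hfou{g}_{\uhDi}}^2+2\VnormZ{\So}^2$ (using the truncation $\VnormS{\hfOp^{-1}}\le\sqrt n$ in \eqref{de:mth:est}) contributes only the $n^{-1}\SoRaC$ term after taking expectations.

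On the event $\Omega_n$ the argument is the classical one. For any fixed $\Di\le\Mut$ I would write
\begin{equation*}
\VnormZ{\hDiSo[\hDi]-\So}^2\le 3\VnormZ{\hDiSo[\hDi]-\hDiSo[\Di\vee\hDi]}^2+3\VnormZ{\hDiSo[\Di\vee\hDi]-\hDiSo[\Di]}^2+3\VnormZ{\hDiSo[\Di]-\So}^2,
\end{equation*}
and bound the first two terms using the definitions \eqref{de:hcontr}--\eqref{de:hDi}: by the minimality property of $\hDi$ each of $\VnormZ{\hDiSo[\hDi]-\hDiSo[\Di\vee\hDi]}^2$ and $\VnormZ{\hDiSo[\Di\vee\hDi]-\hDiSo[\Di]}^2$ is at most $\hcontr[\Di]+\hcontr[\hDi]+\hpen[\Di]+\hpen[\hDi]$, which in turn, via the definition of the contrast $\hcontr$, is controlled by $\hpen[\Di]$ plus a supremum over $k$ in the admissible range of a recentred fluctuation term $\VnormZ{\hDiSo[k]-\DiSo[k]}^2-\tfrac1{11}\hpen[k]$ (and its $\DiSo[\Di]$-centred counterpart). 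The deterministic term $\VnormZ{\hDiSo[\Di]-\So}^2\le 2\VnormZ{\hDiSo[\Di]-\DiSo[\Di]}^2+2\biasnivSo[\Di]$ and $\hpen[\Di]\le C(1+\vE^2+\DiSoTwo)\delta^\Op_\Di n^{-1}$ on $\Omega_n$ deliver exactly the $\biasnivSo\vee\delta^\Op_\Di n^{-1}$ part of the claimed bound, after minimising over $\Di\le\Mut$.

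The crux is therefore the concentration step: showing that
\begin{equation*}
\Ex\Big(\max_{\Di\le k\le\Mh}\big[\VnormZ{\hDiSo[k]-\DiSo[k]}^2-\tfrac{1}{11}\hpen[k]\big]_+\,\Ind{\Omega_n}\Big)\le C\,\maxnormsup^2(1+\vE^2+\DiSoTwo)\,n^{-1}\SoRaC.
\end{equation*}
Here I would decompose $\hfou{\hDiSo}_{\uk}-\fou{\DiSo}_{\uk}=\fhOp_{\uk}^{-1}\big(\hfou g_{\uk}-\fhOp_{\uk}\fou{\DiSo}_{\uk}\big)$, split $\iY_i-\sum_{j\le k}\fDiSo[k]_j\basZ_j(\iZ_i)$ into the centred noise part $\iE_i$ and the bias/$\ceE$ part, and treat the purely stochastic ``variance'' piece by a Bernstein-type deviation inequality for the quadratic form $\Vnorm{\fhOp_{\uk}^{-1}n^{-1}\sum_i\iE_i\fou{\basV(\iV_i)}_{\uk}}^2$ uniformly over the (random but $\le\Mot$) range of $k$ — this is where the factor $11\kappa\hvB^2\hdelta_m n^{-1}$ in \eqref{de:hpen}, the precise value $\kappa=144$, and the side condition $\log(n)(\Mot+1)^2\Delta^\Op_{\Mot+1}=o(n)$ all enter, the latter guaranteeing that the truncation at $\VnormS{\fhOp^{-1}}\le\sqrt n$ is not active for the relevant $k$ and that the remainder in the Bernstein bound sums to $O(n^{-1})$. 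I expect this uniform concentration of the noise quadratic form, together with the bookkeeping needed to pass between the random penalty/level $\hpen,\hdelta_m,\Mh$ and their deterministic surrogates without losing constants, to be the main obstacle; everything else is the routine Goldenshluger--Lepski template. The constant $\SoRaC$ is then whatever the deviation bounds produce, collected into the form \eqref{de:iid:ora:Sigma}.
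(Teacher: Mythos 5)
Your plan matches the paper's proof in all essentials: the paper decomposes the risk over an event $\Eset$ (your $\Omega_n$) on which $\hpen$, $\hdelta_m$, $\hvB^2$ and $\Mh$ are sandwiched by their deterministic counterparts, bounds $P(\Eset^c)=O(n^{-2})$ via Talagrand's inequality together with a truncation of $\iE$ at level $\vE n^{1/3}$ (this is where $\Ex\iE^{12}<\infty$ enters), and controls the recentred fluctuation $\max_k(\VnormZ{\hDiSo[k]-\DiSo[k]}^2-\pen[k]/6)_+$ by the same Talagrand-type concentration of the noise quadratic form written as a supremum over the unit ball. The only cosmetic difference is that the paper imports the Goldenshluger--Lepski step as a ready-made key inequality (Lemma 2.1 of Comte and Johannes, 2012) with explicit constants $85$ and $42$, whereas you re-derive it from the three-term decomposition and the minimality of $\hDi$; the substance is identical.
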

% ....................................................................
%  <<Comment Theorem iid oracle bound>>
% ....................................................................
Let us briefly comment on the last result. We shall emphasise that the
derived upper bound holds for all $n\geq 1$ true and thus is non-asymptotic. The bound consists of two terms, a 
remainder term $n^{-1}\;\SoRaC$ which is negligible with
respect to the first rhs  term $\min_{1\leq \Di\leq\Mut}\{[\biasnivSo\vee \delta_{\Di}^\Op n^{-1}]\}$.
The dependence of the factor  $\SoRaC$ in the remainder term on the
unknown structural function $\So$ (and the conditional expectation operator $\Op$) is explicitly given
in its definition \eqref{de:iid:ora:Sigma}. This
dependence is rather complicated but allows us  still  to derive in the
next section an uniform bound of $\SoRaC$ over certain classes of
structural functions and conditional expectation operators. On the
other hand side, identifying for $1\leq \Di\leq \Mut$, $\biasnivSo$ as
upper bound of the squared-bias and
$\delta_{\Di}^\Op n^{-1}$ as upper bound of the variance of the thresholded least squares 
estimator $\hDiSo$ the dominating term  $\min_{1\leq m\leq\Mut}\{[\biasnivSo\vee \delta_{\Di}^\Op n^{-1}]\}$ mimics a
squared-bias-variance trade-off. Let us further introduce
\begin{equation}\label{de:aDi}
\aDi:= \argmin_{1\leq \Di\leq\Mut}\{[\biasnivSo\vee \delta_{\Di}^\Op
n^{-1}]\} \quad\text{and}\quad \aRa:=[\biasnivSo[\aDi]\vee \delta_{\aDi}^\Op
n^{-1}].
\end{equation}
Obviously, the estimator $\hDiSo[\aDi]$ minimises  within
the family $\{\hDiSo[1],\dotsc,\hDiSo[\Mut]\}$ of estimators the upper bound for the risk. The
dimension parameter $\aDi$ and, hence the estimator $\hDiSo[\aDi]$
depend, however, on the unknown structural function and
conditional expectation operator. The estimator $\hDiSo[\aDi]$ is
therefore not feasible, and called an oracle.  We shall emphasise that
due to Theorem \ref{t:iid:ora}  the risk of the data-driven estimator
$\hDiSo[\hDi]$ is  bounded up to a constant by the risk $\aRa$ of
the oracle within the family
$\{\hDiSo[1],\dotsc,\hDiSo[\Mut]\}$. Moreover, we will show in Section
\ref{s:mm} below that $\aRa$ is the minimax-optimal rate
for a wide range of classes of structural functions and conditional
expectation operators which in turn establishes minimax optimality of
the data-driven estimator.
% --------------------------------------------------------------------
% <<Dependent observations>>
% --------------------------------------------------------------------
\subsection{Dependent observations}\label{s:ora:dep}
% ....................................................................
% <<Intro Dep>>
% ....................................................................
In this section we dismiss the independence assumption and assume
  weakly dependent observations. More precisely,
  $(\iZ_1,\iV_1),\dotsc,(\iZ_n,\iV_n)$
  are drawn from a strictly stationary process
  $\{(\iZ_i,\iV_i)\}_{i\in\Zz}$.
  Keep in mind that a process is called strictly stationary if its
  finite dimensional distributions do not change when shifted in
  time. 
  We suppose that the observations $\{(Y_i,\iZ_i,\iV_i)\}_{i=1}^n$
  still form an identically distributed sample from $(\iY,\iZ,\iV)$
  obeying the model (\ref{eq:model}--\ref{eq:model2}).  Our aim is the
  non-parametric estimation of the structural function $\So$
  under some mixing conditions on the dependence of the process
  $\set{(\iZ_i,\iV_i)}_{i\in\Zz}$.
  Let us begin with a brief review of a classical measure of
  dependence, leading to the notion of a stationary absolutely regular
  process.

  Let $(\Omega,\sA,P)$
  be a probability space. Given two sub-$\sigma$-fields
  $\sU$
  and $\sV$
  of $\sA$
  we introduce next the definition and properties of the absolutely
  regular mixing (or $\beta$-mixing)
  coefficient $\beta(\sU,\sV)$.
  The coefficient was introduced by \cite{KolmogorovRozanov1960} and
  is defined by
  \begin{equation*}
    \beta(\sU,\sV)
    :=\tfrac{1}{2}\sup\set{\sum_{i}\sum_j\left|P(U_i)P(V_i)-P(U_i\cap V_i)\right|}
  \end{equation*}
  where the supremum is taken over all finite partitions
  $(U_i)_{i\in I}$
  and $(V_j)_{j\in J}$,
  which are respectively $\sU$
  and $\sV$
  measurable. Obviously, $\beta(\sU,\sV)\leq 1$.
  As usual, if $\Ob$
  and $\Ob^\prime$
  are two  random variables defined on  $(\Omega,\sA,P)$, we denote by
  $\beta(\Ob,\Ob^\prime)$
  the mixing coefficient $\beta(\sigma(\Ob),\sigma(\Ob^\prime))$,
  where $\sigma(\Ob)$
  and $\sigma(\Ob^\prime)$
  are, respectively, the $\sigma$-fields
  generated by $\Ob$ and $\Ob^\prime$.
 
  We assume in the sequel that there exists a sequence of independent
  random variables with uniform distribution on $[0,1]$
  independent of the strictly stationary process
  $\{(\iZ_i,\iV_i)\}_{i\in\Zz}$.
  Employing Lemma 5.1 in \cite{Viennet1997} we construct by induction
  a process $\{(\cou{\iZ}_i,\cou{\iV}_i)\}_{i\geq1}$
  satisfying the following properties.  Given an integer $q$
  we introduce disjoint even and odd blocks of indices, i.e., for any
  $l\geq1$,
  $\cI^e_{l}:=\{2(l-1)q+1,\dotsc,(2l-1)q\}$
  and $\cI^o_{l}:=\{(2l-1)q+1,\dotsc,2lq\}$,
  respectively, of size $q$.
  Let us further partition into blocks the random processes
  $\{(\iZ_i,\iV_i)\}_{i\geq 1}=\{(E_l,O_l)\}_{l\geq1}$
  and
  $\{(\cou{\iZ}_i,\cou{\iV}_i)\}_{i\geq1}=\{(\couE_l,\couO_l)\}_{l\geq1}$
  where
  \begin{align*}
    E_l=(\iZ_i,\iV_i)_{i\in\cI^e_{l}},\quad
    \couE_l=(\cou{\iZ}_i,\cou{\iV}_i)_{i\in\cI^e_{l}},\quad
    O_l=(\iZ_i,\iV_i)_{i\in\cI^o_{l}}, \quad \couO_l=(\cou{\iZ}_i,\cou{\iV}_i)_{i\in\cI^o_{l}}.
  \end{align*}
  If we set further $\sF_l^-:=\sigma((\iZ_j,\iV_j),j\leq l)$
  and $\sF_l^+:=\sigma((\iZ_j,\iV_j),j\geq l)$,
  then the sequence $(\beta_k)_{k\geq 0}$
  of $\beta$-mixing
  coefficient defined by $\beta_0:=1$
  and $\beta_k:=\beta(\sF_0^-,\sF_k^+)$,
  $k\geq 1$,
  is monotonically non-increasing and satisfies trivially
  $\beta_k\geq\beta((\iZ_0,\iV_0),(\iZ_k,\iV_k))$ for any $k\geq1$.
  \noindent Based on the construction presented in \cite{Viennet1997},
  the sequence $(\cou{\iZ}_i,\cou{\iV}_i)_{i\geq1}$
  can be chosen such that for any integer $l\geq1$:
  \begin{enumerate}[label={\textbf{(P\arabic*)}},ref={\textbf{(P\arabic*)}}]\addtocounter{enumi}{0}
  \item\label{dd:as:cou1} $\couE_l$,
    $E_l$, $\couO_l$ and $O_l$ are identically distributed,
  \item\label{dd:as:cou2} $P(E_l\ne \couE_l)\leq \beta_{q+1}$,
    and $P(O_l\ne \couO_l)\leq \beta_{q+1}$.
  \item\label{dd:as:cou3} The variables $(\couE_1,\dotsc,\couE_l)$
    are \iid and so $(\couO_1,\dotsc,\couO_l)$.
  \end{enumerate}%
  We shall emphasise that the random vectors $\couE_1,\dotsc,\couE_l$
  are \iid but the components within each vector are generally not
  independent.
% ....................................................................
% <<Assumption dep oracle bound>> \ref{a:dep:df}
% ....................................................................
The next result requires the following assumption which has been used, for example, in \cite{Bosq1998}.
\begin{ass}\label{a:dep:df}
For any integer $k$ the joint distribution $P_{\iZ_0,\iV_0,\iZ_k,\iV_k}$ of
$(\iZ_0,\iV_0)$ and $(\iZ_k,\iV_k)$ admits a density $p_{\iZ_0,\iV_0,\iZ_k,\iV_k}$ which is square
integrable and satisfies\\
$\pdZWInf:=\sup_{k\geq1}\Vnorm[\iZ,\iV\times\iZ,\iV]{p_{(\iZ_0,\iV_0),(\iZ_k,\iV_k)}-p_{\iZ,\iV}\otimes p_{\iZ,\iV}}<\infty$.
\end{ass}%
% ....................................................................
% <<Theorem dep oracle bound>> \ref{t:dep:ora}
% ....................................................................
\begin{theo}\label{t:dep:ora}  
Assume a sample
$\{(\iY_i,\iZ_i,\iV_i)\}_{i=1}^n$ obeying
(\ref{eq:model}--\ref{eq:model2}) where
$\{(\iZ_i,\iV_i)\}_{i=1}^n$ is drawn from a stationary absolutely
regular process with mixing coefficients $(\beta_k)_{k\geq0}$ satisfying
$\gB:=\sum_{k=0}^\infty(k+1)^2\beta_k<\infty$ and given $k\geq 1$ set
$\gB_{k}:=\sum_{j=k}^\infty\beta_k\leq \gB$. Let the Assumptions
\ref{a:mth:rv}--\ref{a:dep:df} be satisfied.
Considering the oracle dimension $\aDi$ as in \eqref{de:aDi} let
$k_n:=\gauss{(\DiSoInf/\vE)^2\pdZWInf\aDi}$ and  $\penDep\in [6+8(\DiSoInf/\vE)^2\gB_{k_n},8(1 +(\DiSoInf/\vE)^2 \gB)]$.  Set $\kappa=288\penDep$ in the
definition \eqref{de:hpen} of the penalty $\hpen$. If
$\log(n)(\Mot+1)^2\Delta_{\Mot+1}^\Op=o(n)$
 as $n\to\infty$, then there exists a
constant $\SoRaC$ given as in \eqref{de:dep:ora:Sigma} in the
Appendix \ref{a:ora:dep}, which depends amongst others  on $\maxnormsup$, $\DiSoInf$,
$\vE$ and $\gB$, and a numerical constant $C$ such that for all
$1\leq q\leq n$  
\begin{multline*}
\Ex\big(\VnormZ{\hDiSo[\hDi]-\So}^2\big)\leq
C\;\big\{[\biasnivSo[\aDi]\vee n^{-1}\delta_{\aDi}^\Op] + n^{-1}[\SoRaC\vee n^3\exp(-n^{1/6}q^{-1}/100) \vee  n^4 q^{-1}\beta_{q+1}]\big\} \\\times\maxnormsup^2 (1+\vE^2+\DiSoTwo)(1+(\DiSoInf/\vE)^2 \gB).
\end{multline*}
\end{theo}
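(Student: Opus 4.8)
The plan is to derive the dependent-data oracle bound by reducing it to the independent case via the coupling construction already set up in the excerpt. The overall strategy is a \emph{blocking-and-coupling} argument: split the sample into $2l$ blocks of length $q$, alternately even and odd, and replace the original blocks $(E_l,O_l)$ by the coupled versions $(\couE_l,\couO_l)$ which by property \ref{dd:as:cou3} are i.i.d.\ across blocks. On the event where all blocks coincide with their couplings --- whose complement has probability at most $2lq^{-1}\cdot l\beta_{q+1}\lesssim n q^{-1}\beta_{q+1}$ by a union bound over \ref{dd:as:cou2} --- the estimator $\hDiSo[\hDi]$ built from the dependent data equals the one built from the coupled, essentially independent data. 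On the complementary (rare) event one controls the risk crudely using $\VnormS{\hfOp_{\um}^{-1}}^2\le n$ on the relevant events together with the moment bound $\Ex\iE^{12}<\infty$ from Assumption \ref{a:mth:rv}, which is exactly where the polynomial factors $n^3$ and $n^4$ in the statement come from.

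The core of the proof is then to re-run the concentration arguments underlying Theorem~\ref{t:iid:ora} for the coupled sample, tracking how the mixing enters. The key steps, in order, would be: (i) establish that $\DihfOp$ and $\DihfIm$ concentrate around $\DifOp$ and $\DifIm$ in spectral/Euclidean norm, now using a Bernstein-type inequality for sums of i.i.d.\ \emph{block} variables $\couE_l$ rather than individual observations, picking up a variance inflation by a factor $\gB=\sum_k(k+1)^2\beta_k$ and an effective sample size $n/q$ (hence the exponential term $\exp(-n^{1/6}q^{-1}/100)$, which is the probability that $\DihfOp$ fails to be invertible with the required spectral bound); (ii) control the stochastic part $\hfou{g}_{\um}-\DifOp\fou{\So}_{\um}$ via a deviation bound for the empirical process $\sum_i \iE_i \fou{\basV(\iV_i)}_{\um}$, where independence of $\iE$ from $(\iZ,\iV)$ (Assumption \ref{a:mth:rv}) lets one condition on the regressors and use the i.i.d.\ structure of the $\iE_i$ directly --- this part needs no coupling; (iii) verify that the data-driven quantities $\hdelta_m$, $\hvB^2$ and $\hpen$ remain, with high probability, comparable to their theoretical counterparts $\delta_m^\Op$ and $\kappa\,\vB^2\,\delta_m^\Op n^{-1}$ on the range $1\le m\le \Mut$, the extra factor $\penDep$ in $\kappa=288\penDep$ absorbing the dependence-induced inflation; and (iv) feed these into the Goldenshluger--Lepski / model-selection machinery exactly as in the i.i.d.\ proof, using the bound \eqref{de:hcontr}--\eqref{de:hDi} to obtain $\VnormZ{\hDiSo[\hDi]-\So}^2\lesssim \VnormZ{\hDiSo[\aDi]-\So}^2+\hpen[\aDi]$ and then taking expectations.

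The choice $k_n=\gauss{(\DiSoInf/\vE)^2\pdZWInf\aDi}$ enters in step (i): Assumption~\ref{a:dep:df} on the square-integrable joint densities is used to bound the covariance between $\fou{\basZ(\iZ_0)}_{\um}\fou{\basZ(\iZ_0)}_{\um}^t$-type terms at lags $\le k_n$ by $\maxnormsup^2 m\cdot\pdZWInf$ uniformly, while lags beyond $k_n$ are controlled by $\gB_{k_n}=\sum_{j\ge k_n}\beta_j$, which is why $\penDep$ is allowed to lie in the interval $[6+8(\DiSoInf/\vE)^2\gB_{k_n},\,8(1+(\DiSoInf/\vE)^2\gB)]$ --- the lower end suffices once $n$ (hence $\aDi$, hence $k_n$) is large, the upper end is a uniform-in-$n$ safe choice. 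One then optimises over the free block length $q$: the statement is deliberately left with $q$ as a parameter ($1\le q\le n$) so that, in Section~\ref{s:mm}, under a polynomial or geometric decay of $\beta_k$ one picks $q$ of order $n^\epsilon$ or $\log n$ to make the three error terms $\SoRaC\vee n^3\exp(-n^{1/6}q^{-1}/100)\vee n^4q^{-1}\beta_{q+1}$ all negligible relative to $n\,[\biasnivSo[\aDi]\vee n^{-1}\delta_{\aDi}^\Op]$, recovering the i.i.d.\ rate.

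The main obstacle I anticipate is step (iii) combined with the rare-event bookkeeping: the penalty $\hpen$ depends on the data through both $\hvB^2$ and $\hdelta_m=\delta_m(\ga)$ with $\ga=(\VnormS{\fhOp_{\um}^{-1}}^2)_m$, so one must show that, outside an event of probability $O(n^{-1}\cdot(\text{error terms}))$, the random $\Mh=\Mn(\ga)$ sandwiches between $\Mut$ and $\Mot$ and the random penalty dominates the ``true'' fluctuation penalty needed for the Goldenshluger--Lepski comparison --- and this domination must survive the variance inflation by $\gB$ without degrading the leading constant beyond the claimed $C\,\maxnormsup^2(1+\vE^2+\DiSoTwo)(1+(\DiSoInf/\vE)^2\gB)$. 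Keeping the dependence of $\SoRaC$ on $\So$, $\Op$ and $\gB$ explicit enough to be made uniform over smoothness classes in Section~\ref{s:mm}, while still closing all the high-probability events with only polynomial-in-$n$ loss, is the delicate accounting that makes this the hard part.
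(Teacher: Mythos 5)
Your proposal follows essentially the same route as the paper: Viennet's even/odd blocking and coupling reduces the concentration steps to i.i.d.\ block variables (with within-block dependence absorbed, via covariance inequalities and the lag-splitting at $k_n$ under Assumption~\ref{a:dep:df}, into a variance inflation by $\gB$ and the calibration of $\penDep$), the coupling failure contributes the $n^4q^{-1}\beta_{q+1}$ term, the block-level Talagrand bounds with the $\vE n^{1/3}$ truncation of $\iE$ contribute the $\exp(-n^{1/6}q^{-1}/100)$ term, and the Goldenshluger--Lepski comparison on the event where $\hpen$ and $\Mh$ are sandwiched by their theoretical counterparts yields the oracle term. The only inaccuracies are cosmetic: the paper applies the coupling inside each deviation inequality rather than globally to the estimator, and the sums $\sum_i\iE_i\basV_j(\iV_i)$ are also treated through the coupled $(\iZ_i,\iV_i)$ blocks rather than purely by conditioning on the regressors.
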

% ....................................................................
% <<Comment on Theorem dep oracle bound>>
% ....................................................................
We shall emphasise that the last assertion provides  again a non-asymptotic
  risk bound for the estimator $\hDiSo[\whm]$   with dimension $\hDi$
  as in \eqref{de:hDi}. Note that, the quantity $\penDep$
  used to construct the penalty $\hpen$
  in the last theorem still depends on the mixing coefficients
  $(\beta_k)_{k\geq0}$
  which are generally unknown.  However, the condition
  $\gB=\sum_{k=0}^\infty(k+1)^2\beta_k<\infty$
  implies $\sum_{k=k_n}^\infty\beta_k\leq (k_n+1)^{-2}\gB$
  and hence, $6+8(\DiSoInf/\vE)^2\gB_{k_n}\leq 7$
  whenever  $k_n=\gauss{(\DiSoInf/\vE)^2\pdZWInf\aDi}\geq
  \sqrt{8(\DiSoInf/\vE)^2\gB_{k_n}}$. Thereby, if $\aDi\to\infty$
  as $n\to\infty$,
  then there exists an integer $n_o$ such that $\penDep=7\in
  [6+8(\DiSoInf/\vE)^2\gB_{k_n},8(1 +(\DiSoInf/\vE)^2 \gB)]$ for all
  $n\geq n_o$.  The next assertion is thus an immediate consequence of Theorem
  \ref{t:dep:ora}, and hence its proof is omitted.
% ....................................................................
% <<Corollary dep oracle bound>> \ref{c:dep:ora}
% ....................................................................
\begin{coro}\label{c:dep:ora}  Let the assumptions of Theorem
  \ref{t:dep:ora} be satisfied. Suppose that  $\aDi\to\infty$ as
  $n\to\infty$ and that there exists an  unbounded sequence of integers
$(q_n)_{n\geq1}$ and a finite constant $L$ satisfying
\begin{equation}\label{c:dep:ora:L}
\sup_{n\geq1}n^3\exp(-n^{1/6}q^{-1}/100)\leq L\quad\text{ and  }\quad\sup_{n\geq1} n^4q_n^{-1}\beta_{q_n+1}\leq
L.
\end{equation}
 If we set $\kappa=2016$ in the definition \eqref{de:hpen} of the
penalty $\hpen$, then there exist a numerical constant $C>0$ and an
integer $n_o$ such that for all $n\geq n_o$
\begin{multline*}
\Ex\big(\VnormZ{\hDiSo[\hDi]-\So}^2\big)\leq
C\;\big\{[\biasnivSo[\aDi]\vee n^{-1}\delta_{\aDi}^\Op] + n^{-1}[\SoRaC\vee L]\big\} \\\times\maxnormsup^2 (1+\vE^2+\DiSoTwo)(1+(\DiSoInf/\vE)^2 \gB).
\end{multline*}
\end{coro}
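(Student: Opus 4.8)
The plan is to read off Corollary \ref{c:dep:ora} from Theorem \ref{t:dep:ora} by picking an admissible value of the free constant $\penDep$ and by using the given sequence $(q_n)$ as the blocking parameter. Concretely, I would argue that for all $n$ beyond some threshold $n_o$ the choice $\penDep=7$ is legitimate in Theorem \ref{t:dep:ora}, that the prescription $\kappa=288\penDep$ then produces exactly the constant $\kappa=2016$ of the corollary, and that inserting $q=q_n$ into the bound of Theorem \ref{t:dep:ora} together with \eqref{c:dep:ora:L} collapses the two mixing remainder terms into the single quantity $L$.

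First I would check that $\penDep=7$ lies in the interval $[\,6+8(\DiSoInf/\vE)^2\gB_{k_n},\,8(1+(\DiSoInf/\vE)^2\gB)\,]$ required by Theorem \ref{t:dep:ora}, where $k_n=\gauss{(\DiSoInf/\vE)^2\pdZWInf\aDi}$. The right endpoint is always at least $8$, so the constraint from above holds for every $n$. For the left endpoint, the standing hypothesis $\aDi\to\infty$, combined with $\pdZWInf<\infty$ (Assumption \ref{a:dep:df}) and $\vE^2>0$ (Assumption \ref{a:mth:rv}), gives $k_n\to\infty$ (in the degenerate case $\DiSoInf=0$ the left endpoint already equals $6<7$ and there is nothing to prove); since $\gB=\sum_{k\geq0}(k+1)^2\beta_k<\infty$ the tails $\gB_{k_n}=\sum_{j\geq k_n}\beta_j$ tend to $0$, so there is $n_o$ with $8(\DiSoInf/\vE)^2\gB_{k_n}\leq1$, hence $6+8(\DiSoInf/\vE)^2\gB_{k_n}\leq7$, for all $n\geq n_o$. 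Thus $\penDep=7$ is admissible for $n\geq n_o$, and then $\kappa=288\penDep=2016$, as in the corollary.

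Then I would apply Theorem \ref{t:dep:ora} with $q=q_n$. The first condition in \eqref{c:dep:ora:L} forces $q_n=o(n^{1/6})$, so after enlarging $n_o$ we may assume $1\leq q_n\leq n$; since the constants $\SoRaC$ and $C$ of Theorem \ref{t:dep:ora} do not depend on $q$ and the bound there holds for every $1\leq q\leq n$, taking $q=q_n$ yields, for $n\geq n_o$,
\begin{multline*}
\Ex\big(\VnormZ{\hDiSo[\hDi]-\So}^2\big)\leq C\,\big\{[\biasnivSo[\aDi]\vee n^{-1}\delta_{\aDi}^\Op]+n^{-1}[\SoRaC\vee n^3\exp(-n^{1/6}q_n^{-1}/100)\vee n^4q_n^{-1}\beta_{q_n+1}]\big\}\\
\times\maxnormsup^2(1+\vE^2+\DiSoTwo)(1+(\DiSoInf/\vE)^2\gB).
\end{multline*}
By \eqref{c:dep:ora:L} we have $n^3\exp(-n^{1/6}q_n^{-1}/100)\leq L$ and $n^4q_n^{-1}\beta_{q_n+1}\leq L$ for all $n$, so the inner maximum is bounded by $\SoRaC\vee L$, which is exactly the bound claimed in Corollary \ref{c:dep:ora}.

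The argument is essentially bookkeeping; the only genuinely substantive point, hence the ``main obstacle'', is the verification that $\penDep=7$ eventually falls in the prescribed interval, which rests on the elementary chain $\aDi\to\infty\Rightarrow k_n\to\infty\Rightarrow\gB_{k_n}\to0$ (using summability of the mixing coefficients). It should be kept in mind that the resulting $n_o$ is not explicit, as it depends on the speed of $\aDi\to\infty$ and on the decay rate of $\gB_k$; this is harmless since Corollary \ref{c:dep:ora} only claims existence of such a threshold.
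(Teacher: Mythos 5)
Your argument is correct and coincides with the paper's own (omitted) proof: the discussion preceding the corollary establishes, exactly as you do, that $\aDi\to\infty$ together with $\gB<\infty$ makes $\penDep=7$ admissible for all $n\geq n_o$, giving $\kappa=288\cdot7=2016$, after which Theorem \ref{t:dep:ora} applied with $q=q_n$ and \eqref{c:dep:ora:L} yields the stated bound. Your additional remarks (the degenerate case $\DiSoInf=0$ and the observation that \eqref{c:dep:ora:L} forces $q_n\leq n$ for large $n$) are harmless bookkeeping consistent with the paper.
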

% ....................................................................
% <<Comments on Corollary dep oracle bound>>
% ....................................................................
Note that the penalty $\hpen$
  used in the last assertion depends indeed only on known quantities
  and, hence the estimator $\hDiSo[\whm]$
  with dimension $\hDi$
  as in \eqref{de:hDi} is fully data-driven. It is further interesting
  to compare its upper risk bound given in Corollary \ref{c:dep:ora}
  with the upper bound derived in Theorem \ref{t:iid:ora} assuming
  independent observations. Both upper bounds coincide up to the
  multiplicative constants, thereby the discussion below Theorem
  \ref{t:iid:ora} applies also here. It remains to underline that \eqref{c:dep:ora:L} in
  Corollary \ref{c:dep:ora} imposes a sufficiently
  fast decay of the sequence of the  mixing coefficients
  $(\beta_k)_{k\geq1}$.  Is it interesting to note that an arithmetically decaying sequence
of mixing coefficients $(\beta_k)_{k\geq1}$ satisfies
\eqref{c:dep:ora:L}. To be precise, consider a sequence of integers
$(q_n)_{n\geq1}$ satisfying $q_n\sim n^{r}$, i.e., $(n^{-r}q_n)_{n\geq1}$
is bounded away both from zero and infinity,  and assume
additionally $\beta_k\sim k^{-s}$. In this situation, the condition
\eqref{c:dep:ora:L} is satisfied whenever $4-r<rs$ and $1/6>r$. In other words,
if the sequence  of mixing coefficients $(\beta_k)_{k\geq1}$ is
sufficiently fast decaying, that is $s > 4 (6 +\theta)-1$ for some
$\theta>0$, then the  condition \eqref{c:dep:ora:L} holds true
taking, for example, a sequence  $q_n\sim n^{1/(6+\theta)}$.

%%% Local Variables:
%%% mode: latex
%%% TeX-master: "_0NIV_dep"
%%% End:

%======================================================================================================================
%                                                                 
% Title: Independent observations
% Author: Jan JOHANNES, Nicolas Asin
% 
% Email: jan.johannes@ensai.fr nicolas.asin@uclouvain.be
% Date: %%ts latex start%%[2016-04-07 Thu 13:00]%%ts latex end%%
% Main-TeX-File: t in der form "Name"
%
% ======================================================================================================================
\section{Minimax optimality of the data-driven estimator}\label{s:mm}
% --------------------------------------------------------------------
% <<Review of minimax theory>>
% --------------------------------------------------------------------
%\subsection{Review of minimax theory}
\subsection{Assumptions and notations} We shall access in this section
the accuracy of the  estimator $\hDiSo[\whm]$
  with dimension $\hDi$ selected  as in \eqref{de:hDi} by its maximal
  integrated mean squared error over a class $\Soc$ of structural
  functions, that is,
  $\sup_{\So\in\Soc}\Ex\VnormZ{\hDiSo[\whm]-\So}^2$. The class $\Soc$
  reflects prior information on the structural function, e.g., its
  level of smoothness. It will be determined by means of
  a weighted norm in $\HiZ$ and, hence  will be constructed flexibly enough to
  characterise, in particular, differentiable functions. Given the
  orthonormal basis $\basis{\basZ}$ in $\HiZ$ and a strictly positive
  sequence of weights $\ga=(\ga_j)_{j\geq1}$ we define for  $
  h\in\HiZ$ the weighted norm
  $\VnormW[\ga]{h}:=(\sum_{j\in\Nz}\ga_j^{-1}\fou{h}_j^2)^{1/2}$.
  Furthermore, we denote by $\Soc_{\ga}$ and $\Soc_{\ga}^\Sor$ for a constant
  $\Sor>0$, respectively, the completion of $\HiZ$ with respect to $\normW[\ga]$ and
  the ellipsoid $\Soc_{\ga}^\Sor:=\{h\in\So_\ga:\VnormW[\ga]{h}^2\leq\Sor^2\}$.
Observe that $\Soc_{\ga}$ is a subspace of $\HiZ$ for any
non-increasing weight sequence $\ga$. Here and subsequently, we assume
that there exist a monotonically non-increasing and strictly positive
sequence of weights $\Sow:=(\Sow_j)_{j\geq 1}$ tending to zero and a
constant $\Sor>0$ such that the structural $\So$ belongs to the ellipsoid $\Socwr$ which captures all the prior information
about the unknown structural function $\So$. Additionally we specify
the mapping properties of the conditional expectation operator $\Op$
and more precisely, we will impose  a restriction on the decay of the
sequence $(\VnormS{\DifOp^{-1}}^2)_{\Di\geq 1}$ which essentially
determines $\delta^\Op$ used in the upper bounds given in Theorem
\ref{t:iid:ora} and \ref{t:dep:ora}. Denoting by $\Opc$ the set of all
operator mapping $\HiZ$ and $\HiV$ and given a
strictly positive   sequence of weights $\Opw=(\Opw_j)_{j\geq 1}$ and a constant $\Opd\geq 1$ we define the subset $\Opcwd$ of $\cT$ by \begin{equation}\label{de:lc}
\Opcwd:=\set{T\in\cT: \Opd^{-2}\VnormW[\Opw]{f}^2\leq\VnormV{\Op f}^2\leq\Opd^2\VnormW[\Opw]{f}^2,\,\quad \forall f\in\HiZ}.
\end{equation}
We notice that  each $\Op\in\Opcwd$ is injective with  $\Opd^{-2}\leq
\Opw_j\VnormV{\Op\basZ_j}^2\leq\Opd^2$ for all $j\in\Nz$. Moreover,  the
sequence $\gs:=(\gs_j)_{j\geq1}$ of singular values of $\Op$ satisfies $\Opd^{-2}\leq
\Opw_j\gs^2_j\leq\Opd^2$, too. We shall emphasise, if
$\fou{\Diag{\Opw}}_{\uDi}$ denotes the $\Di$-dimensional diagonal
matrix with diagonal entries $(\Opw_j)_{1\leq j\leq\Di}$ then  for all $\Op\in\Opcwd$ holds
$\VnormS{\fou{\Op}_{\uDi}\fou{\Diag{\Opw}}_{\uDi}^{1/2}}\leq
\Opd$ which in turn implies $\Opw_{(\Di)}:=\max_{1\leq
  j\leq\Di}\Opw_j=\VnormS{\fou{\Diag{\Opw}}_{\uDi}^{1/2}}^2\leq\Opd^2\VnormS{\DifOp^{-1}}^2$
for all $\Di\in\Nz$.  Notice that the link condition \eqref{de:lc}
involves only the  basis $\set{\basZ_l}_{l\geq1}$ in $\HiZ$. In what
follows, we introduce an alternative but stronger condition, which
extends the link condition \eqref{de:lc}.  We denote by $\OpcwdD$
for some $\OpD\geq\Opd$ the subset of $\Opcwd$ given by 
\begin{equation}\label{de:elc}
\OpcwdD=\set{\Op\in\Opcwd:
  \sup_{\Di\in\Nz}\VnormS{\fou{\Diag{\Opw}}_{\uDi}^{-1/2}\fou{\Op}_{\uDi}^{-1}}\leq\OpD}.
\end{equation}
Obviously, for all $\Op\in\OpcwdD$ we have $\VnormS{\DifOp^{-1}}^2\leq \VnormS{\fou{\Diag{\Opw}}_{\uDi}^{1/2}}^2\OpD^2=
\Opw_{(\Di)}\OpD^2$ and thus $\OpD^{-2}\leq\Opd^{-2}\leq \Opw_{(\Di)}^{-1}\VnormS{\DifOp^{-1}}^2\leq\OpD^2$
for all $\Di\in\Nz$. In other words,   the sequence $\Opw$
characterises the decay of the sequence
$(\VnormS{\DifOp^{-1}}^2)_{\Di\geq 1}$ for each $\Op\in\OpcwdD$.  It
is important to note, that the extended link
condition \eqref{de:elc} guaranties further the convergence  of the theoretical
approximation $\DiSo\in\DiHiZ$ given by $\DifDiSo:=\DifOp^{-1}\DifIm$ to the
structural function $\So$, that is, $\biasnivSo[\Di]=o(1)$ as
$\Di\to\infty$. 
Moreover, assuming in addition  $\So\in\Socwr$ 
 the approximation error satisfies $\Sow_{\Di}^{-1}\biasnivSo[\Di]\leq 4
\OpD^{4}\Sor^2$  due to  Lemma \ref{app:pre:lb} in the Appendix \ref{app:pre}.
All results of this section are derived under regularity conditions on
the structural function $\So$ and the conditional expectation operator
$\Op$ described by the sequences $\Sow$ and $\Opw$, respectively. 
 The next assumption summarises our conditions on these sequences. An illustration is provided below by assuming a \lq\lq
regular decay\rq\rq\ of these sequences.

% ....................................................................
% <<Assumption mm sequences>> \ref{a:mm:seq}
% ....................................................................
\begin{ass}\label{a:mm:seq}
\begin{ListeN}[\setlength{\labelwidth}{2ex}\setListe{0ex}{1ex}{4ex}{0ex}\renewcommand{\theListeN}{(\alph{ListeN})}] 
\item\label{a:mm:seq:a}
Let  $\Opw:=(\Opw_j)_{j\geq1}$ be a strictly positive, finite,
monotonically non-decreasing sequences of weights with   $\Opw_1=1$. 
\item\label{a:mm:seq:b}
Let $\Sow:=(\Sow_j)_{j\geq1}$ be strictly positive, monotonically non-increasing sequence of weights with  limit zero, $\Sow_1=1$ and  
  $\VnormInf{\sum_{j\geq1}\Sow_j\basZ_j^2}\leq \SowBasZsup^2$ for some finite constant
  $\SowBasZsup\geq1$.
\end{ListeN}
\end{ass}
% ....................................................................
% <<Comment on Assumption mm sequences>>
% ....................................................................
Note that under Assumption \ref{a:mm:seq} \ref{a:mm:seq:a}  
for each
$\Op\in\OpcwdD$ the matrix $\DifOp[\uk]$ is non-singular with
$\OpD^{-2}\leq\Opw^{-1} \VnormS{\DifOp[\uk]^{-1}}^{2}\leq\OpD^2$ for
all $k\in\Nz$, and hence the Assumption \ref{a:iid:ora}
\ref{a:iid:ora:a} holds true. On the other hand side, Assumption \ref{a:mm:seq} \ref{a:mm:seq:b} holds in case of
a bounded basis $\basis{\basZ}$ for any summable weight sequence $\Sow$, that is,
$\sum_{j\geq1}\Sow_j<\infty$. More generally, under Assumption
\ref{a:mth:bs} the additional assumption $\sum_{j\geq1}j\Sow_j<\infty$
is sufficient to ensure Assumption \ref{a:mm:seq}
\ref{a:mm:seq:b}. Furthermore, under Assumption \ref{a:mm:seq}
\ref{a:mm:seq:b} the elements of $\Socwr$ are bounded uniformly, that
is, $\VnormInf{\phi}^2\leq
\VnormInf{\sum_{j\geq1}\Sow_j\basZ_j^2}\VnormW[\Sow]{\phi}^2\leq
\SowBasZsup^2\Sor^2$ for all $\phi\in\Socwr$. The last estimate  is used in Lemma \ref{app:pre:lb} in the
Appendix \ref{app:pre}  to show that for all $\So\in\Socwr$ and
$\Op\in\OpcwdD$ the approximation $\DiSo$
satisfies $\VnormInf{\So-\DiSo}\leq 2 \SowBasZsup\OpD^2\Sor$ and
$\VnormZ{\DiSo}^2\leq 4\OpD^4\Sor^2$. Thereby, setting
$\DiSowTwo:=\Vnorm[\iZ,\iV]{\ceE}^2\vee4\OpD^4\Sor^2$ and $\DiSowTwo:=\VnormInf{\ceE}+(1+2\OpD^2 )\SowBasZsup\Sor$
the Assumption \ref{a:iid:ora} \ref{a:iid:ora:b} holds with
$\DiSoTwo:=\DiSowTwo$ and $\DiSoInf:=\DiSowInf$ uniformly for all
$\So\in\Socwr$ and $\Op\in\OpcwdD$.
\subsection{Independent observations}\label{s:mm:iid}
A careful inspection of the
proof of Theorem \ref{t:iid:ora} shows that the constant $\SoRaC$
given as in \eqref{de:iid:ora:Sigma} can be bounded uniformly by a
constant $\SowRaC$ as in \eqref{de:iid:mm:Sigma} for all
$\So\in\Socwr$.  Keep in mind the notation given in \eqref{de:delta} and
  \eqref{de:Mn}. Let us introduce in analogy to $\Mut$, $\Mot$, $\delta_m^\Op$ and $\Delta_m^\Op$  the quantities
  $\Mutw:= \Mn(4\OpD^2\Opw)$,  $\Motw:= \Mn(\Opw/(4\OpD^2))$,
  $\delta_m^\Opw:=\delta_m(\Opw)$ and
  $\Delta_m^\Opw:=\Delta_m(\Opw)$. Under Assumption \ref{a:mm:seq} it
  is easily seen that for each $\Op\in\OpcwdD$ we
  have  $(1+2\log\OpD)^{-1}\OpD^{-2}\leq
  \delta_m^\Op/\delta_m^\Opw\leq(1+2\log\OpD)\OpD^2$ for all $m\geq1$ and
  $\Mutw\leq\Mut\leq\Mot\leq\Motw$ for all $n\geq1$. If we require in
  addition that $(\log n)(\Motw+1)^2\Delta_{\Motw+1}^\Opw=o(n)$ as
  $n\to\infty$, then it holds immediately   $(\log n)(\Mot+1)^2\Delta_{\Mot+1}^{\Op}=o(n)$ as
  $n\to\infty$. Moreover, the condition is automatically satisfied in
  both cases considered in the Illustration below.
% ....................................................................
% <<Theorem upper bound iid>> \ref{t:iid:mm}
% ....................................................................
\begin{theo}\label{t:iid:mm}Assume an i.i.d. $n$-sample of $(\iY,\iZ,\iV)$ obeying
(\ref{eq:model}--\ref{eq:model2}). Let Assumption \ref{a:mth:rv}, \ref{a:mth:bs} and 
\ref{a:mm:seq}  be satisfied. Set $\kappa=144$ in the
definition \eqref{de:hpen} of the penalty $\hpen$. If
$\Op\in\OpcwdD$, 
$\log(n)(\Motw+1)^2\Delta_{\Motw+1}^\Opw=o(n)$ as $n\to\infty$, then there exists a
constant $\SowRaC$ given as in \eqref{de:iid:mm:Sigma} in the Appendix
\ref{a:mm:iid}, and a numerical constant $C$ such that  for all
$n\geq 1$ 
\begin{equation*}
\sup_{\So\in\Socwr}\Ex\big(\VnormZ{\hDiSo[\hDi]-\So}^2\big)\leq
C\;\maxnormsup^2\OpD^4 (\Sor^2+\vE^2+\DiSowTwo)\{\min_{1\leq \Di\leq
  \Mutw}\{[\Sow_{\Di}\vee \delta_{\Di}^\Opw n^{-1}]\}  +n^{-1}\;\SowRaC\}.
\end{equation*}
\end{theo}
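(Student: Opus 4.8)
The plan is to derive Theorem~\ref{t:iid:mm} directly from the non-asymptotic oracle inequality of Theorem~\ref{t:iid:ora} by supplying the uniform controls (over $\So\in\Socwr$ and $\Op\in\OpcwdD$) that the constants appearing there enjoy. Concretely, I would proceed in four steps.

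\textbf{Step 1 (Checking the hypotheses of Theorem~\ref{t:iid:ora} hold uniformly).} First I would verify that under Assumptions~\ref{a:mth:rv}, \ref{a:mth:bs} and \ref{a:mm:seq}, together with $\Op\in\OpcwdD$, the hypotheses of Theorem~\ref{t:iid:ora} are met for every $\So\in\Socwr$. Part~\ref{a:iid:ora:a} of Assumption~\ref{a:iid:ora} follows from Assumption~\ref{a:mm:seq}\ref{a:mm:seq:a} and the extended link condition \eqref{de:elc}, which give $\OpD^{-2}\le\Opw_{(\Di)}^{-1}\VnormS{\DifOp^{-1}}^2\le\OpD^2$; in particular $\DifOp$ is non-singular for all $\Di$. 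Part~\ref{a:iid:ora:b} holds uniformly with $\DiSoTwo=\DiSowTwo$ and $\DiSoInf=\DiSowInf$ by the discussion immediately preceding the theorem (using Lemma~\ref{app:pre:lb} to bound $\VnormZ{\DiSo}^2\le4\OpD^4\Sor^2$ and $\VnormInf{\So-\DiSo}\le2\SowBasZsup\OpD^2\Sor$ for all $\So\in\Socwr$, $\Op\in\OpcwdD$). Finally, the growth condition $\log(n)(\Mot+1)^2\Delta_{\Mot+1}^\Op=o(n)$ is implied by the assumed $\log(n)(\Motw+1)^2\Delta_{\Motw+1}^\Opw=o(n)$ via the sandwich $\Mut\le\Mot\le\Motw$ and $\Delta_m^\Op\le(1+2\log\OpD)\OpD^2\,\Delta_m^\Opw$ recorded just above the statement. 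So Theorem~\ref{t:iid:ora} applies, with the same $\kappa=144$.

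\textbf{Step 2 (Comparing $\Op$-indexed and $\Opw$-indexed quantities).} Next I would replace every $\Op$-dependent object in the oracle bound by its deterministic $\Opw$-analogue, at the cost of multiplicative constants depending only on $\OpD$. Using $\OpD^{-2}\le\Opw_{(\Di)}^{-1}\VnormS{\DifOp^{-1}}^2\le\OpD^2$ one gets $\Delta_m^\Op\asymp\OpD^{\pm2}\Delta_m^\Opw$ and, because $\Lambda$ only picks up an additive $2\log\OpD$ in the logarithm, $\delta_m^\Op\le(1+2\log\OpD)\OpD^2\,\delta_m^\Opw$ and symmetrically from below; likewise $\Mutw\le\Mut$. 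Consequently
\begin{equation*}
\min_{1\le\Di\le\Mut}\bigl\{\biasnivSo\vee\delta_\Di^\Op n^{-1}\bigr\}
\;\le\;(1+2\log\OpD)\OpD^2\,\min_{1\le\Di\le\Mutw}\bigl\{\biasnivSo\vee\delta_\Di^\Opw n^{-1}\bigr\}.
\end{equation*}
It remains to bound the squared bias $\biasnivSo$ by the weight $\Sow_\Di$: by Lemma~\ref{app:pre:lb}, for $\So\in\Socwr$ and $\Op\in\OpcwdD$ one has $\Sow_\Di^{-1}\biasnivSo\le4\OpD^4\Sor^2$, hence $\biasnivSo\le4\OpD^4\Sor^2\,\Sow_\Di$, which folds into the leading constant and turns the minimum into $\min_{1\le\Di\le\Mutw}\{\Sow_\Di\vee\delta_\Di^\Opw n^{-1}\}$, exactly the quantity in the assertion.

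\textbf{Step 3 (Uniform bound on the remainder constant $\SoRaC$).} The genuinely delicate point — and the one I expect to be the main obstacle — is showing that the remainder constant $\SoRaC$ of \eqref{de:iid:ora:Sigma}, whose dependence on $\So$ and $\Op$ is described in the text as ``rather complicated'', is bounded by a single constant $\SowRaC$ as in \eqref{de:iid:mm:Sigma}, uniformly over $\So\in\Socwr$ and $\Op\in\OpcwdD$. This requires revisiting the proof of Theorem~\ref{t:iid:ora} and checking that wherever $\So$ enters (through $\VnormZ{\So}$, $\VnormInf{\So}$, $\VnormZ{\DiSo}$, $\VnormInf{\So-\DiSo}$, the bias levels $\biasnivSo$, and the stochastic penalty surrogate $\hvB^2$) it is controlled by the uniform constants $\DiSowTwo,\DiSowInf$ from Step~1, and that wherever $\Op$ enters (through $\VnormS{\DifOp^{-1}}$, $\delta_\Di^\Op$, $\Mut$, $\Mot$) it is controlled via $\Opw$ and $\OpD$ as in Step~2. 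Because each such occurrence is replaced by a bound that no longer depends on the particular $\So$ or $\Op$, taking the supremum over $\Socwr\times\OpcwdD$ yields a finite $\SowRaC$; collecting the $\maxnormsup$, $\OpD$, $\Sor$, $\vE$, $\DiSowTwo$ factors into the stated prefactor $C\maxnormsup^2\OpD^4(\Sor^2+\vE^2+\DiSowTwo)$ completes the bound. I would present this as a lemma stating precisely the substitutions and defer the bookkeeping to Appendix~\ref{a:mm:iid}.

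\textbf{Step 4 (Assembly).} Finally I would combine Steps~1--3: Theorem~\ref{t:iid:ora} gives, for every fixed $\So\in\Socwr$ with $\Op\in\OpcwdD$,
\begin{equation*}
\Ex\VnormZ{\hDiSo[\hDi]-\So}^2\le C\maxnormsup^2(1+\vE^2+\DiSowTwo)\Bigl\{\min_{1\le\Di\le\Mut}[\biasnivSo\vee\delta_\Di^\Op n^{-1}]+n^{-1}\SoRaC\Bigr\},
\end{equation*}
and substituting the bounds $\biasnivSo\le4\OpD^4\Sor^2\Sow_\Di$, $\delta_\Di^\Op\le(1+2\log\OpD)\OpD^2\delta_\Di^\Opw$, $\Mutw\le\Mut$, $\SoRaC\le\SowRaC$, then taking the supremum over $\So\in\Socwr$ (the right-hand side being now $\So$-free), gives the claimed inequality after absorbing $(1+2\log\OpD)\OpD^2$ and $4\OpD^4$ into the numerical constant $C$ and the factor $\OpD^4$. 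Since Theorem~\ref{t:iid:ora} is valid for all $n\ge1$, so is the resulting bound.
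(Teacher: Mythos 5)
Your proposal is correct and follows essentially the same route as the paper: the paper likewise feeds the oracle argument of Theorem \ref{t:iid:ora} with the bias bounded by $4\OpD^4\Sor^2\Sow_\Di$ via Lemma \ref{app:pre:lb}, the quantities $\delta_\Di^\Op$, $\Mut$, $\Mot$ sandwiched by their $\Opw$-analogues, and the remainder constant uniformised to $\SowRaC$ through Propositions \ref{p:iid:mm:Eset}--\ref{p:iid:mm:Rest} and Lemma \ref{l:iid:mm:sets}. The only bookkeeping point your sketch glosses over is that a few terms entering $\SoRaC$ (e.g.\ $\bPhie$, which is decreasing in $\Mot$) are not dominated termwise by their $\Opw$-analogues, so the paper re-derives them inside the propositions with $\Mot$ replaced by $\Motw$ already at the decomposition stage rather than bounding $\SoRaC\leq\SowRaC$ after the fact --- which is exactly the ``revisiting the proof'' your Step~3 announces.
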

% ....................................................................
% <<Text>>
% ....................................................................
We shall compare the last assertion with the lower bound of the maximal
  risk over the classes $\Socwr$ and $\OpcwdD$  given, for example, in
  \cite{JohannesSchwarz2010} or \cite{ChenReiss2011}. Given  sequences as in Assumption \ref{a:mm:seq} let us define
  \begin{equation}\label{de:aDi:Sow}
\taDi:=  \argmin_{1\leq\Di\leq\Mutw}\{[\Sow_\Di\vee n^{-1}\delta_{\Di}^\Opw
  ]\}\quad\text{ and }\quad\taRa:=[\Sow_{\taDi}\vee
  n^{-1}\delta_{\taDi}^\Opw]
\end{equation}
as well as $\oDi:= \argmin_{\Di\geq1}\{[\Sow_\Di\vee
n^{-1}\sum_{j=1}^{\Di}\Opw_j]\}$ and  $\oRa:=[\Sow_{\oDi}\vee
n^{-1}\sum_{j=1}^{\oDi}\Opw_j]$. Assuming a sufficiently rich class $\cP_{\iNo}$
of error distributions $P_\iNo$ (c.f.  \cite{JohannesSchwarz2010} or \cite{ChenReiss2011} for a precise
definition) there exists a constant $C$ such that for all
$\Op\in\OpcwdD$ we have 
\begin{equation}\label{r:iid:lb}
\inf_{\tSo}\sup_{P_\iNo\in\cP_{\iNo}}\sup_{\So\in\Socwr}
\Ex\big(\VnormZ{\tSo-\So}^2\big)\geq C \oRa,\quad\text{for
all }n\geq1,
\end{equation}
where the infimum is taken over all possible estimators $\tSo$ of $\So$.
Obviously, the fully data-driven estimator
  $\hDiSo[\hDi]$  given in \eqref{de:mth:est} attains the lower bound up to a constant
  if and only if $\taRa$  is of the same order as $\oRa$ which leads immediately to the following corollary.
% ....................................................................
% <<Corollary minimax iid>> \ref{c:iid:mm}
% ....................................................................
\begin{coro}\label{c:iid:mm}  Let the Assumptions of Theorem
  \ref{t:iid:mm}   be satisfied. If $\sup_{n\geq1}\{\taRa/\oRa\}<\infty$, then
$\sup_{\So\in\Socwr}\Ex\big(\VnormZ{\hDiSo[\hDi]-\So}^2\big)=O(\oRa)$,
as $n\to\infty$.
\end{coro}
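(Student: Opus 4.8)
The plan is to read the claim off directly from the non-asymptotic bound of Theorem~\ref{t:iid:mm}, the definition of $\taRa$ in \eqref{de:aDi:Sow}, and the standing hypothesis $\sup_{n\geq1}\{\taRa/\oRa\}<\infty$. First I would apply Theorem~\ref{t:iid:mm}: its hypotheses ($\Op\in\OpcwdD$, $\log(n)(\Motw+1)^2\Delta_{\Motw+1}^{\Opw}=o(n)$, and the choice $\kappa=144$) are exactly those assumed in the corollary, so it supplies, for every $n\geq1$,
\[
\sup_{\So\in\Socwr}\Ex\big(\VnormZ{\hDiSo[\hDi]-\So}^2\big)\leq C_1\Big\{\min_{1\leq \Di\leq \Mutw}\{[\Sow_{\Di}\vee \delta_{\Di}^{\Opw} n^{-1}]\}+n^{-1}\,\SowRaC\Big\},
\]
where $C_1:=C\,\maxnormsup^2\OpD^4(\Sor^2+\vE^2+\DiSowTwo)$ and $\SowRaC$ are finite and do not depend on $n$.

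Next I would identify the leading term: by the very definitions of $\taDi$ and $\taRa$ in \eqref{de:aDi:Sow} one has $\min_{1\leq \Di\leq \Mutw}\{[\Sow_{\Di}\vee \delta_{\Di}^{\Opw} n^{-1}]\}=\taRa$, so the displayed bound becomes $C_1\{\taRa+n^{-1}\,\SowRaC\}$. To absorb the remainder into $\oRa$ I would invoke Assumption~\ref{a:mm:seq}\,\ref{a:mm:seq:a}, which forces $\Opw_1=1$; hence for the minimiser $\oDi\geq1$ in the definition of $\oRa$ we get $\sum_{j=1}^{\oDi}\Opw_j\geq\Opw_1=1$ and therefore $\oRa=[\Sow_{\oDi}\vee n^{-1}\sum_{j=1}^{\oDi}\Opw_j]\geq n^{-1}$, so that $n^{-1}\,\SowRaC\leq\SowRaC\,\oRa$ for all $n\geq1$. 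Finally, the standing hypothesis provides a finite constant $C_2$ with $\taRa\leq C_2\,\oRa$ for all $n\geq1$; combining these facts gives $\sup_{\So\in\Socwr}\Ex(\VnormZ{\hDiSo[\hDi]-\So}^2)\leq C_1(C_2+\SowRaC)\,\oRa$ for every $n\geq1$, which is the asserted $O(\oRa)$ --- in fact a non-asymptotic strengthening of it.

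The argument is pure bookkeeping and I anticipate no genuine obstacle: the substantive work has already been carried out, the concentration and thresholding estimates behind the oracle bound residing in Theorem~\ref{t:iid:mm} and the matching lower bound being \eqref{r:iid:lb}. The only two points that merit a sentence of care are that $\SowRaC$ is genuinely an $n$-free constant (which is precisely the form asserted in Theorem~\ref{t:iid:mm} via \eqref{de:iid:mm:Sigma}), and that it is the normalisation $\Opw_1=1$ which lets the remainder $n^{-1}\,\SowRaC$ be dominated by the rate $\oRa$ through the trivial bound $\oRa\geq n^{-1}$.
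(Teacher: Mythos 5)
Your proposal is correct and matches the paper's (essentially omitted) argument: the paper states the corollary follows immediately from Theorem \ref{t:iid:mm} once $\taRa$ and $\oRa$ are of the same order, which is exactly the bookkeeping you carry out. The two points you single out for care --- that $\SowRaC$ in \eqref{de:iid:mm:Sigma} is an $n$-free constant and that $\oRa\geq n^{-1}$ via $\Opw_1=1$ absorbs the remainder --- are precisely what makes the deduction immediate.
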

% ....................................................................
% <<Text>>
% ....................................................................
We shall emphasise that the last assertion establishes the minimax optimality of the fully
data-driven estimator $\hDiSo[\hDi]$ over the classes $\Socwr$ and
$\OpcwdD$. Therefore, the estimator is called adaptive. However,
minimax optimality is only attained if the rates $\oRa$
  and $\taRa$ are  of the same order. This is, for example, the case
  if the following two conditions hold simultaneously true:  (i) $\oDi\leq \Mutw$ and (ii)
$\delta_{\Di}^\Opw\leq C \sum_{j=1}^{\Di}\Opw_j$. Considering the
Illustration below in case  (P-P) (i) and
(ii) are satisfied, while in case (P-E) (ii) does not hold
true. However, in case (P-E)  no loss in terms of the rate occur since the
squared bias term  dominates the variance term, for a detailed
discussion in a deconvolution context, we refer to
\cite{ButuceaTsybakov2007,ButuceaTsybakov2007a}.  
% ....................................................................
% <<Illustration>>
% ....................................................................
\begin{illu}  We  illustrate briefly the last results considering the following two
  configurations  for the sequences $\Sow$ and $\Opw$ which are   usually studied in the literature (c.f. \cite{HallHorowitz2005},
  \cite{ChenReiss2011}, \cite{JohannesSchwarz2010} or
  \cite{BreunigJohannes2015}). Let
\begin{ListeN}[\setlength{\labelwidth}{2ex}\setListe{0ex}{1ex}{4ex}{0ex}\renewcommand{\theListeN}{(\alph{ListeN})}] 
\item[{\rm(P-P)}]\label{a:mm:ill:a} $\Sow_j=j^{-2p}$ and $\Opw_j=j^{2a}$, $j\geq1$,
  with $p>1$ and $a>1/2$;
\item[{\rm(P-E)}]\label{a:mm:ill:c} $\Sow_j=j^{-2p}$ and
  $\Opw_j=\exp(j^{2a}-1)$, $j\geq1$,  with $p>1$, $a>0$;
\end{ListeN}
then Assumption \ref{a:mm:seq} is satisfied in both cases. Writing for
two strictly positive sequences $(\ga_n)_{n\geq1}$ and
$(\gb_n)_{n\geq1}$ that $\ga_n\sim\gb_n$, if $(\ga_n/\gb_n)_{n\geq1}$
is bounded away from $0$ and infinity, we have   
\begin{ListeN}[\setlength{\labelwidth}{2ex}\setListe{0ex}{1ex}{4ex}{0ex}\renewcommand{\theListeN}{(\alph{ListeN})}] 
\item[{\rm(P-P)}]\label{a:mm:ill:a1} $\oDi\sim n^{1/(2p+2a+1)}$ and
  $\taRa\sim\oRa\sim n^{-2p/(2p+2a+1)}$;
\item[{\rm(P-E)}]\label{a:mm:ill:c1} $\oDi\sim(\log
  n-\frac{2p+(2a-1)_+}{2a}\log(\log n))^{1/(2a)}$ and $\taRa\sim\oRa\sim (\log
  n)^{-p/a}$.
\end{ListeN}
An increasing value of the parameter $a$ leads in both cases to a
slower rate $\oRa$, and hence it is called degree of ill-posedness;
cf. \cite{Natterer1984}.
\end{illu}
\subsection{Dependent observations}\label{s:mm:dep}
We dismiss again the independence assumption and assume weakly
dependent observations as introduced in Section
\ref{s:ora:dep}. Moreover, keeping in mind the case of independent observations  we replace Assumption \ref{a:iid:ora} by
Assumption \ref{a:mm:seq} which allows us to derive in
\eqref{de:dep:mm:Sigma} a constant $\SowRaC$ uniformly over the
classes $\Socwr$  depending amongst others  on the quantities
$\DiSowTwo$, $\DiSowInf$ and $\vE$.    
% ....................................................................
% <<Theorem dep minimax bound>> \ref{t:dep:mm}
% ....................................................................
\begin{theo}\label{t:dep:mm}  
Assume a sample
$\{(\iY_i,\iZ_i,\iV_i)\}_{i=1}^n$ obeying
(\ref{eq:model}--\ref{eq:model2}) where
$\{(\iZ_i,\iV_i)\}_{i=1}^n$ is drawn from a stationary absolutely
regular process with mixing coefficients $(\beta_k)_{k\geq0}$ satisfying
$\gB:=\sum_{k=0}^\infty(k+1)^2\beta_k<\infty$ and given $k\geq 1$ set
$\gB_{k}:=\sum_{j=k}^\infty\beta_k\leq \gB$. Let the Assumptions
\ref{a:mth:rv}, \ref{a:mth:bs},  \ref{a:dep:df} and \ref{a:mm:seq} be satisfied.
Considering the dimension $\taDi$ as in \eqref{de:aDi:Sow} let
$k_n:=\gauss{(\DiSowInf/\vE)^2\pdZWInf\taDi}$ and  $\penDep\in [6+8(\DiSowInf/\vE)^2\gB_{k_n},8(1 +(\DiSowInf/\vE)^2 \gB)]$.  Set $\kappa=288\penDep$ in the
definition \eqref{de:hpen} of the penalty $\hpen$. If
$\Op\in\OpcwdD$, $\log(n)(\Motw+1)^2\Delta_{\Motw+1}^\Opw=o(n)$ as $n\to\infty$, then there exists a
constant $\SowRaC$ given as in \eqref{de:dep:mm:Sigma} in the
Appendix \ref{a:mm:dep}, which depends amongst others  on $\maxnormsup$,
 $\DiSowInf$, $\vE$ and $\gB$, and a  numerical constant $C$ such that for all
$1\leq q\leq n$ 
\begin{multline*}
\sup_{\So\in\Socwr}\Ex\big(\VnormZ{\hDiSo[\hDi]-\So}^2\big)\leq
C\;\big\{\taRa
+ n^{-1}[\SowRaC\vee  n^3\exp(-n^{1/6}q^{-1}/100) \vee n^4q^{-1}\beta_{q_n+1}]\big\} \\\times\maxnormsup^2 \OpD^4(\Sor^2+\vE^2+\DiSowTwo)(1+(\DiSowInf/\vE)^2 \gB).
\end{multline*}
\end{theo}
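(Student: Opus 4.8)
The plan is to deduce the claimed uniform bound from the non-asymptotic oracle inequality of Theorem~\ref{t:dep:ora}, by controlling every quantity depending on the structural function $\So$ or the operator $\Op$ uniformly over the ellipsoid $\Socwr$ and the class $\OpcwdD$ of \eqref{de:elc}; this is the exact counterpart, in the dependent setting, of the passage from Theorem~\ref{t:iid:ora} to Theorem~\ref{t:iid:mm}. The argument has three ingredients: (i) verify that, under Assumption~\ref{a:mm:seq} together with $\So\in\Socwr$ and $\Op\in\OpcwdD$, all hypotheses of Theorem~\ref{t:dep:ora} hold with \emph{uniform} constants; (ii) bound the dominating term $[\biasnivSo[\aDi]\vee n^{-1}\delta_{\aDi}^\Op]$ of Theorem~\ref{t:dep:ora} by a constant multiple of $\taRa$; and (iii), by a careful inspection of the proof of Theorem~\ref{t:dep:ora}, bound the constant $\SoRaC$ of \eqref{de:dep:ora:Sigma} uniformly by the constant $\SowRaC$ of \eqref{de:dep:mm:Sigma}.

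For step~(i): Assumption~\ref{a:iid:ora}\ref{a:iid:ora:a} holds because every $\Op\in\OpcwdD$ satisfies $\OpD^{-2}\leq\Opw_{(\Di)}^{-1}\VnormS{\DifOp^{-1}}^2\leq\OpD^2$, so $\DifOp$ is non-singular for all $\Di\geq1$; Assumption~\ref{a:iid:ora}\ref{a:iid:ora:b} holds with $\DiSoTwo:=\DiSowTwo$ and $\DiSoInf:=\DiSowInf$, using the bounds $\VnormZ{\DiSo}^2\leq 4\OpD^4\Sor^2$, $\VnormInf{\So-\DiSo}\leq 2\SowBasZsup\OpD^2\Sor$ and $\VnormInf{\phi}^2\leq\SowBasZsup^2\Sor^2$ for $\phi\in\Socwr$ recorded just before the statement (consequences of Lemma~\ref{app:pre:lb} and Assumption~\ref{a:mm:seq}). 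The rate condition $\log(n)(\Mot+1)^2\Delta_{\Mot+1}^\Op=o(n)$ is inherited from $\log(n)(\Motw+1)^2\Delta_{\Motw+1}^\Opw=o(n)$ by means of $\Mut\leq\Mot\leq\Motw$ and $\Delta_{\Mot+1}^\Op\leq\OpD^2\Delta_{\Motw+1}^\Opw$, exactly as recorded before Theorem~\ref{t:iid:mm}. Finally, since $\DiSoInf\leq\DiSowInf$, the prescribed range for $\penDep$, and hence the choice $\kappa=288\penDep$ in \eqref{de:hpen}, is admissible for Theorem~\ref{t:dep:ora}; that $k_n$ is tied here to $\taDi$ rather than to $\aDi$ is harmless, the coupling block length being a free parameter in the proof of Theorem~\ref{t:dep:ora} that only has to be taken large enough, and this modification is absorbed in the re-inspection of step~(iii).

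For step~(ii), using $\biasnivSo\leq 4\OpD^4\Sor^2\Sow_\Di$ (Lemma~\ref{app:pre:lb}), $\delta_\Di^\Op\leq(1+2\log\OpD)\OpD^2\delta_\Di^\Opw$ and $\Mutw\leq\Mut$, together with \eqref{de:aDi} and \eqref{de:aDi:Sow}, one obtains
\begin{multline*}
[\biasnivSo[\aDi]\vee n^{-1}\delta_{\aDi}^\Op]
= \min_{1\leq\Di\leq\Mut}\bigl[\biasnivSo\vee n^{-1}\delta_\Di^\Op\bigr]\\
\leq \bigl(4\OpD^4\Sor^2\vee(1+2\log\OpD)\OpD^2\bigr)\min_{1\leq\Di\leq\Mutw}\bigl[\Sow_\Di\vee n^{-1}\delta_\Di^\Opw\bigr]\\
= \bigl(4\OpD^4\Sor^2\vee(1+2\log\OpD)\OpD^2\bigr)\,\taRa,
\end{multline*}
so this term is $O(\taRa)$ with a constant depending only on $\OpD$ and $\Sor$. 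Moreover $(1+\vE^2+\DiSoTwo)(1+(\DiSoInf/\vE)^2\gB)$ is controlled by $(1+\vE^2+\DiSowTwo)(1+(\DiSowInf/\vE)^2\gB)$, which is absorbed into the stated factor $\OpD^4(\Sor^2+\vE^2+\DiSowTwo)(1+(\DiSowInf/\vE)^2\gB)$, and the two coupling remainders $n^3\exp(-n^{1/6}q^{-1}/100)$ and $n^4q^{-1}\beta_{q+1}$ involve neither $\So$ nor $\Op$ and are retained unchanged.

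The main work, and the step I expect to be the chief obstacle, is step~(iii). Paralleling the independent case, where $\SoRaC$ of \eqref{de:iid:ora:Sigma} is dominated by $\SowRaC$ of \eqref{de:iid:mm:Sigma}, one revisits the proof of Theorem~\ref{t:dep:ora} and replaces every occurrence of a $\So$- or $\Op$-dependent quantity by a uniform surrogate: $\VnormZ{\DiSo}$, $\VnormInf{\So}$, $\VnormInf{\So-\DiSo}$ and $\biasnivSo$ by the bounds of Lemma~\ref{app:pre:lb}; $\VnormS{\DifOp^{-1}}^2$ and the truncation indices $\Mut,\Mot$ by the sequence $\Opw$ through $\OpD^{-2}\leq\Opw_{(\Di)}^{-1}\VnormS{\DifOp^{-1}}^2\leq\OpD^2$ and $\Mutw\leq\Mut\leq\Mot\leq\Motw$; while the dependence-related quantities $\gB$, $\gB_{k_n}$, and the $\beta$-coupling bounds coming from the blocking construction \ref{dd:as:cou1}--\ref{dd:as:cou3} are already free of $\So$ and $\Op$. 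The delicate bookkeeping is to track how $\SoRaC$ accumulates its dependence on $\DiSoInf/\vE$ and on $\gB$ through the coupling and Bernstein-type steps of that proof and to check that all such contributions are majorised by the asserted $\SowRaC$; no essentially new idea beyond those already used for Theorems~\ref{t:iid:mm} and~\ref{t:dep:ora} is required. Collecting~(i)--(iii) and enlarging the numerical constant then yields the asserted bound, uniformly over $\So\in\Socwr$, $\Op\in\OpcwdD$ and $1\leq q\leq n$.
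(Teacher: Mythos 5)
Your overall route---uniformising the dependent-case oracle argument over $\Socwr$ and $\OpcwdD$ via Lemma~\ref{app:pre:lb}, the inequalities $\OpD^{-2}\leq\Opw_{(\Di)}^{-1}\VnormS{\DifOp^{-1}}^2\leq\OpD^2$, $\delta_\Di^\Op\leq(1+2\log\OpD)\OpD^2\delta_\Di^\Opw$, $\Mutw\leq\Mut\leq\Mot\leq\Motw$, and a re-inspection of the dependent-case proof replacing $\SoRaC$ by the uniform constant $\SowRaC$ of \eqref{de:dep:mm:Sigma}---is exactly what the paper does in Appendix~\ref{a:mm:dep}, where Propositions~\ref{p:dep:mm:Eset}, \ref{p:dep:mm:Rest} and Lemma~\ref{l:dep:mm:sets} rerun the proof of Theorem~\ref{t:dep:ora} line by line with these surrogates. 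The one step of your write-up that does not go through as stated is the black-box use of Theorem~\ref{t:dep:ora}: its hypothesis requires $\penDep\in[6+8(\DiSoInf/\vE)^2\gB_{k_n'},\,8(1+(\DiSoInf/\vE)^2\gB)]$ with $k_n'=\gauss{(\DiSoInf/\vE)^2\pdZWInf\aDi}$ tied to the oracle dimension \eqref{de:aDi} of the particular $(\So,\Op)$, whereas the present theorem only guarantees $\penDep\in[6+8(\DiSowInf/\vE)^2\gB_{k_n},\,8(1+(\DiSowInf/\vE)^2\gB)]$ with $k_n$ tied to $\taDi$ of \eqref{de:aDi:Sow}. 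A $\penDep$ admissible here can violate both ends of the range needed there: the upper end $8(1+(\DiSowInf/\vE)^2\gB)$ exceeds $8(1+(\DiSoInf/\vE)^2\gB)$ as soon as $\DiSoInf<\DiSowInf$, and since $\aDi$ and $\taDi$ are not comparable in general, $(\DiSowInf/\vE)^2\gB_{k_n}$ need not dominate $(\DiSoInf/\vE)^2\gB_{k_n'}$.

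Your justification that this is harmless because ``$k_n$ is the coupling block length, a free parameter'' is also inaccurate: the free block length is $q$, while $k_n$ enters through Lemma~\ref{l:dep:tal} (via the truncation index of Lemma~\ref{l:mixing:AJ}), whose hypothesis couples the minimal admissible $\penDep$ to the \emph{starting index} of the maximum in the remainder term. The correct repair---and what your step~(iii), properly executed, must amount to---is the paper's: rerun the oracle proof with the remainder maximum taken over $\taDi\leq k\leq\Mot$ and the key inequality \eqref{t:iid:ora:key:arg} evaluated at the single index $\Di=\taDi$, so that the main term is $582\,[\biasnivSo[\taDi]\vee\pen[\taDi]]$, which your own estimates $\biasnivSo[\taDi]\leq4\OpD^4\Sor^2\Sow_{\taDi}$ and $\delta_{\taDi}^\Op\leq(1+2\log\OpD)\OpD^2\delta_{\taDi}^\Opw$ bound by a multiple of $\taRa$; your step~(ii), which bounds the $\aDi$-oracle term delivered by Theorem~\ref{t:dep:ora}, is then both unsupported and unnecessary. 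With this modification your proposal coincides with the paper's proof.
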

% ....................................................................
% <<Comment on Theorem dep mm bound>>
% ....................................................................
We shall emphasise that the last assertion provides  in
  analogy to Theorem \ref{t:dep:ora}  a non-asymptotic
  risk bound for the estimator $\hDiSo[\whm]$   with dimension $\hDi$
  as in \eqref{de:hDi} where the quantity $\penDep$
  used to construct the penalty $\hpen$ still depends on the mixing coefficients
  $(\beta_k)_{k\geq0}$. As Corollary \ref{c:dep:ora} in Section
  \ref{s:ora:dep} follows directly from Theorem \ref{t:dep:ora} the
  next assertion is an immediate consequence of Theorem 
  \ref{t:dep:mm}, and hence its proof is omitted.
% ....................................................................
% <<Corollary dep mm bound>> \ref{c:dep:mm}
% ....................................................................
\begin{coro}\label{c:dep:mm}  Let the assumptions of Theorem
  \ref{t:dep:mm} be satisfied. Suppose that  $\taDi\to\infty$ as
  $n\to\infty$ and that there exists an  unbounded sequence of integers
$(q_n)_{n\geq1}$ and a finite constant $L$ satisfying \eqref{c:dep:ora:L}.
If we set $\kappa=2016$ in the definition \eqref{de:hpen} of the
penalty $\hpen$, then there exist a numerical constant $C$ and an
integer $n_o$ such that for all $n\geq n_o$
\begin{multline*}
\sup_{\So\in\Socwr}\Ex\big(\VnormZ{\hDiSo[\hDi]-\So}^2\big)\leq
C\;\big\{\taRa
+ n^{-1}[\SowRaC\vee L]\big\} \\\times\maxnormsup^2 \OpD^4(\Sor^2+\vE^2+\DiSowTwo)(1+(\DiSowInf/\vE)^2 \gB).
\end{multline*}
\end{coro}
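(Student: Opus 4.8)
The plan is to obtain Corollary \ref{c:dep:mm} directly from the non-asymptotic bound of Theorem \ref{t:dep:mm} by making an admissible choice of the penalty constant and of the blocking parameter $q$. First I would check that the constant $\kappa=2016$ used here is a legitimate instance of the constant $\kappa=288\penDep$ required in Theorem \ref{t:dep:mm}, that is, that $\penDep=7$ lies in the interval $[6+8(\DiSowInf/\vE)^2\gB_{k_n},\,8(1+(\DiSowInf/\vE)^2\gB)]$ for all sufficiently large $n$. The upper endpoint is at least $8>7$ since $\gB=\sum_{k\ge0}(k+1)^2\beta_k\ge\beta_0=1$. For the lower endpoint, the summability hypothesis $\gB<\infty$ gives $\gB_{k}=\sum_{j\ge k}\beta_j\le(k+1)^{-2}\gB$, and since $\taDi\to\infty$ forces $k_n=\gauss{(\DiSowInf/\vE)^2\pdZWInf\taDi}\to\infty$, we get $\gB_{k_n}\to0$; hence there is an integer $n_1$ such that $6+8(\DiSowInf/\vE)^2\gB_{k_n}\le 7$ for all $n\ge n_1$. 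This is exactly the reasoning already spelled out below Theorem \ref{t:dep:ora}, now run with $\aDi$ and $\DiSoInf$ replaced by $\taDi$ and $\DiSowInf$.

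Next I would fix $q=q_n$, using the unbounded integer sequence $(q_n)_{n\ge1}$ furnished by the hypothesis \eqref{c:dep:ora:L}. The first part of \eqref{c:dep:ora:L}, namely $\sup_{n\ge1} n^3\exp(-n^{1/6}q_n^{-1}/100)\le L$, forces $n^{1/6}q_n^{-1}\to\infty$, whence $q_n=o(n^{1/6})$ and in particular $1\le q_n\le n$ for all $n\ge n_2$ and some integer $n_2$; thus Theorem \ref{t:dep:mm} is applicable with this choice of $q$ once $n\ge\max(n_1,n_2)$. Substituting $q=q_n$ in the conclusion of Theorem \ref{t:dep:mm} and using both parts of \eqref{c:dep:ora:L} to bound $n^3\exp(-n^{1/6}q_n^{-1}/100)\le L$ and $n^4q_n^{-1}\beta_{q_n+1}\le L$, the bracket $[\SowRaC\vee n^3\exp(-n^{1/6}q_n^{-1}/100)\vee n^4q_n^{-1}\beta_{q_n+1}]$ reduces to $\SowRaC\vee L$. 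Taking $n_o:=\max(n_1,n_2)$ and absorbing the fixed numerical factor into $C$ then yields the stated bound for every $n\ge n_o$.

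There is no genuine analytic obstacle here: the statement is a bookkeeping consequence of Theorem \ref{t:dep:mm}, which is precisely why the paper omits its proof. The only points requiring a little care are (i) combining the finitely many lower thresholds on $n$ that arise (large enough that $\penDep=7$ is admissible, and large enough that $q_n\le n$) into a single $n_o$, and (ii) confirming that the first inequality in \eqref{c:dep:ora:L} does force $q_n\le n$ eventually, so that the admissible range $1\le q\le n$ of Theorem \ref{t:dep:mm} is respected. Everything else is a direct substitution, and the multiplicative structure $\maxnormsup^2\OpD^4(\Sor^2+\vE^2+\DiSowTwo)(1+(\DiSowInf/\vE)^2\gB)$ of the problem-dependent constant is inherited verbatim from Theorem \ref{t:dep:mm}.
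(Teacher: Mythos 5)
Your proposal is correct and follows essentially the same route the paper intends: the paper explicitly omits the proof because the corollary is obtained exactly as you describe, by verifying (as spelled out below Theorem \ref{t:dep:ora}, with $\aDi$, $\DiSoInf$ replaced by $\taDi$, $\DiSowInf$) that $\penDep=7$ is admissible for large $n$ since $\gB_{k_n}\to 0$, choosing $q=q_n$, and using \eqref{c:dep:ora:L} to collapse the bracket to $\SowRaC\vee L$. Your additional observations — that $2016=288\cdot 7$, and that the first inequality in \eqref{c:dep:ora:L} forces $q_n=o(n^{1/6})$ so the range $1\leq q\leq n$ is eventually respected — are exactly the right bookkeeping points.
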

Let us briefly comment on the last result. The additional condition
\eqref{c:dep:ora:L} is, for example, satisfied if  the mixing coefficients $\beta$ 
have an arithmetic decay as pointed out below Corollary
\ref{c:dep:ora}. Comparing   Corollary
\ref{c:dep:mm} and Theorem \ref{t:iid:mm} we see that both upper
bounds coincide up to the multiplicative constants. Keep in mind
that   exploiting Theorem \ref{t:iid:mm} in case of independent
observations Corollary \ref{c:iid:mm}
establishes  minimax optimality of the estimator  $\hDiSo[\whm]$ with
dimension $\hDi$  as in \eqref{de:hDi} whenever the rates $\taRa$ and $\oRa$ coincide.
Exactly in the same manner from  Corollary
\ref{c:dep:mm}  follows the  minimax optimality of the estimator  $\hDiSo[\whm]$ 
 for weakly mixing observations  provided the rates
$\taRa$ and $\oRa$ coincide. In particular, considering the
Illustration in Section \ref{s:mm:iid} the estimator
$\hDiSo[\whm]$ attains the minimax rates in the mildly and severely ill-posed case (P-P) and (P-E),
respectively, without having in advance the knowledge of the case. It
remains to underline that the penalty   $\hpen$
  used in Corollary \ref{c:dep:mm} depends again only on known quantities
  and, hence the estimator $\hDiSo[\whm]$
  with dimension $\hDi$
  as in \eqref{de:hDi} is fully data-driven, and thus, adaptive. 
%%% Local Variables:
%%% mode: latex
%%% TeX-master: "_0NIV_dep"
%%% End:

\paragraph{Acknowledgements.}
This work was supported by the IAP research network no.\ P7/06 of the
Belgian Government (Belgian Science Policy), and by the contract "Projet d'Actions de Recherche Concert\'ees" (ARC) 11/16-039 of the "Communaut\'e française de Belgique", granted by the "Acad\'emie universitaire Louvain".  
% --------------------------------------------------------------------
% <<Appendix>>
% --------------------------------------------------------------------
\setcounter{subsection}{0}
\renewcommand{\thesubsection}{\Alph{subsection}}
\renewcommand{\theprop}{\Alph{subsection}.\arabic{prop}}
\numberwithin{equation}{subsection}  
\numberwithin{prop}{subsection}
\appendix
\section*{Appendix: Proofs}\label{app:proofs}
\subsection{Notations}\label{app:not}
% ======================================================================================================================
%                                                                 
% Title: Notations
% Author: Jan JOHANNES, Nicolas Asin
% Email: johannes@math.uni-heidelberg.de nicolas.asin@uclouvain.be
% Date: %%ts latex start%%[2016-04-05 Tue 22:45]%%ts latex end%%
% Main-TeX-File: _0NIV_dep in der form "Name"
%
% ======================================================================================================================
We begin by defining and recalling notations to be used in all
proofs. Given $m\geq 1$,
$\DiHiZ$
and $\DiHiV$
denote the subspace of $\HiZ$
and $\HiV$
spanned by the functions $\{\basZ_j\}_{j=1}^{\Di}$
and $\{\basV_j\}_{j=1}^{\Di}$,
respectively. $\DiProjZ$
and $\cDiProjZ$
(resp. $\DiProjV$
and $\cDiProjV$)
denote the orthogonal projections on $\DiHiZ$
and its orthogonal complement $\DiHiZ^\perp$,
respectively. If $K$
is an operator mapping $\HiZ$
to $\HiV$
and if we restrict $\DiProjV K \DiProjZ$
to an operator from $\DiHiZ$
to $\DiHiV$,
then it can be represented by a matrix $[K]_{\um}$
with generic entries $\VskalarV{\basV_{j},K\basZ_{l}}=:[K]_{j,l}$
for $1\leq j,l\leq m$.
The spectral norm of $[K]_{\um} $
is denoted by $\VnormS{[K]_{\um}}$
and the inverse matrix of $[K]_{\um}$
by $[K]_{\um}^{-1}$.
For $m\geq 1$,
$\Id_{\um}$
denotes the $m$-dimensional
identity matrix and for all $x\in\Rz^m$
we denote by $x^tx=:\Vnorm{x}^2$
its the euclidean norm.  Furthermore, keeping in mind the notations
given in \eqref{de:delta} and \eqref{de:Mn} we use for all $m\geq1$
and $n\geq1$
% ....................................................................
% <<De notations>> \ref{a:not:de:no}
% ....................................................................
\begin{multline}\label{a:not:de:no}
\Delta_m^\Op= \Delta_m((\VnormS{\fOp_{\um}^{-1}}^2)_{m\geq1}) ,\,
\Lambda_m^\Op=\Lambda_m((\VnormS{\fOp_{\um}^{-1}}^2)_{m\geq1}),\,\delta_m^\Op=
m\Delta_m^\Op\Lambda_m^\Op,\\
\hDelta_m= \Delta_m((\VnormS{\fhOp_{\um}^{-1}}^2)_{m\geq1}),\quad \hLambda_m=\Lambda_m((\VnormS{\fhOp_{\um}^{-1}}^2)_{m\geq1}),\quad\hdelta_m= m\hDelta_m\hLambda_m,\\
\Mh= \Mn\big((\VnormS{\DihfOp^{-1}}^2)_{m\geq1}\big),\quad \Mut= \Mn(4(\VnormS{\DifOp^{-1}}^2)_{m\geq1}),\quad \Mot= \Mn(\tfrac{1}{4}(\VnormS{\DifOp^{-1}}^2)_{m\geq1}),\\
\pen=\kappa\sigma_m^2m\Delta_m^\Op\Lambda_m^\Op n^{-1}\quad\mbox{and}\quad \hpen=11\kappa\hsigma_m^2m\hDelta_m\hLambda_mn^{-1}.\hfill
\end{multline}
Recall that
$\DihfOp=\frac{1}{n}\sum_{i=1}^n\fou{\basV(\iV_i)}_{\uDi}\fou{\basZ(\iZ_i)}_{\uDi}^t$
and $\DihfIm=\frac{1}{n}\sum_{i=1}^n\iY_i\fou{\basV(\iZ_i)}_{\uDi}$
where  $\DifOp=\Ex\fou{\basV(\iV)}_{\uDi}\fou{\basZ(\iZ)}^t_{\uDi}$
and $\DifIm=\Ex \iY\fou{\basV(\iV)}_{\uDi}$.  Given
$\DiSo:=\sum_{j=1}^{\Di}\fou{\DiSo}_{j}\basZ_j\in\DiHiZ$,
$\Di\geq1$, with $\fou{\DiSo}_{\uDi}=\DifOp^{-1}\DifIm$ which is
well-defined since $\DifOp$ is non singular. Let $\iE:=\iNo-\ceE(\iZ,\iV)$
  with $\ceE(\iZ,\iV):=\Ex[\iNo|\iZ,\iV]$ where $\{\iE_i\}_{i=1}^n$  forms an \iid sample
  independent of $\{(\iZ_i,\iV_i)\}_{i=1}^n$. 
Given $\DiSoTwo=\Vnorm[\iZ,\iV]{\ceE}^2\vee\VnormZ{\So}^2\vee\sup_{m\geq1}\VnormZ{\DiSo}^2$
we note that $\vY^2:=\Ex \iY^2\leq\vE^2+2\DiSoTwo$ and $\vB^2= 2\{\vY^2+\max_{1\leq k\leq
  m}\VnormZ{\DiSo[k]}^2\}\leq 2\{\vE^2+3\DiSoTwo\}$ where $\vB^2\geq
\Ex(\iY-\DiSo(\iZ))^2$ and
$\vE^2=\inf_{\Vnorm[\iZ,\iV]{h}<\infty}\Ex(\iNo-h(\iZ,\iV))^2\leq\vY^2\wedge\Ex(\iY-\DiSo(\iZ))^2$.
Furthermore,
$\Ex|\iY-\DiSo(\iZ)|^{2k}\leq 2^{2k-1}\{\Ex(\iE)^{2k}+(\DiSoInf)^{2k}\}$ with $\DiSoInf:=\VnormInf{\ceE}+\VnormInf{\So}+\sup_{m\geq1}\VnormInf{\So-\DiSo}$. Define
the random matrix $[\Xi]_{\um}:= \DihfOp - \DifOp$
and random vectors $\fou{B}_{\uDi}$, $\fou{S}_{\uDi}$ and
$\fou{V}_{\uDi}:=\fou{B}_{\uDi}+\fou{S}_{\uDi}$ given by
their components
\begin{equation*}
  \fou{B}_j:=\frac{1}{n}\sum_{i=1}^n \iE_i \basV_j(\iV_i),\;
  \fou{S}_j:=\frac{1}{n}\sum_{i=1}^n \basV_j(\iV_i)\{\ceE(\iZ_i,\iV_i)+ \So(\iZ_i) -   
\DiSo(\iZ_i)\},\;1\leq j\leq \Di,
\end{equation*} 
where $\DihfIm- \DihfOp\fou{\DiSo}_{\uDi}=\fou{V}_{\uDi}$. Note that
$\Ex\fou{V}_{\uDi}=0$, indeed it holds $\Ex
\fou{B}_{\uDi}=0$ due to the mean independence, i.e.,
$0=\Ex(\iNo|\iV)=\Ex(\iE|\iV)+
\Ex(\iNo|\iV)=\Ex(\iE)$,
$\Ex (\ceE(\iZ,\iV))=\Ex (\Ex(\ceE(\iZ,\iV)|\iV))=\Ex
(\Ex(\iNo|\iV))=0$ and $\Ex\fou{S}_{\uDi}= \fou{\Op\So}_{\uDi} -
\fou{\Op\DiSo}_{\uDi}=0$.  Define further
$\hvY^2:=n^{-1}\sum_{i=1}^nY_i^2$, $\hvB^2= 74\{\hvY^2+\max_{1\leq k\leq
  m}\VnormZ{\hDiSo[k]}^2\}$,  the events
% ....................................................................
% <<De events>> \ref{a:not:de:ev}
% ....................................................................
\begin{multline}\label{a:not:de:ev}
 \hOpset:=\{ \VnormS{\DihfOp^{-1}}\leq \sqrt n \},\quad  \Xiset:= \{4\VnormS{[\Xi]_{\um}}\VnormS{\DifOp^{-1}}\leq 1\},\\
\Aset:=\{\vY^2\leq {2}\hvY^2\leq {3}\vY^2\},\quad 
\Bset:=\{\VnormS{\DifOp[\uk]^{-1}}\VnormS{\fou{\Xi}_\uk}\leq 1/4,\forall 1\leq k\leq (\Mot+1)\},\\
\Cset:=\{\Vnorm{\DifOp[\uk]^{-1}\fou{V}_{\uk}}^2\leq
\tfrac{1}{8}(\Vnorm{\DifOp[\uk]^{-1}\DifIm[\uk]}^2 + \vY^2)
,\forall 1\leq k\leq \Mot\},\hfill\\
\Eset:=\set{\pen\leq\hpen\leq 99\pen; \quad\forall 1\leq \Di\leq \Mot }\cap\set{\Mut\leq \Mh\leq \Mot},\hfill
\end{multline}
and their complements $\hOpset^c$, $\Xiset^c$, $\Aset^c$, $\Bset^c$, $\Cset^c$, and $\Eset^c$, respectively. Furthermore,   we will denote by $C$ universal numerical constants  and
by $C(\cdot)$ constants depending only on the arguments. In both cases, the values of the constants may change from line to line.

%%% Local Variables:
%%% mode: latex
%%% TeX-master: "_0NIV_dep"
%%% End:
 
\subsection{Preliminary results}\label{app:pre}
% ======================================================================================================================
%                                                                 
% Title: Preliminaries
% Author: Jan JOHANNES, Nicolas Asin
% Email: johannes@math.uni-heidelberg.de nicolas.asin@uclouvain.be
% Date: %%ts latex start%%[2016-04-07 Thu 12:52]%%ts latex end%%
% Main-TeX-File: _0NIV_dep in der form "Name"
%
% ======================================================================================================================
This  section gathers preliminary results.  Given independent
observations $\{(\iZ_i,\iV_i)\}_{i=1}^n$ the first assertion
provides our key arguments in order to control the deviations of the
data-driven selection procedure.  Both inequalities are due to
\cite{Talagrand1996}, the formulation of the first part  can be found for
example in \cite{KleinRio2005}, while the second part is  based on
equation (5.13) in Corollary 2 in \cite{BirgeMassart1995} and stated
in this form for example in  \cite{ComteMerlevede2002}.
%==================================
% <<Talagrand>> \ref{l:talagrand{:eq1|:eq2}}
%==================================
\begin{lem}(Talagrand's inequalities)\label{l:talagrand} Let
  $X_1,\dotsc,X_n$ be independent random variables and let
  $\overline{\nu_t}=n^{-1}\sum_{i=1}^n\left[\nu_t(X_i)-\Ex\left(\nu_t(X_i)\right)
  \right]$ for $\nu_t$ belonging to a countable class
  $\{\nu_t,t\in\cT\}$ of measurable functions. Then, there exists a
  numerical constant  $C>0$ such that
\begin{align}
	&\Ex\vectp{\sup_{t\in\cT}|\overline{\nu_t}|^2-6H^2}\leq C \left[\frac{v}{n}\exp\left(\frac{-nH^2}{6v}\right)+\frac{h^2}{n^2}\exp\left(\frac{- n H}{100h}\right) \right],\label{l:talagrand:eq1} \\
	&\ProbaMeasure\big(\sup_{t\in\cT}|\overline{\nu_t}|\geq2H+\lambda\big)\leq3\exp\bigg[-\frac{n}{100}\bigg(\frac{\lambda^2}{v}\wedge\frac{\lambda}{h}\bigg)\bigg],\label{l:talagrand:eq2}
\end{align}
for any $\lambda>0$, where
\begin{equation*}
	\sup_{t\in\cT}\sup_{x\in\cZ}|\nu_t(x)|\leq h,\qquad \Ex[\sup_{t\in\cT}|\overline{\nu_t}|]\leq H,\qquad \sup_{t\in\cT}\frac{1}{n}\sum_{i=1}^n \Var(\nu_t(X_i))\leq v.
\end{equation*}
\end{lem}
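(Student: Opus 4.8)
The approach is to deduce both displays from the now-classical concentration inequality of \cite{Talagrand1996} for the supremum of a centred empirical process, in the sharp form with explicit constants (Bousquet's inequality, and its refinement in \cite{KleinRio2005}). Writing $Z:=\sup_{t\in\cT}|\overline{\nu_t}|$, which is measurable because $\cT$ is countable, that inequality gives, for every $\lambda>0$,
\begin{equation*}
\ProbaMeasure\bigl(Z\geq \Ex Z+\lambda\bigr)\leq \exp\Bigl(-\frac{n\lambda^2}{2\,(v+2h\,\Ex Z)+\tfrac23\, h\lambda}\Bigr),
\end{equation*}
with $v$ and $h$ the variance and sup-norm proxies from the statement, together with a matching bound for the lower tail. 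Everything below is elementary rearrangement of this estimate using the hypothesis $\Ex Z\leq H$; the countability of $\cT$ is precisely what makes Talagrand's theorem directly applicable, so no separability reduction is needed.

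For the tail bound \eqref{l:talagrand:eq2} I would substitute $\Ex Z\leq H$ into the master inequality: by monotonicity $\ProbaMeasure(Z\geq 2H+\lambda)\leq\ProbaMeasure(Z\geq\Ex Z+H+\lambda)$, and then I would bound the resulting exponent from below using $H+\lambda\geq\lambda$ in the numerator, the elementary inequality $\tfrac{a}{b+c}\geq\tfrac12(\tfrac ab\wedge\tfrac ac)$ for the denominator, and crude comparisons among the quantities $v$, $hH$ and $h\lambda$. This produces the exponent $-\tfrac{n}{100}\bigl(\tfrac{\lambda^2}{v}\wedge\tfrac{\lambda}{h}\bigr)$ with the constant $3$ in front, which is exactly the form obtained, from equation~(5.13) of Corollary~2 in \cite{BirgeMassart1995}, in \cite{ComteMerlevede2002}.

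For the moment bound \eqref{l:talagrand:eq1} I would integrate the tail: $\Ex\vectp{Z^2-6H^2}=\int_0^\infty\ProbaMeasure(Z^2>6H^2+u)\,du$. On the event $\{Z^2>6H^2+u\}$ one has $Z>\sqrt{6H^2+u}\geq\Ex Z+\lambda_u$ with $\lambda_u:=\sqrt{6H^2+u}-H$, and $\lambda_u$ is bounded below by a universal multiple of $\sqrt u\vee H$; plugging this into the master inequality, separating the numerator $\lambda_u^2\gtrsim u+H^2$ and the denominator into its $v$-dominated and $h\lambda_u$-dominated regimes, and integrating each regime, the first yields a term of order $\tfrac vn\exp(-nH^2/(6v))$ and the second a term of order $\tfrac{h^2}{n^2}\exp(-nH/(100h))$, which is the asserted bound; this is the computation underlying the statement recorded in \cite{KleinRio2005}.

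The only genuinely delicate point is the bookkeeping of the numerical constants — the centring at $6H^2$, the $6v$ in the first exponential, and the $100$ in both inequalities — which is what forces the particular crude bounds and split points used above. Since none of this is new, in the write-up I would simply cite \cite{KleinRio2005} for \eqref{l:talagrand:eq1} and \cite{BirgeMassart1995}, \cite{ComteMerlevede2002} for \eqref{l:talagrand:eq2}, rather than reproduce the constant chasing.
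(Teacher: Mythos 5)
Your proposal is correct and ends up exactly where the paper does: the paper offers no proof of this lemma at all, only the remark preceding it that both inequalities are due to Talagrand (1996), with the first stated as in Klein and Rio (2005) and the second taken from equation (5.13) of Corollary 2 in Birg\'e and Massart (1995) as formulated in Comte and Merlev\`ede (2002) --- precisely the citations you defer to. Your intermediate sketch (substituting $\Ex Z\leq H$ into the Bousquet--Klein--Rio deviation bound for \eqref{l:talagrand:eq2}, and integrating the tail of $Z^2$ against the two regimes of the exponent for \eqref{l:talagrand:eq1}) is the standard derivation and is sound.
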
%
%==================================
% <<Mixing>>
%==================================
\noindent Lemma \ref{l:mixing:V0} -- \ref{l:mixing:V2}  gather preliminary results if $\{(\iZ_i,\iV_i)\}_{i\in\Zz}$
is  a stationary absolutely
regular process with mixing coefficients
  $(\beta_k)_{k\geq1}$.
%==================================
% <<Mixing \ref{l:mixing:V0}>>
%==================================
\begin{lem}\label{l:mixing:V0}
Under Assumption \ref{a:mth:bs} if $\{(\iZ_i,\iV_i)\}_{i\in\Zz}$ is  a stationary absolutely
regular process with mixing coefficients
  $(\beta_k)_{k\geq0}$,  then 
\begin{equation*}
  \sum_{j,l=1}^m\Var(\sum_{i=1}^q\basZ_j(\iZ_i)\basV_j(\iV_i))\leq qm^2\maxnormsup^4[1+4\sum_{k=1}^{q-1}\beta_k].
\end{equation*}
\end{lem}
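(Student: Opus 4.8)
The plan is to expand the variance of the block sum $\sum_{i=1}^q\basZ_j(\iZ_i)\basV_j(\iV_i)$ into a sum of covariances, split into diagonal ($i=i'$) and off-diagonal ($i\ne i'$) contributions, and bound each piece using Assumption \ref{a:mth:bs} together with the standard covariance inequality for $\beta$-mixing sequences. First I would write, for fixed $1\leq j,l\leq m$,
\begin{equation*}
\Var\Bigl(\sum_{i=1}^q\basZ_j(\iZ_i)\basV_l(\iV_i)\Bigr)=\sum_{i=1}^q\Var\bigl(\basZ_j(\iZ_i)\basV_l(\iV_i)\bigr)+\sum_{\substack{i,i'=1\\ i\ne i'}}^q\Cov\bigl(\basZ_j(\iZ_i)\basV_l(\iV_i),\basZ_j(\iZ_{i'})\basV_l(\iV_{i'})\bigr).
\end{equation*}
Strict stationarity reduces every summand to one depending only on $|i-i'|=:k$, so the off-diagonal part equals $2\sum_{k=1}^{q-1}(q-k)\Cov(\basZ_j(\iZ_0)\basV_l(\iV_0),\basZ_j(\iZ_k)\basV_l(\iV_k))\leq 2q\sum_{k=1}^{q-1}|\Cov(\cdots)|$.

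Next I would control the individual covariances. For the diagonal term, $\Var(\basZ_j(\iZ_0)\basV_l(\iV_0))\leq\Ex[\basZ_j^2(\iZ_0)\basV_l^2(\iV_0)]$, and summing over $j,l$ and using $\VnormInf{\sum_{j=1}^m\basZ_j^2}\leq m\maxnormsup^2$ and $\VnormInf{\sum_{l=1}^m\basV_l^2}\leq m\maxnormsup^2$ from Assumption \ref{a:mth:bs} gives $\sum_{j,l=1}^m\Ex[\basZ_j^2(\iZ_0)\basV_l^2(\iV_0)]\leq\Ex[(\sum_{j=1}^m\basZ_j^2(\iZ_0))(\sum_{l=1}^m\basV_l^2(\iV_0))]\leq m^2\maxnormsup^4$; multiplied by $q$ this yields the first term $q m^2\maxnormsup^4$ in the claimed bound. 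For the off-diagonal terms I would invoke the $\beta$-mixing covariance inequality: since $\basZ_j(\iZ_0)\basV_l(\iV_0)$ is $\sF_0^-$-measurable and $\basZ_j(\iZ_k)\basV_l(\iV_k)$ is $\sF_k^+$-measurable, a Davydov/Viennet-type bound gives a control of $|\Cov|$ in terms of $\beta_k$ and the $L^\infty$ (or high-moment) norms of the two factors. The cleanest route for a clean $\maxnormsup^4$ constant is to use the form of the covariance inequality stated via the $\beta$-mixing density bound, or simply $|\Cov(U,W)|\leq 2\beta_k\,\VnormInf{U}\,\VnormInf{W}$ after noting that after summing over $j,l$ one can again factor $\sum_{j,l}\VnormInf{\basZ_j}\VnormInf{\basV_l}$ — however this is too crude. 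Instead I would sum the covariance inequality in the integrated form: $\sum_{j,l=1}^m|\Cov(\basZ_j(\iZ_0)\basV_l(\iV_0),\basZ_j(\iZ_k)\basV_l(\iV_k))|\leq 2\beta_k\,\VnormInf{\sum_{j=1}^m\basZ_j^2}^{1/2}\cdots$ — more precisely, use that the $\beta$-mixing coefficient dominates the $\phi$-type dependence so that $\sum_{j,l}|\Cov|\leq 4\beta_k\,\bigl\|\sum_j\basZ_j^2\bigr\|_\infty^{1/2}\bigl\|\sum_l\basV_l^2\bigr\|_\infty^{1/2}\cdot(\text{something})$; a careful bookkeeping with Cauchy–Schwarz across the double index and Assumption \ref{a:mth:bs} delivers $\sum_{j,l=1}^m|\Cov(\cdots)|\leq 2\beta_k m^2\maxnormsup^4$, so the full off-diagonal part is at most $2q\cdot\sum_{k=1}^{q-1}2\beta_k m^2\maxnormsup^4=4qm^2\maxnormsup^4\sum_{k=1}^{q-1}\beta_k$.

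Combining the diagonal bound $qm^2\maxnormsup^4$ with the off-diagonal bound $4qm^2\maxnormsup^4\sum_{k=1}^{q-1}\beta_k$ gives exactly $qm^2\maxnormsup^4[1+4\sum_{k=1}^{q-1}\beta_k]$, which is the claim. The main obstacle I anticipate is obtaining the factor $m^2\maxnormsup^4$ (rather than a cruder $m^4$) inside the covariance sum: the naive bound $|\Cov(U,W)|\leq 2\beta_k\VnormInf{U}\VnormInf{W}$ applied termwise and then summed over $j,l$ would give $4\beta_k m^2\maxnormsup^4$ only if one can legitimately pull the summation over $j,l$ inside before taking suprema, which requires writing the covariance inequality in the form involving the integrated dependence coefficient (the $\beta$-mixing version controlling $\Ex|\Ex[\,\cdot\,|\sF_0^-]-\Ex[\cdot]|$ jointly over the index set), exactly the construction from \cite{Viennet1997} already invoked in the paper; I would cite that covariance inequality and apply Cauchy–Schwarz in the pair $(j,l)$ together with Assumption \ref{a:mth:bs} to close the gap.
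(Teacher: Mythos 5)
Your strategy is essentially a deconstructed version of the paper's argument and it does lead to the stated bound, but the decisive estimate is asserted rather than proved: the claim $\sum_{j,l=1}^m|\mathrm{Cov}(\cdots)|\leq 2\beta_k m^2\maxnormsup^4$ is justified only by ``careful bookkeeping with Cauchy--Schwarz'' and a formula containing a literal placeholder, and this is exactly the point where the naive route fails (as you yourself note, termwise sup-norm bounds on $\basZ_j\basV_l$ under Assumption \ref{a:mth:bs} only give an $m^4$ factor). The paper closes this step by invoking Viennet's construction directly at the level of the variance: by Lemma 4.1 of Asin and Johannes (2016), a consequence of Theorem 2.1 in \cite{Viennet1997}, there exist measurable functions $b_k$ with values in $[0,1]$ and $\Ex b_k(\iZ_0,\iV_0)\leq\beta_k$ such that for every square-integrable $h$,
\[
\Var\Big(\sum_{i=1}^q h(\iZ_i,\iV_i)\Big)\leq q\,\Ex\Big[h^2(\iZ_0,\iV_0)\Big(1+4\sum_{k=1}^{q-1}b_k(\iZ_0,\iV_0)\Big)\Big].
\]
Applying this with $h=\basZ_j\basV_l$ and summing over $(j,l)$ \emph{inside} the expectation, Assumption \ref{a:mth:bs} gives the pointwise factorisation $\sum_{j,l=1}^m\basZ_j^2(z)\basV_l^2(w)=(\sum_{j}\basZ_j^2(z))(\sum_{l}\basV_l^2(w))\leq m^2\maxnormsup^4$, and taking expectations of $1+4\sum_k b_k$ yields $qm^2\maxnormsup^4(1+4\sum_k\beta_k)$ in one stroke, with no diagonal/off-diagonal split. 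Your finer-grained route can be made rigorous by substituting, for the vague step, the underlying Delyon--Viennet covariance inequality $|\mathrm{Cov}(f(\iZ_0,\iV_0),f(\iZ_k,\iV_k))|\leq 2\,(\Ex[b_k f^2])^{1/2}(\Ex[b_k' f^2])^{1/2}$ with $\Ex b_k,\Ex b_k'\leq\beta_k$, then applying Cauchy--Schwarz in the double index $(j,l)$ and the same pointwise factorisation; the two proofs are then identical in substance, the packaged variance inequality merely saving the bookkeeping you left implicit.
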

\begin{proof}[\noindent\textcolor{darkred}{\sc Proof of Lemma \ref{l:mixing:V0}}]
Due to  Lemma 4.1 in \cite{AsinJohannes2016} which is a direct consequence of Theorem 2.1 in
\cite{Viennet1997} there exists a sequence $(b_k)_{k\geq1}$ of
measurable functions  $b_k:\Rz\to[0,1]$ with $\Ex
b_k(\iZ_0,\iV_0)=\beta((\iZ_0,\iV_0),(\iZ_k,\iV_k))\leq \beta_k$ such that for any
measurable function $h$ with $\Ex h^2(\iZ_0,\iV_0)<\infty$ and any
integer $q$ holds
\begin{equation*}
  \Var(\sum_{i=1}^q h(\iZ_i,\iV_i))\leq q \Ex\big\{h^2(\iZ_0,\iV_0)(1+4\sum_{k=1}^{q-1}b_k(\iZ_0,\iV_0)\}.
\end{equation*}
Setting $ h(\iZ,\iV)=\basZ_j(\iZ)\basV_l(\iV)$ the last assertion
together with Assumption \ref{a:mth:bs}  implies
\begin{multline*}
 \sum_{j,l=1}^{\Di}\Var(\sum_{i=1}^q \basZ_j(\iZ_i)\basV_l(\iV_i))\leq q \sum_{j,l=1}^{\Di}\Ex\big\{ \basZ_j^2(\iZ_i)\basV_l^2(\iV_i)(1+4\sum_{k=1}^{q-1}b_k(\iZ_0,\iV_0)\big\}\\
\leq q \Di^2\maxnormsup^4\Ex\big\{(1+4\sum_{k=1}^{q-1}b_k(\iZ_0,\iV_0)\big\}
\leq q \Di^2\maxnormsup^4\big\{1+4\sum_{k=1}^{q-1}\beta_k\big\}
\end{multline*}
which shows the assertion, and thus completes the proof.
\end{proof}
\noindent The proof of the next assertion follows along
the lines of the proof of  Theorem 2.1 and Lemma 4.1 in
\cite{Viennet1997} and  we omit the details.
%==================================
% <<Mixing \ref{l:mixing:V0}>>
%==================================
\begin{lem}\label{l:mixing:V}Let $\{(\iZ_i,\iV_i)\}_{i\in\Zz}$ be  a
stationary absolutely
regular process with mixing coefficients
  $(\beta_k)_{k\geq0}$ satisfying $\gB:= 2\sum_{k=0}^\infty
(k+1)\beta_k<\infty$. Then 
\begin{equation*}
  \Var(\sum_{i=1}^nh(\iZ_i,\iV_i))\leq4n\big(\Ex h^2(\iZ_0,\iV_0)\big)^{1/2}\VnormInf{h}\gB^{1/2}.
\end{equation*}
\end{lem}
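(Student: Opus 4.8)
The plan is to follow the argument behind Lemma \ref{l:mixing:V0}, which itself rests on the coupling construction of \cite{Viennet1997}, and then to trade one $L^2$-factor for an $L^\infty$-factor via the Cauchy--Schwarz inequality. Concretely, I would first invoke the variance inequality already used in the proof of Lemma \ref{l:mixing:V0} (Lemma 4.1 in \cite{AsinJohannes2016}, itself a consequence of Theorem 2.1 and Lemma 4.1 in \cite{Viennet1997}): there exist measurable functions $b_k$, $k\geq1$, with values in $[0,1]$ and $\Ex b_k(\iZ_0,\iV_0)\leq\beta_k$, such that for every measurable $h$ with $\Ex h^2(\iZ_0,\iV_0)<\infty$,
\[
\Var\Big(\sum_{i=1}^n h(\iZ_i,\iV_i)\Big)\leq n\,\Ex\big\{h^2(\iZ_0,\iV_0)\,B_n\big\},\qquad B_n:=1+4\sum_{k=1}^{n-1}b_k(\iZ_0,\iV_0)\geq0.
\]
We may assume $\VnormInf{h}<\infty$, since otherwise the claimed bound is trivial. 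Using $h^2\leq\VnormInf{h}\,|h|$ pointwise, $B_n\geq0$, and the Cauchy--Schwarz inequality gives
\[
\Ex\big\{h^2(\iZ_0,\iV_0)\,B_n\big\}\leq\VnormInf{h}\,\Ex\big\{|h(\iZ_0,\iV_0)|\,B_n\big\}\leq\VnormInf{h}\,\big(\Ex h^2(\iZ_0,\iV_0)\big)^{1/2}\big(\Ex B_n^2\big)^{1/2}.
\]

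The core of the proof is then the estimate $\Ex B_n^2\leq16\,\gB$. Writing $S:=\sum_{k=1}^{n-1}b_k(\iZ_0,\iV_0)$ we have $\Ex B_n^2=1+8\,\Ex S+16\,\Ex S^2$, and the linear term is controlled by $\Ex S\leq\sum_{k\geq1}\beta_k$. For the quadratic term I would use that $b_k\in[0,1]$, so that $b_kb_l\leq b_{\max(k,l)}$ for all $k,l$; grouping the $(k,l)$-pairs according to the value of $\max(k,l)$ (there are $2m-1$ pairs with $\max(k,l)=m$) yields $S^2\leq\sum_{m\geq1}(2m-1)\,b_m(\iZ_0,\iV_0)$, hence $\Ex S^2\leq\sum_{m\geq1}(2m-1)\beta_m\leq2\sum_{m\geq1}m\beta_m$. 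Recalling $\beta_0=1$ and $\gB=2\sum_{k\geq0}(k+1)\beta_k=2+2\sum_{k\geq1}\beta_k+2\sum_{k\geq1}k\beta_k$, a termwise comparison gives $\Ex B_n^2\leq1+8\sum_{k\geq1}\beta_k+32\sum_{m\geq1}m\beta_m\leq16\,\gB$.

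Substituting $(\Ex B_n^2)^{1/2}\leq4\,\gB^{1/2}$ into the two displays above produces exactly $\Var(\sum_{i=1}^n h(\iZ_i,\iV_i))\leq4n\,(\Ex h^2(\iZ_0,\iV_0))^{1/2}\,\VnormInf{h}\,\gB^{1/2}$, which is the assertion. The only genuinely delicate point is the step $\Ex S^2\leq2\sum_m m\beta_m$: it is the elementary-but-hidden inequality $b_kb_l\leq b_{\max(k,l)}$ together with the pair count $2m-1$ that turns the square of the $\beta$-mixing partial sums into the $(k+1)$-weighted series defining $\gB$, and explains why the stronger summability hypothesis $\sum_k(k+1)\beta_k<\infty$ is required here compared with Lemma \ref{l:mixing:V0}. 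One should also keep in mind that the Viennet inequality is used in its uncentred $h^2$-form rather than with $(h-\Ex h)^2$ — this is legitimate because the variance is translation invariant, and it is precisely the form already exploited in the proof of Lemma \ref{l:mixing:V0}.
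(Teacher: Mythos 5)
Your route is essentially the one the paper intends: the paper omits the proof entirely, referring to Theorem 2.1 and Lemma 4.1 of Viennet (1997), and what you have written is a reconstruction of exactly that argument for $p=2$ — Viennet's variance inequality in the form already quoted in the proof of Lemma \ref{l:mixing:V0}, Cauchy--Schwarz to trade one factor of $h$ for $\VnormInf{h}$, and a second-moment bound on the sum of the coupling functions $b_k$. The arithmetic checks out and the constant $4$ lands exactly where the statement needs it.

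One step needs repair, and it is precisely the one you single out as delicate. The pointwise inequality $b_kb_l\leq b_{\max(k,l)}$ does not follow from $b_k\in[0,1]$ alone: it would require $k\mapsto b_k(\omega)$ to be non-increasing for each $\omega$, and the functions supplied by the coupling lemma are only known to satisfy $\Ex b_k(\iZ_0,\iV_0)\leq\beta_k$; they are attached to the pairwise distributions of $(\iZ_0,\iV_0)$ and $(\iZ_k,\iV_k)$, whose $\sigma$-fields are not nested, so no pointwise monotonicity in $k$ is available. Consequently your pointwise bound $S^2\leq\sum_{m\geq1}(2m-1)b_m$ is unjustified as written. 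The conclusion survives with a one-line change: from $b_k,b_l\in[0,1]$ you get $b_kb_l\leq\min(b_k,b_l)$ pointwise, hence $\Ex(b_kb_l)\leq\min(\Ex b_k,\Ex b_l)\leq\min(\beta_k,\beta_l)=\beta_{\max(k,l)}$, the last equality using the monotonicity of the deterministic sequence $(\beta_k)_{k\geq0}$, which the paper does establish. Summing over the $2m-1$ pairs with $\max(k,l)=m$ then yields $\Ex S^2\leq\sum_{m\geq1}(2m-1)\beta_m$ directly in expectation, and the remainder of your computation, including the comparison with $16\gB$, goes through unchanged.
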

%==================================
% <<Mixing>> \ref{l:mixing:V2}
%==================================
\noindent The next Lemma is a direct consequence of Theorem 2.2  in
\cite{Viennet1997} and  we omit its proof.
\begin{lem}\label{l:mixing:V2}Let $\{(\iZ_i,\iV_i)\}_{i\in\Zz}$ be  a
stationary absolutely
regular process with mixing coefficients
  $(\beta_k)_{k\geq0}$ satisfying $\gB:= \sum_{k=0}^\infty
(k+1)^2\beta_k<\infty$. Then there exists a numerical constant $C>0$
such that 
\begin{equation*}
  \Ex|\sum_{i=1}^n\{h(\iZ_i,\iV_i)-\Ex h(\iZ_i,\iV_i)\}|^4\leq C n^2 \VnormInf{h}^{p}\gB
\end{equation*}
\end{lem}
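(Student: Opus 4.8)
\textbf{Proof proposal for Lemma \ref{l:mixing:V2}.}
We may assume $\VnormInf{h}<\infty$, the asserted bound being void otherwise, and we put $Y_i:=h(\iZ_i,\iV_i)-\Ex h(\iZ_0,\iV_0)$, a centred measurable function of $(\iZ_i,\iV_i)$ with $\VnormInf{Y_i}\leq M:=2\VnormInf{h}$ and $\Var(\sum_{i=1}^nY_i)=\Var(\sum_{i=1}^nh(\iZ_i,\iV_i))$. The plan rests on the elementary identity, valid for centred random variables,
\begin{equation*}
\Ex\Big(\sum_{i=1}^nY_i\Big)^4=3\,\big(\Var(\textstyle\sum_{i=1}^nY_i)\big)^2+\mathrm{cum}_4\big(\textstyle\sum_{i=1}^nY_i\big),\qquad\text{with}\quad\mathrm{cum}_4(X):=\Ex X^4-3(\Ex X^2)^2 ,
\end{equation*}
whose two summands I would estimate separately: the first one will produce the announced term of order $n^2\VnormInf{h}^4\gB$, the second only a lower order term of order $n\VnormInf{h}^4\gB$.

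For the first summand I would invoke Lemma \ref{l:mixing:V}. As $2\sum_{k\geq0}(k+1)\beta_k\leq 2\sum_{k\geq0}(k+1)^2\beta_k=2\gB<\infty$ and, $(\iZ_0,\iV_0)$ having a probability law, $(\Ex h^2(\iZ_0,\iV_0))^{1/2}\leq\VnormInf{h}$, Lemma \ref{l:mixing:V} gives $\Var(\sum_{i=1}^nY_i)\leq 4\sqrt2\,n\,\VnormInf{h}^2\,\gB^{1/2}$ and hence $3\big(\Var(\sum_{i=1}^nY_i)\big)^2\leq 96\,n^2\,\VnormInf{h}^4\,\gB$. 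I would stress that it is the cancellation already encoded in this sharp variance bound — rather than the crude estimate $\sum_{i,j}|\mathrm{Cov}(Y_i,Y_j)|$, which would only give the \emph{square} of $\sum_k\beta_k$ — that keeps the power of $\gB$ in the final bound equal to one.

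For the fourth cumulant I would use multilinearity, $\mathrm{cum}_4(\sum_{i=1}^nY_i)=\sum_{i_1,i_2,i_3,i_4=1}^n\mathrm{cum}(Y_{i_1},Y_{i_2},Y_{i_3},Y_{i_4})$, and the invariance of the joint cumulant under permutations of its arguments, which reduces the task to estimating $24\sum_{1\leq i_1\leq i_2\leq i_3\leq i_4\leq n}|\mathrm{cum}(Y_{i_1},Y_{i_2},Y_{i_3},Y_{i_4})|$. The heart of the matter — and the step I expect to require most care — is the decay estimate, for ordered indices $i_1\leq i_2\leq i_3\leq i_4$ with largest spacing $d:=\max\{i_2-i_1,\,i_3-i_2,\,i_4-i_3\}$,
\begin{equation*}
|\mathrm{cum}(Y_{i_1},Y_{i_2},Y_{i_3},Y_{i_4})|\leq 8\,M^4\,\beta_{d}.
\end{equation*}
I would deduce it from the centred–variables formula $\mathrm{cum}(Y_1,Y_2,Y_3,Y_4)=\Ex[Y_1Y_2Y_3Y_4]-\Ex[Y_1Y_2]\Ex[Y_3Y_4]-\Ex[Y_1Y_3]\Ex[Y_2Y_4]-\Ex[Y_1Y_4]\Ex[Y_2Y_3]$ by splitting every term across the widest gap and invoking the classical covariance inequality $|\mathrm{Cov}(U,W)|\leq 2\VnormInf{U}\VnormInf{W}\,\beta(\sigma(U),\sigma(W))$ for absolutely regular sequences (itself a consequence of the coupling recalled in Section \ref{s:ora:dep}), together with the monotonicity of $k\mapsto\beta_k$: when $d=i_3-i_2$ one groups $\{Y_{i_1},Y_{i_2}\}$ against $\{Y_{i_3},Y_{i_4}\}$, while when $d=i_2-i_1$ (resp.\ $d=i_4-i_3$) one isolates $Y_{i_1}$ (resp.\ $Y_{i_4}$), whose mean vanishes. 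Granting this, I would set $i_1=i$, $j_r:=i_{r+1}-i_r\geq0$, drop the constraint $i+j_1+j_2+j_3\leq n$, and use that the number of triples $(j_1,j_2,j_3)$ of non-negative integers with $\max(j_1,j_2,j_3)=m$ equals $(m+1)^3-m^3\leq 3(m+1)^2$ to obtain $\sum_{1\leq i_1\leq i_2\leq i_3\leq i_4\leq n}\beta_{d}\leq n\sum_{m\geq0}3(m+1)^2\beta_m=3n\gB$, whence $|\mathrm{cum}_4(\sum_{i=1}^nY_i)|\leq 24\cdot8\,M^4\cdot3n\gB\leq C\,n\,\VnormInf{h}^4\,\gB$. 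Adding the two bounds and using $n\leq n^2$ together with $\gB\geq\beta_0=1$ gives $\Ex(\sum_{i=1}^nY_i)^4\leq C\,n^2\,\VnormInf{h}^4\,\gB$, which is the claimed bound (with the exponent of $\VnormInf{h}$ equal to $4$). One may alternatively skip the combinatorics entirely and quote the Rosenthal–type moment inequality for absolutely regular sequences of \cite{Viennet1997}, Theorem 2.2, with fourth power, substituting the sharp variance bound of Lemma \ref{l:mixing:V} into its leading term.
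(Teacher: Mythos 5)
Your argument is correct, but it is a genuinely different route from the paper's: the paper offers no proof at all, stating only that the lemma is a direct consequence of Theorem 2.2 in \cite{Viennet1997} and omitting the details --- which is precisely the alternative you mention in your closing sentence. Your main line is a self-contained fourth-moment computation via the cumulant identity $\Ex X^4=3(\Ex X^2)^2+\mathrm{cum}_4(X)$ for centred $X$: the leading term of order $n^2\VnormInf{h}^4\gB$ comes from squaring the variance bound of Lemma \ref{l:mixing:V} (your remark that $2\sum_{k\geq0}(k+1)\beta_k\leq 2\gB$ correctly reconciles the two different normalisations of $\gB$ used in Lemma \ref{l:mixing:V} and Lemma \ref{l:mixing:V2}), while the fourth cumulant contributes only $O(n\VnormInf{h}^4\gB)$ thanks to the widest-gap estimate $|\mathrm{cum}(Y_{i_1},\dotsc,Y_{i_4})|\leq 8M^4\beta_d$ --- whose case analysis, including the use of $\Ex Y_{i_1}=0$ when the widest gap is the first or the last one and the monotonicity of $k\mapsto\beta_k$ for the cross terms, checks out --- combined with the count that the number of triples of non-negative integers with maximum equal to $m$ is $(m+1)^3-m^3\leq 3(m+1)^2$, which is exactly where the weight $(k+1)^2$ in $\gB$ enters. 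The citation route is of course shorter; your version has the merit of being verifiable without consulting Viennet's Rosenthal-type inequality, of making transparent why the summability condition must involve $(k+1)^2$ rather than $(k+1)$, and of confirming that the exponent $p$ on $\VnormInf{h}$ in the paper's display is a typo for $4$.
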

%==================================
% <<Mixing>> \ref{l:mixing:AJ}
%==================================
\noindent The next assertion is due to \cite{AsinJohannes2016} Lemma 4.10, and we omit its proof. 
\begin{lem}\label{l:mixing:AJ}Let $\{(\iZ_i,\iV_i)\}_{i\in\Zz}$ be  a stationary absolutely
regular process with mixing coefficients
  $(\beta_k)_{k\geq0}$. Under the
  Assumptions   \ref{a:mth:bs} and \ref {a:dep:df}
  we have for any  $q\geq 1$ and $K\in\{0,\dotsc,q-1\}$
\begin{multline}\label{l:mixing:AJ:e1}
 \sum_{j}^\Di  \Var(\sum_{i=1}^q h(\iZ_i,\iV_i)\basV_j(\iV_i))\\\leq  q m \{\maxnormsup^2\VnormZ{h}^2 + 2\VnormInf{h}^2 [ \gamma K/\sqrt{m} + 2\maxnormsup^2 \sum_{k=K+1}^{q-1}\beta_k]\}.
\end{multline}
\end{lem}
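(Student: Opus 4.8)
The plan is to split the variance of the block sum into its diagonal part and the off-diagonal covariances, and to control the covariances in two complementary ways: for the small lags $1\leq k\leq K$ through the $L^2$-control of the pair density supplied by Assumption~\ref{a:dep:df}, and for the remaining lags through the $\beta$-mixing coefficients. Write $X_i:=(\iZ_i,\iV_i)$ and $g_j(X_i):=h(\iZ_i,\iV_i)\basV_j(\iV_i)$. First I would use strict stationarity to write
\begin{align*}
\Var\Big(\sum_{i=1}^q g_j(X_i)\Big)
&=q\,\Var(g_j(X_0))+2\sum_{k=1}^{q-1}(q-k)\operatorname{Cov}\bigl(g_j(X_0),g_j(X_k)\bigr)\\
&\leq q\,\Var(g_j(X_0))+2q\sum_{k=1}^{q-1}\bigl|\operatorname{Cov}\bigl(g_j(X_0),g_j(X_k)\bigr)\bigr|,
\end{align*}
so that, after summing over $1\leq j\leq\Di$, it remains to bound $\sum_{j=1}^\Di\Var(g_j(X_0))$ and, for every $k\geq1$, $\sum_{j=1}^\Di|\operatorname{Cov}(g_j(X_0),g_j(X_k))|$. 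For the diagonal part I would use $\Var(g_j(X_0))\leq\Ex[h^2(X_0)\basV_j^2(\iV_0)]$, so that $\sum_{j=1}^\Di\Var(g_j(X_0))\leq\Ex\bigl[h^2(X_0)\sum_{j=1}^\Di\basV_j^2(\iV_0)\bigr]\leq\Di\maxnormsup^2\VnormZ{h}^2$ by Assumption~\ref{a:mth:bs}; this accounts for the summand $q\Di\maxnormsup^2\VnormZ{h}^2$ in \eqref{l:mixing:AJ:e1}.

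For the covariances I would set $\rho_k:=p_{(\iZ_0,\iV_0),(\iZ_k,\iV_k)}-p_{\iZ,\iV}\otimes p_{\iZ,\iV}$, so that by stationarity $\operatorname{Cov}(g_j(X_0),g_j(X_k))=\int_{[0,1]^4}h(z_0,v_0)\basV_j(v_0)\,h(z_k,v_k)\basV_j(v_k)\,\rho_k(z_0,v_0,z_k,v_k)\,\mathrm{d}z_0\,\mathrm{d}v_0\,\mathrm{d}z_k\,\mathrm{d}v_k$. For $1\leq k\leq K$ I would take $\epsilon_j\in\{-1,1\}$ equal to the sign of $\operatorname{Cov}(g_j(X_0),g_j(X_k))$, so that $\sum_{j=1}^\Di|\operatorname{Cov}(g_j(X_0),g_j(X_k))|=\int_{[0,1]^4}h(z_0,v_0)h(z_k,v_k)\bigl(\sum_{j=1}^\Di\epsilon_j\basV_j(v_0)\basV_j(v_k)\bigr)\rho_k(z_0,v_0,z_k,v_k)\,\mathrm{d}z_0\,\mathrm{d}v_0\,\mathrm{d}z_k\,\mathrm{d}v_k$. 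Applying Cauchy--Schwarz in $L^2([0,1]^4)$, the pointwise bound $|h|\leq\VnormInf{h}$, the orthonormality $\int_0^1\basV_j(v)\basV_{j'}(v)\,\mathrm{d}v=\delta_{j,j'}$ (which makes $\int_{[0,1]^2}\bigl(\sum_{j=1}^\Di\epsilon_j\basV_j(v_0)\basV_j(v_k)\bigr)^2\,\mathrm{d}v_0\,\mathrm{d}v_k=\Di$), and $\Vnorm[\iZ,\iV\times\iZ,\iV]{\rho_k}\leq\gamma$ with $\gamma:=\pdZWInf$ the constant of Assumption~\ref{a:dep:df}, I would obtain $\sum_{j=1}^\Di|\operatorname{Cov}(g_j(X_0),g_j(X_k))|\leq\sqrt{\Di}\,\VnormInf{h}^2\,\gamma$. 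For $K+1\leq k\leq q-1$ I would instead bound $\sum_{j=1}^\Di|\basV_j(v_0)\basV_j(v_k)|\leq\bigl(\sum_{j=1}^\Di\basV_j^2(v_0)\bigr)^{1/2}\bigl(\sum_{j=1}^\Di\basV_j^2(v_k)\bigr)^{1/2}\leq\Di\maxnormsup^2$ by Assumption~\ref{a:mth:bs} and use that $\|\rho_k\|_{L^1([0,1]^4)}=2\beta(\sigma(\iZ_0,\iV_0),\sigma(\iZ_k,\iV_k))\leq2\beta_k$, to obtain $\sum_{j=1}^\Di|\operatorname{Cov}(g_j(X_0),g_j(X_k))|\leq2\Di\maxnormsup^2\VnormInf{h}^2\beta_k$. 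Substituting these two bounds into $2q\sum_{k=1}^{q-1}\sum_{j=1}^\Di|\operatorname{Cov}(g_j(X_0),g_j(X_k))|$ produces the summands $2q\sqrt{\Di}\,\VnormInf{h}^2\,\gamma\,K$ and $4q\Di\maxnormsup^2\VnormInf{h}^2\sum_{k=K+1}^{q-1}\beta_k$, which added to the diagonal term give precisely the right-hand side of \eqref{l:mixing:AJ:e1}.

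The step I expect to be the main obstacle is the small-lag estimate: one must avoid bounding $\|\rho_k\|_{L^1}$ crudely, since that would cost a factor $\Di$ instead of $\sqrt{\Di}$; the gain comes from pairing the $L^2$-norm of $\rho_k$ (Assumption~\ref{a:dep:df}) with the sign-trick and the orthonormality of $\{\basV_j\}$, so that the basis contributes only $\sqrt{\Di}$, everything else reducing to Cauchy--Schwarz and the bounded-basis property of Assumption~\ref{a:mth:bs}. I would also record that the argument rests on the standing convention that $\iZ$ and $\iV$ are marginally uniform on $[0,1]$ (Assumption~\ref{a:mth:rv}), which makes $\HiV\cong L^2[0,1]$ and lets all integrals above run over $[0,1]^4$, a set of total mass one.
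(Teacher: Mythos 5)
Your proof is correct, and it is worth noting that the paper itself contains no argument to compare it against: Lemma~\ref{l:mixing:AJ} is imported from Lemma~4.10 of \cite{AsinJohannes2016} and its proof is explicitly omitted. Your route --- stationarity to reduce the block variance to $q\Var(g_j(X_0))+2q\sum_{k=1}^{q-1}|\operatorname{Cov}(g_j(X_0),g_j(X_k))|$, the diagonal term via Assumption~\ref{a:mth:bs}, the lags $1\leq k\leq K$ via the sign trick combined with the orthonormality of $\{\basV_j\}$ in $L^2[0,1]$ and the $L^2$ control of the centred pair density from Assumption~\ref{a:dep:df} (which is indeed what buys the factor $\sqrt{\Di}$ in place of $\Di$), and the lags $k>K$ via $\Vnorm[L^1]{p_{(\iZ_0,\iV_0),(\iZ_k,\iV_k)}-p_{\iZ,\iV}\otimes p_{\iZ,\iV}}=2\beta\big((\iZ_0,\iV_0),(\iZ_k,\iV_k)\big)\leq 2\beta_k$ --- reproduces all three summands of \eqref{l:mixing:AJ:e1} with the stated constants, and degenerates correctly when $K=0$. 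The two hypotheses you flag are genuinely load-bearing: the orthonormality of $\{\basV_j\}$ with respect to Lebesgue measure rests on the uniform marginal of $\iV$ from Assumption~\ref{a:mth:rv}, and the identification of the total-variation norm of the centred pair density with twice the $\beta$-coefficient uses the absolute continuity granted by Assumption~\ref{a:dep:df} together with the monotonicity $\beta((\iZ_0,\iV_0),(\iZ_k,\iV_k))\leq\beta_k$ recorded in Section~\ref{s:ora:dep}. I see no gap.
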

\noindent In the remaining part of this section we gather in Lemma \ref{app:pre:l4} ––
\ref{app:pre:l6} preliminary results
linking the different notations introduced in the last section.
%==================================
% <<app:pre:l4>>
%==================================
\begin{lem}\label{app:pre:l4}For all $n,m\geq 1$ we have 
\[\set{\frac{1}{4}< \frac{\VnormS{\DihfOp^{-1}}^2}{\VnormS{\DifOp^{-1}}^2}\leq 4,\forall\,1\leq m\leq (\Mot+1)}\subset \set{\Mut\leq \Mh\leq \Mot}.\]
\end{lem}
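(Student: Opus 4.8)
The plan is to argue directly from the explicit definition \eqref{de:Mn} of $\Mn$, using that replacing a sequence by a pointwise larger one can only decrease the value of $\Mn$, but being careful that the two-sided comparison of spectral norms available on the event in question holds only for indices $1\leq m\leq\Mot+1$. Write $a_m:=\VnormS{\DifOp^{-1}}^2$ and $\widehat a_m:=\VnormS{\DihfOp^{-1}}^2$, so that on this event $\tfrac14 a_m<\widehat a_m\leq 4a_m$ for all $1\leq m\leq\Mot+1$, and recall from \eqref{a:not:de:no} that $\Mot=\Mn(\tfrac14 a)$, $\Mh=\Mn(\widehat a)$ and $\Mut=\Mn(4a)$. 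I would prove the inclusions $\Mh\leq\Mot$ and then $\Mut\leq\Mh$ in that order: the first bound is what makes the critical index in the second step fall within the admissible range $\{1,\dots,\Mot+1\}$.

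For $\Mh\leq\Mot$: if $\set{2\leq m\leq\DiMa:\,m^2a_m/4>\alpha_n}$ is empty then $\Mot=\DiMa$ and there is nothing to prove; otherwise its minimum equals $m^\ast:=\Mot+1\leq\DiMa$, so $(m^\ast)^2a_{m^\ast}/4>\alpha_n$. Since $m^\ast\leq\Mot+1$, on the event one has $\widehat a_{m^\ast}>a_{m^\ast}/4$ and hence $(m^\ast)^2\widehat a_{m^\ast}>\alpha_n$, so $m^\ast$ lies in $\set{2\leq m\leq\DiMa:\,m^2\widehat a_m>\alpha_n}$; its minimum is therefore at most $m^\ast$, which gives $\Mh\leq m^\ast-1=\Mot$ by \eqref{de:Mn}.

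For $\Mut\leq\Mh$: if $\set{2\leq m\leq\DiMa:\,m^2\widehat a_m>\alpha_n}$ is empty then $\Mh=\DiMa\geq\Mut$; otherwise its minimum equals $\widehat m:=\Mh+1\leq\DiMa$, so $\widehat m^2\widehat a_{\widehat m}>\alpha_n$. By the inclusion just established, $\widehat m=\Mh+1\leq\Mot+1$, so the upper comparison applies and $\widehat a_{\widehat m}\leq 4a_{\widehat m}$; consequently $4\widehat m^2a_{\widehat m}\geq\widehat m^2\widehat a_{\widehat m}>\alpha_n$, i.e.\ $\widehat m\in\set{2\leq m\leq\DiMa:\,4m^2a_m>\alpha_n}$, whose minimum is thus at most $\widehat m$, giving $\Mut\leq\widehat m-1=\Mh$. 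Putting the two bounds together yields $\Mut\leq\Mh\leq\Mot$ on the event, as claimed. The only real point requiring care — and the reason the statement is not an immediate monotonicity remark — is exactly this sequencing: because $\widehat a_m$ is only controlled against $a_m$ for $m\leq\Mot+1$, one must first secure $\Mh\leq\Mot$ in order to know that the index $\widehat m$ governing $\Mut$ does not leave that range; the empty-set convention in \eqref{de:Mn} is absorbed into the two trivial cases above.
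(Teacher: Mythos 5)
Your proof is correct and follows essentially the same route as the paper's: both arguments compare the first threshold-crossing indices, in the sense of \eqref{de:Mn}, of the sequences $m^2\widehat a_m$, $m^2a_m/4$ and $4m^2a_m$, using the lower comparison $\widehat a_m>a_m/4$ at the index $\Mot+1$ to force $\Mh\leq\Mot$ and the upper comparison $\widehat a_m\leq 4a_m$ at the index $\Mh+1$ to force $\Mut\leq\Mh$. The only organisational difference is that the paper argues contrapositively, proving the inclusions \eqref{app:pre:l4:e1}--\eqref{app:pre:l4:e2} for the complements $\set{\Mh<\Mut}$ and $\set{\Mh>\Mot}$ by decomposing over $\set{\Mh=M}$, whereas you argue directly on the event, which is precisely why you need the sequencing remark ensuring $\Mh+1\leq\Mot+1$ before applying the upper comparison.
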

\begin{proof}[\noindent\textcolor{darkred}{\sc Proof  of Lemma
    \ref{app:pre:l4}.}] Let $\htau_m=\VnormS{\DihfOp^{-1}}^{-2}$ and recall that $1\leq \Mh\leq \DiMa$ with
\begin{equation*}
\set{\Mh = M}=\left\{\begin{matrix}
&&\set{\frac{\htau_{M+1}}{(M+1)^2}< \alpha_n^{-1}},& M=1,\\
\set{\min\limits_{2\leq m\leq M}\frac{\htau_{m}}{m^2}\geq  \alpha_n^{-1}}&\bigcap&\set{\frac{\htau_{M+1}}{(M+1)^2} < \alpha_n^{-1}},&1< M< \DiMa,\\
\set{\min\limits_{2\leq m\leq M} \frac{\htau_{m}}{m^2}\geq  \alpha_n^{-1}},&&&M=\DiMa.
\end{matrix}\right.
\end{equation*}
For $\tau_m=\VnormS{\DifOp^{-1}}^{-2}$ we proof below the following two assertions 
\begin{align}\label{app:pre:l4:e1}
\set{\Mh < \Mut}&\subset \set{\min\limits_{1\leq m\leq \Mut}:\frac{\htau_{m}}{\tau_{m}}< \frac{1}{4}},\\\label{app:pre:l4:e2}
\set{\Mh > \Mot}&\subset \set{\max_{1\leq m\leq (\Mot+1)}\frac{\htau_{m}}{\tau_{m}}\geq 4}.
\end{align}
Obviously, the assertion of Lemma \ref{app:pre:l4} follows now  by combination of \eqref{app:pre:l4:e1} and \eqref{app:pre:l4:e2}.\\
Consider \eqref{app:pre:l4:e1} which is trivial in case $\Mut=1$. If
$\Mut>1$ we have  $\min\limits_{1\leq m\leq \Mut} \frac{\tau_m}{m^2}\geq \frac{4}{\alpha_n}$. By exploiting the last estimate  we obtain 
\begin{multline*}
\set{\Mh < \DiMa}\cap\set{\Mh < \Mut}=\bigcup_{M=1}^{\Mut-1}\set{\Mh =M}
\\\hfill\subset\bigcup_{M=1}^{\Mut-1}\set{\frac{\htau_{M+1}}{(M+1)^2}<\alpha_n^{-1}}
=
\set{\min_{2\leq m\leq \Mut}\frac{\htau_{m}}{m^2}< \alpha_n^{-1}}\subset  \set{\min_{1\leq m\leq
  \Mut}\frac{\htau_{m}}{\tau_{m}}< 1/4}
\end{multline*}
while  trivially $\set{\Mh = \DiMa}\cap\set{\Mh < \Mut}=\emptyset$, which proves \eqref{app:pre:l4:e1} because $\Mut\leq \DiMa$.\\
Consider \eqref{app:pre:l4:e2} which is trivial in case $\Mot=\DiMa$. If $\Mot<\DiMa$, then 
$\frac{\tau_{\Mot+1}}{(\Mot+1)^2}< \alpha_n^{-1}$, and hence
\begin{multline*}
  \set{\Mh >1}\cap\set{\Mh> \Mot}=\bigcup_{M=\Mot+1}^{\DiMa}\set{\Mh =M}\\
\subset\bigcup_{M=\Mot+1}^{\DiMa}\set{\min_{2\leq m\leq M}
    \frac{\htau_{m}}{m^2}\geq \alpha_n^{-1}}=
      \set{\min_{2\leq m\leq (\Mot+1)}\frac{\htau_{m}}{m^2}\geq \alpha_n^{-1}}
\subset\set{\frac{\htau_{\Mot+1}}{\tau_{\Mot+1}}\geq 4}
\end{multline*}
while  trivially $\{\Mh = 1\}\cap\{\Mh > \Mot\}=\emptyset$ which shows \eqref{app:pre:l4:e2} and completes the proof.
\end{proof}
%==================================
% <<app:pre:l5>>
%==================================
\begin{lem}\label{app:pre:l5}  
Let $\Aset$, $\Bset$ and $\Cset$ as in \eqref{a:not:de:ev}.   For all $n\geq1$ it  holds true that \\ \centerline{$\Aset\cap\Bset\cap\Cset\subset\{ \pen[k]\leq\hpen[k]\leq 99\pen[k],1\leq k\leq \Mot\}\cap\{\Mut\leq \Mh \leq  \Mot\}.$} 
\end{lem}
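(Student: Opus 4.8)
The plan is to unpack the three events $\Aset$, $\Bset$, $\Cset$ and show that on their intersection both the two-sided comparison between the stochastic penalty $\hpen[k]$ and its deterministic counterpart $\pen[k]$ holds for all $1\leq k\leq\Mot$, and the sandwiching $\Mut\leq\Mh\leq\Mot$ holds. For the latter, the key observation is that on $\Bset$ the perturbation $\fou{\Xi}_\uk=\DihfOp-\DifOp$ is controlled by $\VnormS{\DifOp[\uk]^{-1}}\VnormS{\fou{\Xi}_\uk}\leq 1/4$ for all $1\leq k\leq\Mot+1$. A standard Neumann-series argument then gives that $\DihfOp$ is invertible with $\VnormS{\DihfOp^{-1}}^2/\VnormS{\DifOp^{-1}}^2\in(\tfrac14,4)$ on this range of $k$; feeding this into Lemma \ref{app:pre:l4} yields $\Mut\leq\Mh\leq\Mot$. (This also shows in particular that the indicator $\Ind{\{\VnormS{\DihfOp^{-1}}\leq\sqrt n\}}$ equals one on the relevant range once $n$ is large enough relative to $\VnormS{\DifOp^{-1}}^2$, but the clean statement only needs the comparison of the $\Mh$'s.)

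For the penalty comparison, recall $\hpen[k]=11\kappa\hvB[k]^2\hdelta_k n^{-1}$ and $\pen[k]=\kappa\vB[k]^2\delta_k^\Op n^{-1}$ with $\hdelta_k=k\hDelta_k\hLambda_k$ and $\delta_k^\Op=k\Delta_k^\Op\Lambda_k^\Op$, where $\hDelta_k,\hLambda_k$ are built from $(\VnormS{\fhOp_{\um}^{-1}}^2)_m$ and $\Delta_k^\Op,\Lambda_k^\Op$ from $(\VnormS{\fOp_{\um}^{-1}}^2)_m$. First I would use the ratio bound $\tfrac14<\VnormS{\fhOp_{\um}^{-1}}^2/\VnormS{\fOp_{\um}^{-1}}^2\leq4$ (valid for $1\leq m\leq\Mot+1$ on $\Bset$, by the Neumann argument above) to compare the $\max$-functionals: $\Delta_k$ is monotone in its argument so $\tfrac14\Delta_k^\Op\leq\hDelta_k\leq4\Delta_k^\Op$, and for $\Lambda_k$ one checks that multiplying the argument sequence by a factor in $[\tfrac14,4]$ changes the ratio $\log(\ga_m\vee(m+2))/\log(m+2)$ only by a bounded multiplicative amount (this is where a little care is needed, since the $\log$ of a product is a sum, not a product — but since $\ga_m\vee(m+2)\geq m+2\geq 4$ the perturbation $\log 4/\log(m+2)$ is bounded by $1$, giving $\hLambda_k\leq 2\Lambda_k^\Op+1\leq 3\Lambda_k^\Op$ and a matching lower bound). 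Combining, $\hdelta_k$ and $\delta_k^\Op$ differ by a bounded factor. Second, on $\Aset$ we have $\vY^2\leq 2\hvY^2\leq 3\vY^2$, and combining this with the bound $\max_{1\leq k\leq m}\VnormZ{\hDiSo[k]}^2$ versus $\max_{1\leq k\leq m}\VnormZ{\DiSo[k]}^2$ — the latter comparison again uses the Neumann bound, since $\fou{\hDiSo}_{\uk}=\fhOp_{\uk}^{-1}\hfou g_{\uk}$ and the solution $\DiSo$ is close to $\hDiSo$ on $\Bset\cap\Cset$ by $\Cset$'s hypothesis $\Vnorm{\DifOp[\uk]^{-1}\fou{V}_{\uk}}^2\leq\tfrac18(\Vnorm{\DifOp[\uk]^{-1}\DifIm[\uk]}^2+\vY^2)$ — one obtains $\hvB[k]^2$ comparable to $\vB[k]^2$ up to an absolute constant on $\Aset\cap\Bset\cap\Cset$. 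Multiplying the two comparisons and tracking the constant $11$ against the target factor $99$ (i.e. checking $11\cdot(\text{constant})\leq 99$ and the lower bound $\geq 1$) closes the penalty inequality.

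The main obstacle I expect is bookkeeping the numerical constants so that the claimed factors $1$ and $99$ come out exactly: every step (Neumann series for the ratio $\VnormS{\fhOp^{-1}}^2/\VnormS{\fOp^{-1}}^2$, the $\Lambda$-perturbation, the $\hvB^2$ versus $\vB^2$ comparison via $\hvY^2$ and the max of the estimator norms) contributes its own constant, and these must be chosen and composed so the product lands in $[1,99]$. A secondary technical point is making the comparison $\max_{1\leq k\leq m}\VnormZ{\hDiSo[k]}^2$ versus $\max_{1\leq k\leq m}\VnormZ{\DiSo[k]}^2$ precise, since $\hDiSo[k]$ is defined with the thresholding indicator $\Ind{\{\VnormS{\fhOp_{\uk}^{-1}}\leq\sqrt n\}}$; on $\Bset$ for $k\leq\Mot$ this indicator is $1$ (using $\VnormS{\fhOp_{\uk}^{-1}}^2\leq4\VnormS{\fOp_{\uk}^{-1}}^2$ and the growth condition on $(\Mot)_n$), so the indicator can be dropped on the event, and then the comparison reduces to $\VnormZ{\hDiSo[k]-\DiSo[k]}^2\leq\Vnorm{\fhOp_{\uk}^{-1}\fou V_{\uk}}^2$-type bounds controlled by $\Bset\cap\Cset$. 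I would keep the inclusion $\Eset\supset\Aset\cap\Bset\cap\Cset$ as the precise target (cf. the definition of $\Eset$ in \eqref{a:not:de:ev}) so that all subsequent proofs can simply invoke this lemma.
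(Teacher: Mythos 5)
Your proposal is correct and follows essentially the same route as the paper: Neumann-series control of $\VnormS{\DihfOp[\uk]^{-1}}$ on $\Bset$ fed into Lemma \ref{app:pre:l4} for $\Mut\leq\Mh\leq\Mot$, monotonicity of $\Delta$ plus the additive $\log$-perturbation bound for $\Lambda$, and the identity $\DihfOp[\uk]^{-1}\fou{V}_{\uk}=\DihfOp[\uk]^{-1}\DihfIm[\uk]-\DifOp[\uk]^{-1}\DifIm[\uk]$ combined with $\Cset$ and $\Aset$ to compare $\hvB^2$ with $\vB^2$. The only caveat is the deferred constant bookkeeping: the paper's tighter Neumann ratio $4/9\leq\VnormS{\DihfOp[\uk]^{-1}}^2/\VnormS{\DifOp[\uk]^{-1}}^2\leq 16/9$ (rather than your $[1/4,4]$) is what makes the $\Lambda$-perturbation $\log(16/9)/\log(k+2)\leq 1$ and lets the product of factors land in $[1,99]$.
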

\begin{proof}[\noindent\textcolor{darkred}{\sc Proof of Lemma \ref{app:pre:l5}.}] 
Let $(\Mot+1)\geq k\geq 1$.  If
$\VnormS{\DifOp[\uk]^{-1}}\VnormS{\fou{\Xi}_\uk}\leq 1/4$, i.e. on the
event $\Bset$,   it follows by the usual Neumann series
argument that $ \VnormS{(\Id_{\uk}
  +\fou{\Xi}_\uk\DifOp[\uk]^{-1})^{-1}-\Id_{\uk}}\leq 1/3$. Thus,
using the identity
$\DihfOp[\uk]^{-1}=\DifOp[\uk]^{-1}-\DihfOp[\uk]^{-1}\{(\Id_{\uk}
  +\fou{\Xi}_\uk\DifOp[\uk]^{-1})^{-1} - \Id_{\uk}\}$ we conclude
\begin{multline}\label{app:pre:l5:e1}
 2 \VnormS{\DifOp[\uk]^{-1}}\leq 3  \VnormS{\DihfOp[\uk]^{-1}}\leq 4 \VnormS{\DifOp[\uk]^{-1}}\quad\mbox{and}\hfill\\
 2 \Vnorm{\DifOp[\uk]^{-1}x}\leq 3\Vnorm{\DihfOp[\uk]^{-1} x} \leq 4 \Vnorm{\DifOp[\uk]^{-1} x},\quad\mbox{for all } x\in\Rz^k.
\end{multline}
Thereby, since
$\DihfOp[\uk]^{-1}(\fou{V}_{\uk})=\DihfOp[\uk]^{-1}\DihfIm[\uk]-\DifOp[\uk]^{-1}\DifIm[\uk]$
we conclude
\begin{gather*}
 \Vnorm{\DifOp[\uk]^{-1}\DifIm[\uk]}^2 \leq
 (32/9)\Vnorm{\DifOp[\uk]^{-1}\fou{V}_{\uk}}^2+2\Vnorm{\DihfOp[\uk]^{-1}\DihfIm[\uk]}^2,\\
 \Vnorm{\DihfOp[\uk]^{-1}\DihfIm[\uk]}^2 \leq (32/9)\Vnorm{\DifOp[\uk]^{-1}\fou{V}_{\uk}}^2+2\Vnorm{\DifOp[\uk]^{-1}\DifIm[\uk]}^2.
\end{gather*}
Consequently, on $\Cset$ where $\Vnorm{\DifOp[\uk]^{-1}\fou{V}_{\uk}}^2\leq
\frac{1}{8}(\Vnorm{\DifOp[\uk]^{-1}\DifIm[\uk]}^2 +
\vY^2)$ it follows  that
\begin{gather*}
(5/9) (\Vnorm{\DifOp[\uk]^{-1}\DifIm[\uk]}^2+\vY^2)
\leq\vY^2+2 \Vnorm{\DihfOp[\uk]^{-1}\DihfIm[\uk]}^2,\\
\Vnorm{\DihfOp[\uk]^{-1}\DihfIm[\uk]}^2\leq (22/9)\Vnorm{\DifOp[\uk]^{-1}\DifIm[\uk]}^2+(4/9)\vY^2.
\end{gather*}
and thus on $\Aset$, i.e.,  $\vY^2\leq {2}\hvY^2\leq
{3}\vY^2$ we have
\begin{gather*}
(5/9) (\Vnorm{\DifOp[\uk]^{-1}\DifIm[\uk]}^2+\vY^2)
\leq(3/2)\hvY^2+2 \Vnorm{\DihfOp[\uk]^{-1}\DihfIm[\uk]}^2,\\
\Vnorm{\DihfOp[\uk]^{-1}\DihfIm[\uk]}^2+\hvY^2\leq (22/9)\Vnorm{\DifOp[\uk]^{-1}\DifIm[\uk]}^2+(10/9)\vY^2.
\end{gather*}
Combining the last two inequalities we conclude for all $1\leq k\leq\Mot$
\begin{equation*}
(5/18) (\Vnorm{\DifOp[\uk]^{-1}\DifIm[\uk]}^2+\vY^2)\leq (\Vnorm{\DihfOp[\uk]^{-1}\DihfIm[\uk]}^2+\hvY^2)
\leq(22/9)(\Vnorm{\DifOp[\uk]^{-1}\DifIm[\uk]}^2+\vY^2).
\end{equation*}
Since on the event $\Aset\cap\Bset\cap\Cset$ the last estimates and \eqref{app:pre:l5:e1} hold for all $1\leq k\leq \Mot$ it follows
\[\Aset\cap\Bset\cap\Cset\subset\set{5\vB^2\leq 18\hvB^2\leq
    44 \vB^2  \mbox{ and }  4\Delta_m^\Op \leq 9\hDelta_m\leq 16 \Delta_m^\Op , \,\forall1\leq m\leq
  \Mot}.\]
From $\hLambda_m=\max_{1\leq k\leq m}\frac{\log(\VnormS{\DihfOp[\uk]^{-1}}^2\vee(k+2))}{\log(k+2)}$ it is easily seen that  $(4/9) \leq \hDelta_m/ \Delta_m^\Op\leq (16/9)$ implies $1/2\leq (1+\log(9/4))^{-1}\leq \hLambda_m/\Lambda_m^\Op \leq
(1+\log(16/9))\leq 2$. Taking into account the last estimates and the definitions $\pen=\kappa\vB^2m\Delta_m^\Op\Lambda_m^\Op n^{-1}$ and $\hpen=11\kappa\hvB^2m\hDelta_m\hLambda_mn^{-1}$ we obtain
\begin{equation}\label{app:pre:l5:e3}
\Aset\cap\Bset\cap\Cset\subset\set{\pen \leq \hpen\leq 99\pen, \,\forall1\leq m\leq
  \Mot}.
\end{equation}
On the other hand, by exploiting successively \eqref{app:pre:l5:e1} and Lemma \ref{app:pre:l4}  we  have
\begin{equation}\label{app:pre:l5:e4}
\Aset\cap\Bset\cap\Cset\subset\set{\frac{4}{9}\leq \frac{\VnormS{\DihfOp^{-1}}^2}{\VnormS{\DifOp^{-1}}^2}\leq \frac{9}{4}, \,\forall1\leq m\leq (\Mot+1)}\subset \set{\Mut\leq
  \Mh\leq \Mot}.
\end{equation}
From \eqref{app:pre:l5:e3} and \eqref{app:pre:l5:e4} follows  the assertion of the lemma, which completes the proof.
\end{proof}
%==================================
% <<app:pre:l6>>
%==================================
\begin{lem}\label{app:pre:l6}  
For all $m,n\geq1$ with $ \sqrt{n}\geqslant (4/3) \VnormS{\DifOp^{-1}}$  we have $\Xiset \subset\hOpset$.
\end{lem}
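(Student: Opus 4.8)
The plan is to run the standard Neumann-series perturbation argument for the random matrix $\DihfOp=\DifOp+[\Xi]_{\um}$. First I would observe that on the event $\Xiset$ we have $\VnormS{[\Xi]_{\um}}\,\VnormS{\DifOp^{-1}}\leq 1/4$, so that with the one-sided factorisation $\DihfOp=(\Id_{\um}+[\Xi]_{\um}\DifOp^{-1})\DifOp$ the ``off-identity'' part of the perturbation factor is controlled by $\VnormS{[\Xi]_{\um}\DifOp^{-1}}\leq\VnormS{[\Xi]_{\um}}\,\VnormS{\DifOp^{-1}}\leq 1/4<1$. Consequently $\Id_{\um}+[\Xi]_{\um}\DifOp^{-1}$ is invertible, and summing the Neumann series $\sum_{k\geq 0}(-[\Xi]_{\um}\DifOp^{-1})^k$ yields $\VnormS{(\Id_{\um}+[\Xi]_{\um}\DifOp^{-1})^{-1}}\leq(1-1/4)^{-1}=4/3$.

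Since $\DifOp$ is non-singular by the standing assumption, it follows that $\DihfOp$ is non-singular as well, and one obtains the representation $\DihfOp^{-1}=\DifOp^{-1}(\Id_{\um}+[\Xi]_{\um}\DifOp^{-1})^{-1}$. Sub-multiplicativity of the spectral norm then gives $\VnormS{\DihfOp^{-1}}\leq\VnormS{\DifOp^{-1}}\,\VnormS{(\Id_{\um}+[\Xi]_{\um}\DifOp^{-1})^{-1}}\leq(4/3)\VnormS{\DifOp^{-1}}$. Invoking the hypothesis $\sqrt{n}\geq(4/3)\VnormS{\DifOp^{-1}}$ finally yields $\VnormS{\DihfOp^{-1}}\leq\sqrt{n}$, i.e.\ the event $\hOpset$ holds; hence $\Xiset\subset\hOpset$, which is the claim.

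There is no genuine obstacle here: the argument is a routine perturbation estimate. The only points that require a little care are choosing the \emph{right} one-sided factorisation, namely $\DihfOp=(\Id_{\um}+[\Xi]_{\um}\DifOp^{-1})\DifOp$ rather than the left one, so that the constant comes out as exactly $4/3$ and the representation of $\DihfOp^{-1}$ is the one matching the definition of $\hOpset$, and recording that the invertibility of both factors is precisely what guarantees that $\DihfOp^{-1}$ exists on $\Xiset$ in the first place (so that membership in $\hOpset$ is meaningful).
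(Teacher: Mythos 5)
Your argument is correct and is exactly the paper's proof: the paper also invokes the standard Neumann series argument to get $\VnormS{\DihfOp^{-1}}\leq(4/3)\VnormS{\DifOp^{-1}}$ on $\Xiset$ and then uses the hypothesis $\sqrt{n}\geq(4/3)\VnormS{\DifOp^{-1}}$ to conclude $\Xiset\subset\hOpset$. You have merely written out the details (factorisation, invertibility, submultiplicativity) that the paper leaves implicit.
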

\begin{proof}[\noindent\textcolor{darkred}{\sc Proof of Lemma \ref{app:pre:l6}.}] 
We observe that  $\VnormS{\DihfOp^{-1}} \leqslant (4/3)\VnormS{\DifOp^{-1}}$ due
to the usual Neumann series argument, if  
$\VnormS{\DifOp^{-1}}\VnormS{\fou{\Xi}_{\uDi}}\leq 1/4$, and consequently $\Xiset \subset\hOpset$ whenever $ \sqrt{n}\geqslant (4/3) \VnormS{[\Op]^{-1}_{\um}}$, which proves the lemma.
\end{proof}
%==================================
% <<app:pre:lb>>
%==================================
\begin{lem}\label{app:pre:lb} 
Let $\Im=\Op\So$ and for each $\Di\in\Nz$ define $\DiSo\in\DiHiZ$ with $\fDiSo_{\uDi}:=\fOp_{\uDi}^{-1}\fIm_{\uDi}$. Given sequences $\Sow$ and $\Opw$ satisfying Assumption \ref{a:mm:seq}, for each $\So\in\Socwr$ and $\Op\in\OpcwdD$ we obtain
\begin{align}\label{app:pre:lb:e1}
	&\sup_{\Di\geq1}\Sow_{\Di}^{-1}\VnormZ{\So-\DiSo}^2 \leq 4
          \OpD^2\Opd^2\Sor^2,\quad\VnormW{\So-\DiSo}^2 \leq 4 \OpD^2\Opd^2\Sor^2,\quad \VnormZ{\DiSo}^2\leq 4\OpD^2\Opd^2\Sor^2\\\label{app:pre:lb:e2}
&\VnormInf{\So-\DiSo}^2 \leq 4\SowBasZsup^2\OpD^2\Opd^2\Sor^2,\quad\VnormInf{\So}^2 \leq \SowBasZsup^2\Sor^2.
\end{align}
\end{lem}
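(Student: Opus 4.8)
The plan is to decompose $\So-\DiSo=\cDiProjZ\So+(\DiProjZ\So-\DiSo)$ along the orthogonal sum $\DiHiZ^{\perp}\oplus\DiHiZ$ and to control the two pieces separately: the first is a plain tail of the $\basZ$-Fourier series of $\So$, while the second is governed by the Galerkin identity expressing $\fDiSo_{\uDi}-\fou{\So}_{\uDi}$ through $\DifOp^{-1}$ applied to the $\basV$-Fourier coefficients of $\Op\cDiProjZ\So$. Once the estimate for $\VnormZ{\DiProjZ\So-\DiSo}^2$ is available, all the quantities in \eqref{app:pre:lb:e1}--\eqref{app:pre:lb:e2} follow by the triangle inequality, orthogonality, and the Cauchy--Schwarz bound $\VnormInf{h}^2\leq\VnormInf{\sum_{j\geq1}\Sow_j\basZ_j^2}\,\VnormW[\Sow]{h}^2\leq\SowBasZsup^2\VnormW[\Sow]{h}^2$, valid for any $h$ with $\VnormW[\Sow]{h}<\infty$ under Assumption~\ref{a:mm:seq}\,\ref{a:mm:seq:b}.

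The core step I would carry out first is the bound $\VnormZ{\DiProjZ\So-\DiSo}^2\leq\OpD^2\Opd^2\,\Sow_{\Di}\,\Sor^2$. Since $\Im=\Op\So$, writing out the entries the $l$-th component of $\DifIm$ is $\VskalarV{\basV_l,\Op\So}$ and that of $\DifOp\fou{\So}_{\uDi}$ is $\VskalarV{\basV_l,\Op\DiProjZ\So}$, so $\DifIm-\DifOp\fou{\So}_{\uDi}=\fou{\Op\cDiProjZ\So}_{\uDi}$ and therefore $\fDiSo_{\uDi}-\fou{\So}_{\uDi}=\DifOp^{-1}\fou{\Op\cDiProjZ\So}_{\uDi}$. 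By Parseval $\Vnorm{\fou{\Op\cDiProjZ\So}_{\uDi}}^2\leq\VnormV{\Op\cDiProjZ\So}^2\leq\Opd^2\VnormW[\Opw]{\cDiProjZ\So}^2$ by the link condition \eqref{de:lc}, and, using that $\Opw$ is non-decreasing, $\Sow$ is non-increasing and $\So\in\Socwr$,
\[\VnormW[\Opw]{\cDiProjZ\So}^2=\sum_{j>\Di}(\Opw_j^{-1}\Sow_j)(\Sow_j^{-1}\fou{\So}_j^2)\leq\Opw_{(\Di)}^{-1}\Sow_{\Di}\,\VnormW[\Sow]{\So}^2\leq\Opw_{(\Di)}^{-1}\Sow_{\Di}\,\Sor^2 .\]
Combining this with $\VnormS{\DifOp^{-1}}^2\leq\Opw_{(\Di)}\OpD^2$, which holds for $\Op\in\OpcwdD$ by the extended link condition \eqref{de:elc} (as already noted below \eqref{de:elc}), the factor $\Opw_{(\Di)}$ cancels and the core estimate follows.

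It then remains to assemble the pieces. Because the decomposition is orthogonal and $\VnormZ{\cDiProjZ\So}^2=\sum_{j>\Di}\fou{\So}_j^2\leq\Sow_{\Di+1}\Sor^2\leq\Sow_{\Di}\Sor^2$, the first inequality of \eqref{app:pre:lb:e1} follows from the core estimate together with $\Sow_{\Di}\leq\Sow_1=1\leq\OpD^2\Opd^2$. For the $\Sow$-weighted norm one splits the defining sum at the index $\Di$: on $\{1,\dots,\Di\}$ the monotonicity $\Sow_j^{-1}\leq\Sow_{\Di}^{-1}$ reduces the head to $\Sow_{\Di}^{-1}\VnormZ{\DiProjZ\So-\DiSo}^2$, bounded by the core estimate, while the tail equals $\sum_{j>\Di}\Sow_j^{-1}\fou{\So}_j^2\leq\Sor^2$, which gives the second inequality of \eqref{app:pre:lb:e1}. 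The bound on $\VnormZ{\DiSo}$ follows from $\VnormZ{\DiSo}\leq\VnormZ{\So}+\VnormZ{\DiProjZ\So-\DiSo}$ and $\VnormZ{\So}^2\leq\Sow_1\VnormW[\Sow]{\So}^2\leq\Sor^2$, and finally both estimates of \eqref{app:pre:lb:e2} are obtained by feeding $h=\So-\DiSo$ and $h=\So$ into the Cauchy--Schwarz bound of the first paragraph.

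The only point that needs a little care is the coupling of the two link conditions: the crude bound $\VnormS{\DifOp^{-1}}\leq\Opw_{(\Di)}^{1/2}\OpD$ grows with $\Di$, but it is exactly neutralised by the decay factor $\Opw_{(\Di)}^{-1}$ produced when the monotone weight $\Opw$ is used to evaluate $\VnormW[\Opw]{\cDiProjZ\So}$, so that the product stays uniform in $\Di$; verifying that $\Opw_j^{-1}\Sow_j$ attains its maximum over $\{j>\Di\}$ at the endpoint $j=\Di+1$ (hence is at most $\Opw_{(\Di)}^{-1}\Sow_{\Di}$) is the single monotonicity bookkeeping step that must be done carefully. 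Everything else reduces to Parseval's identity, triangle inequalities and Cauchy--Schwarz.
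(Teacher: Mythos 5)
Your proof is correct. For the second display the paper argues exactly as you do, feeding $h=\So-\DiSo$ and $h=\So$ into the bound $\VnormInf{h}^2\leq\VnormInf{\sum_{j\geq1}\Sow_j\basZ_j^2}\VnormW[\Sow]{h}^2\leq\SowBasZsup^2\VnormW[\Sow]{h}^2$ together with \eqref{app:pre:lb:e1}. For \eqref{app:pre:lb:e1} itself the paper gives no argument at all --- it simply cites \cite{JohannesSchwarz2010} --- so your Galerkin computation is a self-contained reconstruction of the outsourced step, and it is the standard one: the identity $\fDiSo_{\uDi}-\fou{\So}_{\uDi}=\DifOp^{-1}\fou{\Op\cDiProjZ\So}_{\uDi}$, Bessel's inequality (what you call Parseval), the link condition \eqref{de:lc} applied to $\cDiProjZ\So$, the monotonicity of $\Opw_j^{-1}\Sow_j$ on $\{j>\Di\}$, and the cancellation of $\Opw_{(\Di)}$ against $\VnormS{\DifOp^{-1}}^2\leq\Opw_{(\Di)}\OpD^2$ from \eqref{de:elc} are all exactly the ingredients needed, and your bookkeeping (yielding $2\OpD^2\Opd^2$, within the claimed factor $4$) checks out.
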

\begin{proof}[\noindent\textcolor{darkred}{\sc Proof of Lemma
    \ref{app:pre:lb}.}]The proof of \eqref{app:pre:lb:e1} can be found
  in \cite{JohannesSchwarz2010}. Exploiting 
  $\VnormInf{h}^2\leq\VnormInf{\sum_{j\geq1}\Sow_j\basZ_j^2}\VnormW{h}^2\leq\SowBasZsup^2\VnormW{h}^2$
  and \eqref{app:pre:lb:e1} we obtain \eqref{app:pre:lb:e2}, which
  completes the proof.
\end{proof}

%%% Local Variables: 
%%% mode: latex
%%% TeX-master: "_0NIV_dep"
%%% End: 
 
% ======================================================================================================================
%                                                                 
% Title: Proofs Oracle Risk bound
% Author: Jan JOHANNES, Nicolas Asin
% Email: johannes@math.uni-heidelberg.de nicolas.asin@uclouvain.be
% Date: %%ts latex start%%[2016-04-05 Tue 23:39]%%ts latex end%%
% Main-TeX-File: _0NIV_Dep in der form "Name"
%
% ======================================================================================================================
% --------------------------------------------------------------------
% <<subsection{Proof of Theorem \ref{t:iid:ora}>>
% --------------------------------------------------------------------
\subsection{Proof of Theorem \ref{t:iid:ora}}\label{a:ora:iid}%
\noindent We assume throughout this section that
$\{(\iY_i,\iZ_i,\iV_i)\}_{i=1}^n$ is an independent and
identically distributed sample of the random vector  $(\iY,\iZ,\iV)$ obeying the
model equations (\ref{eq:model}--\ref{eq:model2}).
We shall prove below the Propositions \ref{p:iid:Eset} and
\ref{p:iid:Rest} which are used in the proof of Theorem \ref{t:iid:ora}.
In the proof the propositions we refer to  three technical Lemma
(\ref{l:iid:sets} -- \ref{l:iid:tal2}) which are shown in the end of
this section. Moreover, we make use of  functions
  $\bPsi,\bPhia,\bPhib,\bPhic,\bPhid,\bPhie:\Rz_+\to\Rz$
  defined by
  \begin{multline}\label{de:iid:ora}
    \bPsi(x)=\sum\nolimits_{m\geq 1} x \VnormS{\DifOp^{-1}}^2 \exp(- m
    \Lambda_\Di^\Op/(6x) ),\\
    \bPhia(x)= x n \exp(- \DiMa\log(n)/(6x)),\\
    \bPhib(x)=n^{7/6}x^2\exp(-n^{1/6}/(100x)),\\
    \bPhic(x)= n^3\exp(-n(\Delta^\Op_{\Mot})^{-1}/(25600x^2)),\\
    \bPhid(x)=n^3\exp(- n(\Delta_{\Mot+1}^\Op)^{-1}/(6400x))\hfill\\
    \bPhie(x)= x n \exp(-\Mot\log(n)/(6x) ).\hfill%\\
%    \bPhif=  n^3\exp(-n^{1/2}/50).\hfill
  \end{multline}
  We shall emphasise that each function in \eqref{de:iid:ora}  is non decreasing in $x$
  and for all $x>0$,  $\bPsi(x)<\infty$, $\bPhia(x)=o(1)$
  and $\bPhib(x)=o(1)$  as $n\to\infty$. Moreover, if
  $\log(n)(\Mot+1)^2\Delta_{\Mot+1}^\Op=o(n)$ as $n\to\infty$  then 
there exists an integer $n_o:=n_o(\Op,\maxnormsup)$ depending on $\Op$
and $\maxnormsup$ only   such that 
  \begin{equation}\label{de:iid:no}
1\geq \sup_{n\geq n_o} \big\{1024\maxnormsup^4(1+\DiSoInf/\vE\big)^2(\Mot+1)^2\Delta_{\Mot+1}^\Op
n^{-1}\big\},
\end{equation}
 and we have also for all $x>0$,  $\bPhic(x)=o(1)$, $\bPhid(x)=o(1)$
  and $\bPhie(x)=o(1)$  as $n\to\infty$. Consequently, under
  Assumption \ref{a:mth:rv} and \ref{a:mth:bs}  there exists 
a finite constant $\SoRaC$  such that for all $n\geq1$,
\begin{multline}\label{de:iid:ora:Sigma}
\SoRaC\geq
\big\{n_o^2\bigvee n^3\exp(-n^{1/6}/50)\bigvee \bPsi\big(1+\DiSoInf/\vE\big)\bigvee
\bPhia\big(1+\DiSoInf/\vE\big)\bigvee
\bPhib\big(1+\DiSoInf/\vE\big)\\\hfill\bigvee 
\bPhie\big(1+\DiSoInf/\vE\big)\bigvee 
\bPhic(1+\DiSoInf/\vE)\bigvee
\bPhid(\VnormInf{p_{\iZ,\iV}})\\
\bigvee\Ex(\iE/\vE)^{8}\bigvee(\DiSoInf/\vE)^{8}\bigvee (\maxnormsup/\vE)^2\Ex(\iE/\vE)^{12} \big\}.
\end{multline}
% --------------------------------------------------------------------
% <<Proof of Theorem \ref{t:iid:ora}>>
% --------------------------------------------------------------------
\begin{proof}[\noindent\textcolor{darkred}{\sc Proof of Theorem \ref{t:iid:ora}}] 
% ....................................................................
% <<key argument>> \ref{t:iid:ora:key:arg}
% ....................................................................
 We start the
  proof with the observation that  $(\pen[1],\dotsc,\pen[\Mh])$ is by construction a non-decreasing
  sub-sequence. Therefore, we can apply  Lemma 2.1 in \cite{ComteJohannes2012}
  which in turn implies  for
  all $1\leq \Di\leq \Mh$ that
\begin{equation}\label{t:iid:ora:key:arg}
\VnormZ{\hDiSo[\hDi]-\So}^2\leq 85 [\biasnivSo\vee\hpen] +42 \max_{\Di\leq k\leq \Mh}\vectp{ \VnormZ{\hDiSo[k]-\DiSo[k]}^2 - \hpen[k]/6}
\end{equation}
where $(x)_+:=\max(x,0)$.
% ....................................................................
% <<Def theoretical Quantities>> \ref{de:Mn} \ref{de:pen}
% ....................................................................
Having the last bound in mind we decompose the risk
  $\Ex\VnormZ{\hDiSo[\hDi]-\So}^2$ with respect to
the event $\Eset$  defined in \eqref{a:not:de:ev} on which  the  quantities $\hpen$ and $\Mh$ are close to their theoretical
counterparts $\pen$,  $\Mut$ and $\Mot$  defined in
\eqref{a:not:de:no}. To be precise, we consider the elementary identity
  \begin{equation}\label{ri:dec}
    \Ex\VnormZ{\hDiSo[\hDi]-\So}^2= \Ex\left(\Ind{\Eset}\VnormZ{\hDiSo[\hDi]-\So}^2\right)+\Ex\left(\Ind{\Eset^c}\VnormZ{\hDiSo[\hDi]-\So}^2\right)
  \end{equation}
  where we bound the two right hand side (rhs) terms separately.  The
  second rhs term  we bound with help of Proposition
  \ref{p:iid:Rest}, which leads to
  \begin{equation}\label{ri:dec:e1}
    \Ex\VnormZ{\hDiSo[\hDi]-\So}^2\leq \Ex\left(\Ind{\Eset}\VnormZ{\hDiSo[\hDi]-\So}^2\right)+C\;n^{-1}\;
\maxnormsup^2(1+\vE^2+\DiSoTwo)\SoRaC.
  \end{equation}
Consider the first rhs term. On the event $\Eset$
  the upper bound given in \eqref{t:iid:ora:key:arg} implies
  \begin{equation*}
    \VnormZ{\hDiSo[\hDi]-\So}^2\Ind{\Eset}\leq 582 \min_{1\leq \Di\leq \Mut}\{[\biasnivSo\vee\pen]\} +42 \max_{1\leq k\leq \Mot}\vectp{ \VnormZ{\hDiSo[k]-\DiSo[k]}^2 - \pen[k]/6}.
  \end{equation*}
Keeping in mind that $\pen[k]=144\maxnormsup^2\vB[k]^2\delta_k^\Op n^{-1}$  with $\delta_k^\Op=k\Lambda_k^\Op\Delta_{k}^\Op$ and $\vB[k]^2\leq2(\vE^2+3\DiSoTwo)$  we derive in
  Proposition \ref{p:iid:Eset} below an upper bound for the
  expectation of the second rhs term, the remainder term,
  in the last display. Thereby, we obtain
 \begin{equation*}
    \Ex\big(\Ind{\Eset}\VnormZ{\hDiSo[\hDi]-\So}^2\big)\leq 
    C\;\maxnormsup^2 (1+\vE^2+\DiSoTwo)\{\min_{1\leq \Di\leq
      \Mut}\{[\biasnivSo\vee n^{-1}\delta_{\Di}^\Op]\}  +n^{-1}\;\SoRaC\}.
  \end{equation*}
Replacing in  \eqref{ri:dec:e1} the first rhs by the last upper bound
we obtain the assertion of the theorem, which completes the proof.
 \end{proof}%
% --------------------------------------------------------------------
% <<Prop bound on complement>> \ref{p:iid:Eset}
% --------------------------------------------------------------------
\begin{prop}\label{p:iid:Eset} Under the assumptions of Theorem \ref{t:iid:ora}
  there exists a numerical constant $C$ such that for all $n\geq1$ 
\begin{equation*}\Ex\set{\max_{1\leq k\leq
      \Mot}\vectp{\VnormZ{\hDiSo-\DiSo}^2 -24\maxnormsup^2\Di n^{-1}\vB^2\Lambda_\Di^\Op\Delta_{\Di}^\Op}}\leq
C n^{-1}\maxnormsup^2(1+\vE^2+\DiSoTwo)\SoRaC.
\end{equation*}
\end{prop}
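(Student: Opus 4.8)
The plan is to localise on the event $\Bset$ of \eqref{a:not:de:ev}, on which $\DihfOp[\uk]$ is invertible and close to $\DifOp[\uk]$ for every $1\le k\le\Mot$, and to control the fluctuations dimension by dimension via Talagrand's inequality \eqref{l:talagrand:eq1} after truncating the unbounded residuals $\iY_i-\DiSo[k](\iZ_i)$. First I would note that, since $\alpha_n=o(n)$, there is an integer $n_o=n_o(\Op,\maxnormsup)$ such that for $n\ge n_o$ and $1\le k\le\Mot$ one has $\VnormS{\DifOp[\uk]^{-1}}^2\le 4\alpha_n k^{-2}\le\tfrac{9}{16}n$ by the definition \eqref{de:Mn} of $\Mot$; hence on $\Bset$ the Neumann-series argument used in the proof of Lemma \ref{app:pre:l5}, together with Lemma \ref{app:pre:l6}, give that $\DihfOp[\uk]$ is invertible with $\VnormS{\DihfOp[\uk]^{-1}}^2\le n$ (so the threshold in \eqref{de:mth:est} is inactive for $k\le\Mot$) and $\VnormS{\DihfOp[\uk]^{-1}\DifOp[\uk]}\le 4/3$, whence $\fou{\hDiSo[k]}_{\uk}-\fou{\DiSo[k]}_{\uk}=\DihfOp[\uk]^{-1}\fou{V}_{\uk}$ and $\VnormZ{\hDiSo[k]-\DiSo[k]}^2\Ind{\Bset}\le\tfrac{16}{9}\VnormS{\DifOp[\uk]^{-1}}^2\Vnorm{\fou{V}_{\uk}}^2\Ind{\Bset}$. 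Writing $\pen[k]/6=24\maxnormsup^2\vB[k]^2 k\Delta_k^\Op\Lambda_k^\Op n^{-1}$ for the subtracted quantity, I would then split the expectation into its contributions over $\Bset$ and over $\Bset^c$; the finitely many cases $n<n_o$ are covered by the crude bound below together with the term $n_o^2$ in \eqref{de:iid:ora:Sigma}.

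On $\Bset$ I would truncate: with $c_n\sim n^{1/3}$ set $R^{k}_i:=(\iY_i-\DiSo[k](\iZ_i))\Ind{\{|\iY_i-\DiSo[k](\iZ_i)|\le c_n\}}$ and split $\fou{V}_{\uk}$ into the vector $\fou{V}^{\mathrm{t}}_{\uk}$ built from the $R^k_i$ and a remainder $\fou{V}^{\mathrm{r}}_{\uk}$. Using Assumptions \ref{a:mth:bs}, \ref{a:mth:rv} and Assumption \ref{a:iid:ora}\ref{a:iid:ora:b}, the remainder obeys $\VnormS{\DifOp[\uk]^{-1}}^2\Ex\Vnorm{\fou{V}^{\mathrm{r}}_{\uk}}^2\le C\,\Delta_k^\Op k\maxnormsup^2 n^{-1}c_n^{-10}(\Ex\iE^{12}+\DiSoInf^{12})$, and summing over $k\le\Mot$ while invoking $\log(n)(\Mot+1)^2\Delta_{\Mot+1}^\Op=o(n)$ makes this of the required form, absorbed into the moment terms of \eqref{de:iid:ora:Sigma}. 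For the truncated part I would apply Lemma \ref{l:talagrand}\eqref{l:talagrand:eq1} to the empirical process $\overline{\nu_t}$ generated by $\nu_t(\iY,\iZ,\iV)=\VnormS{\DifOp[\uk]^{-1}}\sum_{j=1}^k t_j\basV_j(\iV)(\iY-\DiSo[k](\iZ))\Ind{\{|\iY-\DiSo[k](\iZ)|\le c_n\}}$, with $t$ ranging over a countable dense subset of the unit ball of $\Rz^k$, so that $\sup_t|\overline{\nu_t}|=\VnormS{\DifOp[\uk]^{-1}}\Vnorm{\fou{V}^{\mathrm{t}}_{\uk}}$. Using Assumption \ref{a:mth:bs}, the identity $\Ex[(\iY-\DiSo[k](\iZ))^2\mid\iZ,\iV]=\vE^2+(\ceE(\iZ,\iV)+\So(\iZ)-\DiSo[k](\iZ))^2$ and the bound $\Ex(\iY-\DiSo[k](\iZ))^2\le\vB[k]^2$ recorded in Appendix \ref{app:not}, one obtains $H_k^2\le\Delta_k^\Op k\maxnormsup^2\vB[k]^2 n^{-1}$, $v_k\le\Delta_k^\Op\vE^2(1+(\DiSoInf/\vE)^2)$ and $h_k\le 2\maxnormsup\sqrt{k}\,\VnormS{\DifOp[\uk]^{-1}}c_n$. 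The factors $\maxnormsup^2$ and $\Lambda_k^\Op$ in the penalty are chosen precisely so that $\pen[k]/6\ge 24H_k^2\ge\tfrac{32}{9}\cdot 6H_k^2$ for all $k$, whence
\[
\Ex\Big(\Ind{\Bset}\max_{1\le k\le\Mot}\vectp{\VnormZ{\hDiSo[k]-\DiSo[k]}^2-\pen[k]/6}\Big)\le\tfrac{32}{9}\sum_{k=1}^{\Mot}\Ex\vectp{\sup_t|\overline{\nu_t}|^2-6H_k^2}+\text{(remainder)},
\]
and Talagrand bounds the $k$-th summand by $C\big[v_k n^{-1}e^{-nH_k^2/(6v_k)}+h_k^2 n^{-2}e^{-nH_k/(100h_k)}\big]$; the additional $\Lambda_k^\Op$ is also what makes the sum over $k\le\Mot$ summable and of order $n^{-1}$, and collecting terms one recognises the quantities $\bPsi$, $\bPhia$, $\bPhib$, $\bPhie$ of \eqref{de:iid:ora} evaluated at $1+\DiSoInf/\vE$, all dominated by $\SoRaC$ through \eqref{de:iid:ora:Sigma}.

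On $\Bset^c$ I would use the crude deterministic bound $\VnormZ{\hDiSo[k]}^2=\VnormS{\DihfOp[\uk]^{-1}}^2\Vnorm{\DihfIm[\uk]}^2\Ind{\hOpset}\le k\maxnormsup^2\sum_{i=1}^n\iY_i^2$ (Cauchy--Schwarz and Assumption \ref{a:mth:bs}), so $\max_{1\le k\le\Mot}\VnormZ{\hDiSo[k]-\DiSo[k]}^2\le 2n\maxnormsup^2\sum_{i=1}^n\iY_i^2+2\DiSoTwo$, which has finite fourth moment of order $n^4$ since $\Ex\iY^4<\infty$ (Assumptions \ref{a:mth:rv} and \ref{a:iid:ora}). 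Cauchy--Schwarz then bounds the bad-event contribution by a polynomial in $n$ (with $\maxnormsup,\vE,\DiSoInf,\DiSoTwo$-dependent coefficients) times $\Prob(\Bset^c)^{1/2}$; a union bound over $1\le k\le\Mot+1$ combined with Talagrand's inequality \eqref{l:talagrand:eq2} (or a Bernstein bound) applied to $\VnormS{\fou{\Xi}_\uk}=\sup_{\Vnorm{a}\vee\Vnorm{b}\le1}a^t\fou{\Xi}_\uk b$ -- using $\VnormInf{p_{\iZ,\iV}}<\infty$ (Assumption \ref{a:mth:rv}), Assumption \ref{a:mth:bs} and a truncation of the $\iE_i$ handled by $\Ex\iE^{12}<\infty$ -- then shows $\Prob(\Bset^c)$ is small enough that the product is at most $Cn^{-1}\maxnormsup^2(1+\vE^2+\DiSoTwo)\SoRaC$, producing exactly the terms $n^3\exp(-n^{1/6}/50)$, $\bPhic(1+\DiSoInf/\vE)$, $\bPhid(\VnormInf{p_{\iZ,\iV}})$, $\Ex(\iE/\vE)^{8}$, $(\DiSoInf/\vE)^{8}$ and $(\maxnormsup/\vE)^2\Ex(\iE/\vE)^{12}$ of \eqref{de:iid:ora:Sigma}; here $\log(n)(\Mot+1)^2\Delta_{\Mot+1}^\Op=o(n)$ and \eqref{de:iid:no} are what guarantee $\bPhic$ and $\bPhid$ are $o(1)$. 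Combining the two contributions yields the claim.

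The main obstacle is the constant bookkeeping, not a conceptual difficulty: one must simultaneously choose the truncation level $c_n$ so that the truncation remainder is $O(n^{-1})$ using only the twelfth moment of $\iE$ (and so that $\sqrt n/c_n$ is of order $n^{1/6}$, matching the exponent in $\bPhib$), check that the factor $\Lambda_k^\Op$ in the penalty is large enough both to dominate a fixed multiple of $6H_k^2$ and to render $\sum_{k\le\Mot}\big[v_k n^{-1}e^{-nH_k^2/(6v_k)}+h_k^2 n^{-2}e^{-nH_k/(100h_k)}\big]$ of order $n^{-1}$ (not merely $o(1)$), and control the spectral deviation of $\DihfOp[\uk]-\DifOp[\uk]$ uniformly over $1\le k\le\Mot+1$ at a polynomial rate strong enough to beat the crude polynomial bound on $\max_k\VnormZ{\hDiSo[k]-\DiSo[k]}^2$ -- all of which hinge on the growth condition $\log(n)(\Mot+1)^2\Delta_{\Mot+1}^\Op=o(n)$.
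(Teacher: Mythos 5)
Your overall strategy matches the paper's: split on a good event where $\DihfOp[\uk]$ is uniformly close to $\DifOp[\uk]$ so that $\VnormZ{\hDiSo[k]-\DiSo[k]}^2\lesssim\VnormS{\DifOp[\uk]^{-1}}^2\Vnorm{\fou{V}_{\uk}}^2$, truncate the unbounded error at level $\sim n^{1/3}$ (the paper truncates only $\iE$, exploiting that $\ceE+\So-\DiSo[k]$ is already bounded by $\DiSoInf$; you truncate the whole residual, which is equivalent), apply Talagrand's inequality \eqref{l:talagrand:eq1} dimension by dimension with the factor $\Lambda_k^\Op$ providing summability, and control the bad event via \eqref{l:talagrand:eq2}. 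The one place where you genuinely diverge is the bad-event contribution. You bound $\max_k\VnormZ{\hDiSo[k]-\DiSo[k]}^2$ crudely by a quantity of order $n^2\hvY^2$ and apply Cauchy--Schwarz against $\Prob(\Bset^c)^{1/2}$; this forces you to establish $\Prob(\Bset^c)\lesssim n^{-6}$, which is indeed available from the exponential inequalities under $\log(n)(\Mot+1)^2\Delta_{\Mot+1}^\Op=o(n)$, but it is \emph{not} delivered by Lemma \ref{l:iid:sets} as stated (which only gives $C\SoRaC n^{-2}$), and the constant $\SoRaC$ of \eqref{de:iid:ora:Sigma} would have to be enlarged (prefactors $n^{3}\exp(\cdots)$ replaced by roughly $n^{7}\exp(\cdots)$, still finite in supremum over $n$); so your claim that this step produces ``exactly'' the terms of \eqref{de:iid:ora:Sigma} is not literally accurate. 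The paper avoids Cauchy--Schwarz entirely: it writes $\VnormZ{\hDiSo-\DiSo}^2\Ind{\hOpset}\Ind{\Xiset^c}\le n\Vnorm{\fou{V}_{\uDi}}^2\Ind{\Xiset^c}$, re-centres $n\Vnorm{\fou{V}_{\underline{\Mot}}}^2$ against $12\maxnormsup^2\Mot\vB[\Mot]^2\log(n)$ (handled by a second application of \eqref{l:iid:tal:conc} with the degenerate sequence $\ga_m=n\Ind{\{m=\Mot\}}$), and then only needs the first-moment bound $\Prob(\bigcup_k\Xiset[k]^c)\lesssim\SoRaC n^{-2}$ against the polynomial prefactor $\Mot\log(n)\le n$, plus a separate term $\VnormZ{\DiSo}^2\Prob(\bigcup_k\hOpset[k]^c)$ for the thresholding indicator. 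The paper's decomposition is therefore better calibrated to the $n^{-2}$ probability bounds it actually proves; your route works but only after strengthening those bounds.
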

\begin{proof}[\noindent\textcolor{darkred}{\sc Proof of Proposition \ref{p:iid:Eset}}]  We start the
  proof with the observation that $\VnormZ{\hDiSo-\DiSo}^2\Ind{\hOpset}\Ind{\Xiset}\leq 2\VnormS{\DifOp^{-1}}^2\Vnorm{\fou{V}_{\uDi}}^2$ and
$\VnormZ{\hDiSo-\DiSo}^2\Ind{\hOpset}\Ind{\Xiset^c}\leq
n\Vnorm{\fou{V}_{\uDi}}^2\Ind{\Xiset^c}$, and hence
\begin{equation*}
\VnormZ{\hDiSo-\DiSo}^2\leq 2\VnormS{\DifOp^{-1}}^2 \Vnorm{\fou{V}_{\uDi}}^2 + n\Vnorm{\fou{V}_{\uDi}}^2\Ind{\Xiset^c}+\VnormZ{\DiSo}^2\Ind{\hOpset^c}.
\end{equation*}
Since  $(\Delta_m^\Op)_{m\geq1}$ as in
\eqref{a:not:de:no} satisfies
$\Delta_{m}^\Op\geq\VnormS{\DifOp^{-1}}^2$ and
$\Vnorm{\fou{V}_{\uDi}}^2\Ind{\Xiset^c}\leq \Vnorm{\fou{V}_{\underline{\Mot}}}^2\sum_{m=1}^{\Mot}\Ind{\Xiset^c}$ we obtain
 \begin{multline}\label{p:p:iid:Eset:e1}
\Ex\set{\max_{1\leq \Di\leq\Mot}\vectp{\VnormZ{\hDiSo-\DiSo}^2 -24\maxnormsup^2\Di n^{-1}\vB^2\Lambda_\Di^\Op\Delta_{\Di}^\Op}}\\
\hfill\leq 2\Ex\set{ \max_{1\leq\Di\leq\Mot} \VnormS{\DifOp^{-1}}^2\vectp{\Vnorm{\fou{V}_{\uDi}}^2
    -12\maxnormsup^2\Di n^{-1}\vB^2\Lambda_\Di^\Op}}\\\hfill +
 \Ex\set{n\vectp{\Vnorm{\fou{V}_{\underline{\Mot}}}^2-12\maxnormsup^2\Mot  n^{-1}\vB[\Mot]^2\log(n)}}
\\+12\maxnormsup^2\Mot \vB[\Mot]^2 \log(n)P(\bigcup_{k=1}^{\Mot}\Xiset[k]^c)
 +\max_{1\leq \Di\leq\Mot}\VnormZ{\DiSo}^2P(\bigcup_{k=1}^{\Mot}\hOpset[k]^c)
\end{multline}
where we bound separately each of the four rhs
terms. In order to bound  the first and second rhs term we employ \eqref{l:iid:tal:conc} in
Lemma \ref{l:iid:tal} with $K=\Mot$ and sequence $\ga=(\ga_m)_{m\geq1}$ given by $\ga_m=\VnormS{\DifOp^{-1}}^2$ and
$\ga_m=n\Ind{\{m=\Mot\}}$, respectively. Keeping in mind that in both cases
$\ga_{(K)}K^2\leq n^{3/2}$  there exists a numerical constant $C>0$
such
that 
 \begin{multline*}
\Ex\set{\max_{1\leq \Di\leq\Mot}\vectp{\VnormZ{\hDiSo-\DiSo}^2 -24\maxnormsup^2\Di n^{-1}\vB^2\Lambda_\Di^\Op\Delta_{\Di}^\Op}}\\
\leq C n^{-1}\maxnormsup^2\big\{\vE^2\bPsi(1+\DiSoInf/\vE)+\vE^2\bPhib(1+\DiSoInf/\vE)+
\vE^2\bPhie(1+\DiSoInf/\vE)+\Ex(\iE/\vE)^{12}\}\\
+6\maxnormsup^2\Mot \vB[\Mot]^2\log(n) P(\bigcup_{k=1}^{\Mot}\Xiset[k]^c)
 +\max_{1\leq \Di\leq\Mot}\VnormZ{\DiSo}^2P(\bigcup_{k=1}^{\Mot}\hOpset[k]^c)
\end{multline*}
with $\bPsi$, $\bPhib$ and $\bPhie$ as in \eqref{de:iid:ora}, i.e., $\bPsi(x)=\sum_{m\geq 1} x \VnormS{\DifOp^{-1}}^2 \exp(- m
\Lambda_\Di^\Op/(6x) )$, $\bPhib(x)=n^{7/6}x^2\exp(-n^{1/6}/(100x))$ and $\bPhie(x)= x n \exp(-
\Mot\log(n)/(6x) )$, $x>0$. 
Exploiting that $\vB^2\leq 2(\vE^2+3\DiSoTwo)$, $\Mot\log(n)\leq
n$   and $\max_{1\leq \Di\leq\Mot}\VnormZ{\DiSo}^2\leq\DiSoTwo$,
replacing the probability $P(\bigcup_{k=1}^{\Mot}\hOpset[k]^c)$ and $P(\bigcup_{k=1}^{\Mot}\Xiset[k]^c)$ by its upper bound  
given in \eqref{l:iid:sets:hO} and  \eqref{l:iid:sets:B} in Lemma
\ref{l:iid:sets}, respectively, and  employing the definition
of $\SoRaC$ as in \eqref{de:iid:ora:Sigma} we obtain the result of the proposition, 
 which completes the proof.
\end{proof}
% --------------------------------------------------------------------
% <<Proposition \ref{p:iid:Rest}>>
% --------------------------------------------------------------------
\begin{prop}\label{p:iid:Rest} Under the assumptions of Theorem \ref{t:iid:ora}  there exists a numerical constant $C$ such that for all $n\geq1$
 \begin{equation*}
\Ex\big(\VnormZ{\hDiSo[\hDi]-\So}^2\1_{\Eset^c}\big)\leq 
C\;n^{-1}\;
\maxnormsup^2(1+\vE^2+\DiSoTwo)\SoRaC.
\end{equation*}
\end{prop}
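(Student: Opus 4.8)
The plan is to dominate the contribution of the exceptional event $\Eset^c$ by combining a crude, polynomial-in-$n$ bound for the fourth moment of the estimator with a super-polynomially small bound for $P(\Eset^c)$, the two being reconciled through the very definition \eqref{de:iid:ora:Sigma} of $\SoRaC$. Starting from the Cauchy--Schwarz inequality,
\begin{equation*}
\Ex\big(\VnormZ{\hDiSo[\hDi]-\So}^2\1_{\Eset^c}\big)\leq\big(\Ex\VnormZ{\hDiSo[\hDi]-\So}^4\big)^{1/2}\,\big(P(\Eset^c)\big)^{1/2},
\end{equation*}
it suffices to bound the two factors separately. For the first one I would use the thresholding built into \eqref{de:mth:est} together with Assumption \ref{a:mth:bs}: the estimator vanishes unless $\VnormS{\DihfOp^{-1}}^2\leq n$, and then a Cauchy--Schwarz step combined with $\sum_{j=1}^{\Mh}\basV_j^2\leq\DiMa\maxnormsup^2$ gives $\VnormZ{\hDiSo[\hDi]}^2\leq n^2\maxnormsup^2\DiMa\,\hvY^2$ with $\hvY^2=n^{-1}\sum_{i=1}^n\iY_i^2$, whence $\VnormZ{\hDiSo[\hDi]-\So}^4\leq C\,n^4\maxnormsup^4\DiMa^2(\hvY^2)^2+C\,\DiSoTwo^2$. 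Taking expectations and using $\Ex(\hvY^2)^2\leq\Ex\iY^4\leq 8(\Ex\iE^4+\DiSoInf^4)$ — which follows from Assumption \ref{a:mth:rv}, in particular $\Ex\iE^{12}<\infty$ and $\VnormInf{\ceE},\VnormInf{\So}<\infty$ — and writing $\Ex\iE^4=\vE^4\Ex(\iE/\vE)^4$ and $\DiSoInf^4=\vE^4(\DiSoInf/\vE)^4$, so that the moment factors $\Ex(\iE/\vE)^8$ and $(\DiSoInf/\vE)^8$ appearing in \eqref{de:iid:ora:Sigma} absorb the distributional dependence, one obtains a bound of the form $\Ex\VnormZ{\hDiSo[\hDi]-\So}^4\leq C\,n^a\maxnormsup^4(1+\vE^2+\DiSoTwo)^2\,\SoRaC$ for a fixed exponent $a$ (using that $\DiMa$ is at most a fixed power of $n$ and $\SoRaC\geq1$).

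For the second factor I would first invoke Lemma \ref{app:pre:l5}, which gives $\Aset\cap\Bset\cap\Cset\subset\Eset$ and hence $P(\Eset^c)\leq P(\Aset^c)+P(\Bset^c)+P(\Cset^c)$. Each of these three probabilities decays faster than every power of $n$ by the concentration arguments collected in Lemma \ref{l:iid:sets} (built on the Bernstein/Talagrand inequalities of Lemma \ref{l:talagrand}): the deviation of $\hvY^2$ from $\vY^2$ governs $P(\Aset^c)$, the spectral deviation $\VnormS{\fou{\Xi}_{\uk}}=\VnormS{\DihfOp[\uk]-\DifOp[\uk]}$ governs $P(\Bset^c)$, and the deviation of $\Vnorm{\DifOp[\uk]^{-1}\fou{V}_{\uk}}^2$ governs $P(\Cset^c)$, each uniformly over $1\leq k\leq\Mot+1$. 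The crucial input is the standing hypothesis $\log(n)(\Mot+1)^2\Delta_{\Mot+1}^\Op=o(n)$ (equivalently \eqref{de:iid:no} for $n\geq n_o$), which forces the exponential rates in these bounds to dominate every power of $n$; concretely $P(\Eset^c)$ is controlled by a finite sum of the quantities $n^3\exp(-n^{1/6}/50)$, $\bPhic(1+\DiSoInf/\vE)$, $\bPhid(\VnormInf{p_{\iZ,\iV}})$ and companions from \eqref{de:iid:ora}, all dominated by $\SoRaC$ by construction.

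Combining the two estimates, the super-polynomial smallness of $P(\Eset^c)$ beats the polynomial growth $n^a$ of the fourth moment, so that the right-hand side of the Cauchy--Schwarz bound is at most $C\,n^{-1}\maxnormsup^2(1+\vE^2+\DiSoTwo)$ times a factor bounded by $\SoRaC$ for all $n\geq n_o$; for the finitely many $n<n_o$ one falls back on the trivial bound $\Ex(\VnormZ{\hDiSo[\hDi]-\So}^2\1_{\Eset^c})\leq\Ex\VnormZ{\hDiSo[\hDi]-\So}^2\leq C\,n_o^2\maxnormsup^2(1+\vE^2+\DiSoTwo)$, which is absorbed by the term $n_o^2$ in \eqref{de:iid:ora:Sigma}, so the asserted bound holds for every $n\geq1$. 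The main obstacle lies entirely in the second factor: obtaining the exponential-type control of $P(\Aset^c)$, $P(\Bset^c)$ and $P(\Cset^c)$ uniformly over all admissible dimensions with only the moment information of Assumption \ref{a:mth:rv} at hand — this is precisely the content of Lemma \ref{l:iid:sets}, and it is there that the condition $\log(n)(\Mot+1)^2\Delta_{\Mot+1}^\Op=o(n)$ is indispensable.
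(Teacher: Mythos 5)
There is a genuine gap in your argument: the claim that $P(\Aset^c)$, $P(\Bset^c)$ and $P(\Cset^c)$ decay faster than every power of $n$ is not available under Assumption \ref{a:mth:rv}. The error $\iE$ is only assumed to satisfy $\Ex\iE^{12}<\infty$, so the deviations of $\hvY^2$ and of the unbounded part of $\fou{V}_{\uDi}$ can only be controlled by Markov-type inequalities; this is exactly why Lemma \ref{l:iid:sets} delivers only bounds of order $\SoRaC\,n^{-2}$, with bottleneck terms such as $C\,n^{-2}\{\Ex(\iE/\vE)^{8}+(\DiSoInf/\vE)^{8}\}$ for $P(\Aset^c)$ and $32(\maxnormsup^2/\vE^2)\Ex(\iE/\vE)^{12}n^{-3}$ inside $P(\Cset^c)$. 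The exponential factors $\bPhic$, $\bPhid$, etc.\ only govern the truncated (bounded) parts. With $P(\Eset^c)=O(\SoRaC\,n^{-2})$ your Cauchy--Schwarz split cannot close: your own crude bound gives $\Ex\VnormZ{\hDiSo[\hDi]-\So}^4\lesssim n^4\DiMa^2$ up to constants, so the product $(\Ex\VnormZ{\hDiSo[\hDi]-\So}^4)^{1/2}(P(\Eset^c))^{1/2}\lesssim n^{2}\DiMa\,\SoRaC^{1/2}\,n^{-1}$ diverges instead of being $O(n^{-1})$. Even squeezing the maximal polynomial rate out of the twelve moments (roughly $P(\Aset^c)=O(n^{-3})$) does not repair this, because the square root halves the exponent while the fourth moment of the thresholded estimator genuinely grows polynomially in $n$.

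The paper's proof circumvents precisely this obstacle by never pairing the random squared error with $P(\Eset^c)$ through Cauchy--Schwarz. It uses the thresholding to get $\VnormZ{\hDiSo[\hDi]-\So}^2\1_{\Eset^c}\leq 3n\Vnorm{\fou{V}_{\umaxDi}}^2+6\DiSoTwo$ with $\maxDi=\DiMa$, and then centres the dominating random term at the deterministic level $12\maxnormsup^2\vB[\maxDi]^2\maxDi\log(n)n^{-1}$: the positive part $n\vectp{\Vnorm{\fou{V}_{\umaxDi}}^2-12\maxnormsup^2\vB[\maxDi]^2\maxDi\log(n)n^{-1}}$ is bounded in expectation at order $n^{-1}\SoRaC$ directly by the Talagrand-based Lemma \ref{l:iid:tal} (where the unbounded part of $\iE$ contributes the term $\Ex(\iE/\vE)^{12}n^{-1}$), while only the \emph{deterministic} factor $36\maxnormsup^2\maxDi\vB[\maxDi]^2\log(n)+6\DiSoTwo$, which is $O(n)$ since $\maxDi\log(n)\leq n$, gets multiplied by $P(\Eset^c)=O(\SoRaC\,n^{-2})$ from \eqref{l:iid:sets:E}. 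You would need to adopt this (or an equivalent) decoupling of the stochastic and deterministic parts; as written, your second factor is too large for the first to absorb.
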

\begin{proof}[\noindent\textcolor{darkred}{\sc Proof of Proposition \ref{p:iid:Rest}}]  We start the
  proof with the observation that  
$\VnormZ{\hDiSo-\DiSo}^2\Ind{\hOpset}\leq
n\Vnorm{\fou{V}_{\uDi}}^2\leq
n\Vnorm{\fou{V}_{\umaxDi}}^2$
for all $1\leq \Di\leq \maxDi:=\DiMa$, and hence
$\VnormZ{\hDiSo-\So}^2\Ind{\hOpset} \leq3n\Vnorm{\fou{V}_{\umaxDi}}^2+6\DiSoTwo$
where $\DiSoTwo\geq\VnormZ{\So}^2\vee\sup_{m\geq1}\VnormZ{\DiSo}^2$  which  together with $\hDi\leq \maxDi$ implies 
\begin{multline}\label{p:p:iid:Rest:e1}
\Ex\big(\VnormZ{\hDiSo[\hDi]-\So}^2\1_{\Eset^c}\big)\leq 
3\Ex\set{n\vectp{\Vnorm{\fou{V}_{\umaxDi}}^2-12\maxnormsup^2
    \vB[\maxDi]^2\maxDi \log(n)n^{-1}}} \\+\{36\maxnormsup^2 \maxDi \vB[\maxDi]^2 \log(n) + 6\DiSoTwo\}P(\Eset^c)
\end{multline}
where we bound separately the two rhs
terms. In order to bound  the first rhs term we employ \eqref{l:iid:tal:conc} in
Lemma \ref{l:iid:tal} with sequence $\ga=(\ga_m)_{m\geq1}$ given by
$\ga_m=n\Ind{\{m=K\}}$ and $K=\maxDi$ where $K^2\ga_{(K)}\leq n^{3/2}$.  Thereby, there exists a
numerical constant $C>0$ such that
\begin{multline*}
\Ex\big(\VnormZ{\hDiSo[\hDi]-\So}^2\1_{\Eset^c}\big)\leq 
Cn^{-1}\maxnormsup^2\big\{\vE^2\bPhia\big(1+\DiSoInf/\vE\big)+\vE^2\bPhib\big(1+\DiSoInf/\vE\big)
        +\Ex(\iE/\vE)^{12}\big\}\\+\{36\maxnormsup^2 \maxDi \vB[\maxDi]^2
        \log(n) + 6\DiSoTwo\}P(\Eset^c)%\\
\end{multline*}
with $\bPhia$ and $\bPhib$ as in \eqref{de:iid:ora}, i.e.,
$\bPhia(x)= x n \exp(- \DiMa\log(n)/(6x))$ and $\bPhib(x):=n^{7/6}x^2\exp(-n^{1/6}/(100x))$, $x>0$. 
Exploiting further the definition of $\SoRaC$ as in
\eqref{de:iid:ora:Sigma} and that $\vB[\maxDi]^2\leq
2\{\vE^2+3\DiSoTwo\}$ and
$M\log(n)\leq n$ the result of the proposition follows now  by
replacing the probability $P(\Eset^c)$ by its upper bound  
given in \eqref{l:iid:sets:E} in Lemma \ref{l:iid:sets}, which completes the proof.
\end{proof}
% --------------------------------------------------------------------
% <<Lem bound events>> \ref{l:iid:sets{:A|B|C|hO|Xi|E}}
% --------------------------------------------------------------------
\begin{lem}\label{l:iid:sets}
Under the assumptions of Theorem \ref{t:iid:ora} there exists a numerical constant $C$ such that for all $n\geq1$
\begin{align}
&\ProbaMeasure\big(\Aset^c)=\ProbaMeasure\big(\{1/2\leq\hsigma^2_Y/\sigma_Y^2\leq
  3/2\}^c\big)\leq C\; \SoRaC\;n^{-2},
 \label{l:iid:sets:A}\\
&\ProbaMeasure\big(\Bset^c\big)=\ProbaMeasure\big(\bigcup_{\Di=1}^{\Mot+1}\Xiset^c\big)\leq C\;
 \SoRaC\; n^{-2}, \label{l:iid:sets:B}\\
&\ProbaMeasure\big(\Cset^c\big)\leq C\; \SoRaC\; n^{-2}, \label{l:iid:sets:C}\\
&\ProbaMeasure\big(\Eset^c\big)\leq C\;\SoRaC\; n^{-2}, \label{l:iid:sets:E}\\
&\ProbaMeasure\big(\bigcup_{\Di=1}^{\Mot}\hOpset\big)\leq  C\;
 \SoRaC\;n^{-2}. \label{l:iid:sets:hO}
\end{align}
\end{lem}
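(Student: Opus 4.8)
The plan is to prove the three \lq\lq genuine\rq\rq\ estimates \eqref{l:iid:sets:A}, \eqref{l:iid:sets:B}, \eqref{l:iid:sets:C} first and then to read off \eqref{l:iid:sets:E} and \eqref{l:iid:sets:hO} from the set inclusions already at hand. Throughout I would dispose of the range $n<n_o$, with $n_o$ as in \eqref{de:iid:no}, at once by bounding every probability by $1\le n_o^2 n^{-2}\le\SoRaC n^{-2}$, since $n_o^2$ is one of the terms in the definition \eqref{de:iid:ora:Sigma} of $\SoRaC$; for $n\ge n_o$ I may then use the size restriction $1024\maxnormsup^4(1+\DiSoInf/\vE)^2(\Mot+1)^2\Delta_{\Mot+1}^\Op\le n$, and in particular $\VnormS{\DifOp[\uk]^{-1}}^2\le\Delta_{\Mot+1}^\Op\le n/1024$ for every $1\le k\le\Mot+1$. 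For \eqref{l:iid:sets:A} I would note that $\hvY^2-\vY^2=n^{-1}\sum_{i=1}^n(Y_i^2-\vY^2)$ is an \iid\ centred sum with $\Ex(Y^2-\vY^2)^4\le C(\Ex\iE^8+\DiSoInf^8+\vY^8)$, using $Y=\iE+\ceE(\iZ,\iV)+\So(\iZ)$, $\VnormInf{\ceE}+\VnormInf{\So}\le\DiSoInf$ and $\Ex\iE^{12}<\infty$; a standard moment inequality gives $\Ex|\hvY^2-\vY^2|^4\le Cn^{-2}\Ex(Y^2-\vY^2)^4$, and since $\vY^2\ge\vE^2>0$ Markov's inequality yields $\ProbaMeasure(\Aset^c)=\ProbaMeasure(|\hvY^2-\vY^2|>\vY^2/2)\le Cn^{-2}\{\Ex(\iE/\vE)^8\vee(\DiSoInf/\vE)^8\vee1\}\le C\SoRaC n^{-2}$.

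The hard part will be \eqref{l:iid:sets:B}. For fixed $1\le k\le\Mot+1$ I need $\ProbaMeasure(\Xiset[k]^c)=\ProbaMeasure\big(\VnormS{\fou{\Xi}_\uk}>(4\VnormS{\DifOp[\uk]^{-1}})^{-1}\big)$. I would write the spectral norm as the supremum of an empirical process over a countable dense subset of the product of the Euclidean unit balls, $\VnormS{\fou{\Xi}_\uk}=\sup_{\Vnorm{u}\le1,\,\Vnorm{v}\le1}n^{-1}\sum_{i=1}^n\{(u^t\fou{\basV(\iV_i)}_\uk)(v^t\fou{\basZ(\iZ_i)}_\uk)-\Ex[\,\cdot\,]\}$, and apply \eqref{l:talagrand:eq2}. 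Assumption \ref{a:mth:bs} gives the envelope $h\le2k\maxnormsup^2$ and, bounding the spectral norm by the Frobenius norm, $H\le k\maxnormsup^2 n^{-1/2}$; the point on which everything turns is that, because $\iZ$ and $\iV$ are uniform on $[0,1]$ with bounded joint density, the weak variance obeys $v\le\sup_{\Vnorm{u},\Vnorm{v}\le1}\Ex[(u^t\fou{\basV(\iV)}_\uk)^2(v^t\fou{\basZ(\iZ)}_\uk)^2]\le\VnormInf{p_{\iZ,\iV}}$, uniformly in $k$. The size restriction forces $(4\VnormS{\DifOp[\uk]^{-1}})^{-1}\ge8\maxnormsup^2(\Mot+1)n^{-1/2}\ge8H$, so \eqref{l:talagrand:eq2} applies with $\lambda=(4\VnormS{\DifOp[\uk]^{-1}})^{-1}-2H\ge\tfrac34(4\VnormS{\DifOp[\uk]^{-1}})^{-1}$, and the minimum in its exponent is bounded below by $\min\{cn/(\VnormInf{p_{\iZ,\iV}}\Delta_{\Mot+1}^\Op),\,3\sqrt n/100\}$, whence $\ProbaMeasure(\Xiset[k]^c)\le3\exp\big(-cn/(\VnormInf{p_{\iZ,\iV}}\Delta_{\Mot+1}^\Op)\big)+3\exp(-3\sqrt n/100)$. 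Under $\log(n)(\Mot+1)^2\Delta_{\Mot+1}^\Op=o(n)$ both terms are $o(n^{-3})$; summing over the at most $\Mot+1\le n$ indices and using the terms $\bPhid(\VnormInf{p_{\iZ,\iV}})$ and $n^3\exp(-n^{1/6}/50)$ of \eqref{de:iid:ora:Sigma} gives $\ProbaMeasure(\Bset^c)\le\sum_{k=1}^{\Mot+1}\ProbaMeasure(\Xiset[k]^c)\le C\SoRaC n^{-2}$.

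For \eqref{l:iid:sets:C} I would proceed along the same lines after the reduction $\Cset^c\subset\bigcup_{k=1}^{\Mot}\{\Vnorm{\DifOp[\uk]^{-1}\fou{V}_\uk}>\vE/(2\sqrt2)\}$, valid because $\Vnorm{\DifOp[\uk]^{-1}\DifIm[\uk]}^2\ge0$ and $\vY^2\ge\vE^2$. Here $\fou{V}_\uk=n^{-1}\sum_{i=1}^n\fou{\basV(\iV_i)}_\uk(Y_i-\DiSo[k](\iZ_i))$ is centred, and $\Vnorm{\DifOp[\uk]^{-1}\fou{V}_\uk}$ is again a supremum of an empirical process over $\Vnorm{u}\le1$; conditioning on $(\iZ,\iV)$ and using $\iE\perp(\iZ,\iV)$ gives $\Ex[(Y-\DiSo[k](\iZ))^2\mid\iZ,\iV]\le\vE^2+\DiSoInf^2$, hence $\Ex\Vnorm{\DifOp[\uk]^{-1}\fou{V}_\uk}^2\le\VnormS{\DifOp[\uk]^{-1}}^2 k n^{-1}(\vE^2+\DiSoInf^2)\le\vE^2/512$, the last step being exactly where the $(1+\DiSoInf/\vE)^2$ factor inside \eqref{de:iid:no} is needed so that the event is a deviation from a negligible mean. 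The only new difficulty, the unboundedness of the summands, I would resolve by the standard truncation device: splitting $\iE_i$ at level $\tau_n\sim n^{1/3}\vE$, the event $\max_{1\le i\le n}|\iE_i|>\tau_n$ has probability at most $n\Ex\iE^{12}\tau_n^{-12}=\Ex(\iE/\vE)^{12}n^{-3}$, absorbed into $\SoRaC$ via \eqref{de:iid:ora:Sigma}, while on its complement \eqref{l:talagrand:eq2} applies to the truncated process with $v\le(\vE^2+\DiSoInf^2)\VnormS{\DifOp[\uk]^{-1}}^2$ and $h\le C\VnormS{\DifOp[\uk]^{-1}}\sqrt k\,\maxnormsup\,n^{1/3}$, producing the $\bPhic(1+\DiSoInf/\vE)$-type and $\exp(-cn^{1/6})$-type contributions; summing over $k\le\Mot\le n$ gives $\ProbaMeasure(\Cset^c)\le C\SoRaC n^{-2}$.

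It then remains to assemble \eqref{l:iid:sets:E} and \eqref{l:iid:sets:hO}. For the former, Lemma \ref{app:pre:l5} gives $\Eset^c\subset\Aset^c\cup\Bset^c\cup\Cset^c$, so $\ProbaMeasure(\Eset^c)\le C\SoRaC n^{-2}$ by the three bounds above. For the latter, for $n\ge n_o$ and $1\le k\le\Mot$ the size restriction gives $(4/3)\VnormS{\DifOp[\uk]^{-1}}\le(4/3)\sqrt{n/1024}\le\sqrt n$, so Lemma \ref{app:pre:l6} yields $\hOpset[k]^c\subset\Xiset[k]^c$, whence $\bigcup_{k=1}^{\Mot}\hOpset[k]^c\subset\bigcup_{k=1}^{\Mot+1}\Xiset[k]^c=\Bset^c$ and $\ProbaMeasure(\bigcup_{k=1}^{\Mot}\hOpset[k]^c)\le\ProbaMeasure(\Bset^c)\le C\SoRaC n^{-2}$, the case $n<n_o$ being trivial as noted. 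The genuine obstacle is \eqref{l:iid:sets:B}: pushing the spectral-norm deviation down to order $n^{-2}$ hinges on the $k$-uniform weak-variance bound afforded by the bounded density together with the precise form of the size restriction \eqref{de:iid:no}; the same combination, together with the truncation bookkeeping, governs \eqref{l:iid:sets:C}, and both must be carried out so that the resulting constants stay in a shape amenable to the uniformization over the smoothness classes performed in Section \ref{s:mm}.
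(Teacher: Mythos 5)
Your proposal is correct and follows essentially the same route as the paper: the case split at $n_o$ absorbed into $\SoRaC$, a Rosenthal--Markov argument for \eqref{l:iid:sets:A}, Talagrand's deviation inequality \eqref{l:talagrand:eq2} with the bounded-density weak-variance bound for \eqref{l:iid:sets:B} (the paper's Lemma \ref{l:iid:tal2} works with the Frobenius-norm class $\cB_{\Di^2}$ rather than the product of unit balls, but the quantities $h$, $H$, $v$ and the conclusion are the same), truncation of $\iE$ at level $\vE n^{1/3}$ plus Talagrand for \eqref{l:iid:sets:C} (Lemma \ref{l:iid:tal}, which controls the unbounded part by a second-moment Markov bound rather than your max-exceedance event, a purely cosmetic difference), and the inclusions from Lemmas \ref{app:pre:l5} and \ref{app:pre:l6} for \eqref{l:iid:sets:E} and \eqref{l:iid:sets:hO}. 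The identification of \eqref{l:iid:sets:B} and \eqref{l:iid:sets:C} as the points where the size restriction \eqref{de:iid:no} and the uniform weak-variance bound are consumed matches the paper's proof exactly.
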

\begin{proof}[\noindent\textcolor{darkred}{\sc Proof of Lemma \ref{l:iid:sets}}]Consider
  \eqref{l:iid:sets:A}. Since
  $\iY_1^2/\vY^2-1,\dotsc,\iY_n^2/\vY^2-1 $ are independent and
  and centred random variables with $\Ex\big|Y_i^2/\vY^2-1\big|^{4}\leq C \vY^{-8}\Ex|\iY|^{8}$ it follows from Theorem 2.10 in
\cite{Petrov1995}  that $\Ex\big|n^{-1}\sum_{i=1}^n\iY_i^2/\vY^2-1\big|^{4}\leq C
n^{-2} \vY^{-8}\Ex|\iY|^{8}$ where $\vY\geq \vE$ and
$\Ex|\iY|^{8}\leq C(\Ex(\iE)^{8}+(\DiSoInf)^{8})$ with
$\DiSoInf\geq\VnormInf{\ceE}\vee\VnormInf{\So}$. Employing Markov's inequality  and the last bounds
we obtain  $P\big(|n^{-1}\sum_{i=1}^n\iY_i^2/\vY^2-1|>1/2\big)\leq C n^{-2}
 (\Ex(\iE/\vE)^{8}+(\DiSoInf/\vE)^{8})$. Thereby, the assertion
\eqref{l:iid:sets:A} follows from the last bound by employing the
definition of $\SoRaC$ given in \eqref{de:iid:ora:Sigma} and  by
exploiting that  $\{1/2\leq\hvY^2/\vY^2\leq3/2\}^c\subset
\{|n^{-1}\sum_{i=1}^n\iY_i^2/\vY^2-1|>1/2\}$. Consider 
\eqref{l:iid:sets:B}--\eqref{l:iid:sets:E}. Let  $\ga$ be a sequence given by
$\ga_m=\VnormS{\DifOp^{-1}}^2$ where
$\ga_{(m)}=\Delta^T_{m}$ and $n_o$ an integer satisfying
\ref{de:iid:no}, that is,  $n\geq 1024\maxnormsup^4(1+\DiSoInf/\vE)^2(\Mot+1)^2\Delta_{\Mot+1}^\Op$ for
 all $n> n_o$. We distinguish in the following the cases $n\leq n_o$ and
$n > n_o$. Consider 
\eqref{l:iid:sets:B}. Obviously, we have
$\ProbaMeasure\big(\Bset^c\big)\leq n^{-2}n_o^2$ for all $1\leq n\leq n_o$. On the other hand, given $n\geq n_o$ and, hence $n\geq
256\maxnormsup^4(\Mot+1)^2\Delta_{\Mot+1}^\Op=4c^{-2}\maxnormsup^4 K^2\ga_{(K)}$ with sequence $\ga=(\VnormS{\DifOp^{-1}}^2)_{m\geq1}$, integer $K=\Mot+1$ and constant
$c=1/8$ we obtain from  \eqref{l:iid:tal2:prob} in Lemma
\ref{l:iid:tal2} for all $1\leq m\leq (\Mot+1)$
\begin{equation*}
  \ProbaMeasure(\Xiset^c)=\ProbaMeasure(\VnormS{\DifOp^{-1}}^2\VnormS{\fou{\Xi}_{\Di}}^2\geq1/16)
  \leq3\exp\bigg[\frac{-n}{6400\VnormInf{p_{\iZ,\iV}}\Delta_{\Mot+1}^\Op}\vee\frac{-n^{1/2}}{50}\bigg]
\end{equation*}
and hence,  given  $\bPhid$ as in \eqref{de:iid:ora} and
$\Mot+1\leq n$  it follows 
\begin{equation*}
\ProbaMeasure\big(\Bset^c\big)\leq (\Mot+1)\max_{1\leq m\leq
  (\Mot+1)}\ProbaMeasure(\Xiset^c)\leq 3\;
n^{-2}\;\bPhid(\VnormInf{p_{\iZ,\iV}})\vee \{n^3\exp(-n^{1/2}/50)\}.
\end{equation*}
 By combination of the
two cases and employing the definition of
$\SoRaC$ given in \eqref{de:iid:ora:Sigma} we obtain \eqref{l:iid:sets:B}. The proof of
\eqref{l:iid:sets:C} follows a long the lines of the proof of
\eqref{l:iid:sets:B} using \eqref{l:iid:tal:prob} in Lemma
\ref{l:iid:tal} rather than  \eqref{l:iid:tal2:prob} in Lemma
\ref{l:iid:tal2}. Precisely, if $1\leq n\leq n_o$ we have $P(\Cset^c)\leq
n^{-2}n_o^2$, while given $n> n_o$ and, hence 
$n \geq 1024\maxnormsup^2 (1+\DiSoInf/\vE)^2 (\Mot+1) \Delta_{\Mot+1}^\Op =
4c^{-2}\maxnormsup^2 (1+\DiSoInf/\vE)^2 K\ga_{(K)}$  with sequence $\ga_m=\VnormS{\DifOp^{-1}}^2$, integer
$K=\Mot$ and constant $c=1/16$ from  \eqref{l:iid:tal:prob} in Lemma
\ref{l:iid:tal} we obtain for all $1\leq m\leq \Mot$
\begin{multline*}
\ProbaMeasure(\Vnorm{\DifOp^{-1}\fou{V}_{\uDi}}^2>\tfrac{1}{8}(\VnormZ{\DiSo}^2+
\vY^2))\leq\ProbaMeasure(\ga_{m}\Vnorm{\fou{V}_{\uDi}}^2>16
c^2 \{2\VnormZ{\DiSo}^2+ 2\vY^2\})\\
\hfill\leq 3\exp\bigg[\frac{-n}{25600(1+\DiSoInf/\vE)^2\Delta^\Op_{\Mot}}\vee\frac{-n^{1/6}}{50}\bigg]
+ 32 (\maxnormsup^2/\vE^2) \Ex(\iE/\vE)^{12}\;n^{-3}.
\end{multline*}
Exploiting the definition of $\bPhic$ given in \eqref{de:iid:ora}
implies $\ProbaMeasure\big(\Cset^c\big)\leq 3 
\{n^3\exp(-n^{1/6}/50)\}\vee\bPhic(1+\DiSoInf/\vE)n^{-2}+ 32(\maxnormsup^2/\vE^2) \Ex(\iE/\vE)^{12}n^{-2} $
The assertion \eqref{l:iid:sets:C} follows employing the definition of
$\SoRaC$ given in \eqref{de:iid:ora:Sigma}. Consider
\eqref{l:iid:sets:E}. Due to Lemma \ref{app:pre:l5} it holds
$\ProbaMeasure\big(\Eset^c\big)\leq \ProbaMeasure\big(\Aset^c\big)
+\ProbaMeasure\big(\Bset^c\big)
+\ProbaMeasure\big(\Cset^c\big)$. Therefore, the assertion
\eqref{l:iid:sets:E} follows from
\eqref{l:iid:sets:A}--\eqref{l:iid:sets:C}. Consider
\eqref{l:iid:sets:hO}. We distinguish again the two cases $n\leq n_o$
and $n>n_o$, where $\ProbaMeasure\big(\bigcup_{m=1}^{\Mot}\hOpset^c\big)\leq n^{-2}n_o^2$
for all $1\leq n\leq n_o$. On the other hand, for all $n>n_o$ we have 
$n\geq (16/9)\Delta_{\Mot+1}^\Op\geq
(16/9)\VnormS{\DifOp^{-1}}^2$ for all $1\leq m\leq \Mot$, and hence from Lemma
\ref{app:pre:l6} follows $\bigcup_{m=1}^{\Mot}\hOpset^c\subset\bigcup_{m=1}^{\Mot}\Xiset^c\subset\Bset^c$ for all
$n>n_o$. Thereby, \eqref{l:iid:sets:B} implies \eqref{l:iid:sets:hO}  for all
$n>n_o$. By combination of the two cases we obtain \eqref{l:iid:sets:hO}, which completes the proof.
\end{proof}
% --------------------------------------------------------------------
% <<Lemma iid Talagrand>> \ref{l:iid:tal{:conc|:prob}}
% --------------------------------------------------------------------
\begin{lem}\label{l:iid:tal} Given  a
  non negative sequence  $\ga:=(\ga_{\Di})_{\Di\in\Nz}$ let
  $\Lambda_\Di^\ga:=\Lambda_\Di(\ga)$ as in \eqref{de:delta},
  $\ga_{(K)}:=\max_{1\leq m\leq K}\ga_m$, for any $x>0$, $\Phi_n(x):=n^{7/6}x^2\exp(-n^{1/6}/(100x))$ and $\Psi_\ga(x):=\sum_{m\geq 1} x^2 \ga_m \exp(- m \Lambda_\Di^\ga/(6x^2) )<\infty$, which by
  construction always exists. If    $\ga_{(K)}K^2\leq n^{3/2}$ and $\DiSoInf\geq\sup_{\Di\geq
      1}\VnormInf{\ceE+\So-\DiSo}<\infty$  then  there exists a
  numerical constant $C$ such that 
\begin{multline}\label{l:iid:tal:conc}
\Ex\vectp{\max_{1\leq\Di\leq K}\ga_\Di[\Vnorm{\fou{V}_{\uDi}}^2-12\maxnormsup^2\sigma_{\Di}^2\Di
  \Lambda_\Di^\ga n^{-1}]}\\
\leq Cn^{-1} \maxnormsup^2 \big\{\vE^2\Psi_\ga\big(1+\DiSoInf/\vE\big)+\vE^2\Phi_n\big(1+\DiSoInf/\vE\big)
        +\Ex(\iE/\vE)^{12}\big\}
\end{multline}
Moreover, if  $n \geq 4 c^{-2}(1+\DiSoInf/\vE)^2\maxnormsup^2  K \ga_{(K)}$ for $c>0$ then for all $1\leq\Di\leq K$ holds
\begin{multline}\label{l:iid:tal:prob}
\ProbaMeasure\big(\ga_m\Vnorm{\fou{V}_{\uDi}}^2\geq 16c^2\{2\vY^2+2\VnormZ{\DiSo}^2\}\big)\\
\hfill\leq 3\exp\bigg[\frac{-nc^2}{100(1+\DiSoInf/\vE)^2\ga_{(K)}}\vee\frac{-n^{1/6}}{50}\bigg]\hfill\\
+ (8c^2)^{-1} (\maxnormsup^2/\vE^2) \Ex(\iE/\vE)^{12}\;n^{-3}.
      \end{multline}
\end{lem}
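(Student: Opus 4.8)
\noindent The plan is to deduce both assertions from Talagrand's inequalities, Lemma~\ref{l:talagrand}, applied to an empirical process associated with $\fou{V}_{\uDi}$, after truncating the unbounded error $\iE$. First I would write $\Vnorm{\fou{V}_{\uDi}}=\sup_{t\in D}\sum_{j=1}^{\Di}t_j\fou{V}_j$ with $D$ a countable dense subset of the Euclidean unit ball of $\Rz^\Di$, and note that $\sum_{j=1}^{\Di}t_j\fou{V}_j=\tfrac1n\sum_{i=1}^n\nu_t(\iY_i,\iZ_i,\iV_i)$, $\nu_t(y,z,v):=\big(\sum_{j=1}^{\Di}t_j\basV_j(v)\big)\big(y-\DiSo(z)\big)$, a family centred for each $t$ since $\Ex[\basV_j(\iV)(\iY-\DiSo(\iZ))]=0$ for $j\leq\Di$. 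Using $\iY_i-\DiSo(\iZ_i)=\iE_i+\xi_i$ with $\xi_i:=\ceE(\iZ_i,\iV_i)+\So(\iZ_i)-\DiSo(\iZ_i)$, $\VnormInf{\xi}\leq\DiSoInf$, I would set $a_n:=\vE n^{1/3}$, split $\iE_i=\iE_i^{(1)}+\iE_i^{(2)}$ with $\iE_i^{(1)}:=\iE_i\Ind{\{|\iE_i|\leq a_n\}}$, and accordingly decompose $\fou{V}_{\uDi}=\fou{W}_{\uDi}+\fou{R}_{\uDi}$, where $\fou{W}_j:=\tfrac1n\sum_i\{\basV_j(\iV_i)(\iE_i^{(1)}+\xi_i)-\Ex[\basV_j(\iV)(\iE^{(1)}+\xi)]\}$ carries the bounded, centred part, with envelope $h:=\maxnormsup\sqrt{\Di}(a_n+\DiSoInf)$, and $\fou{R}_j:=\tfrac1n\sum_i\{\basV_j(\iV_i)\iE_i^{(2)}-\Ex[\basV_j(\iV)\iE^{(2)}]\}$ collects the heavy tail.

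For the process $t\mapsto\sum_jt_j\fou{W}_j$ I would apply \eqref{l:talagrand:eq1}. Using independence of $\iE$ and $(\iZ,\iV)$, $\Ex\basV_j^2(\iV)=1$, $\Ex(\iE^{(1)})^2\leq\vE^2$, $\VnormInf{\xi}\leq\DiSoInf$ and Assumption~\ref{a:mth:bs}, one gets the variance proxy $v:=2\vE^2(1+\DiSoInf/\vE)^2$ and, from $\Ex\Vnorm{\fou{W}_{\uDi}}^2=\tfrac1n\sum_{j=1}^{\Di}\Var(\basV_j(\iV)(\iE^{(1)}+\xi))\leq2\maxnormsup^2\vB^2\Di n^{-1}$ together with $\Lambda_\Di^\ga\geq1$, the admissible mean proxy $H^2:=2\maxnormsup^2\vB^2\Di\Lambda_\Di^\ga n^{-1}\geq\Ex\Vnorm{\fou{W}_{\uDi}}^2$, for which $6H^2=12\maxnormsup^2\vB^2\Di\Lambda_\Di^\ga n^{-1}$ is precisely the quantity subtracted in \eqref{l:iid:tal:conc} (recall $\sigma_\Di^2=\vB^2$). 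Feeding these into \eqref{l:talagrand:eq1} and using $\vB^2\geq2\vE^2$, $\maxnormsup^2\geq1$ and the choice of $a_n$ — which yield $nH^2/(6v)\geq\Di\Lambda_\Di^\ga/(3(1+\DiSoInf/\vE)^2)$ and $nH/(100h)\geq n^{1/6}/(100(1+\DiSoInf/\vE))$ — gives a per-$\Di$ bound of order $n^{-1}\maxnormsup^2\vE^2\{x^2e^{-\Di\Lambda_\Di^\ga/(6x^2)}+\Di\,n^{-4/3}x^2e^{-n^{1/6}/(100x)}\}$ with $x:=1+\DiSoInf/\vE$. For the tail part I would use only $\Ex\Vnorm{\fou{R}_{\uDi}}^2\leq\tfrac{\Di}{n}\Ex[\iE^2\Ind{\{|\iE|>a_n\}}]\leq\tfrac{\Di}{n}a_n^{-10}\Ex\iE^{12}=\tfrac{\Di}{n}\vE^2n^{-10/3}\Ex(\iE/\vE)^{12}$, exploiting $\Ex\iE^{12}<\infty$. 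Finally, I would bound the expectation of the maximum by $\sum_{\Di=1}^K\ga_\Di\Ex\vectp{\Vnorm{\fou{V}_{\uDi}}^2-12\maxnormsup^2\vB^2\Di\Lambda_\Di^\ga n^{-1}}$, pass from $\fou{W}$ to $\fou{V}$ by a routine $(1+\varepsilon_n)$-splitting that absorbs the negligible mass of $\fou{R}$ into the constant, and sum: the first Talagrand term gives $\sum_{\Di\geq1}x^2\ga_\Di e^{-\Di\Lambda_\Di^\ga/(6x^2)}=\Psi_\ga(x)$; the second, with $\sum_{\Di\leq K}\ga_\Di\Di\leq\ga_{(K)}K^2\leq n^{3/2}$, gives $n^{-1}\Phi_n(x)$; and $\sum_{\Di\leq K}\ga_\Di\Ex\Vnorm{\fou{R}_{\uDi}}^2$ is a negligible multiple of $n^{-1}\Ex(\iE/\vE)^{12}$. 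This produces \eqref{l:iid:tal:conc}.

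For \eqref{l:iid:tal:prob} I would set $T:=2\vY^2+2\VnormZ{\DiSo}^2$, noting $T\geq2\vY^2\geq2\vE^2$ and $T\geq\Ex(\iY-\DiSo(\iZ))^2=\vE^2+\Ex\xi^2$, and split, via $\Vnorm{\fou{V}_{\uDi}}\leq\Vnorm{\fou{W}_{\uDi}}+\Vnorm{\fou{R}_{\uDi}}$,
\[
\{\ga_\Di\Vnorm{\fou{V}_{\uDi}}^2\geq16c^2T\}\subset\{\Vnorm{\fou{W}_{\uDi}}\geq2c\sqrt{T/\ga_\Di}\}\cup\{\Vnorm{\fou{R}_{\uDi}}\geq2c\sqrt{T/\ga_\Di}\}.
\]
On the first event I would invoke \eqref{l:talagrand:eq2} with $2H_0:=2(2\maxnormsup^2T\Di/n)^{1/2}$, an admissible mean bound since $\vE^2+\Ex\xi^2\leq T$, and $\lambda:=2c\sqrt{T/\ga_\Di}-2H_0$; the hypothesis $n\geq4c^{-2}(1+\DiSoInf/\vE)^2\maxnormsup^2K\ga_{(K)}$ forces $2H_0\leq c\sqrt{T/\ga_\Di}$, hence $c\sqrt{T/\ga_\Di}\leq\lambda\leq2c\sqrt{T/\ga_\Di}$, and, combined with $v\leq2\vE^2(1+\DiSoInf/\vE)^2$, $T\geq2\vE^2$, $\ga_\Di\leq\ga_{(K)}$, $\sqrt{\Di\ga_\Di}\leq\sqrt{K\ga_{(K)}}\leq c\sqrt{n}\,\big(2\maxnormsup(1+\DiSoInf/\vE)\big)^{-1}$ and $a_n=\vE n^{1/3}$, it yields $\tfrac n{100}(\lambda^2/v\wedge\lambda/h)\geq\tfrac{nc^2}{100(1+\DiSoInf/\vE)^2\ga_{(K)}}\vee\tfrac{n^{1/6}}{50}$, i.e. the exponential term. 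On the second event, Markov's inequality together with the bound on $\Ex\Vnorm{\fou{R}_{\uDi}}^2$ above, $\ga_\Di\Di\leq\ga_{(K)}K\leq nc^2/(4\maxnormsup^2(1+\DiSoInf/\vE)^2)$ and $T\geq2\vE^2$ produces a residual which, using the hypothesis once more, is at most $(8c^2)^{-1}(\maxnormsup^2/\vE^2)\Ex(\iE/\vE)^{12}n^{-3}$. Combining the two contributions yields \eqref{l:iid:tal:prob}.

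The genuinely delicate point is not the architecture but the numerical bookkeeping: the truncation level $a_n$ must be tuned so that the envelope $h$ produces exactly the factor $n^{1/6}$ in the second exponent while the tail remainder keeps the announced polynomial order in $n$; the three summed contributions must be matched precisely to $\Psi_\ga(1+\DiSoInf/\vE)$, $\Phi_n(1+\DiSoInf/\vE)$ and the $\Ex(\iE/\vE)^{12}$-term; and one must dispose separately of the mild edge cases (small $n$, or $\DiSoInf<\vE$) and of the passage from the constant $24$ to $12$ in front of the subtracted variance term.
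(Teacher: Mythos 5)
Your proposal follows essentially the same route as the paper's proof: the same representation of $\Vnorm{\fou{V}_{\uDi}}$ as an empirical process indexed by the Euclidean unit ball, the same truncation of $\iE$ at level $\vE n^{1/3}$ splitting the process into a bounded centred part and a heavy-tail remainder, the same choices (up to constants) of $h$, $H$ and $v$ in Talagrand's inequalities \eqref{l:talagrand:eq1} and \eqref{l:talagrand:eq2}, the same Markov/twelfth-moment treatment of the remainder, and the same use of $\ga_{(K)}K^2\leq n^{3/2}$ to match the three contributions to $\Psi_\ga$, $\Phi_n$ and the $\Ex(\iE/\vE)^{12}$ term. The constant bookkeeping you flag (the passage between the subtracted $12$ and the per-term $6H^2$, the re-centring of the truncated error) is handled in the paper by the elementary splitting $|\overline{\nu_t}|^2\leq 2|\overline{\nu_t}^b|^2+2|\overline{\nu_t}^u|^2$ and works out as you anticipate.
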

\begin{proof}[\noindent\textcolor{darkred}{\sc Proof of Lemma \ref{l:iid:tal}}]
We intend to apply  Talagrand's inequalities given in Lemma \ref{l:talagrand}  employing the identity
$\Vnorm{\fou{V}_{\uDi}}^2=\sup_{t\in\Bz_{\Di}}|\overline{\nu_t}|^2$
where $\Bz_{\Di}:=\{t\in\DiHiZ:\VnormZ{t}\leq1\}$% ,
and
$\nu_t(\iE,\iZ,\iV)=\sum_{j=1}^\Di(\iE+\ceE(\iZ,\iV)+\So(\iZ)-\DiSo(\iZ))\fou{t}_j\basV_j(\iV)$
where $\iNo=\iE+\ceE(\iZ,\iV)$ and $\iE$ and $(\iZ,\iV)$ are
independent. A direct application, however, is not possible since
$\iE$ and hence, $\nu_t$ are generally not uniformly bounded in $\iE$.  Therefore, let
us introduce $\iE^b:=\iE\Ind{\set{|\iE|\leq\vE n^{1/3}}}- \Ex\iE\Ind{\set{|\iE|\leq\vE n^{1/3}}}$ and
      $\iE^u:=\iE-\iE^b$. Setting
      $\nu_t^b(\iE,\iZ,\iV):=\nu_t(\iE^b,\iZ,\iV)$ and
      $\nu_t^u:=\nu_t-\nu_t^b=\sum_{j=1}^\Di\iE^u\fou{t}_j\basV_j(\iV)$
      we have obviously $
      \overline{\nu_t}=\overline{\nu_t}^b+\overline{\nu_t}^u$. Considering
      first the assertion \eqref{l:iid:tal:conc} it follows
\begin{multline}\label{p:l:iid:tal:conc:e1}
\Ex\vectp{
  \max_{1\leq\Di\leq K}\set{\ga_\Di[\Vnorm{\fou{V}_{\uDi}}^2
    -12\maxnormsup^2\vB^2 \Di \Lambda_\Di^\ga n^{-1}]}}\\
\leq2\Ex\vectp{\max_{1\leq \Di\leq K}\{\ga_\Di[\sup_{t\in\Bz_\Di}|{\overline{\nu_t}^b}|^2-
  6\maxnormsup^2\vB^2\Di \Lambda_\Di^\ga n^{-1}]\}}+2\ga_{(K)}\Ex(\sup_{t\in\Bz_{K}}|{\overline{\nu_t}^u}|^2)
\end{multline}
where we bound separately each rhs term. Consider the second rhs
term. Keeping in mind that
$\Ex|\iE|^2\Ind{\set{|\iE|^2>\eta^2}}\leq\eta^{-10}\Ex(|\iE|^{12})$ for any $\eta>0$  and $\vE^2=\Ex|\iE|^2$
by employing successively the independence of the
sample, Assumption  \ref{a:mth:bs} and $\ga_{(K)}K\leq n^{3/2}$ we obtain
\begin{equation}\label{p:l:iid:tal:conc:e2}
\ga_{(K)}\Ex\sup_{t\in\Bz_{K}}|\overline{\nu_t}^u|^2
\leq n^{-1}\maxnormsup^2\ga_{(K)}K\Ex[|\iE|^2\Ind{\{|\iE|>\vE n^{1/3}\}}]\leq\maxnormsup^2n^{-1}\Ex(\iE/\vE)^{12}.
\end{equation}
The first rhs term of \eqref{p:l:iid:tal:conc:e1} we bound
employing Talagrand's inequality \eqref{l:talagrand:eq1}
given in Lemma \ref{l:talagrand}. To this end, we need to compute the
quantities $h$, $H$ and $v$ verifying the three required
inequalities. Employing $|\iE^b|\leq 2\vE n^{1/3}$ and Assumption
 \ref{a:mth:bs}  
we obtain
\begin{multline}\label{p:l:iid:tal:conc:h}
\sup_{t\in\Bz_\Di}\VnormInf{\nu_t^b}                               
= \sup_{\iE,\iZ,\iV}\big|(\iE^b+\ceE(\iZ,\iV)+\So(\iZ)-\DiSo(\iZ))^2\sum_{j=1}^\Di\basV_j^2(\iV)\big|^{1/2}\\
\leq\{\vE n^{1/3}+ \VnormInf{\ceE+\So-\DiSo}\}\maxnormsup m^{1/2}=:h.
\end{multline}
Employing in addition the independence of the
sample, the independence between $\iE$ and $(\iZ,\iV)$ implying
$\Ex(\iE^b+\ceE(\iZ,\iV)+\So(\iZ)-\DiSo(\iZ))^2\leq
\Ex(\iE)^2+\Ex(\ceE(\iZ,\iV)+\So(\iZ)-\DiSo(\iZ))^2=\Ex(\iY-\DiSo(\iZ))^2\leq\vB^2$,
and $\Lambda_{\Di}^\ga\geq1$  the quantity $H$ is given by
\begin{multline}\label{p:l:iid:tal:conc:H}
\Ex\sup_{t\in\Bz_\Di}|\overline{\nu_t}^b|^2
\leq
n^{-1}\Ex(\iE^b+\ceE(\iZ,\iV)+\So(\iZ)-\DiSo(\iZ))^2\sum_{j=1}^\Di\basV_j^2(\iV)\leq\maxnormsup^2\Ex(\iY-\DiSo(\iZ))^2\Di
n^{-1}
\\\leq
\maxnormsup^22(\Ex(Y)^2+\VnormZ{\DiSo}^2)\Di n^{-1}\leq
\maxnormsup^2\vB^2\Di\Lambda_\Di^\ga n^{-1}=:H^2.
\end{multline}
It remains to calculate the third quantity $v$. Using successively the
independence of $\iE$ and $(\iZ,\iV)$ and the uniform distribution of
$\iV$  we obtain
\begin{multline}\label{p:l:iid:tal:conc:v}
\sup_{t\in\Bz_\Di}n^{-1}\sum_{i=1}^n\Var(\nu_t(\iE^b_i,\iZ_i,\iV_i)\leq\sup_{t\in\Bz_\Di}\Ex\nu_t^2(\iE^b,\iZ,\iV)\\
=\sup_{t\in\Bz_\Di}\Ex(\iE^b)^2\Ex(\sum_{j=1}^\Di\basV_j(\iV)\fou{t}_j)^2+\sup_{t\in\Bz_\Di}\Ex([\ceE(\iZ,\iV)+\So(\iZ)-\DiSo(\iZ)]\sum_{j=1}^\Di\basV_j(\iV)\fou{t}_j)^2\hfill\\
\leq\vE^2+\VnormInf{\ceE+\So-\DiSo}^2=:v.
\end{multline}
Evaluating \eqref{l:talagrand:eq1} of  Lemma \ref{l:talagrand} with
$h$, $H$, $v$ given by \eqref{p:l:iid:tal:conc:h},
\eqref{p:l:iid:tal:conc:H} and \eqref{p:l:iid:tal:conc:v},
respectively, and exploiting $\maxnormsup^2\vB^2\geq\vE^2$ and
$\VnormInf{\ceE+\So-\DiSo}\leq\DiSoInf$  it follows 
\begin{multline*}
\Ex\vectp{\max_{1\leq \Di\leq K}\{\ga_\Di[\sup_{t\in\Bz_\Di}|{\overline{\nu_t^b}}|^2-
  6\maxnormsup^2\Di n^{-1}\vB^2\Lambda_\Di^\ga]\}}\\
\leq Cn^{-1}\sum_{\Di=1}^{K}\ga_\Di\big\{ \vE^2 (1+\DiSoInf/\vE)^2\exp\Big(-\frac{\Di\Lambda_\Di^a}{6(1+\DiSoInf/\vE)^2}\Big)\\
\hfill+\maxnormsup^2\vE^2( 1 + (\DiSoInf/\vE))^2 \Di
n^{-2+2/3}\exp\big(-n^{1/6}/[100(1+\DiSoInf/\vE)]\big)\big\}
\end{multline*}
Since $\ga_{(K)}K^2\leq n^{3/2}$ and exploiting the definition of $\Psi_\ga$ and
$\Phi_n$  we conclude
\begin{multline*}
\Ex\vectp{\max_{1\leq \Di\leq K}\{\ga_\Di[\sup_{t\in\Bz_\Di}|{\overline{\nu_t^b}}|^2-
  6\maxnormsup^2\Di n^{-1}\sigma^2_{\Di}\Lambda_\Di^\ga]\}} \\
	\leq Cn^{-1} \{\vE^2 \Psi_\ga(1+\DiSoInf/\vE)+  \maxnormsup^2\vE^2\Phi_n(1+\DiSoInf/\vE)\}.
\end{multline*}
We obtain the assertion \eqref{l:iid:tal:conc} by replacing
in  \eqref{p:l:iid:tal:conc:e1}  the last bound and
\eqref{p:l:iid:tal:conc:e2}. Consider now
\eqref{l:iid:tal:prob}. From 
$\Vnorm{\fou{V}_{\uDi}}\leq\sup_{t\in\Bz_{\Di}}|\overline{\nu_t}^b|+\sup_{t\in\Bz_{\Di}}|\overline{\nu_t}^u|$,
$\sup_{t\in\Bz_{\Di}}|\overline{\nu_t}^u|\leq
\sup_{t\in\Bz_{K}}|\overline{\nu_t}^u|$ and $\ga_{\Di}\leq
\ga_{(K)}$  follows for all $1\leq\Di\leq K$
\begin{equation}\label{p:l:iid:tal:prob:e1}
\ProbaMeasure\big(\Vnorm{\fou{V}_{\uDi}}\geq
4c\ga_m^{-1/2}\big)
\leq\ProbaMeasure\big(\sup_{t\in\cB_{\Di}}|\overline{\nu_t}^b| \geq 2c\ga_m^{-1/2}\big)+\ProbaMeasure\big(\sup_{t\in\cB_{K}}|\overline{\nu_t}^u|\geq 2c\ga_{(K)}^{-1/2}\big)
\end{equation}
where we bound separately each rhs term. Consider the second rhs
term. Applying successively Markov's inequality,  $\ga_{(K)}K\leq n$ and
\eqref{p:l:iid:tal:conc:e2}  we obtain
\begin{multline}\label{p:l:iid:tal:prob:e2}
\ProbaMeasure\big(\sup_{t\in\cB_{K}}|\overline{\nu_t}^u|\geq2c\ga_{(K)}^{-1/2}\big)\leq
(2c)^{-2} \ga_{(K)}
\Ex\sup_{t\in\cB_{K}}|\overline{\nu_t}^u|^2\leq
(2c)^{-2}\maxnormsup^2 n^{-3} \Ex(\iE/\vE)^{12}
\end{multline}
The first rhs term of \eqref{p:l:iid:tal:prob:e1} we bound
employing Talagrand's inequality \eqref{l:talagrand:eq2} given in
Lemma \ref{l:talagrand} with
$h$, $H$, $v$ as in by \eqref{p:l:iid:tal:conc:h}--\eqref{p:l:iid:tal:conc:v}, respectively.
Thereby, for all $\lambda>0$ we have
\begin{multline*}
\ProbaMeasure\big(\sup_{t\in\cB_{\Di}}|\overline{\nu_t}^b| \geq 2 \{2\vY^2+2\VnormZ{\DiSo}^2\}^{1/2}
 m^{1/2}\maxnormsup n^{-1/2}+\lambda\big)
\\
\leq3\exp\bigg[\frac{-n\lambda^2}{100(\vE+\DiSoInf)^2}\vee\frac{-n^{2/3}\lambda}{100(\vE+\DiSoInf)\maxnormsup m^{1/2} }\bigg].
\end{multline*}
Since $n \geq 4 c^{-2}
(1+\DiSoInf/\vE)^2\maxnormsup^2  K \ga_{(K)}\geq 4 c^{-2}
\maxnormsup^2 \Di \ga_{\Di}$, letting $\lambda:=c\{2\vY^2+2\VnormZ{\DiSo}^2\}^{1/2}\ga_{\Di}^{-1/2}$ and
using $\{2\vY^2+2\VnormZ{\DiSo}^2\}^{1/2}\geq \vE$, $\ga_m\leq
\ga_{(K)}$ and  $n^{1/2}c \geq  2 (1+\DiSoInf/\vE)\maxnormsup  K^{1/2} \ga_{(K)}^{1/2}$ we obtain
\begin{multline*}
\ProbaMeasure\big(\sup_{t\in\cB_{\Di}}|\overline{\nu_t}^b| \geq 2c \{2\vY^2+2\VnormZ{\DiSo}^2\}^{1/2}\ga_m^{-1/2}\big)\\\hfill
\leq3\exp\bigg[\frac{-nc^2}{100(1+\DiSoInf/\vE)^2\ga_{(K)}}\vee\frac{-n^{1/6}}{50}\bigg]
\end{multline*}
We obtain the assertion \eqref{l:iid:tal:prob} by replacing
in  \eqref{p:l:iid:tal:prob:e1}  the last bound and
\eqref{p:l:iid:tal:prob:e2},  which completes the proof.
\end{proof}
% --------------------------------------------------------------------
% <<Lemma iid Talagrand2>> \ref{l:iid:tal2:{:prob}}
% --------------------------------------------------------------------
\begin{lem}\label{l:iid:tal2} Let $\ga$ be  a
  non negative sequence  and $\ga_{(K)}:=\max_{1\leq m\leq
 K}\ga_m$. If $n \geq 4 c^{-2}
\maxnormsup^4 K^2 \ga_{(K)}$ for $c>0$ then for all $1\leq\Di\leq K$ holds
  \begin{equation}\label{l:iid:tal2:prob}
    \ProbaMeasure\bigg(\ga_{\Di}\VnormS{\fou{\Xi}_{\uDi}}^2\geq 4c^2\bigg)
\leq3\exp\bigg[\frac{-nc^2}{100\VnormInf{p_{\iZ,\iV}}\ga_{(K)}}\vee\frac{-n^{1/2}}{50}\bigg]
  \end{equation}
  where $p_{Z,W}$
  denotes the joint density of $Z$
  and $W$.
\end{lem}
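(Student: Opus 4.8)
The plan is to dominate the spectral norm of the centred random matrix $\fou{\Xi}_{\uDi}=\DihfOp-\DifOp$ by its Hilbert--Schmidt norm, to write the latter as the supremum of an empirical process indexed by matrices, and then to invoke Talagrand's concentration inequality \eqref{l:talagrand:eq2} of Lemma \ref{l:talagrand}. Denoting by $\fou{\Xi}_{j,l}=n^{-1}\sum_{i=1}^n\basV_j(\iV_i)\basZ_l(\iZ_i)-\Ex[\basV_j(\iV)\basZ_l(\iZ)]$ the $(j,l)$-entry of $\fou{\Xi}_{\uDi}$, I would first use $\VnormS{\fou{\Xi}_{\uDi}}^2\leq\sum_{j,l=1}^m\fou{\Xi}_{j,l}^2=\sup_{a}|\overline{\nu_a}|^2$, where the supremum runs over a fixed countable dense subset of the Frobenius unit ball $\{a=(a_{j,l})\in\Rz^{m\times m}:\sum_{j,l}a_{j,l}^2\leq1\}$, $\nu_a(\iZ,\iV):=\sum_{j,l=1}^m a_{j,l}\basV_j(\iV)\basZ_l(\iZ)$ and $\overline{\nu_a}:=n^{-1}\sum_{i=1}^n\{\nu_a(\iZ_i,\iV_i)-\Ex\nu_a(\iZ,\iV)\}$; the equality is the Cauchy--Schwarz saturation $\sum_{j,l}\fou{\Xi}_{j,l}^2=\sup_a(\sum_{j,l}a_{j,l}\fou{\Xi}_{j,l})^2$ together with $\sum_{j,l}a_{j,l}\fou{\Xi}_{j,l}=\overline{\nu_a}$. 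Since $\ga_m\geq0$, this reduces the claim to the deviation bound for $\ProbaMeasure(\sup_a|\overline{\nu_a}|\geq2c\,\ga_m^{-1/2})$.

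Next I would compute the three quantities $h$, $H$, $v$ required by Lemma \ref{l:talagrand}. By the Cauchy--Schwarz inequality and Assumption \ref{a:mth:bs}, $\sup_a\VnormInf{\nu_a}\leq\sup_{z,v}\big(\sum_j\basV_j^2(v)\big)^{1/2}\big(\sum_l\basZ_l^2(z)\big)^{1/2}\leq m\maxnormsup^2=:h$. Since the sample is i.i.d.\ and $\fou{\Xi}_{j,l}$ is centred, $\Ex\sup_a|\overline{\nu_a}|^2=\sum_{j,l=1}^m\Var\big(n^{-1}\sum_{i=1}^n\basV_j(\iV_i)\basZ_l(\iZ_i)\big)\leq n^{-1}\Ex\big[\sum_j\basV_j^2(\iV)\sum_l\basZ_l^2(\iZ)\big]\leq m^2\maxnormsup^4n^{-1}$ by Assumption \ref{a:mth:bs}, so $\Ex\sup_a|\overline{\nu_a}|\leq m\maxnormsup^2n^{-1/2}=:H$. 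Finally, bounding the joint density and using the orthonormality of $\{\basZ_l\}_{l\geq1}$ and $\{\basV_j\}_{j\geq1}$ in $L^2[0,1]$, $\sup_a\,n^{-1}\sum_{i=1}^n\Var(\nu_a(\iZ_i,\iV_i))\leq\sup_a\Ex\nu_a^2(\iZ,\iV)\leq\VnormInf{p_{\iZ,\iV}}\sup_a\int_0^1\!\!\int_0^1\nu_a^2(z,v)\,dz\,dv=\VnormInf{p_{\iZ,\iV}}\sup_a\sum_{j,l}a_{j,l}^2\leq\VnormInf{p_{\iZ,\iV}}=:v$.

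Finally I would apply \eqref{l:talagrand:eq2} with $\lambda:=2c\,\ga_m^{-1/2}-2H$. The hypothesis $n\geq4c^{-2}\maxnormsup^4K^2\ga_{(K)}$, which entails $n\geq4c^{-2}\maxnormsup^4m^2\ga_m$ because $m\leq K$ and $\ga_m\leq\ga_{(K)}$, enters twice: it ensures $2H=2m\maxnormsup^2n^{-1/2}\leq c\,\ga_m^{-1/2}$, so that $\lambda\geq c\,\ga_m^{-1/2}\geq2H>0$ and $2H+\lambda=2c\,\ga_m^{-1/2}$; and from $\lambda\geq2H$ it yields $\tfrac{n}{100}\tfrac{\lambda}{h}\geq\tfrac{n}{100}\tfrac{2H}{h}=\tfrac{n^{1/2}}{50}$, while $\lambda\geq c\,\ga_m^{-1/2}$ with $\ga_m\leq\ga_{(K)}$ gives $\tfrac{n}{100}\tfrac{\lambda^2}{v}\geq\tfrac{nc^2}{100\VnormInf{p_{\iZ,\iV}}\ga_{(K)}}$. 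Substituting these into \eqref{l:talagrand:eq2} produces $\ProbaMeasure(\sup_a|\overline{\nu_a}|\geq2c\,\ga_m^{-1/2})\leq3\exp\big[-\big(\tfrac{nc^2}{100\VnormInf{p_{\iZ,\iV}}\ga_{(K)}}\wedge\tfrac{n^{1/2}}{50}\big)\big]=3\exp\big[\tfrac{-nc^2}{100\VnormInf{p_{\iZ,\iV}}\ga_{(K)}}\vee\tfrac{-n^{1/2}}{50}\big]$, which together with $\ProbaMeasure(\ga_m\VnormS{\fou{\Xi}_{\uDi}}^2\geq4c^2)\leq\ProbaMeasure(\sup_a|\overline{\nu_a}|\geq2c\,\ga_m^{-1/2})$ is the asserted inequality. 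I do not anticipate a genuine obstacle here; the only points requiring care are the standard measurability reduction to a countable index family so that Talagrand's inequality applies, and the constant bookkeeping so that the two regimes of $\lambda^2/v\wedge\lambda/h$ reproduce exactly the exponents $\tfrac{nc^2}{100\VnormInf{p_{\iZ,\iV}}\ga_{(K)}}$ and $\tfrac{n^{1/2}}{50}$ appearing in the statement.
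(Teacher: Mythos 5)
Your proposal is correct and follows essentially the same route as the paper's proof: dominating $\VnormS{\fou{\Xi}_{\uDi}}^2$ by the Frobenius norm written as $\sup_{t\in\cB_{\Di^2}}|\overline{\nu_t}|^2$, computing the same quantities $h=m\maxnormsup^2$, $H=m\maxnormsup^2 n^{-1/2}$ and $v=\VnormInf{p_{\iZ,\iV}}$, and applying Talagrand's inequality \eqref{l:talagrand:eq2}; the only (immaterial) difference is that you take $\lambda=2c\ga_m^{-1/2}-2H$ whereas the paper sets $\lambda=c\ga_m^{-1/2}$ and absorbs the $2H$ term via the same hypothesis $n\geq4c^{-2}\maxnormsup^4K^2\ga_{(K)}$, both yielding the identical exponents.
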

\begin{proof}[\noindent\textcolor{darkred}{\sc Proof of Lemma \ref{l:iid:tal2}}]
  We are going to apply Talagrand's inequality \eqref{l:talagrand:eq2}
  in Lemma \ref{l:talagrand} using
  $\sup_{t\in\cB_{\Di^2}}|\overline{\nu_t}(x)|^2=\sum_{j,l=1}^{\Di}\fou{\Xi}_{j,l}^2\geq
  \VnormS{\fou{\Xi}_{\uDi}}^2$ where  $\nu_t(\iZ,\iV)=\sum_{j,l=1}^{\Di}\fou{t}_{j,l}\basZ_j(\iZ)\basV_l(\iV)$.
  Therefore, we compute next the quantities $h$,
  $H$
  and $v$
  verifying the three required inequalities. 
  Exploiting the independence and identical distribution of the sample
  and Assumption \ref{a:mth:bs} we obtain
  \begin{gather}\label{p:l:iid:tal2:H}
    \Ex[\sup_{t\in\cB_{\Di^2}}|\overline{\nu_t}|^2]
    \leq\frac{1}{n}\sum_{j,l=1}^{\Di}\Ex\big(\basZ^2_j(\iZ_1)\basV^2_l(\iV_1)\big)\leq\frac{\Di^2}{n}\maxnormsup^4=: H^2,\hfill\\
    \label{p:l:iid:tal2:h}
    \sup_{t\in\cB_{\Di^2}}\VnormInf{\nu_t}^2=\sup_{z,w}\sum_{j,l=1}^{\Di}\basZ^2_j(z)\basV^2_l(w)\leq\Di^2\maxnormsup^4=:h^2, \\
    \label{p:l:iid:tal2:v}
    \sup_{t\in\cB_{\Di^2}}\frac{1}{n}\sum_{i=1}^n
    \Var\big(\nu_t(\iZ_i,\iV_i)\big)\leq
    \VnormInf{p_{\iZ,\iV}}\sup_{t\in\cB_{\Di^2}}
    \Vnorm{\fou{t}_{\uDi}}^2=\VnormInf{p_{\iZ,\iV}}=:v.
  \end{gather}
  Evaluating \eqref{l:talagrand:eq2} of Lemma \ref{l:talagrand} with
  $h$,
  $H$,
  $v$
  given by \eqref{p:l:iid:tal2:H}--\eqref{p:l:iid:tal2:v},
  respectively,  for any $\lambda>0$ we have
  \begin{equation*}
    \ProbaMeasure\big(\sup_{t\in\cB_{\Di^2}}|\overline{\nu_t}|\geq
    2m\maxnormsup^2n^{-1/2}+\lambda\big) \leq3\exp\bigg[\frac{-n\lambda^2}{100\VnormInf{p_{\iZ,\iV}}}\vee\frac{-n\lambda}{100\Di\maxnormsup^2}\bigg].
  \end{equation*}
  Since $ n\geq 4c^{-2} K^2 \ga_{(K)}\maxnormsup^4 \geq
  4c^{-2} m^2 \ga_m\maxnormsup^4$, $1\leq m\leq K$, letting
  $\lambda:=c\ga_m^{-1/2}$ and using $\ga_m\leq\ga_{(K)}$ and $n^{1/2}c\geq 2 K\ga_{(K)}^{1/2}\maxnormsup^2$ we obtain
  \begin{multline*}
    \ProbaMeasure\bigg(\sup_{t\in\cB_{\Di^2}}|\overline{\nu_t}|\geq
    2c\ga_{\Di}^{-1/2}\bigg)\leq3\exp\bigg[\frac{-nc^2}{100\VnormInf{p_{\iZ,\iV}}\ga_{(K)}}\vee\frac{-nc}{100\maxnormsup^2K\ga_{(K)}^{1/2}}\bigg]\\
\leq 3\exp\bigg[\frac{-nc^2}{100\VnormInf{p_{\iZ,\iV}}\ga_{(K)}}\vee\frac{-n^{1/2}}{50}\bigg].
  \end{multline*}
A combination
of the last bound  and  $\sup_{t\in\cB_{\Di^2}}|\overline{\nu_t}(x)|\geq
\VnormS{\fou{\Xi}_{\uDi}}$ implies  the assertion, which completes
the proof.
\end{proof}
% --------------------------------------------------------------------
% <<subsection{Proof of Theorem \ref{t:dep:oracle}}>> \ref{a:ora:dep}
% --------------------------------------------------------------------
\subsection{Proof of Theorem \ref{t:dep:ora}}\label{a:ora:dep}
Throughout this section we suppose that  $\set{(\iZ_i,\iV_i)}_{i\in\Zz}$
  is a stationary absolutely
regular process with mixing coefficients  $(\beta_k)_{k\geq0}$. The
sample $\set{(\iY_i,\iZ_i,\iV_i)}_{i=1}^n$ still  obeys the model
(\ref{eq:model}--\ref{eq:model2}) and the Assumption \ref{a:mth:rv}, i.e.,
 $\{\iE_i:=\iNo_i-\ceE(\iZ_i,\iV_i)\}_{i=1}^n$    forms an \iid sample   independent of $\{(\iZ_i,\iV_i)\}_{i=1}^n$. We shall prove below the Propositions \ref{p:dep:Eset} and
\ref{p:dep:Rest} which are used in the proof of Theorem \ref{t:dep:ora}.
In the proof the propositions we refer to three technical Lemma
(\ref{l:dep:sets} -- \ref{l:dep:tal2}) which are shown in the end of this section.
Moreover, we make use of  functions
  $\bPsi,\bPhia,\bPhib,\bPhic,\bPhid,\bPhie:\Rz_+\to\Rz$
  defined by 
  \begin{multline}\label{de:dep:ora}
    \bPsi(x)=\Psi(x)=\sum\nolimits_{m\geq 1} x m^{1/2} \VnormS{\DifOp^{-1}}^2 \exp(- m^{1/2}
    \Lambda_\Di^\Op/(48x) ),\\
    \bPhia(x)= x n \exp(- \gauss{n^{1/8}}\log(n)/(48x)),\\
    \bPhib(x)=n^{7/6}x^2\exp(-n^{1/6}/(200x)),\\
    \bPhic(x)= n^3\exp(-n(\Mot)^{-1/2}(\Delta^\Op_{\Mot})^{-1}/(204800x)),\\
    \bPhid(x)=n^3\exp(-n(\Mot+1)^{-1}(\Delta_{\Mot+1}^\Op)^{-1}/(51200x))\hfill\\
    \bPhie(x)= x n \exp(-(\Mot)^{1/2}\log(n)/(48x) ).\hfill
  \end{multline}
  We shall emphasise that the functions are non decreasing in $x$
  and for all $x>0$,
  $\bPsi(x)<\infty$, $\bPhia(x)=o(1)$
  and $\bPhib(x)=o(1)$
  as $n\to\infty$. Moreover, if
  $\log(n)(\Mot+1)^2\Delta_{\Mot+1}^\Op=o(n)$  as $n\to\infty$ then there exists an integer $n_o$   such that 
  \begin{equation}\label{de:dep:no}
1\geq \sup_{n\geq n_o} \big\{1024\maxnormsup^4(6+8(\DiSoInf/\vE)^2\gB)(\Mot+1)^2\Delta_{\Mot+1}^\Op
n^{-1}\big\}.
\end{equation}
If in addition $q_n(\Mot+1)(\Delta^\Op_{\Mot+1})^{1/2}(\log n)=o(n^{2/3})$ then
we have also for all $x>0$,  $\bPhic(x)=o(1)$, $\bPhid(x)=o(1)$
  and $\bPhie(x)=o(1)$  as $n\to\infty$. Consequently, under
  Assumption \ref{a:mth:rv} and \ref{a:mth:bs}  there exists 
a finite constant $\SoRaC$  such that for all $n\geq1$,
\begin{multline}\label{de:dep:ora:Sigma}
\SoRaC\geq\big\{n_o^2\bigvee
\bPsi\big(1+(\DiSoInf/\vE)^2\maxnormsup\gB^{1/2}\big)\bigvee\bPhia\big(1+(\DiSoInf/\vE)^2\maxnormsup\gB^{1/2}\big)\bigvee\bPhib\big(1+\DiSoInf/\vE\big)\\\bigvee\bPhic(1+(\DiSoInf/\vE)^2\maxnormsup\gB^{1/2})\bigvee\bPhid(\VnormInf{p_{\iZ,\iV}}\gB^{1/2}\maxnormsup^2)\bigvee\bPhie\big(1+(\DiSoInf/\vE)^2\maxnormsup\gB^{1/2}\big)\\
\bigvee\Ex(\iE/\vE)^8
\bigvee(\DiSoInf/\vE)^8\gB
\bigvee(\maxnormsup/\vE)^2 \Ex(\iE/\vE)^{12}\big\}.
\end{multline}
% --------------------------------------------------------------------
% <<Proof of Theorem \ref{t:dep:oracle}>>
% --------------------------------------------------------------------
\begin{proof}[\noindent\textcolor{darkred}{\sc Proof of Theorem \ref{t:dep:ora}}]
 The proof follows line by line the proof of Theorem
 \ref{t:iid:ora}. By using Proposition  \ref{p:dep:Rest} rather than Proposition 
 \ref{p:iid:Rest} we obtain  similar to \eqref{ri:dec:e1} for all $n\geq1$
  \begin{multline}\label{ri:dec:e1:dep}
    \Ex\VnormZ{\hDiSo[\hDi]-\So}^2\leq \Ex\left(\Ind{\Eset}\VnormZ{\hDiSo[\hDi]-\So}^2\right)\\+C\;n^{-1}\;\maxnormsup^2 \{\vE^2+\DiSoTwo\} (1+(\DiSoInf/\vE)^2\gB)[\SoRaC\vee n^3\exp(-n^{1/6}q^{-1}/100) \vee  n^4 q^{-1}\beta_{q+1}].
  \end{multline}
Consider the first rhs term. On the event $\Eset$ defined in
\eqref{a:not:de:ev},  on which  the  quantities $\hpen$ and $\Mh$ are close to their theoretical
counterparts $\pen$,  $\Mut$ and $\Mot$  defined in
\eqref{a:not:de:no},  the upper bound given in \eqref{t:iid:ora:key:arg} implies
  \begin{equation*}
    \VnormZ{\hDiSo[\hDi]-\So}^2\Ind{\Eset}\leq 582 [\biasnivSo[\aDi]\vee\pen[\aDi]] +42 \max_{\aDi\leq k\leq \Mot}\vectp{ \VnormZ{\hDiSo[k]-\DiSo[k]}^2 - \pen[k]/6}.
  \end{equation*}
Keeping in mind that
$\pen[k]=288\penDep\maxnormsup^2\vB[k]^2\delta_k^\Op n^{-1}$  with
$\delta_k^\Op=k\Lambda_k^\Op\Delta_{k}^\Op$, $\penDep\leq 8(1 +(\DiSoInf/\vE)^2 \gB)$ and $\vB[k]^2\leq2(\vE^2+3\DiSoTwo)$  we derive in
  Proposition \ref{p:dep:Eset} below an upper bound for the
  expectation of the second rhs term, the remainder term,
  in the last display. Thereby, we obtain
 \begin{multline*}
    \Ex\big(\Ind{\Eset}\VnormZ{\hDiSo[\hDi]-\So}^2\big)\leq 
    C\;\big\{[\biasnivSo[\aDi]\vee n^{-1}\delta_{\aDi}^\Op] +
n^{-1}[\SoRaC\vee n^3\exp(-n^{1/6}q^{-1}/100) \vee  n^4 q^{-1}\beta_{q+1}]\big\} \\\times\maxnormsup^2 (1+\vE^2+\DiSoTwo)(1+(\DiSoInf/\vE)^2 \gB).
  \end{multline*}
Replacing in  \eqref{ri:dec:e1:dep} the first rhs by the last upper bound
we obtain the assertion of the theorem, which completes the proof.
 \end{proof}
% --------------------------------------------------------------------
% <<Prop bound on complement>> \ref{p:dep:Eset}
% --------------------------------------------------------------------
\begin{prop}\label{p:dep:Eset} Under the assumptions of Theorem \ref{t:dep:ora}
  there exists a numerical constant $C$ such that for all $1\leq q\leq n$ 
\begin{multline*}\Ex\set{\max_{\aDi\leq k\leq
      \Mot}\vectp{\VnormZ{\hDiSo-\DiSo}^2 -48\maxnormsup^2\vB^2\penDep\Di
      \Lambda_\Di^\Op\Delta_{\Di}^\Op n^{-1}}}\\\leq
C\;n^{-1}\;[\SoRaC\vee n^3\exp(-n^{1/6}q^{-1}/100) \vee  n^4 q^{-1}\beta_{q+1}]\;\maxnormsup^2 \{\vE^2+\DiSoTwo\} (1+(\DiSoInf/\vE)^2\gB).
\end{multline*}
\end{prop}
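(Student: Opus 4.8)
\noindent\textbf{Proof proposal of Proposition~\ref{p:dep:Eset} (plan).}
The plan is to follow line by line the proof of Proposition~\ref{p:iid:Eset}, the only substantial change being that the direct use of Talagrand's inequality is replaced by its dependent-data counterpart, obtained through the block-coupling construction recalled in Section~\ref{s:ora:dep}. As in the independent case I would first combine $\VnormZ{\hDiSo-\DiSo}^2\Ind{\hOpset}\Ind{\Xiset}\leq 2\VnormS{\DifOp^{-1}}^2\Vnorm{\fou{V}_{\uDi}}^2$, $\VnormZ{\hDiSo-\DiSo}^2\Ind{\hOpset}\Ind{\Xiset^c}\leq n\Vnorm{\fou{V}_{\uDi}}^2\Ind{\Xiset^c}$ and $\VnormZ{\hDiSo-\DiSo}^2\Ind{\hOpset^c}\leq\VnormZ{\DiSo}^2\Ind{\hOpset^c}$, which---exactly as in \eqref{p:p:iid:Eset:e1}---reduces the left-hand side (a maximum over $\aDi\leq k\leq\Mot$, which I bound by the maximum over $1\leq k\leq\Mot$) to a sum of four terms of the form met in \eqref{p:p:iid:Eset:e1}, now with the subtracted penalty carrying the extra factor $\penDep$: a main fluctuation term $\Ex\{\max_{1\leq\Di\leq\Mot}\VnormS{\DifOp^{-1}}^2\vectp{\Vnorm{\fou{V}_{\uDi}}^2-c\,\penDep\maxnormsup^2\vB^2\Di\Lambda_\Di^\Op n^{-1}}\}$ for a numerical $c$, a large-dimension term $\Ex\{n\vectp{\Vnorm{\fou{V}_{\underline{\Mot}}}^2-c\,\penDep\maxnormsup^2\vB[\Mot]^2\Mot\log(n)n^{-1}}\}$, and the two probability terms $\maxnormsup^2\Mot\vB[\Mot]^2\log(n)\,P(\bigcup_k\Xiset[k]^c)$ and $\max_\Di\VnormZ{\DiSo}^2\,P(\bigcup_k\hOpset[k]^c)$.

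For the first two terms I would split $\fou{V}_{\uDi}=\fou{B}_{\uDi}+\fou{S}_{\uDi}$ as in Section~\ref{app:not} and use $\Vnorm{\fou{V}_{\uDi}}^2\leq 2\Vnorm{\fou{B}_{\uDi}}^2+2\Vnorm{\fou{S}_{\uDi}}^2$ (the factors $2$ being the source of the constant $48$ in the statement). Since the $\iE_i$ are \iid and independent of $\{(\iZ_i,\iV_i)\}_{i=1}^n$, the contribution of $\fou{B}_{\uDi}$ is controlled \emph{conditionally} on the regressor sample by Talagrand's inequality \eqref{l:talagrand:eq1}, using $\sum_{j=1}^\Di\basV_j^2\leq\Di\maxnormsup^2$ from Assumption~\ref{a:mth:bs} and the same truncation $\iE=\iE^b+\iE^u$ at level $\vE n^{1/3}$ as in the proof of Lemma~\ref{l:iid:tal}; this produces the terms $\bPhib$ and $\Ex(\iE/\vE)^{12}$ of \eqref{de:dep:ora}. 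The genuinely dependent contribution $\fou{S}_{\uDi}$, a functional of $\{(\iZ_i,\iV_i)\}_{i=1}^n$ only, is handled by the dependent-data analogue of the Talagrand-type Lemma~\ref{l:iid:tal} (Lemma~\ref{l:dep:tal}): partition the sample into the even and odd blocks $\cI^e_l,\cI^o_l$ of length $q$, replace each block by the coupled one, apply \eqref{l:talagrand:eq1} on each parity---the coupled blocks $(\couE_l)_{l\geq1}$, respectively $(\couO_l)_{l\geq1}$, being \iid by property~\ref{dd:as:cou3}---and absorb the coupling error, which by property~\ref{dd:as:cou2} occurs with probability at most $\beta_{q+1}$ per block, through a crude polynomial-in-$n$ bound on $\Vnorm{\fou{V}_{\uDi}}^2$ together with the moment conditions on $\iE$, summed over the $O(n/q)$ blocks. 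The decisive point is that the variance proxy $v$ entering Talagrand is no longer of order $\vE^2+\DiSoInf^2$ but carries the sum of the intra-block covariances of $\basV_j(\iV)h(\iZ,\iV)$ with $h=\ceE+\So-\DiSo$; bounding it via Lemmas~\ref{l:mixing:V0} and \ref{l:mixing:AJ} (and Lemmas~\ref{l:mixing:V}, \ref{l:mixing:V2} for the fourth-moment bounds feeding the coupling error) inflates $v$ by the factor $\gB$, which is precisely why the penalty constant $\kappa=288\penDep$ with $\penDep\leq 8(1+(\DiSoInf/\vE)^2\gB)$ is required here in place of $\kappa=144$. Summing over $\Di$ and using $\ga_{(\Mot)}\Mot^2\leq n^{3/2}$ as in the independent case reproduces the functions $\bPsi,\bPhie$ (and $\bPhia$ for the large-dimension term) of \eqref{de:dep:ora}, while the coupling errors contribute the explicit remainders $n^3\exp(-n^{1/6}q^{-1}/100)$ and $n^4q^{-1}\beta_{q+1}$.

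For the two remaining probability terms I would invoke the dependent-data analogue Lemma~\ref{l:dep:sets} of Lemma~\ref{l:iid:sets}, which bounds $P(\bigcup_k\Xiset[k]^c)$ and $P(\bigcup_k\hOpset[k]^c)$ by $C\{\SoRaC\vee n^3\exp(-n^{1/6}q^{-1}/100)\vee n^4q^{-1}\beta_{q+1}\}n^{-2}$; its proof rests in turn on the deviation bound for $\VnormS{\fou{\Xi}_{\uDi}}^2$ (Lemma~\ref{l:dep:tal2}) obtained by the same block-coupling together with Lemma~\ref{l:mixing:V0} and the square-integrability of the two-step density (Assumption~\ref{a:dep:df}) entering the variance proxy. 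I would then collect the four bounds, use $\vB^2\leq 2(\vE^2+3\DiSoTwo)$, $\max_{1\leq\Di\leq\Mot}\VnormZ{\DiSo}^2\leq\DiSoTwo$, $\Mot\log(n)\leq n$, $\penDep\leq 8(1+(\DiSoInf/\vE)^2\gB)$ and the definition of $\SoRaC$ in \eqref{de:dep:ora:Sigma} to absorb all numerical and $\Op$-dependent constants, obtaining the stated bound.

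The main obstacle is the hybrid coupling/Talagrand argument of Lemma~\ref{l:dep:tal}: one must choose the truncation level and keep the free block length $q$ so that, after replacing the dependent blocks by \iid ones, the Talagrand bound still decays at the independent rate $\delta_\Di^\Op n^{-1}$ up to the constant $\penDep$, while the accumulated coupling errors over the $O(n/q)$ blocks remain of strictly smaller order---this is exactly what forces the explicit remainders $n^3\exp(-n^{1/6}q^{-1}/100)$ and $n^4q^{-1}\beta_{q+1}$, shown negligible later under \eqref{c:dep:ora:L}. The careful bookkeeping of the variance inflation by $\gB$ through Lemmas~\ref{l:mixing:V0}, \ref{l:mixing:AJ}, \ref{l:mixing:V}, \ref{l:mixing:V2}, and the fact that the \iid part $\fou{B}_{\uDi}$ and the dependent part $\fou{S}_{\uDi}$ must be treated by two genuinely different conditionings, are the delicate points.
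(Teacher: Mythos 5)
Your plan reproduces the paper's architecture: the four-term decomposition inherited from \eqref{p:p:iid:Eset:e1}, the coupled-block Talagrand bound of Lemma~\ref{l:dep:tal} for the first two terms, Lemma~\ref{l:dep:sets} for the two probability terms, and the final bookkeeping via $\vB^2\leq 2(\vE^2+3\DiSoTwo)$, $\penDep\leq 8(1+(\DiSoInf/\vE)^2\gB)$ and the definition of $\SoRaC$; the origin of the remainders $n^3\exp(-n^{1/6}q^{-1}/100)$ and $n^4q^{-1}\beta_{q+1}$ is also correctly identified. There is, however, one step that fails: you propose to bound the maximum over $\aDi\leq k\leq\Mot$ by the maximum over $1\leq k\leq\Mot$. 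That restriction of the range is not cosmetic. In Lemma~\ref{l:dep:tal} the variance proxy $H^2$ for the coupled even blocks is controlled via Lemma~\ref{l:mixing:AJ} only for $\Di\geq k$, and the penalty constant must satisfy $\penDep\geq 2[3+4(\DiSoInf/\vE)^2\sum_{j>M_k}\beta_j]$, a condition that weakens as $k$ grows. Theorem~\ref{t:dep:ora} only assumes $\penDep\geq 6+8(\DiSoInf/\vE)^2\gB_{k_n}$ with $k_n$ built from $\aDi$, which matches the lemma applied with $k=\aDi$ but does \emph{not} imply the condition at $k=1$, where the tail sum $\sum_{j>M_1}\beta_j$ can be as large as $\gB_1$. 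For $\penDep$ at the lower end of its admissible interval --- precisely the regime Corollary~\ref{c:dep:ora} exploits to obtain the data-driven choice $\penDep=7$ --- your enlarged maximum is therefore not dominated by the penalty, and the positive-part expectation over $1\leq k<\aDi$ cannot be controlled. You must keep the maximum starting at $\aDi$ and invoke \eqref{l:dep:tal:conc} with $k=\aDi$, $K=\Mot$, as the paper does.

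A secondary, non-fatal discrepancy: you attribute the doubling of the constant (from $24$ to $48$) to a split $\Vnorm{\fou{V}_{\uDi}}^2\leq 2\Vnorm{\fou{B}_{\uDi}}^2+2\Vnorm{\fou{S}_{\uDi}}^2$, with $\fou{B}_{\uDi}$ treated conditionally and $\fou{S}_{\uDi}$ by coupling. The paper does not separate these two pieces: Lemma~\ref{l:dep:tal} keeps $\nu_t(\iE,\iZ,\iV)$ intact, truncates $\iE$ at level $\vE n^{1/3}$, and couples the blocks of the pairs $(\iE^b_i,\iZ_i,\iV_i)$ jointly; the factor $2$ arises from the even/odd block decomposition $\overline{\nu_t}^b=\tfrac12\{\overline{\nu_t}^{be}+\overline{\nu_t}^{bo}\}$ in \eqref{p:l:dep:tal:conc:e2}, while the $\iE$-contribution simply enters the variance computation \eqref{p:l:dep:tal:conc:e4} as the term $\vE^2 m/q$ alongside the mixing-inflated variance of the $\ceE+\So-\DiSo$ part. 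Your $B$/$S$ route could probably be carried out, but it reorganises where $\penDep$ enters the penalty calibration, so all constants in \eqref{de:hpen} and in the statement would need to be re-derived rather than quoted.
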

\begin{proof}[\noindent\textcolor{darkred}{\sc Proof of Proposition \ref{p:dep:Eset}}]  The proof
  follows along the lines of the proof of Proposition
  \ref{p:iid:Eset}. Similarly to \eqref{p:p:iid:Eset:e1} we have
 \begin{multline*}
\Ex\set{\max_{\aDi\leq \Di\leq\Mot}\vectp{\VnormZ{\hDiSo-\DiSo}^2
    -48\maxnormsup^2 \penDep\vB^2\Di\Lambda_\Di^\Op\Delta_{\Di}^\Op n^{-1}}}\\
\hfill\leq 2\Ex\set{ \max_{\aDi\leq\Di\leq\Mot} \VnormS{\DifOp^{-1}}^2\vectp{\Vnorm{\fou{V}_{\uDi}}^2
    -24 \maxnormsup^2 \penDep\vB^2\Di\Lambda_\Di^\Op n^{-1}}}\\\hfill +
 \Ex\set{n\vectp{\Vnorm{\fou{V}_{\underline{\Mot}}}^2-24\maxnormsup^2\vB[\Mot]^2\penDep \Mot \log(n) n^{-1}}}
\\+24\maxnormsup^2\vB[\Mot]^2\penDep\Mot  \log(n)P(\bigcup_{k=1}^{\Mot}\Xiset[k]^c)
 +\max_{\aDi\leq \Di\leq\Mot}\VnormZ{\DiSo}^2P(\bigcup_{k=1}^{\Mot}\hOpset[k]^c)
\end{multline*}
where we bound separately each of the four rhs
terms. Employing \eqref{l:dep:tal:conc} in
Lemma \ref{l:dep:tal} with  $k=\aDi$, $K=\Mot$, sequence $a=(a_m)_{m\geq1}$ given by $a_m=\VnormS{\DifOp^{-1}}^2$ and
$a_m=n\Ind{\{m=\Mot\}}$,
respectively.  Keeping in mind that in both cases
$\ga_{(K)}K^2\leq n^{3/2}$  we bound  the first and second rhs term as follows
 \begin{multline*}
\Ex\set{\max_{\aDi\leq \Di\leq\Mot}\vectp{\VnormZ{\hDiSo-\DiSo}^2
    -48\maxnormsup^2\penDep  \vB^2\Di\Lambda_\Di^\Op\Delta_{\Di}^\Op n^{-1}}}\\
\leq Cn^{-1} \maxnormsup^2 \big\{\vE^2\bPsi\big(1+(\DiSoInf/\vE)^2\maxnormsup\gB^{1/2}\big)+\vE^2\bPhib\big(1+\DiSoInf/\vE\big)\\\hfill+\vE^2\bPhie\big(1+(\DiSoInf/\vE)^2\maxnormsup\gB^{1/2}\big)
   +  \vE^2 (1 + \DiSoInf/\vE)^2  n^{7/3} q^{-1} \beta_{q+1}   +\Ex(\iE/\vE)^6\big\}
\\
+24\maxnormsup^2\penDep \vB[\Mot]^2\Mot\log(n) P(\bigcup_{k=\aDi}^{\Mot}\Xiset[k]^c)
 +\max_{\aDi\leq \Di\leq\Mot}\VnormZ{\DiSo}^2P(\bigcup_{k=\aDi}^{\Mot}\hOpset[k]^c)
\end{multline*}
with $\bPsi$, $\bPhib$ and $\bPhie$ as in \eqref{de:dep:ora}, i.e., $\bPsi(x)=\sum\nolimits_{m\geq 1} x m^{1/2} \VnormS{\DifOp^{-1}}^2 \exp(- m^{1/2}
    \Lambda_\Di^\Op/(48x) )$, $\bPhib(x)=n^{7/6}x^2\exp(-n^{1/6}/(200x))$ and $\bPhie(x)= x n \exp(-(\Mot)^{1/2}\log(n)/(48x) )$, $x>0$. 
Exploiting that $\vB^2[\Mot]\leq 2(\vE^2+3\DiSoTwo)$, $\penDep\leq 8(1 +
(\DiSoInf/\vE)^2 \gB)$,  $\Mot\log(n)\leq
n$   and $\max_{\aDi\leq \Di\leq\Mot}\VnormZ{\DiSo}^2\leq\DiSoTwo$,
replacing the probability $P(\bigcup_{k=\aDi}^{\Mot}\hOpset[k]^c)$ and $P(\bigcup_{k=\aDi}^{\Mot}\Xiset[k]^c)$ by its upper bound  
given in \eqref{l:dep:sets:hO} and  \eqref{l:dep:sets:B} in Lemma
\ref{l:dep:sets}, respectively, and  employing the definition
of $\SoRaC$ as in \eqref{de:dep:ora:Sigma} we obtain the result of the proposition, 
 which completes the proof.
\end{proof}
% --------------------------------------------------------------------
% <<Proposition \ref{p:dep:Rest}>>
% --------------------------------------------------------------------
\begin{prop}\label{p:dep:Rest} Under the assumptions of Theorem \ref{t:dep:ora}
 there exists a numerical constant $C$ such that  for all $1\leq q\leq n$ 
 \begin{multline*}
\Ex\big(\VnormZ{\hDiSo[\hDi]-\So}^2\1_{\Eset^c}\big)\leq 
C\;n^{-1} [\SoRaC\vee n^3\exp(-n^{1/6}q^{-1}/100) \vee  n^4
q^{-1}\beta_{q+1}]\\
\times \;\maxnormsup^2 \{\vE^2+\DiSoTwo\} (1+(\DiSoInf/\vE)^2\gB).
\end{multline*}
\end{prop}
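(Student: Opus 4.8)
The plan is to follow the proof of Proposition~\ref{p:iid:Rest} almost line by line, the only substantive changes being that the i.i.d.\ Talagrand-type estimate of Lemma~\ref{l:iid:tal} is replaced by its dependent counterpart Lemma~\ref{l:dep:tal} and the set-probability bounds of Lemma~\ref{l:iid:sets} by those of Lemma~\ref{l:dep:sets}; the latter two build on the even/odd blocking and coupling construction of \cite{Viennet1997} recalled in Section~\ref{s:ora:dep}. As in the i.i.d.\ case I would start from the observation that on $\hOpset$ one has $\VnormZ{\hDiSo-\DiSo}^2\Ind{\hOpset}\leq n\Vnorm{\fou{V}_{\uDi}}^2\leq n\Vnorm{\fou{V}_{\umaxDi}}^2$ for every $1\leq \Di\leq\maxDi:=\DiMa$, while on $\hOpset^c$ the estimator vanishes, so that $\VnormZ{\hDiSo-\So}^2\leq 3n\Vnorm{\fou{V}_{\umaxDi}}^2+6\DiSoTwo$ in all cases; together with $\hDi\leq\maxDi$ this yields, in analogy to \eqref{p:p:iid:Rest:e1},
\begin{multline*}
\Ex\big(\VnormZ{\hDiSo[\hDi]-\So}^2\1_{\Eset^c}\big)\leq
3\Ex\set{n\vectp{\Vnorm{\fou{V}_{\umaxDi}}^2-24\penDep\maxnormsup^2\vB[\maxDi]^2\maxDi\log(n)n^{-1}}}\\
+\big\{72\penDep\maxnormsup^2\maxDi\vB[\maxDi]^2\log(n)+6\DiSoTwo\big\}\ProbaMeasure(\Eset^c),
\end{multline*}
and the two right hand side terms are treated separately.

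For the first term I would apply the concentration inequality \eqref{l:dep:tal:conc} of Lemma~\ref{l:dep:tal} with the sequence $\ga=(\ga_m)_{m\geq1}$ given by $\ga_m=n\Ind{\{m=\maxDi\}}$ and $K=\maxDi$, noting $K^2\ga_{(K)}\leq n^{3/2}$; this produces an upper bound of the order $Cn^{-1}\maxnormsup^2\{\vE^2\bPhia(1+(\DiSoInf/\vE)^2\maxnormsup\gB^{1/2})+\vE^2\bPhib(1+\DiSoInf/\vE)+\vE^2(1+\DiSoInf/\vE)^2n^{7/3}q^{-1}\beta_{q+1}+\Ex(\iE/\vE)^{6}\}$ with $\bPhia,\bPhib$ as in \eqref{de:dep:ora}, where the summand $n^{7/3}q^{-1}\beta_{q+1}$ reflects the coupling cost $\ProbaMeasure(E_l\neq\couE_l)\leq\beta_{q+1}$ and the factor $n^{1/6}q^{-1}$ inside the exponential of $\bPhib$ reflects the block length $q$. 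By the definition of $\SoRaC$ in \eqref{de:dep:ora:Sigma}, together with $\vB[\maxDi]^2\leq2(\vE^2+3\DiSoTwo)$, $\maxDi\log(n)\leq n$ and $n^{7/3}q^{-1}\beta_{q+1}\leq n^4q^{-1}\beta_{q+1}$, this is dominated, up to a numerical constant, by $n^{-1}\maxnormsup^2(\vE^2+\DiSoTwo)(1+(\DiSoInf/\vE)^2\gB)[\SoRaC\vee n^3\exp(-n^{1/6}q^{-1}/100)\vee n^4q^{-1}\beta_{q+1}]$. For the second term I would invoke Lemma~\ref{app:pre:l5}, so that $\Eset^c\subset\Aset^c\cup\Bset^c\cup\Cset^c$, together with Lemma~\ref{l:dep:sets} to obtain $\ProbaMeasure(\Eset^c)\leq Cn^{-2}[\SoRaC\vee n^3\exp(-n^{1/6}q^{-1}/100)\vee n^4q^{-1}\beta_{q+1}]$; combined with $\vB[\maxDi]^2\leq2(\vE^2+3\DiSoTwo)$, $\penDep\leq8(1+(\DiSoInf/\vE)^2\gB)$ and $\maxDi\log(n)\leq n$, the prefactor $72\penDep\maxnormsup^2\maxDi\vB[\maxDi]^2\log(n)+6\DiSoTwo$ is absorbed into the same expression. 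Adding the two contributions gives the claim.

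The main obstacle is the proof of Lemma~\ref{l:dep:tal}, i.e.\ the deviation control of $\Vnorm{\fou{V}_{\umaxDi}}^2$ under dependence: one decomposes $\fou{V}=\fou{B}+\fou{S}$, treats the i.i.d.\ part $\fou{B}$ exactly as in Lemma~\ref{l:iid:tal}, and for the part $\fou{S}$ depending on $\{(\iZ_i,\iV_i)\}$ splits the centred sum into even and odd blocks of length $q$, substitutes for each family of blocks the i.i.d.\ coupled version constructed in \cite{Viennet1997} (properties (P1)--(P3)), applies Talagrand's inequality to the coupled blocks with variance proxies supplied by Lemma~\ref{l:mixing:AJ} and Lemma~\ref{l:mixing:V0}, and finally pays the coupling error $n^4q^{-1}\beta_{q+1}$; the analogous replacements also underlie Lemma~\ref{l:dep:sets}. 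Once these two lemmas are available, everything else is the identical bookkeeping as in the i.i.d.\ case, the one point requiring care being that the additional block-error terms $n^3\exp(-n^{1/6}q^{-1}/100)$ and $n^4q^{-1}\beta_{q+1}$ are kept uniform over $1\leq q\leq n$, which is exactly the range of $q$ in the statement.
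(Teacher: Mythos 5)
Your proposal reproduces the paper's own proof essentially line by line: the same reduction to the decomposition of $\Ex\big(\VnormZ{\hDiSo[\hDi]-\So}^2\1_{\Eset^c}\big)$ via \eqref{p:p:iid:Rest:e1} with $\maxDi=\DiMa$, the same application of \eqref{l:dep:tal:conc} in Lemma \ref{l:dep:tal} with $\ga_m=n\Ind{\{m=\maxDi\}}$ and $K=\maxDi$, and the same control of $\ProbaMeasure(\Eset^c)$ via \eqref{l:dep:sets:E}, with the coupling and blocking machinery entering exactly where you place it (inside Lemmas \ref{l:dep:tal} and \ref{l:dep:sets}). The only deviations (the moment exponent $6$ versus $12$ and the power $n^{7/3}$ versus $n^{17/6}$ in the coupling term) are immaterial since both are absorbed into $\SoRaC\vee n^4q^{-1}\beta_{q+1}$.
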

\begin{proof}[\noindent\textcolor{darkred}{\sc Proof of Proposition \ref{p:dep:Rest}}]
  The proof follows along the lines of the proof of Proposition
  \ref{p:iid:Rest}.  As in \eqref{p:p:iid:Rest:e1} with
  $\maxDi:=\DiMa$ and $\penDep:=8(1 + (\DiSoInf/\vE)^2\gB)$ we obtain
\begin{multline}\label{p:p:dep:Rest:e1}
\Ex\big(\VnormZ{\hDiSo[\hDi]-\So}^2\1_{\Eset^c}\big)\leq 
3\Ex\set{n\vectp{\Vnorm{\fou{V}_{\umaxDi}}^2-24\maxnormsup^2\penDep\vB[\maxDi]^2\maxDi
    \log(n)n^{-1}}} \\+\{72\maxnormsup^2 \maxDi \penDep \vB[\maxDi]^2 \log(n) + 6\DiSoTwo\}P(\Eset^c).
\end{multline}
Considering the first rhs term from \eqref{l:dep:tal:conc} in
Lemma \ref{l:dep:tal} with sequence $\ga=(\ga_m)_{m\geq1}$ given by
$\ga_m=n\Ind{\{m=K\}}$ and $k=K=\maxDi:=\DiMa$  where
$K^2\ga_{(K)}\leq n^{3/2}$ it  follows that
\begin{multline*}
\Ex\big(\VnormZ{\hDiSo[\hDi]-\So}^2\1_{\Eset^c}\big)\leq
 Cn^{-1} \maxnormsup^2 \big\{\vE^2\bPhia\big(1+(\DiSoInf/\vE)^2\maxnormsup\gB^{1/2}\big)+\vE^2\bPhib\big(1+\DiSoInf/\vE\big)\\
  \hfill +  \vE^2 (1 + \DiSoInf/\vE)^2  n^{7/3} q^{-1} \beta_{q+1}   +\Ex(\iE/\vE)^{12}\big\}\\+\{72\maxnormsup^2 \maxDi \penDep \vB[\maxDi]^2 \log(n) + 6\DiSoTwo\}P(\Eset^c)%\\
\end{multline*}
with $\bPhia$ and $\bPhib$ as in \eqref{de:dep:ora}, i.e.,
$\bPhia(x)= x n \exp(- \gauss{n^{1/8}}\log(n)/(48x))$ and $\bPhib(x):=n^{7/6}x^2\exp(-n^{1/6}/(100x))$, $x>0$. 
Exploiting further the definition of $\SoRaC$ as in
\eqref{de:dep:ora:Sigma} and that $\vB[\maxDi]^2\leq
2\{\vE^2+3\DiSoTwo\}$,  $\penDep= 8(1 + (\DiSoInf/\vE)^2 \gB)$ and
$M\log(n)\leq n$ the result of the proposition follows now  by
replacing the probability $P(\Eset^c)$ by its upper bound  
given in \eqref{l:dep:sets:E} in Lemma \ref{l:dep:sets}, which
completes the proof.
\end{proof}
% --------------------------------------------------------------------
% <<Lem bound dep events>> \ref{l:dep:sets{:A|B|C|hO|Xi|E}}
% --------------------------------------------------------------------
\begin{lem}\label{l:dep:sets}
Under the assumptions of Theorem \ref{t:dep:ora} there exists a numerical constant $C$ such that for all $1\leq q\leq n$
\begin{align}
&\ProbaMeasure\big(\Aset^c)=\ProbaMeasure\big(\{1/2\leq\hsigma^2_Y/\sigma_Y^2\leq
  3/2\}^c\big)
\leq C\; \SoRaC\; n^{-2},
 \label{l:dep:sets:A}\\
&\ProbaMeasure\big(\Bset^c\big)=\ProbaMeasure\big(\bigcup_{\Di=1}^{\Mot+1}\Xiset^c\big)
\leq C\; [\SoRaC\vee n^3\exp(-n^{1/2}q^{-1}/50) \vee n^4 q^{-1}\beta_{q+1}]\; n^{-2}, \label{l:dep:sets:B}\\
&\ProbaMeasure\big(\Cset^c\big)
  \leq C\;[\SoRaC\vee  n^3\exp(-n^{1/6}q^{-1}/100) \vee n^4 q^{-1}\beta_{q+1}]\; n^{-2}, \label{l:dep:sets:C}\\
&\ProbaMeasure\big(\Eset^c\big)
  \leq C\;[\SoRaC\vee  n^3\exp(-n^{1/6}q^{-1}/100)\vee n^4 q^{-1}\beta_{q+1}]\; n^{-2}, \label{l:dep:sets:E}\\
&\ProbaMeasure\big(\bigcup_{\Di=1}^{\Mot}\hOpset\big)\leq
  C\;[\SoRaC\vee n^3\exp(-n^{1/2}q^{-1}/50) \vee  n^4 q^{-1}\beta_{q+1}]\; n^{-2}. \label{l:dep:sets:hO}
\end{align}
\end{lem}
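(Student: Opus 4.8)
The plan is to mirror the proof of Lemma \ref{l:iid:sets}, replacing every purely \iid concentration step by its counterpart for the absolutely regular sequence $\{(\iZ_i,\iV_i)\}_{i\in\Zz}$. Throughout, the guiding device is the Viennet coupling recalled in \ref{dd:as:cou1}--\ref{dd:as:cou3}: for a block length $q$ one partitions the indices into even and odd blocks of size $q$, replaces each block of observations by an independent copy of it at a cost of at most $\beta_{q+1}$ in probability per block --- hence at most $(n/q+1)\beta_{q+1}$ in total --- applies the \iid-type Talagrand bounds of Lemmas \ref{l:dep:tal} and \ref{l:dep:tal2} to the coupled block sums, and controls the relevant variance proxies with the mixing estimates of Lemmas \ref{l:mixing:V0}, \ref{l:mixing:V2} and \ref{l:mixing:AJ}, all of which rely on $\gB=\sum_{k\ge0}(k+1)^2\beta_k<\infty$. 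The extra polynomial factors $n^3$ and $n^4$ in the statement will come from the union bounds over $1\le m\le\Mot+1\le n$ together with the Markov prefactors produced by the available (fourth, eighth or twelfth) moments of $\iE$ and of the mixing partial sums, while the terms $n^3\exp(-n^{1/6}q^{-1}/100)$ and $n^4q^{-1}\beta_{q+1}$ are exactly the contributions of the second branch of the exponential bounds and of the coupling error.

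First I would treat \eqref{l:dep:sets:A}. Using $\iY_i=\So(\iZ_i)+\ceE(\iZ_i,\iV_i)+\iE_i$ and $\vY^2=\Ex(\So+\ceE)^2(\iZ,\iV)+\vE^2$, I decompose $n^{-1}\sum_{i=1}^n(\iY_i^2-\vY^2)$ into a centred bounded function of the mixing sequence, namely $n^{-1}\sum\{(\So+\ceE)^2(\iZ_i,\iV_i)-\Ex(\So+\ceE)^2\}$, a cross term $2n^{-1}\sum(\So+\ceE)(\iZ_i,\iV_i)\,\iE_i$, and the \iid centred term $n^{-1}\sum\{\iE_i^2-\vE^2\}$. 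The last two are handled exactly as in the \iid proof, via Rosenthal's inequality (Theorem 2.10 in \cite{Petrov1995}) --- for the cross term using that, conditionally on $\{(\iZ_i,\iV_i)\}$, the summands are independent and centred with bounded conditional moments since $\VnormInf{\So+\ceE}\le\DiSoInf$ and $\Ex\iE^8<\infty$ --- each yielding an $O(n^{-2})$ fourth-moment bound. The first term is treated by Lemma \ref{l:mixing:V2}, which gives $\Ex|\sum\{(\So+\ceE)^2(\iZ_i,\iV_i)-\Ex(\So+\ceE)^2\}|^4\le Cn^2\DiSoInf^{8}\gB$, again $O(n^{-2})$ after normalisation. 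Markov's inequality, the inclusion $\{1/2\le\hvY^2/\vY^2\le3/2\}^c\subset\{|n^{-1}\sum\iY_i^2/\vY^2-1|>1/2\}$, and the definition \eqref{de:dep:ora:Sigma} of $\SoRaC$ then give \eqref{l:dep:sets:A}.

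For \eqref{l:dep:sets:B} I would argue as in the \iid case, distinguishing $1\le n\le n_o$ --- where the bound is absorbed by the $n_o^2$ term in $\SoRaC$ --- and $n>n_o$, where \eqref{de:dep:no} makes $n$ large enough relative to $(\Mot+1)^2\Delta_{\Mot+1}^\Op$. Since $\fou{\Xi}_{\um}=\DihfOp-\DifOp$ involves only $(\iZ_i,\iV_i)$, the coupling is clean: Lemma \ref{l:dep:tal2}, applied with $K=\Mot+1$, $c=1/8$ and $\ga_m=\VnormS{\DifOp^{-1}}^2$ (its variance proxy supplied by Lemma \ref{l:mixing:V0}), bounds $\ProbaMeasure(\Xiset^c)$, for each $1\le m\le\Mot+1$, by a quantity of the form $3\exp[-cn(\Mot+1)^{-1}(\Delta_{\Mot+1}^\Op)^{-1}]\vee3\exp(-cn^{1/2}q^{-1})+Cnq^{-1}\beta_{q+1}$. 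A union bound over $m\le\Mot+1\le n$, recognising $\bPhid$ from \eqref{de:dep:ora}, and invoking \eqref{de:dep:ora:Sigma} closes \eqref{l:dep:sets:B}. Then \eqref{l:dep:sets:hO} follows from it: for $n>n_o$ one has $\sqrt n\ge(4/3)\VnormS{\DifOp^{-1}}$, so Lemma \ref{app:pre:l6} gives $\Xiset\subset\hOpset$ and hence $\bigcup_{m=1}^{\Mot}\hOpset^c\subset\Bset^c$, while for $n\le n_o$ the bound $n^{-2}n_o^2$ is trivial.

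Finally, \eqref{l:dep:sets:C} is obtained in the same spirit but using Lemma \ref{l:dep:tal} in place of Lemma \ref{l:dep:tal2}, because $\fou{V}_{\um}$ blends the \iid errors $\iE_i$ with the mixing $(\iZ_i,\iV_i)$: the coupling is applied only to the $(\iZ_i,\iV_i)$-blocks, the conditionally \iid structure of the $\iE$-contribution is kept, and the variance proxy comes from Lemma \ref{l:mixing:AJ} with the inner truncation level $k_n$ that is built into $\penDep$; the twelfth moment of $\iE$ enters through the usual truncation $\iE^b=\iE\Ind{\set{|\iE|\le\vE n^{1/3}}}-\Ex(\iE\Ind{\set{|\iE|\le\vE n^{1/3}}})$, producing the extra $(\maxnormsup/\vE)^2\Ex(\iE/\vE)^{12}n^{-2}$ term. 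Recognising $\bPhic$ from \eqref{de:dep:ora} and using \eqref{de:dep:ora:Sigma} yields \eqref{l:dep:sets:C}, and \eqref{l:dep:sets:E} then follows from \eqref{l:dep:sets:A}--\eqref{l:dep:sets:C} together with the inclusion $\Eset^c\subset\Aset^c\cup\Bset^c\cup\Cset^c$ of Lemma \ref{app:pre:l5}. I expect the main obstacle to be the bookkeeping in \eqref{l:dep:sets:C}: separating cleanly the \iid and the mixing contributions to $\fou{V}_{\um}$ before coupling, and choosing the block length $q$ together with the inner level $k_n$ so that the mixing variance term from Lemma \ref{l:mixing:AJ} is of the same order as its \iid analogue, all while tracking the constants so that the resulting bound matches \eqref{de:dep:ora:Sigma} uniformly over $1\le q\le n$.
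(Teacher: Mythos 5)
Your proposal is correct and follows essentially the same route as the paper: the three-term decomposition of $n^{-1}\sum_i(\iY_i^2-\vY^2)$ with Rosenthal for the \iid parts and Lemma \ref{l:mixing:V2} for the mixing part to get \eqref{l:dep:sets:A}, the case split at $n_o$ combined with Lemma \ref{l:dep:tal2} (resp.\ \eqref{l:dep:tal:prob} of Lemma \ref{l:dep:tal}) and a union bound over $m\leq\Mot+1\leq n$ for \eqref{l:dep:sets:B} (resp.\ \eqref{l:dep:sets:C}), the inclusion via Lemma \ref{app:pre:l6} for \eqref{l:dep:sets:hO}, and Lemma \ref{app:pre:l5} for \eqref{l:dep:sets:E}. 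The only imprecision is attributing the variance proxy in \eqref{l:dep:sets:C} to Lemma \ref{l:mixing:AJ} (it supplies the quantity $H$ inside Lemma \ref{l:dep:tal}, while $v$ comes from Lemma \ref{l:mixing:V}), but this concerns the internals of a lemma you correctly invoke as a black box.
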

\begin{proof}[\noindent\textcolor{darkred}{\sc Proof of Lemma \ref{l:dep:sets}}]Consider
  \eqref{l:dep:sets:A}.  We note that $\iY=\iE+\ceE(\iZ,\iV)+\So(\iZ)$ and
$\vY^2=\vE^2+\VnormZ{\ceE+\So}^2$. Thereby, setting
$\eta:=\ceE(\iZ,\iV)+\So(\iZ)$ with
$\sigma_\eta^2:=\Ex\eta^2=\Vnorm[Z,W]{\ceE+\So}^2$, where $\eta$ and
$\iE$ are independent, we obtain
  \begin{multline}\label{p:l:dep:sets:e1}
   P\big(|n^{-1}\sum_{i=1}^n\iY_i^2-\vY^2|>\vY^2/2\big)
   \leq   P\big(|n^{-1}\sum_{i=1}^n(\iE_i^2-\vE^2)|> \vY^2/6\big)\\+
   P\big(| n^{-1}\sum_{i=1}^n(\eta_i^2-\sigma_\eta^2)|>\vY^2/6\big)+
    P\big(|n^{-1}\sum_{i=1}^n2\eta_i\iE_i|>\vY^2/6\big)
  \end{multline}
and we bound each rhs term separately.  Consider the first rhs
term. Since
  $\iE_1^2-\vE^2,\dotsc,\iE_n^2-\vE^2 $ are independent and
  and centred random variables with $\Ex(\iE_i^2-\vE^2)^{4}\leq C \Ex(\iE)^{8}$ it follows from Theorem 2.10 in
\cite{Petrov1995}  that $\Ex(n^{-1}\sum_{i=1}^n\iE_i^2-\vE^2)^{4}\leq C
n^{-k} \Ex(\iE)^{8}$. Employing Markov's inequality, the last bound
and $\vY^2\geq\vE^2$ we have
$P\big(|n^{-1}\sum_{i=1}^n\iE_i^2-\vE^2|>\vY^2/6\big)\leq C
n^{-2} \Ex(\iE/\vE)^{8}$. Consider the second rhs
term in \eqref{p:l:dep:tal:conc:e1}. From Lemma \ref{l:mixing:V2} with
$h(\iZ,\iV)=\{\ceE(\iZ,\iV)+\So(\iZ)\}^2=\eta^2$ follows
$\Ex(n^{-1}\sum_{i=1}^n\eta_i^2-\sigma_\eta^2)^{4}\leq n^2
\VnormInf{\ceE+\So}^8\gB$ and hence, applying Markov's inequality, $\vY^2\geq\vE^2$ and $\DiSoInf\geq\VnormInf{\ceE+\So}$ we have
$P\big(|n^{-1}\sum_{i=1}^n\eta_i^2-\sigma_\eta^2|>\vY^2/6\big)\leq C
n^{-2}(\DiSoInf/\vE)^{8}\gB$. It remains to consider the last rhs
term in \eqref{p:l:dep:tal:conc:e1}. Keeping in mind, that
$\{\iE_i\}_{i=1}^n$ are \iid and independent of $\{\eta_i\}_{i=1}^n$ from Theorem 2.10 in
\cite{Petrov1995} follows
\begin{multline*}
  \Ex|\sum_{i=1}^n\eta_i\iE_i|^4=\Ex(\Ex[|\sum_{i=1}^n\eta_i\iE_i|^4|\eta_1,\dotsc,\eta_n])
  \leq
  n^2\Ex(\Ex[|\eta_1\iE_1|^4|\eta_1,\dotsc,\eta_n])=n^2\Ex(\eta)^4\Ex(\iE)^4\\\leq
  (1/2)n^2\{\Ex(\eta)^8+\Ex(\iE)^8\} \leq (1/2)n^2\{\VnormInf{\ceE+\So}^8+\Ex(\iE)^8\}
\end{multline*}
and hence, Markov's inequality, $\vY^2\geq\vE^2$
and $\DiSoInf\geq\VnormInf{\ceE+\So}$
imply together
$P\big(|\sum_{i=1}^n\eta_i\iE_i|>n\vY^2/12\big)\leq C
n^{-2}\{(\DiSoInf/\vE)^8+\Ex(\iE/\vE)^8\}$. Replacing in
\eqref{p:l:dep:tal:conc:e1} each rhs term by its respective bound, we
obtain
$P\big(|n^{-1}\sum_{i=1}^n\iY_i^2/\vY^2-1|>1/2\big)\leq C
n^{-2}\{(\DiSoInf/\vE)^8+\Ex(\iE/\vE)^8\}$. Thereby, the assertion
\eqref{l:dep:sets:A} follows from the last bound by employing the
definition of $\SoRaC$
given in \eqref{de:dep:ora:Sigma} and by exploiting that
$\{1/2\leq\hvY^2/\vY^2\leq3/2\}^c\subset
\{|n^{-1}\sum_{i=1}^n\iY_i^2/\vY^2-1|>1/2\}$. Consider
\eqref{l:dep:sets:B}--\eqref{l:dep:sets:E}.  Let $\ga$
be a sequence given by $\ga_m=\VnormS{\DifOp^{-1}}^2$
where $\ga_{(m)}=\Delta^T_{m}$
and $n_o$
an integer satisfying \ref{de:dep:no}, that is,
$n\geq
1024\maxnormsup^4(6+8(\DiSoInf/\vE)^2\gB)(\Mot+1)^2\Delta_{\Mot+1}^\Op$
for all $n> n_o$.
We distinguish in the following the cases $n\leq n_o$
and $n > n_o$.
Consider \eqref{l:dep:sets:B}. Obviously, we have
$\ProbaMeasure\big(\Bset^c\big)\leq n^{-2}n_o^2$
for all $1\leq n\leq n_o$.
On the other hand, given $n\geq n_o$
and, hence
$n\geq512\maxnormsup^4(1+4\gB)(\Mot+1)^2\Delta_{\Mot+1}^\Op=8c^{-2}\maxnormsup^4
(1+4\gB)K^2\ga_{(K)}$ with sequence $\ga=(\VnormS{\DifOp^{-1}}^2)_{m\geq1}$,
integer $K=\Mot+1$
and constant $c=1/8$
we obtain from \eqref{l:dep:tal2:prob} in Lemma \ref{l:dep:tal2} for
all $1\leq m\leq (\Mot+1)$ \begin{multline*}
  \ProbaMeasure(\Xiset^c)=\ProbaMeasure(\VnormS{\DifOp^{-1}}^2\VnormS{\fou{\Xi}_{\Di}}^2\geq1/16)
  \\\leq6\exp\bigg[\frac{-n}{51200\maxnormsup^2\VnormInf{p_{\iZ,\iV}}\gB^{1/2}(\Mot+1)\Delta_{\Mot+1}^\Op}\vee\frac{-n^{1/2}}{50q}\bigg]+
nq^{-1}\beta_{q+1}
\end{multline*}
and hence,  given  $\bPhid$ as in \eqref{de:dep:ora} and
$\Mot+1\leq n$  it follows 
\begin{multline*}
\ProbaMeasure\big(\Bset^c\big)\leq (\Mot+1)\max_{1\leq m\leq
  (\Mot+1)}\ProbaMeasure(\Xiset^c)\\\leq 6\;\big\{
\bPhid(\maxnormsup^2\VnormInf{p_{\iZ,\iV}}\gB^{1/2})\vee
n^3\exp(-n^{1/2}q^{-1}/50)\vee n^4q^{-1}\beta_{q+1}\big\}\;n^{-2}.
\end{multline*}
By combination of the
two cases and employing the definition of
$\SoRaC$ given in \eqref{de:dep:ora:Sigma} we obtain
\eqref{l:dep:sets:B}. The proof of
\eqref{l:dep:sets:C} follows a long the lines of the proof of
\eqref{l:dep:sets:B} using \eqref{l:dep:tal:prob} in Lemma
\ref{l:dep:tal} rather than  \eqref{l:dep:tal2:prob} in Lemma
\ref{l:dep:tal2}. Precisely, if $1\leq n\leq n_o$ we have $P(\Cset^c)\leq
n^{-2}n_o^2$, while given $n> n_o$ and, hence 
$n \geq 1024\maxnormsup^2 [6+8(\DiSoInf/\vE)^2\gB] (\Mot+1) \Delta_{\Mot+1}^\Op =
4c^{-2}\maxnormsup^2 2[3+4(\DiSoInf/\vE)^2\gB] K\ga_{(K)}$  with sequence $\ga_m=\VnormS{\DifOp^{-1}}^2$, integer
$K=\Mot$ and constant $c=1/16$ from  \eqref{l:dep:tal:prob} in Lemma
\ref{l:dep:tal} we obtain for all $1\leq m\leq \Mot$
\begin{multline*}
\ProbaMeasure(\Vnorm{\DifOp^{-1}\fou{V}_{\uDi}}^2>\tfrac{1}{8}(\VnormZ{\DiSo}^2+
\vY^2))\leq\ProbaMeasure(\ga_{m}\Vnorm{\fou{V}_{\uDi}}^2>16
c^2 \{2\VnormZ{\DiSo}^2+ 2\vY^2\})\\
\hfill\leq
6\exp\bigg[\frac{-n}{51200(1+4(\DiSoInf/\vE)^2\maxnormsup\gB^{1/2})(\Mot)^{1/2}\Delta^\Op_{\Mot}}\vee\frac{-n^{1/6}}{100q}\bigg]\\+ n q^{-1}\beta_{q+1} +
  64(\maxnormsup/\vE)^2n^{-3}\Ex(\iE/\vE)^{12}
\end{multline*}
Exploiting the definition of $\bPhic$ given in \eqref{de:dep:ora}
implies $\ProbaMeasure\big(\Cset^c\big)\leq 6 
\{n^3\exp(-n^{1/6}q^{-1}/100)\}\vee\bPhic(1+(\DiSoInf/\vE)^2\maxnormsup\gB^{1/2})n^{-2}+ n q^{-1}\beta_{q+1}+ 64 (\maxnormsup^2/\vE^2) \Ex(\iE/\vE)^{12}n^{-2} $
The assertion \eqref{l:dep:sets:C} follows employing the definition of
$\SoRaC$ given in \eqref{de:dep:ora:Sigma}. Due to Lemma \ref{app:pre:l5} it holds
$\ProbaMeasure\big(\Eset^c\big)\leq \ProbaMeasure\big(\Aset^c\big)
+\ProbaMeasure\big(\Bset^c\big)
+\ProbaMeasure\big(\Cset^c\big)$. Therefore, the assertion
\eqref{l:dep:sets:E} follows from
\eqref{l:dep:sets:A}--\eqref{l:dep:sets:C}. Consider
\eqref{l:dep:sets:hO}. We distinguish again the two cases $n\leq n_o$
and $n>n_o$, where $\ProbaMeasure\big(\bigcup_{m=1}^{\Mot}\hOpset^c\big)\leq n^{-2}n_o^2$
for all $1\leq n\leq n_o$. On the other hand, for all $n>n_o$ we have 
$n\geq (16/9)\Delta_{\Mot+1}^\Op\geq
(16/9)\VnormS{\DifOp^{-1}}^2$ for all $1\leq m\leq \Mot$, and hence from Lemma
\ref{app:pre:l6} follows $\bigcup_{m=1}^{\Mot}\hOpset^c\subset\bigcup_{m=1}^{\Mot}\Xiset^c\subset\Bset^c$ for all
$n>n_o$. Thereby, \eqref{l:dep:sets:B} implies \eqref{l:dep:sets:hO}  for all
$n>n_o$. By combination of the two cases we obtain
\eqref{l:dep:sets:hO}, which completes the proof. 
\end{proof}
% --------------------------------------------------------------------
% <<Lemma dep Talagrand>> \ref{l:dep:tal{:conc|:prob}}
% --------------------------------------------------------------------
\begin{lem}\label{l:dep:tal} Given  a
  non negative sequence  $\ga:=(\ga_{\Di})_{\Di\in\Nz}$ let
  $\Lambda_\Di^\ga:=\Lambda_\Di(\ga)$ as in \eqref{de:delta},
  $\ga_{(m)}:=\max_{1\leq k\leq m}\ga_k$, for any $x>0$,
  $\Phi_n(x):=n^{7/6}x^2\exp(-n^{1/6}/(200x))$ and
  $\Psi_\ga(x):=\sum_{m\geq 1} x m^{1/2} \ga_m \exp(- m^{1/2} \Lambda_\Di^\ga/(48x) )<\infty$, which by
  construction always exists.  If $\gB\geq  2\sum_{k=0}^\infty(k+1)\beta_k$, $\sup_{\Di\geq
      1}\VnormInf{\ceE+\So-\DiSo}\leq\DiSoInf<\infty$,  $\ga_{(K)}K^2\leq n^{3/2}$,  $M_k:=\gauss{(\vE/\DiSoInf)^2\pdZWInf k}$ and $\penDep\geq 2[3 + 4(\DiSoInf/\vE)^2 \sum_{j>M_k}\beta_j]$  then  there exists a
 finite numerical constant $C>0$ such that
\begin{multline}\label{l:dep:tal:conc}
\Ex\vectp{\max_{k\leq\Di\leq K}\ga_\Di[\Vnorm{\fou{V}_{\uDi}}^2-24\maxnormsup^2\penDep\sigma_{\Di}^2\Di
  \Lambda_\Di^\ga n^{-1}]}\\
\leq Cn^{-1} \maxnormsup^2 \big\{\vE^2\Psi_\ga\big(1+(\DiSoInf/\vE)^2\maxnormsup\gB^{1/2}\big)+\vE^2\Phi_n\big(1+\DiSoInf/\vE\big)\\
   +  \vE^2 (1 + \DiSoInf/\vE)^2  n^{17/6} q^{-1} \beta_{q+1}   +\Ex(\iE/\vE)^{12}\big\}
\end{multline}
Moreover, if   $n \geq 4 c^{-2}\maxnormsup^2 2[3 + 4(\DiSoInf/\vE)^2 \gB]  K \ga_{(K)}$  and $c>0$ then for all $1\leq\Di\leq K$ holds
\begin{multline}\label{l:dep:tal:prob}
\ProbaMeasure\big(\ga_m\Vnorm{\fou{V}_{\uDi}}^2\geq 16c^2\{\vY^2+\VnormZ{\DiSo}^2\}\big)\\
\hfill\leq6\exp\bigg[\frac{-nc^2}{200\{1+
  4(\DiSoInf/\vE)^2\maxnormsup\gB^{1/2}\}K^{1/2}\ga_{(K)}}\vee\frac{-n^{1/6}}{100q}\bigg]\\
\hfill + n q^{-1}\beta_{q+1} +
  (2c)^{-2}(\maxnormsup/\vE)^2n^{-3}\Ex(\iE/\vE)^{12}.
      \end{multline}
\end{lem}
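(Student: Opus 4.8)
The plan is to follow the proof of Lemma~\ref{l:iid:tal}, replacing the direct appeal to Talagrand's inequalities by a blocking-and-coupling reduction to an i.i.d.\ situation. As in the independent case I would write $\Vnorm{\fou{V}_{\uDi}}^2=\sup_{t\in\Bz_\Di}|\overline{\nu_t}|^2$ with $\nu_t(\iE,\iZ,\iV)=\sum_{j=1}^\Di(\iE+\ceE(\iZ,\iV)+\So(\iZ)-\DiSo(\iZ))\fou{t}_j\basV_j(\iV)$, truncate $\iE=\iE^b+\iE^u$ with $\iE^b:=\iE\Ind{\{|\iE|\leq\vE n^{1/3}\}}-\Ex[\iE\Ind{\{|\iE|\leq\vE n^{1/3}\}}]$, and split $\overline{\nu_t}=\overline{\nu_t}^b+\overline{\nu_t}^u$. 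Because $\{\iE_i\}$ is i.i.d.\ and independent of the process $\{(\iZ_i,\iV_i)\}$, the unbounded part $\overline{\nu_t}^u$ is controlled exactly as in Lemma~\ref{l:iid:tal} using Assumption~\ref{a:mth:bs}, $\ga_{(K)}K^2\leq n^{3/2}$ and Markov's inequality; this produces the term $\maxnormsup^2 n^{-1}\Ex(\iE/\vE)^{12}$ in~\eqref{l:dep:tal:conc} and $(2c)^{-2}(\maxnormsup/\vE)^2 n^{-3}\Ex(\iE/\vE)^{12}$ in~\eqref{l:dep:tal:prob}. It then remains to treat $\overline{\nu_t}^b$, into which the dependence enters only through $\{(\iZ_i,\iV_i)\}$.

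Next I would invoke the construction of Section~\ref{s:ora:dep}: partition $\{1,\dots,n\}$ into the even blocks $\cI^e_l$ and odd blocks $\cI^o_l$ of length $q$, set $p:=\gauss{n/(2q)}$, and decompose $\overline{\nu_t}^b$ into the contributions of the first $p$ even blocks, the first $p$ odd blocks and the at most $2q$ leftover indices, the latter yielding a deterministic term of negligible order. On the coupling-failure event $\bigcup_{l\leq p}\{E_l\neq\couE_l\}$ and its odd analogue, whose probability is at most $p\beta_{q+1}\leq n q^{-1}\beta_{q+1}$ by~\ref{dd:as:cou2}, I would bound $\ga_\Di\Vnorm{\fou{V}_{\uDi}}^2$ by the deterministic estimate following from $|\iE^b|\leq 2\vE n^{1/3}$, $\VnormInf{\ceE+\So-\DiSo}\leq\DiSoInf$, Assumption~\ref{a:mth:bs} and $\ga_\Di\Di^2\leq\ga_{(K)}K^2\leq n^{3/2}$; this gives the remaining $q^{-1}\beta_{q+1}$ contributions appearing in~\eqref{l:dep:tal:conc} and~\eqref{l:dep:tal:prob}. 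On the complement, by~\ref{dd:as:cou1} and~\ref{dd:as:cou3} the even-block contribution equals $p^{-1}\sum_{l=1}^p\mu^e_t(\couE_l)$, where $\mu^e_t(\couE_l):=\tfrac{p}{n}\sum_{i\in\cI^e_l}\nu_t^b$ are centred functions of the i.i.d.\ block variables $\couE_l$ (and similarly for the odd blocks), so that Talagrand's inequalities of Lemma~\ref{l:talagrand} apply with sample size $p$.

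The core of the argument is the evaluation of the Talagrand constants $h$, $H$, $v$ for the block functions $\mu^e_t$. Using $pq\leq n$ and $|\iE^b|\leq 2\vE n^{1/3}$ one obtains the same $h=(2\vE n^{1/3}+\DiSoInf)\maxnormsup\sqrt\Di$ as in the independent case. Since the $\couE_l$ are i.i.d.\ and $t\mapsto\mu^e_t$ is linear, $H^2=\Ex\sup_t|\overline{\mu^e_t}|^2$ equals $pn^{-2}$ times a sum over $1\leq j\leq\Di$ of within-block variances of $\sum_{i\in\cI^e_1}(\iE^b_i+\ceE_i+\So_i-\DiSo_i)\basV_j(\iV_i)$; the $\iE^b$-part is handled by the independence of $\iE$ and $(\iZ,\iV)$, while the $(\ceE+\So-\DiSo)$-part is bounded by Lemma~\ref{l:mixing:AJ}, the truncation parameter being taken equal to $M_k$ and the hypothesis $\penDep\geq 2[3+4(\DiSoInf/\vE)^2\sum_{j>M_k}\beta_j]$ being exactly what makes the resulting mixing tail absorbable, so that $6H^2\leq 24\maxnormsup^2\penDep\sigma_\Di^2\Di\Lambda_\Di^\ga n^{-1}$. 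For the variance factor $v=\sup_t\Var(\mu^e_t(\couE_1))$ the same decomposition, together with Lemma~\ref{l:mixing:V} applied to $\eta g_t$ with $g_t=\sum_j\fou{t}_j\basV_j$ (using $\VnormV{g_t}\leq1$ and $\VnormInf{g_t}\leq\maxnormsup\sqrt\Di$), produces a bound carrying an extra $\sqrt\Di$-type factor relative to the i.i.d.\ case, which is precisely why the decay in $\Psi_\ga$ involves $m^{1/2}\Lambda_\Di^\ga$ instead of $m\Lambda_\Di^\ga$. Feeding $h,H,v$ into~\eqref{l:talagrand:eq1}, using $\ga_{(K)}K^2\leq n^{3/2}$ and summing over $k\leq\Di\leq K$ reproduces $\vE^2\Psi_\ga(1+(\DiSoInf/\vE)^2\maxnormsup\gB^{1/2})$ together with the secondary term $\vE^2\Phi_n(1+\DiSoInf/\vE)$; adding the odd-block and $\iE^u$ contributions gives~\eqref{l:dep:tal:conc}, while feeding $h,H,v$ into~\eqref{l:talagrand:eq2} with the choice of $\lambda$ from the proof of Lemma~\ref{l:iid:tal} gives~\eqref{l:dep:tal:prob}. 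I expect the main obstacle to be the simultaneous calibration of $M_k$, $\penDep$ and the block length $q$ so that every mixing-generated quantity---in $H$, in $v$ and in the coupling-failure bound---is dominated by the claimed right-hand sides, together with the attendant bookkeeping of the exponents of $n$.
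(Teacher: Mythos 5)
Your proposal is correct and follows essentially the same route as the paper's proof: truncation of $\iE$ with the unbounded part handled by the i.i.d.\ argument of Lemma~\ref{l:iid:tal}, reduction of the bounded part to i.i.d.\ blocks via the even/odd blocking and Viennet coupling of Section~\ref{s:ora:dep} (with the $p\beta_{q+1}$ coupling cost), and Talagrand's inequalities at block level with $H$ controlled through Lemma~\ref{l:mixing:AJ} and the $M_k$/$\penDep$ calibration and $v$ through Lemma~\ref{l:mixing:V}, which is exactly where the extra $m^{1/2}$ in $\Psi_\ga$ arises. The only cosmetic deviation is that you bound the coupling discrepancy by splitting on the failure event, whereas the paper bounds $\Ex\sup_t|\cou{\overline{\nu_t}^{be}}-\overline{\nu_t}^{be}|^2$ directly; both yield the same order of remainder.
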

\begin{proof}[\noindent\textcolor{darkred}{\sc Proof of Lemma \ref{l:dep:tal}}]
The proof follows a
  long the lines of the proof of Lemma \ref{l:iid:tal} and, hence
  recall  the decomposition \eqref{p:l:iid:tal:conc:e1}, where  the second rhs term is still bound by \eqref{p:l:iid:tal:conc:e2}
 employing that $\{\iE_i\}_{i=1}^n$ forms an \iid sample independent of
 the instruments $\{\iV_i\}_{i=1}^n$. Precisely, we have 
\begin{multline}\label{p:l:dep:tal:conc:e1}
\Ex\vectp{
  \max_{k\leq\Di\leq K}\set{\ga_\Di[\Vnorm{\fou{V}_{\uDi}}^2
    -24\maxnormsup^2\vB^2 \Di \Lambda_\Di^\ga n^{-1}]}}\\
\leq2\Ex\vectp{\max_{k\leq \Di\leq K}\{\ga_\Di[\sup_{t\in\Bz_\Di}|{\overline{\nu_t}^b}|^2-
  12\maxnormsup^2\vB^2\Di \Lambda_\Di^\ga n^{-1}]\}}+\maxnormsup^2n^{-1}\Ex(\iE/\vE)^{12}.
\end{multline}
Therefore, it remains to
 consider the  first rhs term in  \eqref{p:l:dep:tal:conc:e1}.
Consider $(\iZ_i,\iV_i)_{i\geq 1}=(E_l,O_l)_{l\geq 1}$ and
$(\couZ_i,\couV_i)_{i\geq 1}=(\couE_l,\couO_l)_{l\geq 1}$  obeying the
coupling properties \ref{dd:as:cou1}, \ref{dd:as:cou2} and
\ref{dd:as:cou3}. Moreover, introduce analogously
$(\iE^b_i)_{i\geq1}=(\vec{\iE}^{\;be}_l,\vec{\iE}^{\;bo}_l)_{l\geq1}$. 
Setting
$\vec{v}_t(\vec{e},\vec{z},\vec{w})=q^{-1}\sum_{i=1}^qv_t(e_i,z_i,w_i)$ for $n=2pq$ follows
\begin{equation*}
 \overline{\nu_t}^b =
 \tfrac{1}{2}\big\{\tfrac{1}{p}\sum_{l=1}^p\{\vec{v}_t(\vec{\iE}^{\;be}_l,E_l)-\Ex\vec{v}_t(\vec{\iE}^{\;be}_l,E_l)
 \}+ \tfrac{1}{p}\sum_{l=1}^p\{\vec{v}_t(\vec{\iE}^{\;bo}_l,O_l)-\Ex\vec{v}_t(\vec{\iE}^{\;bo}_l,O_l)\}\big\}=:\tfrac{1}{2}\{\overline{\nu_t}^{be}+\overline{\nu_t}^{bo}\}. 
\end{equation*}
Considering the random variables $(\couZ_i,\couV_i)_{i\geq
  1}=(\couE_l,\couO_l)_{l\geq 1}$ rather than $(\iZ_i,\iV_i)_{i\geq 1}=(E_l,O_l)_{l\geq 1}$ we introduce in addition $ \cou{{\overline{\nu_t}^b}} =\tfrac{1}{2}\{\cou{{\overline{\nu_t}^{be}}}+\cou{{\overline{\nu_t}^{bo}}}\}$.
Keeping in mind that $\overline{\nu_t}^{be}$ and
  $\overline{\nu_t}^{bo}$ (respectively,  $\cou{{\overline{\nu_t}^{be}}}$ and
  $\cou{{\overline{\nu_t}^{bo}}}$) are identically distributed, the
  first rhs term in  \eqref{p:l:dep:tal:conc:e1}  is bounded by
\begin{multline}\label{p:l:dep:tal:conc:e2}
\Ex\vectp{\max_{k\leq \Di\leq
    \Mot}\{\ga_{\Di}[\sup_{t\in\Bz_\Di}|{\overline{\nu_t}^b}|^2-
  12\maxnormsup^2\penDep\vB\Di\Lambda^\ga_{\Di}n^{-1}]\}}\\
\hfill\leq2\Ex\vectp{\max_{k\leq \Di\leq
    \Mot}\{\ga_{\Di}[\sup_{t\in\Bz_\Di}|\cou{{\overline{\nu_t}^{be}}}|^2-
  6\maxnormsup^2\penDep\vB\Di\Lambda^\ga_{\Di}n^{-1}]\}}\\
  +2\ga_{K}\Ex\vectp{\sup_{t\in\Bz_{K}}|\cou{{\overline{\nu_t}^{be}}}-{\overline{\nu_t}^{be}}|^2}
\end{multline} 
where we consider separately each rhs term starting with the second.
From  $
|\cou{{\overline{\nu_t}^{be}}}-{\overline{\nu_t}^{be}}|=|p^{-1}\sum_{l=1}^p\{\vec{v}_t(\vec{\iE}_l^{\;be},\couE_l)-
\vec{v}_t(\vec{\iE}^{\;be}_l,E_l)\}|\leq2\VnormInf{\nu_t^b}\sum_{l=1}^{p}\Ind{\{\couE_l\ne E_l\}}$ and by exploiting  the coupling property
\ref{dd:as:cou3} and \eqref{p:l:iid:tal:conc:h}, $n^{3/2}\geq K\ga_{(K)}$
and $\VnormInf{\ceE+\So-\DiSo[K]}\leq\DiSoInf$ we obtain
\begin{multline}\label{p:l:dep:tal:conc:e3}
  \ga_{(K)}\Ex\vectp{\sup_{t\in\Bz_{K}}|\cou{{\overline{\nu_t}^{be}}}-{\overline{\nu_t}^{be}}|^2}\leq 
4\ga_{(K)}\sup_{t\in\Bz_{K}}\VnormInf{\nu_t^b}^2p
\beta_{q+1}\\\leq 4\{\vE n^{1/3}+
\VnormInf{\ceE+\So-\DiSo[K]}\}^2\maxnormsup^2 K \ga_{(K)} p \beta_{q+1}\leq
2 \vE^2 \maxnormsup^2 \{1 + \DiSoInf/\vE\}^2  n^{17/6} q^{-1} \beta_{q+1}
\end{multline}
Considering  the first
rhs term in \eqref{p:l:dep:tal:conc:e2}   we intend to apply
Talagrand's inequality \eqref{l:talagrand:eq1}
given in Lemma \ref{l:talagrand}. The computation of the
quantities $h$, $H$ and $v$ verifying the three required
inequalities is very similar to the calculations given in
\eqref{p:l:iid:tal:conc:h}--\eqref{p:l:iid:tal:conc:v}. Keeping in
mind that $\VnormInf{\cou{{\nu_t^{be}}}}=\VnormInf{{{\nu_t^{be}}}}\leq\VnormInf{\nu_t^{b}}$ from
\eqref{p:l:iid:tal:conc:h} follows
\begin{equation}\label{p:l:dep:tal:conc:h}
\sup_{t\in\Bz_\Di}\VnormInf{\cou{{\nu_t^{be}}}}\leq\{\vE n^{1/3}+ \VnormInf{\ceE+\So-\DiSo}\}\maxnormsup m^{1/2}=:h.
\end{equation}
Making use of the coupling properties
\ref{dd:as:cou1}-\ref{dd:as:cou3} we observe  that  $\{(\vec{\iE}_l^{\;be},\couE_l)\}_{l=1}^p$
are \iid,  $\vec{\nu}_t(\vec{\iE}_l^{\;be},E_l)$ and
  $\vec{\nu}_t(\vec{\iE}_l^{\;be},\couE_l)$ are identically
  distributed, $\{\vec{\iE}_l^{\;be}\}_{l=1}^p$ and  $\{E_l\}_{l=1}^p$
are independent, and $\vec{\iE}_l^{\;be}$ has \iid
components. Consequently, we have 
\begin{multline}\label{p:l:dep:tal:conc:e4}
\Ex\sup_{t\in\Bz_\Di}|\cou{{\overline{\nu_t}^{be}}}|^2
=p^{-1}\sum_{j=1}^\Di\Var\big[q^{-1}\sum_{i=1}^q\{\iE^{\;b}_i+\ceE(\iZ_i,\iV_i)+\So(\iZ_i)-\DiSo(\iZ_i)\}\basV_j(\iV_i))\big]
\\
 \leq p^{-1}\{q^{-1}\vE^2m+q^{-2}\sum_{j=1}^\Di\Var\big[\sum_{i=1}^q(\ceE(\iZ_i,\iV_i)+\So(\iZ_i)-\DiSo(\iZ_i))\basV_j(\iV_i)\big]
\end{multline}
Considering the second right hand side term, we apply Lemma
\ref{l:mixing:AJ}, and hence given
$M_k:=\gauss{(\vE/\DiSoInf)^2\pdZWInf k}$ we have for all $m\geq k$
\begin{multline*}
  \sum_{j=1}^\Di\Var\big[\sum_{i=1}^q(\ceE(\iZ_i,\iV_i)+\So(\iZ_i)-\DiSo(\iZ_i))\basV_j(\iV_i)\big]\\\leq
  qm\{\maxnormsup^2\Vnorm[\iZ,\iV]{\ceE+\So-\DiSo}^2 +
  2\VnormInf{\ceE+\So-\DiSo}^2 [ \pdZWInf M/\sqrt{m} + 2\maxnormsup^2
  \sum_{j=M+1}^{q-1}\beta_j]\}\\
\leq  qm\{\maxnormsup^2\Vnorm[\iZ,\iV]{\ceE+\So-\DiSo}^2 + \vE^2[2 + 4
(\DiSoInf/\vE)^2 \sum_{j=M_k+1}^{q-1}\beta_j]\}
\end{multline*}
Given $\penDep[k]\geq 2[3 + 4(\DiSoInf/\vE)^2 \sum_{j>M_k}\beta_j]$
by combination of the last bound and \eqref{p:l:dep:tal:conc:e4} we obtain  for all $m\geq k$
\begin{multline}\label{p:l:dep:tal:conc:H}
\Ex\sup_{t\in\Bz_\Di}|\cou{{\overline{\nu_t}^{be}}}|^2
\leq n^{-1}m
\maxnormsup^2\{\Vnorm[\iZ,\iV]{\ceE+\So-\DiSo}^2+\vE^2\}\penDep[k]
\leq \maxnormsup^2\{\vY^2+\VnormZ{\DiSo}^2\}\penDep[k]\Di n^{-1}\\
\leq \maxnormsup^2\vB^2  \penDep[k]  \Di\Lambda_\Di^\ga n^{-1}=:H^2.
\end{multline}
Consider finally $v$. Employing successively \ref{dd:as:cou3},
\ref{dd:as:cou1} and Lemma \ref{l:mixing:V} with $\gB\geq 2\sum_{k=0}^\infty(k+1)\beta_k$ we have
\begin{multline}\label{p:l:dep:tal:conc:v}
	\sup_{t\in\cB_m}\tfrac{1}{p}\sum_{l=1}^p
        \Var(\vec{\nu_t}(\vec{\iE}^{\;be}_l,\couE_l))=\sup_{t\in\cB_m}\Var\big[q^{-1}\sum_{i=1}^q\nu_t(\iE_i^{\;b},\iZ_i,\iV_i)\big]\\
 \leq
\frac{\vE^2}{q}+\sup_{t\in\cB_m}\Var\big[q^{-1}\sum_{i=1}^q\{\ceE(\iZ,\iV)+\So(\iZ_i)-\DiSo(\iZ_i)\}\sum_{j=1}^m\fou{t}_j\basV_j(\iV_i)\big]\\
\leq \frac{\vE^2}{q}+
\frac{4}{q}\sup_{t\in\cB_m}\{\VnormInf{\ceE+\So-\DiSo}^2\{\Ex(\sum_{j=1}^m\fou{t}_j\basV_j(\iV_i))^2\}^{1/2}\VnormInf{\sum_{j=1}^m\fou{t}_j\basV_j}\gB^{1/2}\}\\
\leq \frac{\vE^2}{q}+
\frac{4}{q}\VnormInf{\ceE+\So-\DiSo}^2 m^{1/2}\maxnormsup\gB^{1/2}
\\
\leq q^{-1}m^{1/2} \{\vE^2+ 4\VnormInf{\ceE+\So-\DiSo}^2\maxnormsup\gB^{1/2}\}=:v
\end{multline}
Evaluating \eqref{l:talagrand:eq1} of  Lemma \ref{l:talagrand} with
$h$, $H$, $v$ given by \eqref{p:l:dep:tal:conc:h},
\eqref{p:l:dep:tal:conc:H} and \eqref{p:l:dep:tal:conc:v},
respectively, and exploiting $\penDep[k]\maxnormsup^2\vB^2\geq\vE^2$ and
$\VnormInf{\ceE+\So-\DiSo}\leq\DiSoInf$  there exists a numerical constant $C>0$ such that
\begin{multline*}
\Ex\vectp{\max_{k\leq \Di\leq K}\{\ga_\Di[\sup_{t\in\Bz_\Di}|\cou{{\overline{\nu_t}^{be}}}|^2-
  6\maxnormsup^2\penDep[k]\Di \vB^2\Lambda_\Di^\ga n^{-1}]\}}
 \\
\leq Cn^{-1}\sum_{\Di=k}^{K}\ga_\Di\big\{ \vE^2 (1+(\DiSoInf/\vE)^2\maxnormsup\gB^{1/2})\Di^{1/2}\exp\Big(-\frac{\Di^{1/2}\Lambda_\Di^a}{48(1+(\DiSoInf/\vE)^2\maxnormsup\gB^{1/2})}\Big)\\
\hfill+\maxnormsup^2\vE^2( 1 + \DiSoInf/\vE)^2 \Di
n^{-2+2/3}\exp\big(-n^{1/6}/[200(1+\DiSoInf/\vE)]\big)\big\}
\end{multline*}
Since $\ga_{(K)}(K)^2\leq n^{3/2}$ from the definition of $\Psi_\ga$ and
$\Phi_n$  it follows
\begin{multline*}
\Ex\vectp{\max_{k\leq \Di\leq K}\{\ga_\Di[\sup_{t\in\Bz_\Di}|\cou{{\overline{\nu_t}^{be}}}|^2-
  6\maxnormsup^2 \penDep[k] \Di \sigma^2_{\Di}\Lambda_\Di^\ga n^{-1}]\}} \\
	\leq Cn^{-1} \{\vE^2 \Psi_\ga(1+(\DiSoInf/\vE)^2\maxnormsup\gB^{1/2})+  \maxnormsup^2\vE^2\Phi_n(1+\DiSoInf/\vE)\}.
\end{multline*}
Replacing in \eqref{p:l:dep:tal:conc:e2}  the rhs terms by the last bound and \eqref{p:l:dep:tal:conc:e3}, respectively, we obtain
\begin{multline*}
\Ex\vectp{\max_{k\leq \Di\leq
    K}\{\ga_{\Di}[\sup_{t\in\Bz_\Di}|{\overline{\nu_t}^b}|^2-
  12\maxnormsup^2\penDep[k]\vB\Di\Lambda^\ga_{\Di}n^{-1}]\}}\\
\hfill\leq Cn^{-1} \{\vE^2
\Psi_\ga(1+(\DiSoInf/\vE)^2\maxnormsup\gB^{1/2})+
\maxnormsup^2\vE^2\Phi_n(1+\DiSoInf/\vE)\}\\ +
4 \vE^2 \maxnormsup^2 (1 + \DiSoInf/\vE)^2  n^{17/6} q^{-1} \beta_{q+1}.
\end{multline*}
which together with  \eqref{p:l:dep:tal:conc:e1} implies the assertion
\eqref{l:dep:tal:conc}. Consider now
\eqref{l:dep:tal:prob}. Following  the proof of \eqref{l:iid:tal:prob}
we make use of the bound \eqref{p:l:iid:tal:prob:e1} where  as in
\eqref{p:l:iid:tal:prob:e2} the  second rhs
term  can still be bounded by applying successively Markov's
inequality, $\ga_{(K)}K\leq n$ and
\eqref{p:l:iid:tal:conc:e2}. Thereby,  we obtain for all $1\leq m\leq K$
\begin{equation}\label{p:l:dep:tal:prob:e1}
\ProbaMeasure\big(\Vnorm{\fou{V}_{\uDi}}\geq
4c\ga_m^{-1/2}\big)\leq\ProbaMeasure\big(\sup_{t\in\cB_{\Di}}|\overline{\nu_t}^b|
\geq 2c\ga_m^{-1/2}\big)+
(2c)^{-2}\maxnormsup^2 n^{-3} \Ex(\iE/\vE)^{12}.
\end{equation}
Considering the first rhs term we make use of the notations $ \overline{\nu_t}^b
=:\tfrac{1}{2}\{\overline{\nu_t}^{be}+\overline{\nu_t}^{bo}\}$ and  $
\cou{{\overline{\nu_t}^b}}
=\tfrac{1}{2}\{\cou{{\overline{\nu_t}^{be}}}+\cou{{\overline{\nu_t}^{bo}}}\}$
introduced in the proof of \eqref{l:dep:tal:conc} above, where $\overline{\nu_t}^{be}$ and $\overline{\nu_t}^{bo}$
(respectively, $\cou{{\overline{\nu_t}^{be}}}$ and
$\cou{{\overline{\nu_t}^{bo}}}$) are identically distributed. Thereby,
we have
\begin{multline}\label{p:l:dep:tal:prob:e2}
\ProbaMeasure\big(\sup_{t\in\cB_{\Di}}|\overline{\nu_t}^b|
\geq 2c\ga_m^{-1/2}\big)\leq2\ProbaMeasure\big(\sup_{t\in\cB_{\Di}}|\overline{\nu_t}^{be}|\geq 2c\ga_m^{-1/2}\big)
\\\hfill\leq2\big[\ProbaMeasure(\sup_{t\in\cB_{\Di}}|\cou{{\overline{\nu_t}^{be}}}|\geq 2c\ga_m^{-1/2})+\ProbaMeasure(\bigcup_{i=1}^{p}\couE_l\ne E_l)\big]\\\leq2\big[\ProbaMeasure(\sup_{t\in\cB_{\Di}}|\cou{{\overline{\nu_t}^{be}}}|\geq 2c\ga_m^{-1/2})+p\beta_{q+1}\big]
\end{multline}
where the last inequality follows from the coupling property
\ref{dd:as:cou3}. The first rhs term in the last display we bound
employing Talagrand's inequality \eqref{l:talagrand:eq2} given in
Lemma \ref{l:talagrand} with
$h$, $H$, $v$ as in
\eqref{p:l:dep:tal:conc:h}--\eqref{p:l:dep:tal:conc:v},
respectively. Thereby, using $\penDep[1]=2[3 + 4(\DiSoInf/\vE)^2
\gB]$  we have for all $K\geq m\geq 1$
and for all $\lambda>0$ 
\begin{multline*}
\ProbaMeasure\big(\sup_{t\in\cB_{\Di}}|\cou{{\overline{\nu_t}^{be}}}|
\geq 2 \maxnormsup\{\vY^2+\VnormZ{\DiSo}^2\}^{1/2} (\penDep[1])^{1/2}\Di^{1/2} n^{-1/2}+\lambda\big)\\
\leq3\exp\bigg[-\frac{p}{100}\bigg(\frac{\lambda^2}{q^{-1}m^{1/2} \{\vE^2+ 4(\DiSoInf)^2\maxnormsup\gB^{1/2}\}}\wedge\frac{\lambda}{(\vE+\DiSoInf)\maxnormsup m^{1/2} n^{1/3}}\bigg)\bigg].
\end{multline*}
Since $n \geq 4 c^{-2}\maxnormsup^2 \penDep[1]  K \ga_{(K)}\geq 4 c^{-2}
\maxnormsup^2 \penDep[1] \Di \ga_{\Di}$  letting
$\lambda:=c\{\vY^2+\VnormZ{\DiSo}^2\}^{1/2}\ga_{\Di}^{-1/2}$ and 
using $\{\vY^2+\VnormZ{\DiSo}^2\}^{1/2}\geq \vE$,  $\ga_{m}\leq
\ga_{(K)}$  and $n^{1/2}c\geq2(1+\DiSoInf/\vE)\maxnormsup
  K^{1/2}\ga_{(K)}^{1/2}$ we obtain
\begin{multline*}
\ProbaMeasure\big(\sup_{t\in\cB_{\Di}}|\cou{{\overline{\nu_t}^{be}}}| \geq 2c
\{\vY^2+\VnormZ{\DiSo}^2\}^{1/2}\ga_m^{-1/2}\big)\\
\leq3\exp\bigg[-\frac{p}{100}\bigg(\frac{c^2\vE^2}{q^{-1}K^{1/2}
  \{\vE^2+
  4(\DiSoInf)^2\maxnormsup\gB^{1/2}\}\ga_{(K)}}\wedge\frac{c\vE}{(\vE+\DiSoInf)\maxnormsup
  K^{1/2}\ga_{(K)}^{1/2} n^{1/3}}\bigg)\bigg]\\
\leq3\exp\bigg[\frac{-nc^2}{200K^{1/2}\{1+
  4(\DiSoInf/\vE)^2\maxnormsup\gB^{1/2}\}\ga_{(K)}}\vee\frac{-n^{1/6}}{100q}\bigg]
\end{multline*}
We obtain the assertion \eqref{l:dep:tal:prob} by replacing
successively in \eqref{p:l:dep:tal:prob:e2} the first rhs term by the
last bound, the resulting bound is then used in
\eqref{p:l:dep:tal:prob:e1} to derive the assertion,  which completes the proof.
\end{proof}
% --------------------------------------------------------------------
% <<Lemma dep Talagrand2>> \ref{l:dep:tal2:{:prob}}
% --------------------------------------------------------------------
\begin{lem}\label{l:dep:tal2}  Let $\ga$ be  a
  non negative sequence  with $\ga_{(m)}:=\max_{1\leq k\leq
 m}\ga_k$ and
$\gB\geq 2\sum_{k=0}^\infty
(k+1)\beta_k$. If $n \geq 8 c^{-2}
\maxnormsup^4 (1+4\gB) K^2 \ga_{(K)}$ for $c>0$ then for all $1\leq\Di\leq K$ holds
  \begin{equation}\label{l:dep:tal2:prob}
    \ProbaMeasure\bigg(\ga_{\Di}\VnormS{\fou{\Xi}_{\uDi}}^2\geq 4c^2\bigg)
\leq 6\exp\bigg[\frac{-nc^2}{800\VnormInf{p_{\iZ,\iV}}K\ga_{(K)}\maxnormsup^2\gB^{1/2}}\vee\frac{-n^{1/2}}{50q}\bigg]+nq^{-1}\beta_{q+1}
  \end{equation}
  where $p_{Z,W}$
  denotes the joint density of $Z$
  and $W$.
\end{lem}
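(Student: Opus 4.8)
The plan is to reproduce, step for step, the proof of the probability bound \eqref{l:dep:tal:prob} of Lemma~\ref{l:dep:tal} and of its i.i.d.\ analogue Lemma~\ref{l:iid:tal2}: reduce to the supremum of an empirical process, split the sample into even and odd blocks of length $q$, couple each block-sequence with an i.i.d.\ one via the properties~\ref{dd:as:cou1}--\ref{dd:as:cou3}, and then apply Talagrand's concentration inequality \eqref{l:talagrand:eq2}. First I would set, exactly as in Lemma~\ref{l:iid:tal2}, $\nu_t(\iZ,\iV)=\sum_{j,l=1}^{\Di}\fou{t}_{j,l}\basZ_j(\iZ)\basV_l(\iV)$ and $\overline{\nu_t}=n^{-1}\sum_{i=1}^n\{\nu_t(\iZ_i,\iV_i)-\Ex\nu_t(\iZ_i,\iV_i)\}$, so that $\sup_{t\in\cB_{\Di^2}}|\overline{\nu_t}|^2=\sum_{j,l}\fou{\Xi}_{j,l}^2\geq\VnormS{\fou{\Xi}_{\uDi}}^2$, and it is enough to bound $\ProbaMeasure(\sup_{t\in\cB_{\Di^2}}|\overline{\nu_t}|\geq 2c\,\ga_{\Di}^{-1/2})$.

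Writing $n=2pq$ (the general case as in Lemma~\ref{l:dep:tal}) and using the even and odd index blocks $\cI^e_l,\cI^o_l$, $1\le l\le p$, I would decompose $\overline{\nu_t}=\tfrac12\{\overline{\nu_t}^e+\overline{\nu_t}^o\}$ with $\overline{\nu_t}^e=p^{-1}\sum_{l=1}^p\{\vec{\nu}_t(E_l)-\Ex\vec{\nu}_t(E_l)\}$ and $\vec{\nu}_t(E_l)=q^{-1}\sum_{i\in\cI^e_l}\nu_t(\iZ_i,\iV_i)$, and symmetrically for $\overline{\nu_t}^o$. Strict stationarity makes the two terms equidistributed, so $\ProbaMeasure(\sup_t|\overline{\nu_t}|\geq 2c\ga_{\Di}^{-1/2})\le 2\,\ProbaMeasure(\sup_t|\overline{\nu_t}^e|\geq 2c\ga_{\Di}^{-1/2})$. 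Passing to the coupled blocks $(\couE_l)_{l=1}^p$ of~\ref{dd:as:cou1}--\ref{dd:as:cou3}, the centring constants are unchanged by~\ref{dd:as:cou1}, so $\overline{\nu_t}^e$ coincides with its coupled version $\cou{\overline{\nu_t}^e}$ on $\{E_l=\couE_l,\ \forall\, l\le p\}$, and~\ref{dd:as:cou2} with a union bound gives $\ProbaMeasure(\sup_t|\overline{\nu_t}^e|\geq 2c\ga_{\Di}^{-1/2})\le\ProbaMeasure(\sup_t|\cou{\overline{\nu_t}^e}|\geq 2c\ga_{\Di}^{-1/2})+p\,\beta_{q+1}$; the resulting $2p\,\beta_{q+1}=n\,q^{-1}\beta_{q+1}$ is exactly the second summand of \eqref{l:dep:tal2:prob}.

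It then remains to apply \eqref{l:talagrand:eq2} to the i.i.d.\ sample $\couE_1,\dots,\couE_p$. By Assumption~\ref{a:mth:bs} one may take $h=\Di\maxnormsup^2\ge\sup_t\VnormInf{\vec{\nu}_t}$; since each $\couE_l$ is distributed as $(\iZ_i,\iV_i)_{i=1}^q$ (by~\ref{dd:as:cou1}) and the $\couE_l$ are independent (by~\ref{dd:as:cou3}), one has $\Ex\sup_t|\cou{\overline{\nu_t}^e}|^2=(pq^2)^{-1}\sum_{j,l}\Var(\sum_{i=1}^q\basZ_j(\iZ_i)\basV_l(\iV_i))$, which by Lemma~\ref{l:mixing:V0} together with $1+4\sum_{k=1}^{q-1}\beta_k\le 1+4\gB$ is at most $H^2:=2\Di^2\maxnormsup^4(1+4\gB)n^{-1}$; and the variance proxy is $\Var(\vec{\nu}_t(\couE_1))=q^{-2}\Var(\sum_{i=1}^q\nu_t(\iZ_i,\iV_i))\le 4q^{-1}\VnormInf{p_{\iZ,\iV}}\Di\maxnormsup^2\gB^{1/2}=:v$ by Lemma~\ref{l:mixing:V}, using $\Ex\nu_t^2\le\VnormInf{p_{\iZ,\iV}}$, $\VnormInf{\nu_t}\le\Di\maxnormsup^2$ and $\VnormInf{p_{\iZ,\iV}}\ge1$ (the density integrates to one over $[0,1]^2$). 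The hypothesis $n\ge 8c^{-2}\maxnormsup^4(1+4\gB)K^2\ga_{(K)}$ gives $2H\le c\,\ga_{\Di}^{-1/2}$, so with $\lambda=c\,\ga_{\Di}^{-1/2}$ inequality \eqref{l:talagrand:eq2} yields $\ProbaMeasure(\sup_t|\cou{\overline{\nu_t}^e}|\ge 2c\ga_{\Di}^{-1/2})\le 3\exp[-\tfrac{p}{100}(\tfrac{\lambda^2}{v}\wedge\tfrac{\lambda}{h})]$; the same hypothesis (with $\gB\ge\beta_0=1$, whence $8(1+4\gB)\ge16$, and $\Di\ga_{\Di}\le K\ga_{(K)}$) yields $\tfrac{p}{100}\tfrac{\lambda}{h}\ge\tfrac{n^{1/2}}{50q}$ and $\tfrac{p}{100}\tfrac{\lambda^2}{v}\ge\tfrac{nc^2}{800\VnormInf{p_{\iZ,\iV}}K\ga_{(K)}\maxnormsup^2\gB^{1/2}}$. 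Collecting these estimates, multiplying by the factor $2$ from the even/odd split, and using $\sup_{t\in\cB_{\Di^2}}|\overline{\nu_t}|\ge\VnormS{\fou{\Xi}_{\uDi}}$ gives \eqref{l:dep:tal2:prob}.

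I expect the only genuinely delicate point to be the variance bookkeeping of the coupled reduction: one must identify the Talagrand variance proxy with the single-block quantity $q^{-2}\Var(\sum_{i=1}^q\nu_t(\iZ_i,\iV_i))$ and estimate it by the mixing bound of Lemma~\ref{l:mixing:V} (a naive i.i.d.\ estimate would destroy the $\VnormInf{p_{\iZ,\iV}}$ factor), and likewise obtain $H^2$ from the block variance of Lemma~\ref{l:mixing:V0} rather than from a single observation; keeping in mind that the centring constants are preserved under the coupling~\ref{dd:as:cou1} while independence is recovered only after it~\ref{dd:as:cou3} is the remaining care. Everything else is the constant chasing already carried out in Lemmas~\ref{l:iid:tal2} and~\ref{l:dep:tal}.
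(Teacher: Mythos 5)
Your proposal is correct and follows essentially the same route as the paper's proof: the same block decomposition $\overline{\nu_t}=\tfrac12\{\overline{\nu_t^{e}}+\overline{\nu_t^{o}}\}$ with Viennet coupling yielding the $nq^{-1}\beta_{q+1}$ term, and the same application of Talagrand's inequality with $h=\Di\maxnormsup^2$, the block-variance bound of Lemma \ref{l:mixing:V0} for $H$, and the mixing bound of Lemma \ref{l:mixing:V} for $v$. The only (harmless) deviation is that you bound $1+4\sum_{k=1}^{q-1}\beta_k$ by $1+4\gB$ directly in $H^2$ where the paper keeps $\cB_q$; the constant chasing to reach \eqref{l:dep:tal2:prob} is identical.
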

\begin{proof}[\dr Proof of Lemma \ref{l:dep:tal2}] The proof follows a
  long the lines of the proof of Lemma \ref{l:iid:tal2} applying Talagrand's inequality \eqref{l:talagrand:eq2}
  in Lemma \ref{l:talagrand} using
  $\sup_{t\in\cB_{\Di^2}}|\overline{\nu_t}(x)|^2=\sum_{j,l=1}^{\Di}\fou{\Xi}_{j,l}^2\geq
  \VnormS{\fou{\Xi}_{\uDi}}^2$ where  $\nu_t(\iZ,\iV)=\sum_{j,l=1}^{\Di}\fou{t}_{j,l}\basZ_j(\iZ)\basV_l(\iV)$.
 Consider
$(\iZ_i,\iV_i)_{i\geq 1}=(E_l,O_l)_{l\geq 1}$ and
$(\couZ_i,\couV_i)_{i\geq 1}=(\couE_l,\couO_l)_{l\geq 1}$   which
satisfy  the coupling properties \ref{dd:as:cou1}, \ref{dd:as:cou2}
and \ref{dd:as:cou3}. Let
$\vec{v}_t(E_k)=\sum_{j,l=1}^\Di\fou{t}_{j,l}\vec{\psi}_{j,l}(E_k)$ with $\vec{\psi}_{j,l}(E_k)=q^{-1}\sum_{i\in\cI_k^e}\basZ_j(\iZ_i)\basV_l(\iV_i)$, then for $n=2pq$ it follows 
\begin{equation*}
 \overline{\nu_t} =
 \tfrac{1}{2}\big\{\tfrac{1}{p}\sum_{l=1}^p\{\vec{v}_t(E_l)-\Ex\vec{v}_t(E_l)
 \}+ \tfrac{1}{p}\sum_{l=1}^p\{\vec{v}_t(O_l)-\Ex\vec{v}_t(O_l)\}\big\}=:\tfrac{1}{2}\{\overline{\nu_t^{e}}+\overline{\nu_t^{o}}\}. 
\end{equation*}
Considering  the random variables $(\couZ_i,\couV_i)_{i\geq1}$ rather
than $(\iZ_i,\iV_i)_{i\geq1}$ we introduce in addition $\cou{
  \overline{\nu_t}}=\tfrac{1}{2}\{\cou{\overline{\nu_t^{e}}}+\cou{\overline{\nu_t^{o}}}\}$. Keeping
in mind that $\overline{\nu_t^{e}}$ and $\overline{\nu_t^{o}}$
(respectively, $\cou{\overline{\nu_t^{e}}}$ and
$\cou{\overline{\nu_t^{o}}}$) are identically distributed, we have
\begin{multline}\label{p:l:dep:tal2:e1}
\ProbaMeasure(\sup_{t\in\cB_{\Di^2}}|\overline{\nu_t}|\geq x)
\leq 2\ProbaMeasure(\sup_{t\in\cB_{\Di^2}}|\overline{\nu_t^e}|\geq x)
\leq2\big[\ProbaMeasure(\sup_{t\in\cB_{\Di^2}}|\cou{\overline{\nu_t^{e}}}|\geq x)+\ProbaMeasure(\bigcup_{i=1}^{p}\couE_l\ne E_l)\big]\\\leq2\big[\ProbaMeasure(\sup_{t\in\cB_{\Di^2}}|\cou{\overline{\nu_t^{e}}}|\geq x)+p\beta_{q+1}\big]
\end{multline}
where the last inequality follows from the coupling property
\ref{dd:as:cou3}. The first rhs term in the last display we bound by applying Talagrand's inequality \eqref{l:talagrand:eq2}
  in Lemma \ref{l:talagrand}.  Therefore, we compute next the quantities $h$,  $H$
  and $v$
  verifying the three required inequalities. Consider $h$. Exploiting
  Assumption \ref{a:mth:bs}  we have
  \begin{multline}
    \label{p:l:dep:tal2:h}
    \sup_{t\in\cB_{\Di^2}}\VnormInf{\vec{v}_t}^2\leq\sum_{j,l=1}^{\Di}\VnormInf{\vec{v}_{j,l}}^2\leq
    \sum_{j,l=1}^{\Di}\VnormInf{\basZ^2_j}\VnormInf{\basV^2_l}\leq\Di^2\maxnormsup^4=:h^2.\hfill  
\end{multline}
Consider
$H$. Let
$\cB_{q}:=\sum_{k=1}^{q-1}\beta_k$. Exploiting
successively that  $\{\vec{\psi}_{j,l}(\couE_k)\}_{k=1}^p$ form an \iid
sample,  $\vec{\psi}_{j,l}(\couE_1)$ and $\vec{\psi}_{j,l}(E_1)$
are identically distributed and Lemma \ref{l:mixing:V0} we obtain
\begin{multline}\label{p:l:dep:tal2:H}
\Ex\sup_{t\in\cB_{\Di^2}}|\cou{\overline{\nu_t^{e}}}|^2
= p^{-1}\sum_{j,l=1}^{\Di}\Var(\vec{\psi}_{j,l}(E_1))
= p^{-1}q^{-2}\sum_{j,l=1}^{\Di}\Var\big(\sum_{i\in\cI_1^e}\basZ_j(\iZ_i)\basV_l(\iV_i)\big)\\\
\leq\frac{2\Di^2\maxnormsup^4}{n}(1+4\cB_{q}):=H^2.
  \end{multline}
Consider $v$. From Lemma \ref{l:mixing:V} with
$h(\iZ,\iV)=\sum_{j,l=1}^m\fou{t}_{jl}\basZ_j(\iZ)\basV_l(\iV)$ follows
\begin{multline}
    \sup_{t\in\cB_{\Di^2}}\frac{1}{p}\sum_{l=1}^p
    \Var\big(\vec{\nu_t}(\cou{E_l})\big)\leq 4q^{-1}\sup_{t\in\cB_{\Di^2}}\Ex\big[(\sum_{j,l=1}^{\Di}\fou{t}_{j,l}\basZ_j(\iZ_1)\basZ_l(\iV_1))^2b(\iZ_1,\iV_1))\big]\\
\leq 4q^{-1}\sup_{t\in\cB_{\Di^2}}\{\Ex(\sum_{j,l=1}^{\Di}\fou{t}_{j,l}\basZ_j(\iZ_1)\basV_l(\iV_1))^2\}^{1/2}\VnormInf{\sum_{j,l=1}^m\fou{t}_{jl}\basZ_j\basV_l}\{2\sum_{k=0}^\infty
(k+1)\beta_k\}^{1/2}\\
 \label{p:l:dep:tal2:v} 
\hspace*{40ex}\leq 4\Di q^{-1}\maxnormsup^2\VnormInf{p_{\iZ,\iV}}\gB^{1/2}=:v.
 \end{multline}
  Evaluating \eqref{l:talagrand:eq2} of Lemma \ref{l:talagrand} with
  $h$,
  $H$,
  $v$
  given by \eqref{p:l:dep:tal2:H}--\eqref{p:l:dep:tal2:v},
  respectively,  for any $\lambda>0$ we have
  \begin{multline*}
    \ProbaMeasure\big(\sup_{t\in\cB_{\Di^2}}|\cou{\overline{\nu_t^{e}}}|\geq
    2\sqrt{2}m\maxnormsup^2(1+4\gB_q)^{1/2}n^{-1/2}+\lambda\big)\\
 \leq3\exp\bigg[\frac{-n\lambda^2}{800\VnormInf{p_{\iZ,\iV}}m\maxnormsup^2\gB^{1/2}}\vee\frac{-n\lambda}{200q\Di\maxnormsup^2}\bigg]
  \end{multline*}
  Since $ n\geq 8c^{-2} K^2 \ga_{(K)}\maxnormsup^4(1+4\gB) \geq
  8c^{-2} m^2 \ga_m\maxnormsup^4(1+4\gB_q)$, $1\leq m\leq K$, letting $\lambda:=c\ga_m^{-1/2}$ and 
using  $\ga_{m}\leq \ga_{(K)}$  and $n^{1/2}c\geq4\maxnormsup^2K\ga_{(K)}^{1/2}$ we obtain
  \begin{multline*}
    \ProbaMeasure\bigg(\sup_{t\in\cB_{\Di^2}}|\overline{\nu_t}|\geq
    2c\ga_{\Di}^{-1/2}\bigg)\\\hfill\leq
3\exp\bigg[\frac{-nc^2}{800\VnormInf{p_{\iZ,\iV}}K\ga_{(K)}\maxnormsup^2\gB^{1/2}}\vee\frac{-nc}{200qK
  \ga_{(K)}^{1/2}\maxnormsup^2}\bigg]\hfill\\
\leq
3\exp\bigg[\frac{-nc^2}{800\VnormInf{p_{\iZ,\iV}}K \ga_{(K)}\maxnormsup^2\gB^{1/2}}\vee\frac{-n^{1/2}}{50q}\bigg]
  \end{multline*}
A combination
of the last bound, \eqref{p:l:dep:tal2:e1}  and  $\sup_{t\in\cB_{\Di^2}}|\overline{\nu_t}(x)|\geq
\VnormS{\fou{\Xi}_{\uDi}}$ implies  the assertion, which completes
the proof.
\end{proof}

%%% Local Variables:
%%% mode: latex
%%% TeX-master: "_0NIV_dep"
%%% End:
 
% ======================================================================================================================
%                                                                 
% Title: Proofs Minimax Risk bound
% Author: Jan JOHANNES, Nicolas Asin
% Email: johannes@math.uni-heidelberg.de nicolas.asin@uclouvain.be
% Date: %%ts latex start%%[2016-04-05 Tue 23:13]%%ts latex end%%
% Main-TeX-File: _0NIV_Dep in der form "Name"
%
% ======================================================================================================================
\subsection{Proof of Theorem \ref{t:iid:mm}}\label{a:mm:iid}%
% --------------------------------------------------------------------
% <<Intro>>
% --------------------------------------------------------------------
Let us first recall notations and gather preliminary results used in
the sequel.  Keeping in mind the notations given in \eqref{de:delta} and
  \eqref{de:Mn} we assume throughout
  this section $\Op\in\OpcwdD$ and use  in addition to \eqref{a:not:de:no}  for all $m\geq1$ and $n\geq1$
% ....................................................................
% <<De notations>> \ref{a:mm:de:no}
% ....................................................................
\begin{multline}\label{a:mm:de:no}
\Delta_m^\Opw= \Delta_m(\Opw) ,\,
\Lambda_m^\Opw=\Lambda_m(\Opw),\,\delta_m^\Opw=
m\Delta_m^\Opw\Lambda_m^\Opw,\\
\Mutw= \Mn(4\OpD^2\Opw),\quad \Motw= \Mn(\Opw/(4\Opd^2)).\hfill
\end{multline}
Recall that under Assumption \ref{a:mm:seq}  for all $\Di\geq1$ it holds
  $\OpD^{-2}\leq\Opw_{\Di}^{-1}\VnormS{\DifOp^{-1}}^2\leq
\OpD^2$,
 $\OpD^{-2} \leq \Delta_m^\Op/\Delta^\Opw_m\leq D^2 $,  $(1+2\log
 \OpD)^{-1}\leq \Lambda_m^\Op/\Lambda^\Opw_m\leq (1+2\log \OpD)$, and
 $\OpD^{-2}(1+2\log \OpD)^{-1}\leq \delta_m^\Op/\delta^\Opw_m\leq
 \OpD^2(1+2\log \OpD)$ as well as $\Mutw\leq\Mut\leq\Mot\leq\Motw$,
 for all $n\geq1$. Furthermore,  the elements of $\Socwr$ are bounded uniformly, that
is, $\VnormInf{\phi}^2\leq
\VnormInf{\sum_{j\geq1}\Sow_j\basZ_j^2}\VnormW[\Sow]{\phi}^2\leq
\SowBasZsup^2\Sor^2$ for all $\phi\in\Socwr$. The last estimate  is used in Lemma \ref{app:pre:lb} in the
Appendix \ref{app:pre}  to show that for all $\So\in\Socwr$ and
$\Op\in\OpcwdD$ the approximation $\DiSo$
satisfies  $\Sow_{\Di}^{-1}\biasnivSo[\Di]\leq 4
\OpD^{4}\Sor^2$, $\VnormInf{\So-\DiSo}\leq 2\OpD^2 \SowBasZsup\Sor$ and
$\VnormZ{\DiSo}^2\leq 4\OpD^4\Sor^2$. Thereby, setting
$\DiSowTwo:=\Vnorm[\iZ,\iV]{\ceE}^2\vee4\OpD^4\Sor^2$ and $\DiSowTwo:=\VnormInf{\ceE}+(1+2\OpD^2 )\SowBasZsup\Sor$
the Assumption \ref{a:iid:ora} \ref{a:iid:ora:b} holds with
$\DiSoTwo:=\DiSowTwo$ and $\DiSoInf:=\DiSowInf$ uniformly for all
$\So\in\Socwr$ and $\Op\in\OpcwdD$.
% --------------------------------------------------------------------
% <<subsection{Proof of Theorem \ref{t:iid:mm}>>
% --------------------------------------------------------------------
The proof follows along the lines of the proof of Theorem
\ref{t:iid:ora} given in Appendix \ref{a:ora:iid}. We shall prove below the Propositions \ref{p:iid:mm:Eset} and
\ref{p:iid:mm:Rest} which are used in the proof of Theorem \ref{t:iid:mm}.
In the proof the propositions we refer to the three technical Lemma
\ref{l:iid:tal}, \ref{l:iid:tal2} and \ref{l:iid:mm:sets}
 which are shown in Appendix  \ref{a:ora:iid} and the end of
this section. Moreover, we make use of  functions
  $\tbPsi,\tbPhic,\tbPhid,\tbPhie:\Rz_+\to\Rz$
  defined by
  \begin{multline}\label{de:iid:mm}
    \tbPsi(x)=\OpD^2\sum\nolimits_{m\geq 1} x \Opw_m \exp(- m
    \Lambda_\Di^\Opw/(6(1+2\log\OpD)x) ),\\
    \tbPhic(x)= n^3\exp(-n(\Delta^\Opw_{\Motw})^{-1}/(25600\OpD^2x^2)),\\
    \tbPhid(x)=n^3\exp(- n(\Delta_{\Motw+1}^\Opw)^{-1}/(6400\OpD^2x))\hfill\\
    \tbPhie(x)= x n \exp(-\Motw\log(n)/(6x) ).\hfill
  \end{multline}
 Note that each function in \eqref{de:iid:mm}  is non decreasing in $x$
  and for all $x>0$,  $\tbPsi(x)<\infty$,
$\bPsi(x)\leq\tbPsi(x)$, $\bPhic(x)\leq\tbPhic(x)$ and
$\bPhid(x)\leq\tbPhid(x)$ with $\bPsi$, $\bPhic$ and $\bPhid$ as in \eqref{de:iid:ora}.
 Moreover, if
  $\log(n)(\Motw+1)^2\Delta_{\Motw+1}^\Op=o(n)$ as $n\to\infty$  then 
there exists an integer $n_o$  such that 
  \begin{equation}\label{de:iid:mm:no}
1\geq \sup_{n\geq n_o}
\big\{1024\maxnormsup^4\OpD^2(1+\DiSowInf/\vE\big)^2(\Motw+1)^2\Delta_{\Motw+1}^\Opw
n^{-1}\big\},
\end{equation}
 and we have also for all $x>0$,  $\tbPhic(x)=o(1)$, $\tbPhid(x)=o(1)$
  and $\tbPhie(x)=o(1)$  as $n\to\infty$. Consequently, considering
  $\tbPhia$ and $\tbPhib$ as in \eqref{de:iid:ora} under
  Assumption \ref{a:mth:rv} and \ref{a:mth:bs}  there exists 
a finite constant $\SowRaC$  such that for all $n\geq1$,
\begin{multline}\label{de:iid:mm:Sigma}
\SowRaC\geq
\big\{n_o^2\bigvee n^3\exp(-n^{1/6}/50)\bigvee \tbPsi\big(1+\DiSowInf/\vE\big)\bigvee
\tbPhia\big(1+\DiSowInf/\vE\big)\bigvee
\tbPhib\big(1+\DiSowInf/\vE\big)\\\hfill\bigvee 
\tbPhie\big(1+\DiSowInf/\vE\big)\bigvee 
\tbPhic(1+\DiSowInf/\vE)\bigvee
\tbPhid(\VnormInf{p_{\iZ,\iV}})\\
\bigvee\Ex(\iE/\vE)^{8}\bigvee(\DiSowInf/\vE)^{8}\bigvee (\maxnormsup/\vE)^2\Ex(\iE/\vE)^{12} \big\}.
\end{multline}% 
% --------------------------------------------------------------------
% <<Proof of Theorem \ref{t:iid:mm}>>
% --------------------------------------------------------------------
\begin{proof}[\noindent\textcolor{darkred}{\sc Proof of Theorem \ref{t:iid:mm}}] 
% ....................................................................
% <<key argument>> \ref{t:iid:ora:key:arg}
% ....................................................................
 We start the
  proof considering the elementary identity \eqref{ri:dec} given in
  the proof of Theorem \ref{t:iid:ora} where we bound the two rhs terms separately.
  The
  second rhs term  we bound with help of Proposition
  \ref{p:iid:mm:Rest}. Thereby, there exists a numerical constant $C$
  such that for all $\So\in\Socwr$ hold
  \begin{equation}\label{ri:dec:mm:iid}
    \Ex\VnormZ{\hDiSo[\hDi]-\So}^2\leq \Ex\left(\Ind{\Eset}\VnormZ{\hDiSo[\hDi]-\So}^2\right)+C\;n^{-1}\;
\maxnormsup^2(1+\vE^2+\DiSowTwo)\SowRaC.
  \end{equation}
Consider the first rhs term. On the event $\Eset$
  the upper bound given in \eqref{t:iid:ora:key:arg} implies
  \begin{equation*}
    \VnormZ{\hDiSo[\hDi]-\So}^2\Ind{\Eset}\leq 582 \min_{1\leq \Di\leq \Mut}\{[\biasnivSo\vee\pen]\} +42 \max_{1\leq k\leq \Mot}\vectp{ \VnormZ{\hDiSo[k]-\DiSo[k]}^2 - \pen[k]/6}.
  \end{equation*}
Keeping in mind that $\pen[k]=144\maxnormsup^2\vB[k]^2\delta_k^\Op n^{-1}$  with $\delta_k^\Op=k\Lambda_k^\Op\Delta_{k}^\Op$ and $\vB[k]^2\leq2(\vE^2+3\DiSowTwo)$  we derive in
  Proposition \ref{p:iid:mm:Eset} below an upper bound for the
  expectation of the second rhs term, the remainder term,
  in the last display. Thereby, from   $\Mut\geq\Mutw$,
  $\biasnivSo\leq \Sow_m 4\OpD^4\Sor^2$ and $\delta_k^\Op\leq
  \OpD^2(1+2\log\OpD)\delta_k^\Opw$ there exists a numerical constant $C$
  such that for all $\So\in\Socwr$ 
 \begin{equation*}
    \Ex\big(\Ind{\Eset}\VnormZ{\hDiSo[\hDi]-\So}^2\big)\leq 
    C\;\maxnormsup^2\OpD^4 (\Sor^2+\vE^2+\DiSowTwo)\{\min_{1\leq \Di\leq
      \Mutw}\{[\Sow_{\Di}\vee n^{-1}\delta_{\Di}^\Opw]\}  +n^{-1}\;\SowRaC\}.
  \end{equation*}
Replacing in  \eqref{ri:dec:mm:iid} the first rhs by the last upper bound
we obtain the assertion of the theorem, which completes the proof.
 \end{proof}%
% --------------------------------------------------------------------
% <<Prop bound on complement>> \ref{p:iid:mm:Eset}
% --------------------------------------------------------------------
\begin{prop}\label{p:iid:mm:Eset} Under the assumptions of Theorem \ref{t:iid:mm}
  there exists a numerical constant $C$ such that for all $n\geq1$ 
\begin{equation*}\Ex\set{\max_{1\leq k\leq
      \Mot}\vectp{\VnormZ{\hDiSo-\DiSo}^2 -24\maxnormsup^2\Di n^{-1}\vB^2\Lambda_\Di^\Op\Delta_{\Di}^\Op}}\leq
C n^{-1}\maxnormsup^2(1+\vE^2+\DiSowTwo)\SowRaC.
\end{equation*}
\end{prop}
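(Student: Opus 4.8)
The plan is to repeat the proof of Proposition \ref{p:iid:Eset} essentially verbatim, with three systematic substitutions. First, under Assumption \ref{a:mm:seq} together with the link condition $\Op\in\OpcwdD$, Lemma \ref{app:pre:lb} supplies the bounds $\sup_{\Di\geq1}\Sow_\Di^{-1}\VnormZ{\So-\DiSo}^2\leq 4\OpD^2\Opd^2\Sor^2$, $\VnormZ{\DiSo}^2\leq 4\OpD^2\Opd^2\Sor^2$ and $\VnormInf{\So-\DiSo}^2\leq 4\SowBasZsup^2\OpD^2\Opd^2\Sor^2$, uniformly in $\Di$ and in $\So\in\Socwr$, so that Assumption \ref{a:iid:ora}\ref{a:iid:ora:b} holds with $\DiSoTwo=\DiSowTwo$ and $\DiSoInf=\DiSowInf$; hence every occurrence of $\DiSoTwo$ and $\DiSoInf$ in the argument of Proposition \ref{p:iid:Eset} is to be read as $\DiSowTwo$ and $\DiSowInf$. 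Second, the set‑probability estimates of Lemma \ref{l:iid:sets} are to be replaced by their $\Socwr$/$\OpcwdD$‑uniform counterpart, Lemma \ref{l:iid:mm:sets}. Third, in the final collection of terms the $\Op$‑dependent remainder $\SoRaC$ of \eqref{de:iid:ora:Sigma} is to be bounded by the $\Opw$‑dependent constant $\SowRaC$ of \eqref{de:iid:mm:Sigma}.

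Concretely, I would start from the pointwise inequality $\VnormZ{\hDiSo-\DiSo}^2\leq 2\VnormS{\DifOp^{-1}}^2\Vnorm{\fou{V}_{\uDi}}^2+n\Vnorm{\fou{V}_{\uDi}}^2\Ind{\Xiset^c}+\VnormZ{\DiSo}^2\Ind{\hOpset^c}$, use $\Delta_m^\Op\geq\VnormS{\DifOp^{-1}}^2$ and $\Vnorm{\fou{V}_{\uDi}}^2\Ind{\Xiset^c}\leq\Vnorm{\fou{V}_{\underline{\Mot}}}^2\sum_{m=1}^{\Mot}\Ind{\Xiset^c}$, and split the quantity in the statement into the four terms of \eqref{p:p:iid:Eset:e1}, now with $\DiSowTwo$ in place of $\DiSoTwo$ (justified by $\max_{1\leq\Di\leq\Mot}\VnormZ{\DiSo}^2\leq\DiSowTwo$, from Lemma \ref{app:pre:lb}). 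The first two of these are controlled via \eqref{l:iid:tal:conc} of Lemma \ref{l:iid:tal}, applied with $K=\Mot$ and the sequences $\ga_m=\VnormS{\DifOp^{-1}}^2$, respectively $\ga_m=n\Ind{\{m=\Mot\}}$; the admissibility condition $\ga_{(K)}K^2\leq n^{3/2}$ holds because $\Mot\leq\DiMa$ and because $\log(n)(\Motw+1)^2\Delta_{\Motw+1}^\Opw=o(n)$ transfers to $\log(n)(\Mot+1)^2\Delta_{\Mot+1}^\Op=o(n)$ via $\Mot\leq\Motw$ and $\Delta_{\Mot+1}^\Op\leq\OpD^2\Delta_{\Motw+1}^\Opw$. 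The last two terms are bounded using the estimates for $\ProbaMeasure(\bigcup_{k=1}^{\Mot}\hOpset[k]^c)$ and $\ProbaMeasure(\bigcup_{k=1}^{\Mot}\Xiset[k]^c)$ given by Lemma \ref{l:iid:mm:sets}, together with $\vB^2\leq 2(\vE^2+3\DiSowTwo)$, $\Mot\log n\leq n$ and $\max_{1\leq\Di\leq\Mot}\VnormZ{\DiSo}^2\leq\DiSowTwo$.

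Collecting these estimates reproduces the bound of Proposition \ref{p:iid:Eset}, but with $\SoRaC$ replaced by the expression \eqref{de:iid:ora:Sigma} evaluated with $\DiSowInf$ in the role of $\DiSoInf$ and with the probability estimates of Lemma \ref{l:iid:sets} replaced by those of Lemma \ref{l:iid:mm:sets}. The last step is to dominate this by $\SowRaC$ of \eqref{de:iid:mm:Sigma}: the $\Op$‑free terms as well as $\bPhia=\tbPhia$ and $\bPhib=\tbPhib$ match verbatim, the contributions of $\bPsi$, $\bPhic$, $\bPhid$ follow from the inequalities $\bPsi(x)\leq\tbPsi(x)$, $\bPhic(x)\leq\tbPhic(x)$, $\bPhid(x)\leq\tbPhid(x)$ recorded after \eqref{de:iid:mm}, and the remaining $\Mot$‑dependent contributions (the quantity $n_o$ and the $\bPhie$‑term) are controlled by the monotonicity relations $\Mutw\leq\Mot\leq\Motw$, $\Delta_{\Mot+1}^\Op\leq\OpD^2\Delta_{\Motw+1}^\Opw$ and $\DiSoInf\leq\DiSowInf$, exactly as the passage from \eqref{de:iid:no} to \eqref{de:iid:mm:no} already shows. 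I expect the genuine work to be confined to two auxiliary tasks: establishing Lemma \ref{l:iid:mm:sets}, i.e.\ re‑running the set‑probability estimates of Lemma \ref{l:iid:sets} with constants uniform over $\Socwr$ and $\OpcwdD$ (which rests on Lemma \ref{app:pre:lb} and the extended link condition \eqref{de:elc}), and the term‑by‑term matching of $\SoRaC$ against $\SowRaC$ — the latter being the only point requiring care, since it is where the geometry of the classes $\Socwr$, $\OpcwdD$ enters the remainder; neither introduces a new concentration argument, so the difficulty is essentially organisational bookkeeping.
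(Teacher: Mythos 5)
Your overall strategy is the one the paper follows: re-run the decomposition \eqref{p:p:iid:Eset:e1}, invoke Lemma \ref{app:pre:lb} to replace $\DiSoTwo,\DiSoInf$ by $\DiSowTwo,\DiSowInf$ uniformly over $\Socwr$ and $\OpcwdD$, substitute Lemma \ref{l:iid:mm:sets} for Lemma \ref{l:iid:sets}, and dominate the resulting remainder by $\SowRaC$. The bookkeeping for the $\bPsi$, $\bPhic$, $\bPhid$ contributions via $\bPsi\leq\tbPsi$, $\bPhic\leq\tbPhic$, $\bPhid\leq\tbPhid$ is exactly as in the paper.

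There is, however, one step that fails as you state it: the treatment of the $\Mot$-dependent term $\bPhie$. Applying Lemma \ref{l:iid:tal} with $K=\Mot$ and $\ga_m=n\Ind{\{m=\Mot\}}$ produces $\bPhie(x)=xn\exp(-\Mot\log(n)/(6x))$, and you claim this is ``controlled by the monotonicity relations $\Mutw\leq\Mot\leq\Motw$''. But $\bPhie$ is \emph{decreasing} in the dimension parameter, so $\Mot\leq\Motw$ gives $\bPhie(x)\geq\tbPhie(x)$ --- the inequality you need goes the wrong way, and the constant $\SowRaC$ of \eqref{de:iid:mm:Sigma} only majorises $\tbPhie$, not $\bPhie$. (The analogy you draw with the passage from \eqref{de:iid:no} to \eqref{de:iid:mm:no} is misleading: there, enlarging $\Mot$ to $\Motw$ strengthens the requirement on $n$, which is the right direction; here it shrinks the exponential, which is the wrong one.) The paper avoids this by enlarging \emph{inside} the decomposition: since $\Mot\leq\Motw$, the middle term is bounded by $n\Vnorm{\fou{V}_{\underline{\Motw}}}^2\sum_{k=1}^{\Motw}\Ind{\Xiset[k]^c}$, and Lemma \ref{l:iid:tal} is then applied with $K=\Motw$ and $\ga_m=n\Ind{\{m=\Motw\}}$, which yields $\tbPhie$ directly (see \eqref{p:p:iid:mm:Eset:e1}); the accompanying probability is then that of $\bigcup_{k=1}^{\Motw}\Xiset[k]^c$, which is what \eqref{l:iid:mm:sets:B} actually controls. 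Alternatively you could salvage your route by using the lower bound $\Mot\geq\Mutw$ to get $\bPhie(x)\leq xn\exp(-\Mutw\log(n)/(6x))$, but that quantity is not among those listed in \eqref{de:iid:mm:Sigma}, so you would be proving the proposition with a different constant than the one it references. Everything else in your proposal matches the paper's argument.
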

\begin{proof}[\noindent\textcolor{darkred}{\sc Proof of Proposition
    \ref{p:iid:mm:Eset}}]
  We start the
  proof with an upper bound similar to \eqref{p:p:iid:Eset:e1} using $\Mot\leq\Motw$, that is,
 \begin{multline}\label{p:p:iid:mm:Eset:e1}
\Ex\set{\max_{1\leq \Di\leq\Mot}\vectp{\VnormZ{\hDiSo-\DiSo}^2 -24\maxnormsup^2\Di n^{-1}\vB^2\Lambda_\Di^\Op\Delta_{\Di}^\Op}}\\
\hfill\leq 2\Ex\set{ \max_{1\leq\Di\leq\Mot} \VnormS{\DifOp^{-1}}^2\vectp{\Vnorm{\fou{V}_{\uDi}}^2
    -12\maxnormsup^2\Di n^{-1}\vB^2\Lambda_\Di^\Op}}\\\hfill +
 \Ex\set{n\vectp{\Vnorm{\fou{V}_{\underline{\Motw}}}^2-12\maxnormsup^2\Motw  n^{-1}\vB[\Motw]^2\log(n)}}
\\+12\maxnormsup^2\Motw \vB[\Motw]^2 \log(n)P(\bigcup_{k=1}^{\Motw}\Xiset[k]^c)
 +\max_{1\leq \Di\leq\Mot}\VnormZ{\DiSo}^2P(\bigcup_{k=1}^{\Mot}\hOpset[k]^c)
\end{multline}
where we bound separately each of the four rhs
terms. In order to bound  the first and second rhs term we employ \eqref{l:iid:tal:conc} in
Lemma \ref{l:iid:tal} with $K=\Mot$ and $K=\Motw$, and sequence $\ga=(\ga_m)_{m\geq1}$ given by $\ga_m=\VnormS{\DifOp^{-1}}^2$ and
$\ga_m=n\Ind{\{m=\Motw\}}$, respectively. Keeping in mind  the
definition of $\Mot$, $\Motw$ and $\Mot\leq\Motw\leq\gauss{n^{1/4}}$, 
and hence in both cases
$\ga_{(K)}K^2\leq n^{3/2}$,  there exists a numerical constant $C>0$
such
that 
 \begin{multline*}
\Ex\set{\max_{1\leq \Di\leq\Mot}\vectp{\VnormZ{\hDiSo-\DiSo}^2 -24\maxnormsup^2\Di n^{-1}\vB^2\Lambda_\Di^\Op\Delta_{\Di}^\Op}}\\
\leq C n^{-1}\maxnormsup^2\big\{\vE^2\bPsi(1+\DiSoInf/\vE)+\vE^2\bPhib(1+\DiSoInf/\vE)+
\vE^2\tbPhie(1+\DiSoInf/\vE)+\Ex(\iE/\vE)^{12}\}\\
+6\maxnormsup^2\Mot \vB[\Mot]^2\log(n) P(\bigcup_{k=1}^{\Mot}\Xiset[k]^c)
 +\max_{1\leq \Di\leq\Mot}\VnormZ{\DiSo}^2P(\bigcup_{k=1}^{\Mot}\hOpset[k]^c)
\end{multline*}
with $\bPsi$, $\bPhib$  as in \eqref{de:iid:ora}, 
and $\tbPhie$ as in \eqref{de:iid:mm}. 
Taking into account that $\bPsi(x)\leq\tbPsi(x)$ and that Assumption \ref{a:iid:ora} \ref{a:iid:ora:b} holds with
$\DiSoTwo:=\DiSowTwo$ and $\DiSoInf:=\DiSowInf$ uniformly for all
$\So\in\Socwr$ and $\Op\in\OpcwdD$, it follows that 
 \begin{multline*}
\Ex\set{\max_{1\leq \Di\leq\Mot}\vectp{\VnormZ{\hDiSo-\DiSo}^2 -24\maxnormsup^2\Di n^{-1}\vB^2\Lambda_\Di^\Op\Delta_{\Di}^\Op}}\\
\leq C n^{-1}\maxnormsup^2\big\{\vE^2\tbPsi(1+\DiSowInf/\vE)+\vE^2\bPhib(1+\DiSowInf/\vE)+
\vE^2\tbPhie(1+\DiSowInf/\vE)+\Ex(\iE/\vE)^{12}\}\\
+6\maxnormsup^2\Motw \vB[\Motw]^2\log(n) P(\bigcup_{k=1}^{\Motw}\Xiset[k]^c)
 +\max_{1\leq \Di\leq\Mot}\VnormZ{\DiSo}^2P(\bigcup_{k=1}^{\Mot}\hOpset[k]^c)
\end{multline*}
Exploiting that $\vB^2\leq 2(\vE^2+3\DiSowTwo)$, $\Motw\log(n)\leq
n$   and $\max_{1\leq \Di\leq\Mot}\VnormZ{\DiSo}^2\leq\DiSowTwo$,
replacing the probability $P(\bigcup_{k=1}^{\Motw}\hOpset[k]^c)$ and $P(\bigcup_{k=1}^{\Motw}\Xiset[k]^c)$ by its upper bound  
given in \eqref{l:iid:mm:sets:hO} and  \eqref{l:iid:mm:sets:B} in Lemma
\ref{l:iid:mm:sets}, respectively, and  employing the definition
of $\SowRaC$ as in \eqref{de:iid:mm:Sigma} we obtain the result of the proposition, 
 which completes the proof.
\end{proof}
% --------------------------------------------------------------------
% <<Proposition \ref{p:iid:Rest}>>
% --------------------------------------------------------------------
\begin{prop}\label{p:iid:mm:Rest} Under the assumptions of Theorem \ref{t:iid:mm}  there exists a numerical constant $C$ such that for all $n\geq1$
 \begin{equation*}
\Ex\big(\VnormZ{\hDiSo[\hDi]-\So}^2\1_{\Eset^c}\big)\leq 
C\;n^{-1}\;
\maxnormsup^2(1+\vE^2+\DiSowTwo)\SowRaC.
\end{equation*}
\end{prop}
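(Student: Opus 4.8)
The plan is to follow line by line the proof of Proposition~\ref{p:iid:Rest} given in Appendix~\ref{a:ora:iid}, replacing throughout the quantities $\DiSoTwo$, $\DiSoInf$ and $\SoRaC$ that appear there by their uniform counterparts $\DiSowTwo$, $\DiSowInf$ and $\SowRaC$. This substitution is legitimate because, under Assumption~\ref{a:mm:seq} and for $\Op\in\OpcwdD$, Lemma~\ref{app:pre:lb} together with the discussion preceding Theorem~\ref{t:iid:mm} yields $\VnormZ{\So}^2\vee\sup_{m\geq1}\VnormZ{\DiSo}^2\leq\DiSowTwo$ and $\VnormInf{\ceE}+\VnormInf{\So}+\sup_{m\geq1}\VnormInf{\So-\DiSo}\leq\DiSowInf$ uniformly over $\So\in\Socwr$, so that Assumption~\ref{a:iid:ora}\ref{a:iid:ora:b} holds with $\DiSoTwo=\DiSowTwo$ and $\DiSoInf=\DiSowInf$ for every such $\So$ and $\Op$, and the same penalty constant $\kappa=144$ is used.

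First I would start, exactly as for \eqref{p:p:iid:Rest:e1}, from the crude bound $\VnormZ{\hDiSo-\DiSo}^2\Ind{\hOpset}\leq n\Vnorm{\fou{V}_{\uDi}}^2\leq n\Vnorm{\fou{V}_{\umaxDi}}^2$, valid for every $1\leq\Di\leq\maxDi:=\DiMa$, which gives $\VnormZ{\hDiSo-\So}^2\Ind{\hOpset}\leq 3n\Vnorm{\fou{V}_{\umaxDi}}^2+6\DiSowTwo$; since $\hDi\leq\maxDi$ this leads to
\begin{multline*}
\Ex\big(\VnormZ{\hDiSo[\hDi]-\So}^2\1_{\Eset^c}\big)\leq 3\Ex\set{n\vectp{\Vnorm{\fou{V}_{\umaxDi}}^2-12\maxnormsup^2\vB[\maxDi]^2\maxDi\log(n)n^{-1}}}\\+\{36\maxnormsup^2\maxDi\vB[\maxDi]^2\log(n)+6\DiSowTwo\}P(\Eset^c).
\end{multline*}
The first right hand side term I would bound by \eqref{l:iid:tal:conc} of Lemma~\ref{l:iid:tal} applied with the sequence $\ga_m=n\Ind{\{m=K\}}$ and $K=\maxDi=\DiMa$, for which $K^2\ga_{(K)}\leq n^{3/2}$: together with $\DiSoInf\leq\DiSowInf$ and the monotonicity of $\bPhia,\bPhib$ from \eqref{de:iid:ora} this produces $Cn^{-1}\maxnormsup^2\{\vE^2\bPhia(1+\DiSowInf/\vE)+\vE^2\bPhib(1+\DiSowInf/\vE)+\Ex(\iE/\vE)^{12}\}$, each summand of which is absorbed into $n^{-1}\maxnormsup^2(1+\vE^2+\DiSowTwo)\SowRaC$ by the definition \eqref{de:iid:mm:Sigma} of $\SowRaC$ (noting $\tbPhia=\bPhia$ and $\tbPhib=\bPhib$). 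In the second term I would use $\vB[\maxDi]^2\leq 2\{\vE^2+3\DiSowTwo\}$ and $\maxDi\log(n)\leq n$ to dominate the prefactor by $Cn(\vE^2+\DiSowTwo)$, and then replace $P(\Eset^c)$ by the bound $C\SowRaC n^{-2}$ furnished by \eqref{l:iid:mm:sets:E} in Lemma~\ref{l:iid:mm:sets}, the uniform-over-$\Socwr$ analogue of Lemma~\ref{l:iid:sets} (which rests on the Talagrand-type Lemmas \ref{l:iid:tal} and \ref{l:iid:tal2} together with the uniform bounds for $\DiSowInf$ and $\Delta^\Opw$). Collecting the two contributions yields the claimed inequality.

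The routine parts here are the bookkeeping of numerical constants and the verification that $K^2\ga_{(K)}\leq n^{3/2}$ in the chosen application of Lemma~\ref{l:iid:tal}; the genuinely load-bearing point — and the one I would check most carefully — is that every occurrence of the unknowns $\So$ and $\Op$ in the intermediate estimates can be dominated uniformly over $\Socwr\times\OpcwdD$: inside $\vB[\cdot]$, inside the arguments $1+\DiSowInf/\vE$ of $\bPhia,\bPhib$, and (crucially, through Lemma~\ref{l:iid:mm:sets}) inside $P(\Eset^c)$, where one exploits $\Mot\leq\Motw$ and $\Delta^\Op_m\leq\OpD^2\Delta^\Opw_m$ to replace the operator-dependent deviation bounds by ones depending only on $\Opw$, $\OpD$ and the uniform constants $\DiSowInf,\DiSowTwo$. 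This uniformity is exactly what the preliminaries before Theorem~\ref{t:iid:mm} and Lemma~\ref{app:pre:lb} secure, and it is the reason the whole remainder structure of Proposition~\ref{p:iid:Rest} carries over verbatim.
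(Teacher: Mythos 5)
Your proposal is correct and follows essentially the same route as the paper: the paper's proof of Proposition \ref{p:iid:mm:Rest} likewise runs line by line through the proof of Proposition \ref{p:iid:Rest} with $\maxDi=\DiMa$, obtains the same two-term decomposition, bounds the first term via \eqref{l:iid:tal:conc} of Lemma \ref{l:iid:tal} with the arguments of $\bPhia,\bPhib$ evaluated at $1+\DiSowInf/\vE$, and controls the second term using $\vB[\maxDi]^2\leq 2\{\vE^2+3\DiSowTwo\}$, $\maxDi\log(n)\leq n$ and the bound \eqref{l:iid:mm:sets:E} on $P(\Eset^c)$ from Lemma \ref{l:iid:mm:sets}, before absorbing everything into $\SowRaC$ via \eqref{de:iid:mm:Sigma}. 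Your justification of the uniform substitution $\DiSoTwo=\DiSowTwo$, $\DiSoInf=\DiSowInf$ over $\Socwr\times\OpcwdD$ is exactly the preliminary observation the paper relies on.
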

\begin{proof}[\noindent\textcolor{darkred}{\sc Proof of Proposition
    \ref{p:iid:mm:Rest}}]  
Following line by line the proof of Proposition
    \ref{p:iid:Rest} for $\maxDi:=\DiMa$
there exists a numerical constant $C>0$ such that
\begin{multline*}
\Ex\big(\VnormZ{\hDiSo[\hDi]-\So}^2\1_{\Eset^c}\big)\leq 
Cn^{-1}\maxnormsup^2\big\{\vE^2\bPhia\big(1+\DiSowInf/\vE\big)+\vE^2\bPhib\big(1+\DiSowInf/\vE\big)
        +\Ex(\iE/\vE)^{12}\big\}\\+\{36\maxnormsup^2 \maxDi \vB[\maxDi]^2
        \log(n) + 6\DiSowTwo\}P(\Eset^c)
\end{multline*}
with $\bPhia$ and $\bPhib$ as in \eqref{de:iid:ora}. 
Exploiting further the definition of $\SowRaC$ as in
\eqref{de:iid:mm:Sigma} and that $\vB[\maxDi]^2\leq
2\{\vE^2+3\DiSowTwo\}$ and
$M\log(n)\leq n$ the result of the proposition follows now  by
replacing the probability $P(\Eset^c)$ by its upper bound  
given in \eqref{l:iid:mm:sets:E} in Lemma \ref{l:iid:mm:sets}, which completes the proof.
\end{proof}
% --------------------------------------------------------------------
% <<Lem bound events>> \ref{l:iid:mm:sets{:A|B|C|hO|Xi|E}}
% --------------------------------------------------------------------
\begin{lem}\label{l:iid:mm:sets}
Under the assumptions of Theorem \ref{t:iid:mm} there exists a numerical constant $C$ such that for all $n\geq1$
\begin{align}
&\ProbaMeasure\big(\Aset^c)=\ProbaMeasure\big(\{1/2\leq\hsigma^2_Y/\sigma_Y^2\leq
  3/2\}^c\big)\leq C\; \SowRaC\;n^{-2},
 \label{l:iid:mm:sets:A}\\
&\ProbaMeasure\big(\Bset^c\big)\leq\ProbaMeasure\big(\bigcup_{\Di=1}^{\Motw+1}\Xiset^c\big)\leq C\;
 \SowRaC\; n^{-2}, \label{l:iid:mm:sets:B}\\
&\ProbaMeasure\big(\Cset^c\big)\leq C\; \SowRaC\; n^{-2}, \label{l:iid:mm:sets:C}\\
&\ProbaMeasure\big(\Eset^c\big)\leq C\;\SowRaC\; n^{-2}, \label{l:iid:mm:sets:E}\\
&\ProbaMeasure\big(\bigcup_{\Di=1}^{\Mot}\hOpset\big)\leq  C\;
 \SowRaC\;n^{-2}. \label{l:iid:mm:sets:hO}
\end{align}
\end{lem}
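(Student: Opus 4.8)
The plan is to follow the proof of Lemma~\ref{l:iid:sets} almost verbatim, replacing throughout every operator- or structural-function-dependent quantity by a bound that is uniform over the classes $\Socwr$ and $\OpcwdD$. Three observations make this transfer routine. First, under the extended link condition \eqref{de:elc} together with Assumption~\ref{a:mm:seq} one has, as recalled at the start of this subsection, $\VnormS{\DifOp^{-1}}^2\leq\OpD^2\Opw_{(m)}$, $\OpD^{-2}\leq\Delta_m^\Op/\Delta_m^\Opw\leq\OpD^2$, $(1+2\log\OpD)^{-1}\leq\Lambda_m^\Op/\Lambda_m^\Opw\leq(1+2\log\OpD)$ and $\Mutw\leq\Mut\leq\Mot\leq\Motw$ for all $m,n\geq1$. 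Second, by Lemma~\ref{app:pre:lb} the quantities entering Assumption~\ref{a:iid:ora} \ref{a:iid:ora:b} may be replaced uniformly by $\DiSoInf\leq\DiSowInf$ and $\DiSoTwo\leq\DiSowTwo$ for every $\So\in\Socwr$, $\Op\in\OpcwdD$. Third, the auxiliary functions of \eqref{de:iid:ora} and \eqref{de:iid:mm} satisfy $\bPsi\leq\tbPsi$, $\bPhic\leq\tbPhic$ and $\bPhid\leq\tbPhid$ pointwise and are non-decreasing (the first from $\VnormS{\DifOp^{-1}}^2\leq\OpD^2\Opw_m$ and $\Lambda_m^\Op\geq(1+2\log\OpD)^{-1}\Lambda_m^\Opw$, the other two from $\Delta_\Mot^\Op\leq\Delta_\Motw^\Op\leq\OpD^2\Delta_\Motw^\Opw$), so that $\SoRaC$ of \eqref{de:iid:ora:Sigma} is dominated, uniformly over the two classes, by $\SowRaC$ of \eqref{de:iid:mm:Sigma}, while $\Motw\log n\leq n$.

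With these substitutions in hand I would argue as follows. For \eqref{l:iid:mm:sets:A} I repeat the proof of \eqref{l:iid:sets:A}: bound $\Ex|\iY|^{8}\leq C(\Ex\iE^{8}+\DiSowInf^{8})$ via $\VnormInf{\ceE}+\VnormInf{\So}\leq\DiSowInf$, invoke Theorem~2.10 of \cite{Petrov1995} and Markov's inequality, and absorb the outcome into the terms $\Ex(\iE/\vE)^{8}$ and $(\DiSowInf/\vE)^{8}$ of \eqref{de:iid:mm:Sigma}. For \eqref{l:iid:mm:sets:B} I first note $\Bset^c\subset\bigcup_{m=1}^{\Motw+1}\Xiset[m]^c$, since $\Mot\leq\Motw$; for $1\leq n\leq n_o$ one has $\ProbaMeasure(\Bset^c)\leq n_o^2n^{-2}$ trivially, while for $n>n_o$ the bound \eqref{de:iid:mm:no} together with $\Delta_{\Motw+1}^\Op\leq\OpD^2\Delta_{\Motw+1}^\Opw$ supplies the hypothesis $n\geq256\maxnormsup^4(\Motw+1)^2\Delta_{\Motw+1}^\Op$ of Lemma~\ref{l:iid:tal2}, applied with $\ga=(\VnormS{\DifOp^{-1}}^2)_{m\geq1}$, $K=\Motw+1$ and $c=1/8$; summing \eqref{l:iid:tal2:prob} over $m\leq\Motw+1\leq n$ and using $\bPhid\leq\tbPhid$ closes the bound through $\SowRaC$.

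For \eqref{l:iid:mm:sets:C} I repeat the proof of \eqref{l:iid:sets:C}, now using \eqref{l:iid:tal:prob} of Lemma~\ref{l:iid:tal} with $\ga_m=\VnormS{\DifOp^{-1}}^2$, $K=\Mot$ and $c=1/16$; the sample-size requirement is again supplied by \eqref{de:iid:mm:no} through $\Delta_{\Mot+1}^\Op\leq\OpD^2\Delta_{\Motw+1}^\Opw$, the exponential part is absorbed using $\bPhic\leq\tbPhic$, and the moment remainder by the term $(\maxnormsup/\vE)^2\Ex(\iE/\vE)^{12}$ of \eqref{de:iid:mm:Sigma}. Then \eqref{l:iid:mm:sets:E} follows from Lemma~\ref{app:pre:l5}, which gives $\Eset^c\subset\Aset^c\cup\Bset^c\cup\Cset^c$, together with \eqref{l:iid:mm:sets:A}--\eqref{l:iid:mm:sets:C}; and \eqref{l:iid:mm:sets:hO} follows as \eqref{l:iid:sets:hO}: bounded by $n_o^2n^{-2}$ for $n\leq n_o$, while for $n>n_o$ one has $n\geq(16/9)\OpD^2\Delta_{\Motw+1}^\Opw\geq(16/9)\VnormS{\DifOp^{-1}}^2$ for all $1\leq m\leq\Mot$, so Lemma~\ref{app:pre:l6} gives $\bigcup_{m=1}^{\Mot}\hOpset[m]^c\subset\bigcup_{m=1}^{\Mot}\Xiset[m]^c\subset\Bset^c$ and \eqref{l:iid:mm:sets:B} applies.

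I expect the main obstacle to be purely the bookkeeping: verifying at each step that the several sample-size thresholds demanded by Lemmas~\ref{l:iid:tal} and \ref{l:iid:tal2} are all implied, uniformly over $\Op\in\OpcwdD$, by the single hypothesis $\log(n)(\Motw+1)^2\Delta_{\Motw+1}^\Opw=o(n)$ via \eqref{de:iid:mm:no} and the two-sided comparisons of $\Delta_m^\Op$, $\Lambda_m^\Op$ and $\Mot$ with their $\Opw$-counterparts, and that every constant generated along the way is majorised by $\SowRaC$ through $\bPsi\leq\tbPsi$, $\bPhic\leq\tbPhic$, $\bPhid\leq\tbPhid$, $\DiSoInf\leq\DiSowInf$ and $\DiSoTwo\leq\DiSowTwo$. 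No estimate beyond those already established for Lemma~\ref{l:iid:sets} is required.
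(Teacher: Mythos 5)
Your proposal is correct and follows essentially the same route as the paper: the paper's proof likewise transfers Lemma \ref{l:iid:sets} line by line, using the uniform threshold \eqref{de:iid:mm:no}, the two-sided comparisons of $\Delta_m^\Op$, $\Lambda_m^\Op$, $\Mot$ with their $\Opw$-counterparts, the dominations $\bPhic\leq\tbPhic$, $\bPhid\leq\tbPhid$, the uniform bounds $\DiSoInf\leq\DiSowInf$, $\DiSoTwo\leq\DiSowTwo$ from Lemma \ref{app:pre:lb}, and the inclusions from Lemmas \ref{app:pre:l5} and \ref{app:pre:l6} in exactly the way you describe.
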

\begin{proof}[\noindent\textcolor{darkred}{\sc Proof of Lemma
    \ref{l:iid:mm:sets}}]The proof of  \eqref{l:iid:mm:sets:A} follows
  line by line the proof of \eqref{l:iid:sets:A} in Lemma
  \ref{l:iid:sets} using the definition of $\SowRaC$ as in
  \eqref{de:iid:mm:Sigma} rather $\SoRaC$  than
  \eqref{de:iid:ora:Sigma} and hence we omit the details.
 Consider 
\eqref{l:iid:mm:sets:B}--\eqref{l:iid:mm:sets:E}. Let  $\ga$ be a sequence given by
$\ga_m=\VnormS{\DifOp^{-1}}^2$ where
$\ga_{(m)}=\Delta^T_{m}$ and $n_o$ an integer satisfying
\eqref{de:iid:mm:no} uniformly for all $\Op\in\OpcwdD$ and $\So\in\Socwr$,
that is,  $n\geq
1024\maxnormsup^4\OpD^2(1+\DiSowInf/\vE)^2(\Motw+1)^2\Delta_{\Motw+1}^\Opw\geq
1024\maxnormsup^4(1+\DiSoInf/\vE)^2(\Mot+1)^2\Delta_{\Mot+1}^\Op$ for
 all $n> n_o$  by construction. We distinguish in the following the cases $n\leq n_o$ and
$n > n_o$. Consider   
\eqref{l:iid:mm:sets:B}. Following line by line the proof of
\eqref{l:iid:sets:B} together with $\bPhid(x)\leq\tbPhid(x)$ and
$\Motw+1\leq n$ we have 
$\ProbaMeasure\big(\bigcup_{\Di=1}^{\Motw+1}\Xiset^c\big)% \leq 3\;
% n^{-2}\;\bPhid(\VnormInf{p_{\iZ,\iV}})\vee
% \{n^3\exp(-n^{1/2}/50)\}
\leq 3\;
n^{-2}\;\tbPhid(\VnormInf{p_{\iZ,\iV}})\vee
\{n^3\exp(-n^{1/2}/50)\}$.
 By combination of the
two cases and employing the definition of
$\SowRaC$ given in \eqref{de:iid:mm:Sigma} we obtain \eqref{l:iid:mm:sets:B}. The proof of
\eqref{l:iid:mm:sets:C} follows line by line  the proof of
\eqref{l:iid:sets:C} in Lemma \ref{l:iid:sets}. 
Exploiting $\bPhic(x)\leq\tbPhic(x)$ we obtain $\ProbaMeasure\big(\Cset^c\big)\leq 3 
\{n^3\exp(-n^{1/6}/50)\}\vee\tbPhic(1+\DiSowInf/\vE)n^{-2}+ 32(\maxnormsup^2/\vE^2) \Ex(\iE/\vE)^{12}n^{-2} $
The assertion \eqref{l:iid:mm:sets:C} follows employing the definition of
$\SowRaC$ given in \eqref{de:iid:mm:Sigma}. Consider
\eqref{l:iid:mm:sets:E}. Due to Lemma \ref{app:pre:l5} it holds
$\ProbaMeasure\big(\Eset^c\big)\leq \ProbaMeasure\big(\Aset^c\big)
+\ProbaMeasure\big(\Bset^c\big)
+\ProbaMeasure\big(\Cset^c\big)$. Therefore, the assertion
\eqref{l:iid:mm:sets:E} follows from
\eqref{l:iid:mm:sets:A}--\eqref{l:iid:mm:sets:C}. The proof of 
\eqref{l:iid:mm:sets:hO} follows in same manner as the proof of 
\eqref{l:iid:sets:hO}, and we omit the details, which completes the proof.
\end{proof}
% --------------------------------------------------------------------
% <<subsection{Proof of Theorem \ref{t:dep:oracle}}>> \ref{a:ora:dep}
% --------------------------------------------------------------------
\subsection{Proof of Theorem \ref{t:dep:mm}}\label{a:mm:dep}
The proof follows along the lines of the proof of Theorem
\ref{t:dep:ora} given in Appendix \ref{a:ora:dep}. We shall prove below the Propositions \ref{p:dep:mm:Eset} and
\ref{p:dep:mm:Rest} which are used in the proof of Theorem \ref{t:dep:mm}.
In the proof the propositions we refer to the three technical Lemma
\ref{l:dep:tal}, \ref{l:dep:tal2} and \ref{l:dep:mm:sets}
 which are shown in Appendix  \ref{a:ora:dep} and the end of
this section. Moreover, we make use of  functions
  $\tbPsi,\tbPhic,\tbPhid,\tbPhie:\Rz_+\to\Rz$
  defined by
  \begin{multline}\label{de:dep:mm}
    \tbPsi(x)=\OpD^2\sum\nolimits_{m\geq 1} x m^{1/2} \Opw_m \exp(- m^{1/2}
    \Lambda_\Di^\Opw/(48(1+2\log\OpD)x) ),\\
    \tbPhic(x)= n^3\exp(-n(\Motw)^{-1/2}(\Delta^\Opw_{\Motw})^{-1}/(204800\OpD^2x)),\\
    \tbPhid(x)=n^3\exp(- n(\Motw+1)^{-1}(\Delta_{\Motw+1}^\Opw)^{-1}/(51200\OpD^2x))\hfill\\
    \tbPhie(x)= x n \exp(-(\Motw)^{1/2}\log(n)/(48x) ).\hfill
  \end{multline}
 Note that each function in \eqref{de:dep:mm}  is non decreasing in $x$
  and for all $x>0$,  $\tbPsi(x)<\infty$,
$\bPsi(x)\leq\tbPsi(x)$, $\bPhic(x)\leq\tbPhic(x)$ and
$\bPhid(x)\leq\tbPhid(x)$ with $\bPsi$, $\bPhic$ and $\bPhid$ as in \eqref{de:dep:ora}.
 Moreover, if
  $\log(n)(\Motw+1)^2\Delta_{\Motw+1}^\Opw=o(n)$ as $n\to\infty$  then 
there exists an integer $n_o$    such that 
  \begin{equation}\label{de:dep:mm:no}
1\geq \sup_{n\geq n_o}
\big\{1024\maxnormsup^4\OpD^2(6+8(\DiSowInf/\vE\big)^2\gB)(\Motw+1)^2\Delta_{\Motw+1}^\Opw
n^{-1}\big\},
\end{equation}
 and we have also for all $x>0$,  $\tbPhic(x)=o(1)$, $\tbPhid(x)=o(1)$
  and $\tbPhie(x)=o(1)$  as $n\to\infty$. Consequently, considering
  $\tbPhia$ and $\tbPhib$ as in \eqref{de:dep:ora} under
  Assumption \ref{a:mth:rv} and \ref{a:mth:bs}  there exists 
a finite constant $\SowRaC$  such that for all $n\geq1$,
\begin{multline}\label{de:dep:mm:Sigma}
\SowRaC\geq\big\{n_o^2\bigvee
\tbPsi\big(1+(\DiSowInf/\vE)^2\maxnormsup\gB^{1/2}\big)\bigvee\bPhia\big(1+(\DiSowInf/\vE)^2\maxnormsup\gB^{1/2}\big)\bigvee\bPhib\big(1+\DiSowInf/\vE\big)\\\bigvee\tbPhic(1+(\DiSowInf/\vE)^2\maxnormsup\gB^{1/2})\bigvee\tbPhid(\VnormInf{p_{\iZ,\iV}}\gB^{1/2}\maxnormsup^2)\bigvee\tbPhie\big(1+(\DiSowInf/\vE)^2\maxnormsup\gB^{1/2}\big)\\
\bigvee\Ex(\iE/\vE)^8
\bigvee(\DiSowInf/\vE)^8\gB
\bigvee(\maxnormsup/\vE)^2 \Ex(\iE/\vE)^{12}\big\}.
\end{multline}% 
% --------------------------------------------------------------------
% <<Proof of Theorem \ref{t:dep:mm}>>
% --------------------------------------------------------------------
\begin{proof}[\noindent\textcolor{darkred}{\sc Proof of Theorem \ref{t:dep:mm}}] 
 We start the
  proof considering the elementary identity \eqref{ri:dec} given in
  the proof of Theorem \ref{t:iid:ora} where we bound the two rhs terms separately.
  The second rhs term  we bound with help of Proposition
  \ref{p:dep:mm:Rest}. Thereby, there exists a numerical constant $C$
  such that for all $\So\in\Socwr$ hold
  \begin{multline}\label{ri:dec:mm:dep}
    \Ex\VnormZ{\hDiSo[\hDi]-\So}^2\leq \Ex\left(\Ind{\Eset}\VnormZ{\hDiSo[\hDi]-\So}^2\right)\\+C\;n^{-1}\;\maxnormsup^2 \{\vE^2+\DiSowTwo\} (1+(\DiSowInf/\vE)^2\gB)[\SowRaC\vee n^3\exp(-n^{1/6}q^{-1}/100) \vee  n^4 q^{-1}\beta_{q+1}].
  \end{multline}
Consider the first rhs term. On the event $\Eset$
  the upper bound given in \eqref{t:iid:ora:key:arg} implies
  \begin{equation*}
    \VnormZ{\hDiSo[\hDi]-\So}^2\Ind{\Eset}\leq 582 \{[\biasnivSo[\taDi]\vee\pen[\taDi]]\} +42 \max_{\taDi\leq k\leq \Mot}\vectp{ \VnormZ{\hDiSo[k]-\DiSo[k]}^2 - \pen[k]/6}.
  \end{equation*}
Keeping in mind that
$\pen[k]=288\penDep\maxnormsup^2\vB[k]^2\delta_k^\Op n^{-1}$  with
$\delta_k^\Op=k\Lambda_k^\Op\Delta_{k}^\Op$, $\penDep\leq 8(1 +(\DiSowInf/\vE)^2 \gB)$ and $\vB[k]^2\leq2(\vE^2+3\DiSowTwo)$   we derive in
  Proposition \ref{p:dep:mm:Eset} below an upper bound for the
  expectation of the second rhs term, the remainder term,
  in the last display. Thereby, from   $\Mut\geq\Mutw$,
  $\biasnivSo\leq \Sow_m 4\OpD^4\Sor^2$ and $\delta_k^\Op\leq
  \OpD^2(1+2\log\OpD)\delta_k^\Opw$ there exists a numerical constant $C$
  such that for all $\So\in\Socwr$ 
 \begin{multline*}
    \Ex\big(\Ind{\Eset}\VnormZ{\hDiSo[\hDi]-\So}^2\big)\leq 
    C\;\big\{[\Sow_{\taDi}\vee n^{-1}\delta_{\taDi}^\Opw] +
n^{-1}[\SowRaC\vee n^3\exp(-n^{1/6}q^{-1}/100) \vee  n^4 q^{-1}\beta_{q+1}]\big\} \\\times\maxnormsup^2 \OpD^4(\Sor^2+\vE^2+\DiSowTwo)(1+(\DiSowInf/\vE)^2 \gB).
  \end{multline*}
Replacing in  \eqref{ri:dec:mm:dep} the first rhs by the last upper bound
we obtain the assertion of the theorem, which completes the proof.
 \end{proof}%
% --------------------------------------------------------------------
% <<Prop bound on complement>> \ref{p:dep:mm:Eset}
% --------------------------------------------------------------------
\begin{prop}\label{p:dep:mm:Eset} Under the assumptions of Theorem \ref{t:dep:mm}
  there exists a numerical constant $C$ such that for all $n\geq1$ 
\begin{multline*}\Ex\set{\max_{\taDi\leq k\leq
      \Mot}\vectp{\VnormZ{\hDiSo-\DiSo}^2 -48\maxnormsup^2\vB^2\penDep\Di
      \Lambda_\Di^\Op\Delta_{\Di}^\Op n^{-1}}}\\\leq
C\;n^{-1}\;\maxnormsup^2 \{\vE^2+\DiSowTwo\} (1+(\DiSowInf/\vE)^2\gB)[\SowRaC\vee n^3\exp(-n^{1/6}q^{-1}/100) \vee  n^4 q^{-1}\beta_{q+1}].
\end{multline*}
\end{prop}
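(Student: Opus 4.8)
The plan is to follow, almost verbatim, the proof of Proposition~\ref{p:dep:Eset} in Appendix~\ref{a:ora:dep}, systematically replacing the operator- and $\So$-dependent constants by their counterparts that are uniform over the classes $\Socwr$ and $\OpcwdD$, exactly as was done in passing from Proposition~\ref{p:iid:Eset} to Proposition~\ref{p:iid:mm:Eset}. Recall that under Assumption~\ref{a:mm:seq} and $\Op\in\OpcwdD$ one has $\VnormS{\DifOp^{-1}}^2\leq\OpD^2\Delta_\Di^\Opw$ for every $m$, together with $\Mutw\leq\Mut\leq\Mot\leq\Motw$ and $\Motw\leq\gauss{n^{1/4}}$, and that the second part of Assumption~\ref{a:iid:ora}~\ref{a:iid:ora:b} holds with $\DiSoTwo:=\DiSowTwo$ and $\DiSoInf:=\DiSowInf$ uniformly in $\So\in\Socwr$ and $\Op\in\OpcwdD$, by Lemma~\ref{app:pre:lb}.

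First I would reproduce the elementary split $\VnormZ{\hDiSo-\DiSo}^2\leq 2\VnormS{\DifOp^{-1}}^2\Vnorm{\fou{V}_{\uDi}}^2+n\Vnorm{\fou{V}_{\uDi}}^2\Ind{\Xiset^c}+\VnormZ{\DiSo}^2\Ind{\hOpset^c}$ used there, bounding $n\Vnorm{\fou{V}_{\uDi}}^2\Ind{\Xiset^c}\leq n\Vnorm{\fou{V}_{\underline{\Motw}}}^2\sum_{k=1}^{\Motw}\Ind{\Xiset[k]^c}$ by virtue of $\Mot\leq\Motw$; this yields the same four-term majorant as in Proposition~\ref{p:dep:Eset}, but with $\Motw$ in place of $\Mot$ in the two ``worst-case'' terms. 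The first two terms are then controlled by Lemma~\ref{l:dep:tal}, inequality~\eqref{l:dep:tal:conc}, applied once with $k=\taDi$, $K=\Mot$ and $\ga_m=\VnormS{\DifOp^{-1}}^2$ and once with $k=\taDi$, $K=\Motw$ and $\ga_m=n\Ind{\{m=\Motw\}}$; the hypothesis $\ga_{(K)}K^2\leq n^{3/2}$ holds in both cases because $\Motw\leq\gauss{n^{1/4}}$, and $\penDep\geq 6+8(\DiSowInf/\vE)^2\gB_{k_n}$ meets the requirement of the lemma for the lower index $k=\taDi$ with $k_n=\gauss{(\DiSowInf/\vE)^2\pdZWInf\taDi}$.

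Next I would pass from the resulting bound, which involves $\bPsi$, $\bPhib$, $\bPhie$ and the coupling remainder $n^{17/6}q^{-1}\beta_{q+1}$ evaluated at the arguments $1+\DiSoInf/\vE$ and $1+(\DiSoInf/\vE)^2\maxnormsup\gB^{1/2}$, to the uniform version by using $\DiSoInf\leq\DiSowInf$, the monotone dominations $\bPsi\leq\tbPsi$, $\bPhic\leq\tbPhic$, $\bPhid\leq\tbPhid$ recorded below~\eqref{de:dep:mm}, and $\bPhie(\cdot)\leq\tbPhie(\cdot)$ (a consequence of $\Mot\leq\Motw$), so that every function appearing is one of those collected in the definition~\eqref{de:dep:mm:Sigma} of $\SowRaC$. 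The two remaining probabilities $\ProbaMeasure(\bigcup_{k=\taDi}^{\Motw}\Xiset[k]^c)$ and $\ProbaMeasure(\bigcup_{k=\taDi}^{\Mot}\hOpset[k]^c)$ I would replace by their bounds \eqref{l:dep:mm:sets:B} and \eqref{l:dep:mm:sets:hO} from Lemma~\ref{l:dep:mm:sets}, which is precisely where the extra terms $n^{3}\exp(-n^{1/6}q^{-1}/100)$ and $n^{4}q^{-1}\beta_{q+1}$ enter. Finally, using $\vB^2\leq 2(\vE^2+3\DiSowTwo)$, $\penDep\leq 8(1+(\DiSowInf/\vE)^2\gB)$, $\Motw\log n\leq n$ and $\max_{1\leq\Di\leq\Mot}\VnormZ{\DiSo}^2\leq\DiSowTwo$, and absorbing all remaining quantities into $\SowRaC$ via \eqref{de:dep:mm:Sigma}, gives the asserted inequality.

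The main obstacle is not conceptual but a matter of careful bookkeeping: one must verify that the various $x$-arguments fed into $\tbPsi$, $\bPhib$, $\tbPhie$ and into the coupling remainders by Lemma~\ref{l:dep:tal} and Lemma~\ref{l:dep:mm:sets} --- namely $1+\DiSowInf/\vE$ for the $\bPhib$-type contributions and $1+(\DiSowInf/\vE)^2\maxnormsup\gB^{1/2}$ for the $\Psi$- and $\Phi_c,\Phi_d,\Phi_e$-type ones --- are all dominated termwise by the constituents of \eqref{de:dep:mm:Sigma}, and that the numerical constants (the factor $\penDep$, the $48$, $24$ and $12$ in the intermediate $\vectp{\cdot}$ thresholds) propagate consistently through the two applications of Talagrand's inequality hidden inside Lemma~\ref{l:dep:tal}; once this is checked, the argument is an essentially mechanical transcription of the oracle-case proof of Proposition~\ref{p:dep:Eset}.
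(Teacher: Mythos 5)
Your proposal follows essentially the same route as the paper's proof: the four-term decomposition with $\Motw$ replacing $\Mot$ in the worst-case terms, the two applications of Lemma~\ref{l:dep:tal} (inequality \eqref{l:dep:tal:conc}) with $k=\taDi$ and $K=\Mot$ resp.\ $K=\Motw$, the uniformisation via $\DiSoTwo:=\DiSowTwo$, $\DiSoInf:=\DiSowInf$ and $\bPsi\leq\tbPsi$, the probability bounds \eqref{l:dep:mm:sets:B} and \eqref{l:dep:mm:sets:hO}, and the absorption into $\SowRaC$ as defined in \eqref{de:dep:mm:Sigma}. The only slip is your parenthetical claim that $\bPhie\leq\tbPhie$ follows from $\Mot\leq\Motw$ (the inequality actually runs the other way), but this is harmless because, as in the paper, the application of Lemma~\ref{l:dep:tal} with $K=\Motw$ produces the $\tbPhie$-term directly, so no such domination is needed.
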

\begin{proof}[\noindent\textcolor{darkred}{\sc Proof of Proposition
    \ref{p:dep:mm:Eset}}]
  We start the
  proof with an upper bound similar to \eqref{p:p:iid:Eset:e1} using $\Mot\leq\Motw$, that is,
 \begin{multline}\label{p:p:dep:mm:Eset:e1}
\Ex\set{\max_{\taDi\leq \Di\leq\Mot}\vectp{\VnormZ{\hDiSo-\DiSo}^2
    -48\maxnormsup^2 \penDep\vB^2\Di\Lambda_\Di^\Op\Delta_{\Di}^\Op n^{-1}}}\\
\hfill\leq 2\Ex\set{ \max_{\taDi\leq\Di\leq\Mot} \VnormS{\DifOp^{-1}}^2\vectp{\Vnorm{\fou{V}_{\uDi}}^2
    -24 \maxnormsup^2 \penDep\vB^2\Di\Lambda_\Di^\Op n^{-1}}}\\\hfill +
 \Ex\set{n\vectp{\Vnorm{\fou{V}_{\underline{\Motw}}}^2-24\maxnormsup^2\vB[\Motw]^2\penDep \Motw \log(n) n^{-1}}}
\\+24\maxnormsup^2\vB[\Motw]^2\penDep\Motw  \log(n)P(\bigcup_{k=1}^{\Motw}\Xiset[k]^c)
 +\max_{\taDi\leq \Di\leq\Mot}\VnormZ{\DiSo}^2P(\bigcup_{k=1}^{\Mot}\hOpset[k]^c)
\end{multline}
where we bound separately each of the four rhs
terms. In order to bound  (i) the first and (ii) )second rhs term we employ \eqref{l:dep:tal:conc} in
Lemma \ref{l:dep:tal} with $k=\aDi$, (i)  $K=\Mot$ and (ii) $K=\Motw$,
and sequence $\ga=(\ga_m)_{m\geq1}$ given by (i)
$\ga_m=\VnormS{\DifOp^{-1}}^2$ and (ii) $\ga_m=n\Ind{\{m=\Motw\}}$. Keeping in mind  the
definition of $\Mot$, $\Motw$ and $\Mot\leq\Motw\leq\gauss{n^{1/4}}$, 
and hence in both cases
$\ga_{(K)}K^2\leq n^{3/2}$,  there exists a numerical constant $C>0$
such
that 
 \begin{multline*}
\Ex\set{\max_{\aDi\leq \Di\leq\Mot}\vectp{\VnormZ{\hDiSo-\DiSo}^2
    -48\maxnormsup^2\penDep  \vB^2\Di\Lambda_\Di^\Op\Delta_{\Di}^\Op n^{-1}}}\\
\leq Cn^{-1} \maxnormsup^2 \big\{\vE^2\bPsi\big(1+(\DiSoInf/\vE)^2\maxnormsup\gB^{1/2}\big)+\vE^2\bPhib\big(1+\DiSoInf/\vE\big)\\\hfill+\vE^2\tbPhie\big(1+(\DiSoInf/\vE)^2\maxnormsup\gB^{1/2}\big)
   +  \vE^2 (1 + \DiSoInf/\vE)^2  n^{7/3} q^{-1} \beta_{q+1}   +\Ex(\iE/\vE)^6\big\}
\\
+24\maxnormsup^2\penDep \vB[\Motw]^2\Motw\log(n) P(\bigcup_{k=\taDi}^{\Motw}\Xiset[k]^c)
 +\max_{\taDi\leq \Di\leq\Mot}\VnormZ{\DiSo}^2P(\bigcup_{k=\taDi}^{\Mot}\hOpset[k]^c)
\end{multline*}
with $\bPsi$, $\bPhib$  as in \eqref{de:dep:ora} and $\tbPhie$ as in \eqref{de:dep:mm}.
Taking into account that $\bPsi(x)\leq\tbPsi(x)$ and that Assumption \ref{a:iid:ora} \ref{a:iid:ora:b} holds with
$\DiSoTwo:=\DiSowTwo$ and $\DiSoInf:=\DiSowInf$ uniformly for all
$\So\in\Socwr$ and $\Op\in\OpcwdD$, it follows that 
 \begin{multline*}
\Ex\set{\max_{\aDi\leq \Di\leq\Mot}\vectp{\VnormZ{\hDiSo-\DiSo}^2
    -48\maxnormsup^2\penDep  \vB^2\Di\Lambda_\Di^\Op\Delta_{\Di}^\Op n^{-1}}}\\
\leq Cn^{-1} \maxnormsup^2 \big\{\vE^2\tbPsi\big(1+(\DiSowInf/\vE)^2\maxnormsup\gB^{1/2}\big)+\vE^2\bPhib\big(1+\DiSowInf/\vE\big)\\\hfill+\vE^2\tbPhie\big(1+(\DiSowInf/\vE)^2\maxnormsup\gB^{1/2}\big)
   +  \vE^2 (1 + \DiSowInf/\vE)^2  n^{7/3} q^{-1} \beta_{q+1}   +\Ex(\iE/\vE)^6\big\}
\\
+24\maxnormsup^2\penDep \vB[\Motw]^2\Motw\log(n) P(\bigcup_{k=\taDi}^{\Motw}\Xiset[k]^c)
 +\max_{\taDi\leq \Di\leq\Mot}\VnormZ{\DiSo}^2P(\bigcup_{k=\taDi}^{\Mot}\hOpset[k]^c)
\end{multline*}
Exploiting that $\vB^2[\Mot]\leq 2(\vE^2+3\DiSowTwo)$, $\penDep\leq 8(1 +
(\DiSowInf/\vE)^2 \gB)$,  $\Motw\log(n)\leq
n$   and $\max_{\taDi\leq \Di\leq\Mot}\VnormZ{\DiSo}^2\leq\DiSowTwo$,
replacing the probability $P(\bigcup_{k=\taDi}^{\Mot}\hOpset[k]^c)$ and
$P(\bigcup_{k=\taDi}^{\Motw}\Xiset[k]^c)$ by its upper bound  
given in \eqref{l:dep:mm:sets:hO} and  \eqref{l:dep:mm:sets:B} in Lemma
\ref{l:dep:mm:sets}, respectively, and  employing the definition
of $\SowRaC$ as in \eqref{de:dep:mm:Sigma} we obtain the result of the proposition, 
 which completes the proof.
\end{proof}
% --------------------------------------------------------------------
% <<Proposition \ref{p:dep:Rest}>>
% --------------------------------------------------------------------
\begin{prop}\label{p:dep:mm:Rest} Under the assumptions of Theorem
  \ref{t:dep:mm}  there exists a numerical constant $C$ such that 
 for all $1\leq q\leq n$ 
 \begin{multline*}
\Ex\big(\VnormZ{\hDiSo[\hDi]-\So}^2\1_{\Eset^c}\big)\leq 
C\;n^{-1} [\SowRaC\vee n^3\exp(-n^{1/6}q^{-1}/100) \vee  n^4
q^{-1}\beta_{q+1}]\\
\times \;\maxnormsup^2 \{\vE^2+\DiSowTwo\} (1+(\DiSowInf/\vE)^2\gB).
\end{multline*}
\end{prop}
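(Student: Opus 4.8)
The plan is to mimic, essentially line by line, the proof of Proposition~\ref{p:dep:Rest} in Appendix~\ref{a:ora:dep}, the only change being that the generic regularity constants $\DiSoTwo$, $\DiSoInf$ and the remainder factor $\SoRaC$ are replaced throughout by their uniform-over-the-class counterparts $\DiSowTwo$, $\DiSowInf$ and $\SowRaC$. This substitution is legitimate because, under Assumption~\ref{a:mm:seq} together with $\Op\in\OpcwdD$, Lemma~\ref{app:pre:lb} ensures that Assumption~\ref{a:iid:ora}~\ref{a:iid:ora:b} holds with $\DiSoTwo:=\DiSowTwo$ and $\DiSoInf:=\DiSowInf$ \emph{uniformly} for all $\So\in\Socwr$; in particular the bounds $\VnormInf{\So-\DiSo}\leq 2\OpD^2\SowBasZsup\Sor$ and $\VnormZ{\DiSo}^2\leq 4\OpD^4\Sor^2$ control, uniformly over the class, the three Talagrand quantities $h$, $H$, $v$ entering Lemma~\ref{l:dep:tal}, so that the bounds derived there remain valid uniformly once the functions $\bPhia$, $\bPhib$ are evaluated at arguments built from $\DiSowInf$ rather than $\DiSoInf$.

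Concretely, one first reproduces the elementary decomposition \eqref{p:p:dep:Rest:e1} with $\maxDi:=\DiMa$ and $\penDep:=8(1+(\DiSowInf/\vE)^2\gB)$, which rests only on $\VnormZ{\hDiSo-\DiSo}^2\Ind{\hOpset}\leq n\Vnorm{\fou{V}_{\umaxDi}}^2$, $\hDi\leq\maxDi$ and $\DiSowTwo\geq\VnormZ{\So}^2\vee\sup_m\VnormZ{\DiSo}^2$. The resulting stochastic term is then bounded by invoking the concentration inequality \eqref{l:dep:tal:conc} of Lemma~\ref{l:dep:tal} with the sequence $\ga_m=n\Ind{\{m=K\}}$ and $k=K=\maxDi=\DiMa$ (so that $K^2\ga_{(K)}\leq n^{3/2}$), which yields a quantity of the form $Cn^{-1}\maxnormsup^2\{\vE^2\bPhia(1+(\DiSowInf/\vE)^2\maxnormsup\gB^{1/2})+\vE^2\bPhib(1+\DiSowInf/\vE)+\vE^2(1+\DiSowInf/\vE)^2 n^{7/3}q^{-1}\beta_{q+1}+\Ex(\iE/\vE)^{12}\}$ with $\bPhia$, $\bPhib$ as in \eqref{de:dep:ora}; this is the step where the genuinely dependence-specific contributions (the coupling remainder proportional to $q^{-1}\beta_{q+1}$ and the exponential tails) enter, through Lemma~\ref{l:dep:tal}. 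For the deterministic term one uses $\vB[\maxDi]^2\leq 2\{\vE^2+3\DiSowTwo\}$, $\penDep=8(1+(\DiSowInf/\vE)^2\gB)$ and $\maxDi\log(n)\leq n$, and replaces $P(\Eset^c)$ by its bound from Lemma~\ref{l:dep:mm:sets} (the class-uniform analogue of \eqref{l:dep:sets:E}), namely $P(\Eset^c)\leq C[\SowRaC\vee n^3\exp(-n^{1/6}q^{-1}/100)\vee n^4q^{-1}\beta_{q+1}]n^{-2}$. Collecting the two pieces and absorbing $\bPhia$, $\bPhib$, the moment terms and $n_o^2$ into the finite constant $\SowRaC$ defined in \eqref{de:dep:mm:Sigma} gives the claimed inequality, with the stated numerical constant $C$ and the prefactor $\maxnormsup^2\{\vE^2+\DiSowTwo\}(1+(\DiSowInf/\vE)^2\gB)$.

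I do not anticipate a genuine obstacle: every tool is already available (Lemma~\ref{l:dep:tal}, Lemma~\ref{l:dep:mm:sets}, and the uniform regularity furnished by Lemma~\ref{app:pre:lb} under Assumption~\ref{a:mm:seq}), and the argument is a purely bookkeeping specialisation of Proposition~\ref{p:dep:Rest} to the classes $\Socwr$ and $\OpcwdD$. The points that still require a moment's care are checking that the choice $K=\DiMa$ remains compatible with the hypothesis $K^2\ga_{(K)}=\DiMa^2 n\leq n^{3/2}$ of Lemma~\ref{l:dep:tal} (which is built into the definition of $\DiMa$, namely $\DiMa\leq n^{1/4}$), and that the deterministic prefactor $72\maxnormsup^2\maxDi\penDep\vB[\maxDi]^2\log(n)+6\DiSowTwo$, once multiplied by $P(\Eset^c)=O(n^{-2})$, only contributes at the order $n^{-1}$, which again follows from $\maxDi\log(n)\leq n$.
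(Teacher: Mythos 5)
Your proposal is correct and follows essentially the same route as the paper: the paper's own proof of Proposition~\ref{p:dep:mm:Rest} likewise proceeds line by line through the proof of Proposition~\ref{p:dep:Rest} with $\maxDi:=\DiMa$, $\DiSoInf:=\DiSowInf$ and $\penDep:=8(1+(\DiSowInf/\vE)^2\gB)$, invokes Lemma~\ref{l:dep:tal} for the stochastic term, and replaces $P(\Eset^c)$ by the class-uniform bound \eqref{l:dep:mm:sets:E} of Lemma~\ref{l:dep:mm:sets} before absorbing the remainders into $\SowRaC$ as defined in \eqref{de:dep:mm:Sigma}. Your additional checks (that $K=\DiMa$ satisfies $K^2\ga_{(K)}\leq n^{3/2}$ and that the deterministic prefactor times $P(\Eset^c)=O(n^{-2})$ contributes at order $n^{-1}$) are exactly the points the paper relies on implicitly.
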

\begin{proof}[\noindent\textcolor{darkred}{\sc Proof of Proposition
    \ref{p:dep:mm:Rest}}]  
Following line by line the proof of Proposition  \ref{p:dep:Rest}
with  $\maxDi:=\DiMa$,  $\DiSoInf:=\DiSowInf$ and $\penDep:=8(1 + (\DiSowInf/\vE)^2\gB)$
we have
\begin{multline*}
\Ex\big(\VnormZ{\hDiSo[\hDi]-\So}^2\1_{\Eset^c}\big)\leq
 Cn^{-1} \maxnormsup^2 \big\{\vE^2\bPhia\big(1+(\DiSowInf/\vE)^2\maxnormsup\gB^{1/2}\big)+\vE^2\bPhib\big(1+\DiSowInf/\vE\big)\\
  \hfill +  \vE^2 (1 + \DiSowInf/\vE)^2  n^{7/3} q^{-1} \beta_{q+1}   +\Ex(\iE/\vE)^{12}\big\}\\+\{72\maxnormsup^2 \maxDi \penDep \vB[\maxDi]^2 \log(n) + 6\DiSowTwo\}P(\Eset^c)
\end{multline*}
with $\bPhia$ and $\bPhib$ as in \eqref{de:dep:ora}. Exploiting further the definition of $\SowRaC$ as in
\eqref{de:dep:mm:Sigma} and that $\vB[\maxDi]^2\leq
2\{\vE^2+3\DiSowTwo\}$,  $\penDep= 8(1 + (\DiSowInf/\vE)^2 \gB)$ and
$M\log(n)\leq n$ the result of the proposition follows now  by
replacing the probability $P(\Eset^c)$ by its upper bound  
given in \eqref{l:dep:mm:sets:E} in Lemma \ref{l:dep:mm:sets}, which completes the proof.
\end{proof}
% --------------------------------------------------------------------
% <<Lem bound events>> \ref{l:dep:mm:sets{:A|B|C|hO|Xi|E}}
% --------------------------------------------------------------------
\begin{lem}\label{l:dep:mm:sets}
Under the assumptions of Theorem \ref{t:dep:mm} there exists a
numerical constant $C$ such that for all $1\leq q\leq n$
\begin{align}
&\ProbaMeasure\big(\Aset^c)=\ProbaMeasure\big(\{1/2\leq\hsigma^2_Y/\sigma_Y^2\leq
  3/2\}^c\big)\leq C\; \SowRaC\;n^{-2},
 \label{l:dep:mm:sets:A}\\
&\ProbaMeasure\big(\Bset^c\big)\leq\ProbaMeasure\big(\bigcup_{\Di=1}^{\Motw+1}\Xiset^c\big)\leq C\;
 [\SowRaC\vee n^3\exp(-n^{1/2}q^{-1}/50) \vee n^4 q^{-1}\beta_{q+1}]\; n^{-2}, \label{l:dep:mm:sets:B}\\
&\ProbaMeasure\big(\Cset^c\big)\leq C\; [\SowRaC\vee  n^3\exp(-n^{1/6}q^{-1}/100) \vee n^4 q^{-1}\beta_{q+1}]\; n^{-2}, \label{l:dep:mm:sets:C}\\
&\ProbaMeasure\big(\Eset^c\big)\leq C\;[\SowRaC\vee  n^3\exp(-n^{1/6}q^{-1}/100)\vee n^4 q^{-1}\beta_{q+1}]\; n^{-2}, \label{l:dep:mm:sets:E}\\
&\ProbaMeasure\big(\bigcup_{\Di=1}^{\Mot}\hOpset\big)\leq  C\;
 [\SowRaC\vee n^3\exp(-n^{1/2}q^{-1}/50) \vee  n^4 q^{-1}\beta_{q+1}]\;n^{-2}. \label{l:dep:mm:sets:hO}
\end{align}
\end{lem}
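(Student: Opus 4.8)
The plan is to prove Lemma \ref{l:dep:mm:sets} by transferring the corresponding statements of Lemma \ref{l:dep:sets} from the non-parametric setting of Theorem \ref{t:dep:ora} to the uniform-over-$\Socwr$-and-$\OpcwdD$ setting of Theorem \ref{t:dep:mm}, exactly as Lemma \ref{l:iid:mm:sets} was obtained from Lemma \ref{l:iid:sets} in Appendix \ref{a:mm:iid}. The key mechanism is that under Assumption \ref{a:mm:seq} together with $\Op\in\OpcwdD$ we have the uniform comparison $\OpD^{-2}\leq \Opw_{(\Di)}^{-1}\VnormS{\DifOp^{-1}}^2\leq\OpD^2$, hence $\Mutw\leq\Mut\leq\Mot\leq\Motw$, $\OpD^{-2}\leq\Delta_\Di^\Op/\Delta_\Di^\Opw\leq\OpD^2$ and $\bPsi(x)\leq\tbPsi(x)$, $\bPhic(x)\leq\tbPhic(x)$, $\bPhid(x)\leq\tbPhid(x)$ with the tilded quantities of \eqref{de:dep:mm}; moreover the uniform bounds $\DiSoTwo\leq\DiSowTwo$ and $\DiSoInf\leq\DiSowInf$ from Lemma \ref{app:pre:lb} hold for all $\So\in\Socwr$, $\Op\in\OpcwdD$. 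Thus every probability bound derived for the $\So$-dependent constant $\SoRaC$ of \eqref{de:dep:ora:Sigma} is automatically majorised once $\SoRaC$ is replaced by the $\Socwr$-uniform constant $\SowRaC$ of \eqref{de:dep:mm:Sigma}, which by construction dominates $\SoRaC$ uniformly.

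First I would treat \eqref{l:dep:mm:sets:A}: this follows line by line the proof of \eqref{l:dep:sets:A} in Lemma \ref{l:dep:sets}, the only change being the replacement of $\SoRaC$ by $\SowRaC$ in the final step, using that $\DiSowInf\geq\VnormInf{\ceE}\vee\VnormInf{\So}$ uniformly over $\Socwr$ and that $\SowRaC$ dominates $\Ex(\iE/\vE)^8\vee(\DiSowInf/\vE)^8\gB$. Next, for \eqref{l:dep:mm:sets:B}--\eqref{l:dep:mm:sets:E} I would fix the sequence $\ga_m=\VnormS{\DifOp^{-1}}^2$ with $\ga_{(m)}=\Delta_m^\Op$ and take an integer $n_o$ satisfying \eqref{de:dep:mm:no} uniformly over $\OpcwdD$ and $\Socwr$, noting that \eqref{de:dep:mm:no} implies the earlier \eqref{de:dep:no} pointwise since $1024\maxnormsup^4\OpD^2(6+8(\DiSowInf/\vE)^2\gB)(\Motw+1)^2\Delta_{\Motw+1}^\Opw\geq 1024\maxnormsup^4(6+8(\DiSoInf/\vE)^2\gB)(\Mot+1)^2\Delta_{\Mot+1}^\Op$. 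Then for \eqref{l:dep:mm:sets:B} I distinguish $n\leq n_o$ (where $P(\Bset^c)\leq n^{-2}n_o^2\leq n^{-2}\SowRaC$) and $n>n_o$, where I invoke \eqref{l:dep:tal2:prob} of Lemma \ref{l:dep:tal2} with $K=\Motw+1$, $c=1/8$, union-bound over $\bigcup_{m=1}^{\Motw+1}\Xiset^c$, use $\Motw+1\leq\gauss{n^{1/4}}\leq n$ and $\bPhid(x)\leq\tbPhid(x)$ to get $P(\Bset^c)\leq 6\{\tbPhid(\maxnormsup^2\VnormInf{p_{\iZ,\iV}}\gB^{1/2})\vee n^3\exp(-n^{1/2}q^{-1}/50)\vee n^4q^{-1}\beta_{q+1}\}n^{-2}$; combining the two cases and using the definition of $\SowRaC$ gives the claim. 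For \eqref{l:dep:mm:sets:C} I follow the proof of \eqref{l:dep:sets:C} verbatim, using \eqref{l:dep:tal:prob} of Lemma \ref{l:dep:tal} with $K=\Mot$, $c=1/16$, the replacements $\DiSoInf\mapsto\DiSowInf$, $\bPhic(x)\leq\tbPhic(x)$, and concluding via $\SowRaC$. Assertion \eqref{l:dep:mm:sets:E} then follows from \eqref{l:dep:mm:sets:A}--\eqref{l:dep:mm:sets:C} through Lemma \ref{app:pre:l5}, which gives $\Eset^c\subset\Aset^c\cup\Bset^c\cup\Cset^c$. Finally \eqref{l:dep:mm:sets:hO} follows the proof of \eqref{l:dep:sets:hO}: for $n\leq n_o$ it is trivial, while for $n>n_o$ we have $n\geq(16/9)\Delta_{\Mot+1}^\Op\geq(16/9)\VnormS{\DifOp^{-1}}^2$ for all $1\leq m\leq\Mot$ so that Lemma \ref{app:pre:l6} yields $\bigcup_{m=1}^{\Mot}\hOpset^c\subset\bigcup_{m=1}^{\Mot}\Xiset^c\subset\Bset^c$, whence \eqref{l:dep:mm:sets:B} applies.

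The main obstacle — though it is bookkeeping rather than genuine difficulty — is verifying that $\SowRaC$ of \eqref{de:dep:mm:Sigma} indeed dominates every term appearing along the way, in particular that the $\Op$-dependent cutoffs $\Mut,\Mot$ hidden in $\bPsi,\bPhic,\bPhid,\bPhie$ are controlled by their tilded counterparts built from $\Motw$, and that the integer $n_o$ of \eqref{de:dep:mm:no} can be chosen uniformly over $\OpcwdD$ (this is where the condition $\log(n)(\Motw+1)^2\Delta_{\Motw+1}^\Opw=o(n)$ is used). One must also be careful that the hypothesis $q_n(\Mot+1)(\Delta_{\Mot+1}^\Op)^{1/2}\log n=o(n^{2/3})$ required in Appendix \ref{a:ora:dep} for $\bPhic,\bPhid,\bPhie=o(1)$ is implied by the analogous statement with $\Motw$, which again follows from $\Mot\leq\Motw$ and the link condition. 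Once these monotonicity comparisons are in place, every estimate reduces mechanically to the corresponding one in Lemma \ref{l:dep:sets}, so the proof is essentially complete and I would state it as such, omitting the routine repetition exactly as is done for Lemma \ref{l:iid:mm:sets}.
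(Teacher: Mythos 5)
Your proposal is correct and follows essentially the same route as the paper's own proof: both transfer Lemma \ref{l:dep:sets} to the uniform setting via the comparisons $\Mot\leq\Motw$, $\bPhic\leq\tbPhic$, $\bPhid\leq\tbPhid$, the uniform bounds $\DiSoTwo\leq\DiSowTwo$, $\DiSoInf\leq\DiSowInf$ from Lemma \ref{app:pre:lb}, a uniform choice of $n_o$ via \eqref{de:dep:mm:no}, and the replacement of $\SoRaC$ by $\SowRaC$, with \eqref{l:dep:mm:sets:E} obtained from Lemma \ref{app:pre:l5} and \eqref{l:dep:mm:sets:hO} from Lemma \ref{app:pre:l6}. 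The choices of $K$ and $c$ in the applications of Lemmata \ref{l:dep:tal} and \ref{l:dep:tal2} also match those in the paper.
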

\begin{proof}[\noindent\textcolor{darkred}{\sc Proof of Lemma
    \ref{l:dep:mm:sets}}]The proof of  \eqref{l:dep:mm:sets:A} follows
  line by line the proof of \eqref{l:dep:sets:A} in Lemma
  \ref{l:dep:sets} using the definition of $\SowRaC$ as in
  \eqref{de:dep:mm:Sigma} rather $\SoRaC$  than
  \eqref{de:dep:ora:Sigma} and hence we omit the details.
 Consider 
\eqref{l:dep:mm:sets:B}--\eqref{l:dep:mm:sets:E}. Let  $\ga$ be a sequence given by
$\ga_m=\VnormS{\DifOp^{-1}}^2$ where
$\ga_{(m)}=\Delta^T_{m}$ and $n_o$ an integer satisfying
\ref{de:dep:no} uniformly for all $\Op\in\OpcwdD$ and $\So\in\Socwr$,
that is,  $n\geq1024\maxnormsup^4\OpD^2(6+8(\DiSowInf/\vE)^2\gB)(\Motw+1)^2\Delta_{\Motw+1}^\Opw\geq
1024\maxnormsup^4(6+8(\DiSoInf/\vE)^2\gB)(\Mot+1)^2\Delta_{\Mot+1}^\Op$ for
 all $n> n_o$  by construction. We distinguish in the following the cases $n\leq n_o$ and
$n > n_o$. Consider   
\eqref{l:dep:mm:sets:B}. Following line by line the proof of
\eqref{l:dep:sets:B} together with $\bPhid(x)\leq\tbPhid(x)$ and
$\Motw+1\leq n$ we have 
$\ProbaMeasure\big(\bigcup_{\Di=1}^{\Motw+1}\Xiset^c\big)
\leq 6\;
n^{-2}\;\{\tbPhid(\maxnormsup^2\VnormInf{p_{\iZ,\iV}}\gB^{1/2})\vee
n^3\exp(-n^{1/2}q^{-1}/50)\vee n^4q^{-1}\beta_{q+1}\big\}$.
 By combination of the
two cases and employing the definition of
$\SowRaC$ given in \eqref{de:dep:mm:Sigma} we obtain \eqref{l:dep:mm:sets:B}. The proof of
\eqref{l:dep:mm:sets:C} follows line by line  the proof of
\eqref{l:dep:sets:C} in Lemma \ref{l:dep:sets}. 
Exploiting $\bPhic(x)\leq\tbPhic(x)$ we obtain 
$\ProbaMeasure\big(\Cset^c\big)\leq 6 
\{n^3\exp(-n^{1/6}q^{-1}/100)\}\vee\tbPhic(1+(\DiSowInf/\vE)^2\maxnormsup\gB^{1/2})n^{-2}+
n q^{-1}\beta_{q+1}+ 64 (\maxnormsup^2/\vE^2) \Ex(\iE/\vE)^{12}n^{-2}
$. The assertion \eqref{l:dep:mm:sets:C} follows employing the definition of
$\SowRaC$ given in \eqref{de:dep:mm:Sigma}. Consider
\eqref{l:dep:mm:sets:E}. Due to Lemma \ref{app:pre:l5} it holds
$\ProbaMeasure\big(\Eset^c\big)\leq \ProbaMeasure\big(\Aset^c\big)
+\ProbaMeasure\big(\Bset^c\big)
+\ProbaMeasure\big(\Cset^c\big)$. Therefore, the assertion
\eqref{l:dep:mm:sets:E} follows from
\eqref{l:dep:mm:sets:A}--\eqref{l:dep:mm:sets:C}. The proof of 
\eqref{l:dep:mm:sets:hO} follows in same manner as the proof of 
\eqref{l:dep:sets:hO}, and we omit the details, which completes the proof.
\end{proof}

%%% Local Variables:
%%% mode: latex
%%% TeX-master: "_0NIV_dep"
%%% End:

% --------------------------------------------------------------------
% <<BibFile>>
% --------------------------------------------------------------------
\newlength{\bibitemsep}\setlength{\bibitemsep}{.2\baselineskip plus .05\baselineskip minus .05\baselineskip}
\newlength{\bibparskip}\setlength{\bibparskip}{0pt}
\let\oldthebibliography\thebibliography
\renewcommand\thebibliography[1]{%
  \oldthebibliography{#1}%
  \setlength{\parskip}{\bibitemsep}%
  \setlength{\itemsep}{\bibparskip}%
}
\bibliography{NIV_dep.bib} 
\end{document}